\algrenewcommand\textproc{}
\tikzstyle arrowstyle=[scale=1]
\tikzstyle directed=[postaction={decorate,decoration={markings,
    mark=at position .55 with {\arrow[arrowstyle]{stealth}}}}]
\tikzstyle ddirected=[postaction={decorate,decoration={markings,
    mark=at position .45 with {\arrow[arrowstyle]{stealth}},
    mark=at position .55 with {\arrow[arrowstyle]{stealth}}}}]
\tikzstyle reverse directed=[postaction={decorate,decoration={markings,
    mark=at position .55 with {\arrowreversed[arrowstyle]{stealth};}}}]
\tikzstyle reverse ddirected=[postaction={decorate,decoration={markings,
    mark=at position .55 with {\arrowreversed[arrowstyle]{stealth};},
    mark=at position .65 with {\arrowreversed[arrowstyle]{stealth};}}}]
\theoremstyle{plain}
\newtheorem{para}{}[section]
\newtheorem{theorem}[para]{Theorem}
\newtheorem{proposition}[para]{Proposition}
\newtheorem{lemma}[para]{Lemma}
\newtheorem{cl}[para]{Claim}
\newtheorem{corollary}[para]{Corollary}
\newtheorem{fact}[para]{Fact}
\newtheorem*{question}{Question}
\theoremstyle{remark}
\newtheorem{remark}[para]{Remark}
\theoremstyle{definition}
\newtheorem{defn}[para]{Definition}
\newtheorem{conjecture}[para]{Conjecture}
\begin{document}

\title{Hidden symmetries and Dehn surgery on tetrahedral links}
\begin{abstract}
Motivated by a question of Neumann and Reid, we study whether Dehn fillings on all but one cusp of a hyperbolic link complement can produce infinite families of knot complements with hidden symmetries which geometrically converge to the original link complement. We prove several results relating the existence of such an infinite family of knot complements with hidden symmetries to the existence of certain symmetries of the horoball packings associated to the original link. Using these results, we develop an algorithm which when run on SnapPy can test when such symmetries do not exist. We then use this SnapPy code and two utilities from \cite{orbcenpract} to show that for any given link in the tetrahedral census of Fominykh-Garoufalidis-Goerner-Tarkaev-Vesnin, no such family of Dehn fillings exists. We establish the same result for two infinite families of cyclic covers of the Berge manifold and the $6^2_2$-complement as well.
\end{abstract}

\author{Priyadip Mondal}
\address{Department of Mathematics, Rutgers University-New Brunswick} \email{pm868@scarletmail.rutgers.edu}

\maketitle


\section{Introduction}
Interest in understanding which knot complements have hidden symmetries can be traced back to Neumann and Reid's seminal paper \cite{NeRe} from 1992. A \textit{hidden symmetry} of a hyperbolic $3$-manifold $M$ is an isometry between two finite covers of $M$ that is not a lift of any self-isometry of $M$. 

It is well known that arithmetic hyperbolic $3$-manifolds have infinitely many hidden symmetries (see \cite[Corollary 8.4.5]{MacRe}), but the figure eight knot is the only arithmetic hyperbolic knot by Reid \cite[Theorem 2]{Reid_arith}, shifting our focus to the non-arithemetic hyperbolic knots. Neumann and Reid \cite{NeRe} explained why the two dodecahedral knot complements of Aitchison and Rubinstein \cite{AiRu}, which are non-arithmetic, have hidden symmetries. As a follow-up, they asked in \cite[Question 1, \S 9]{NeRe} if these are the only hyperbolic knot complements with hidden symmetries. This question had been guiding the contemporary research on hidden symmetries. There are work in the literature showing that different well-known infinite families of hyperbolic knot complements admit no hidden symmetries, e.g., non-arithmetic two bridge knot complements by Reid and Walsh \cite{ReWa}, $(-2,3,n)$ hyperbolic pretzel knot complements by Macasieb and Mattman \cite{MacMat}. 

Indeed, attacking this question in its whole breadth is challenging. A weaker conjecture was proposed in \cite[Conjecture 0.1]{CDM} which states that given a positive real number $v$, there are at-most finitely many hyperbolic knot complements with volume less than $v$ which admit hidden symmetries. So, by Thurston's work \cite[Theorem 6.5.6]{Thurs}, to investigate this conjecture one would study the hyperbolic knot complements obtained by Dehn filling all but one cusp of a hyperbolic link which \textit{geometrically converge} to that link complement. The goal of understanding the existence of hidden symmetries of such families of hyperbolic knot complements has drawn significant recent attention, eg, in \cite{Hoff_Comm}, \cite{Hoff_smallknot}, \cite{CDM}, \cite{HMW}, \cite{CDHMMWIMRN}. 

In Theorem \ref{tetracompthm} of this paper, we prove the following. 

\newcommand\tetcompthm{Let $M$ be one of the tetrahedral homology link complements with two or more cusps in the orientable tetrahedral census given by \cite{FGGTV} and $K_0$  a cusp of $M$. 
Then a family of hyperbolic knot complements obtained by Dehn filling all cusps of $M$ but $K_0$ and geometrically converging to $M$ can have at-most finitely many elements with hidden symmetries. }
\theoremstyle{plain}
\newtheorem*{compthm}{Theorem \ref{tetracompthm}}
\begin{compthm}\tetcompthm\end{compthm}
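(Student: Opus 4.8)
The plan is to prove the theorem by contraposition: I would reduce the existence of an infinite family to the existence of a single forbidden symmetry of the horoball packing of $M$ associated to $K_0$, and then certify computationally that no such symmetry exists for any of the finitely many census manifolds. So suppose toward a contradiction that some family $\{M_n\}$ of knot complements, obtained by Dehn filling all cusps of $M$ except $K_0$ and geometrically converging to $M$, contains infinitely many members with hidden symmetries. Each such $M_n$ non-normally covers the minimal orbifold $O_n$ in its commensurability class, and by the Neumann--Reid constraints a knot complement with hidden symmetries has a rigid (Euclidean turnover) cusp cross-section, so the hidden symmetry is realized by a commensurator element of $\mathrm{Comm}^+(\pi_1 M_n)$ acting near the unfilled cusp as a rotation of order $3$, $4$, or $6$ that does not descend to an isometry of $M_n$. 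I would then invoke the structural results of the paper to pass to the geometric limit: after normalizing so that the lift of the $K_0$-cusp is standardized at $\infty$, a subsequence of these extra isometries converges to an isometry $\sigma$ of $\mathbb{H}^3$ that preserves the full horoball packing of $M$ seen from $K_0$ but is not induced by any symmetry of $M$. Thus an infinite family forces $M$ to carry a hidden symmetry $\sigma$ of this packing.

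The next step is to turn the existence of $\sigma$ into a finite, decidable condition. Because $M$ is fixed and the packing is $\pi_1 M$-periodic, $\sigma$ is a rigid-cusp rotation determined by its effect on finitely many maximal horoballs, so ruling it out amounts to checking that no isometry carrying the chosen base horoball to one of a finite list of candidate target horoballs simultaneously preserves the packing and fails to extend to $M$. This is precisely the test the algorithm is designed to perform: SnapPy produces the maximal cusp and its horoball diagram, while the orbit-center and verification utilities of \cite{orbcenpract} carry out the candidate identifications rigorously rather than numerically.

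Finally, I would run this test on the complete list of orientable tetrahedral homology link complements with two or more cusps from \cite{FGGTV}, for each choice of unfilled cusp $K_0$. In every case the algorithm certifies that the forbidden symmetry $\sigma$ does not exist, contradicting the assumption that infinitely many members of the family have hidden symmetries. Hence at most finitely many members can, which is the assertion of the theorem.

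The main obstacle I anticipate is making the \emph{non}-existence check genuinely rigorous. Producing a symmetry is easy to certify, but verifying that none exists requires exhausting a finite but a priori unbounded set of candidate identifications and proving, with controlled error, that each one fails; SnapPy's floating-point output must therefore be upgraded to verified arithmetic. This is exactly where the exact shape fields and canonical tetrahedral decomposition supplied by the census, together with the utilities of \cite{orbcenpract}, become essential, and organizing the search so that it provably terminates with the correct conclusion for every census manifold and every cusp is the crux of the argument.
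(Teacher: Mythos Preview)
Your high-level shape is right---pass to a limit via Theorem \ref{CDHMMWorb} and extract a symmetry of the $K_0$-horoball packing---but the proposal has two genuine gaps that would keep it from going through.

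First, the characterization of the forbidden object is too coarse. What you get from the limit argument is not merely ``a symmetry of the packing that is not induced by an isometry of $M$,'' but the much sharper conclusion of Theorem \ref{hexlatt}: an entire $W_{(3,3,3)}$ wallpaper group acting on the $K_0$-circle packing, whose order-$3$ elements do \emph{not} fix the horocenter of any $K_j$-horoball for $j\ne 0$, and whose translational subgroup has co-area equal to $\frac{1}{4m}$ times the maximal cusp area. The co-area bound comes from Hoffman's degree formulae (Theorems \ref{Hoff_333}, \ref{Hoff_236}) and is what makes the search finite. Without it, ``$\sigma$ is determined by its effect on finitely many horoballs'' is not obviously true, and your test has no termination guarantee.

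Second, and more seriously, your claim that ``in every case the algorithm certifies that the forbidden symmetry does not exist'' is false for this census. Running the symmetry test (Proposition \ref{codeprop}) leaves $86$ exceptional pairs $(M,i)$ in $\mathcal{E}$ for which the $i$-circle packing \emph{does} admit the required $W_{(3,3,3)}$ symmetries with the right co-area. These are not handled by any packing-symmetry test; the paper disposes of them by four separate arguments (Subsections \ref{snappyrun} and \ref{E1E4}): for $\mathcal{E}_3$ the circle-packing symmetry does not lift to the full horoball packing; for $\mathcal{E}_2$ there is an order-$6$ symmetry joining $K_0$ to another cusp, and Corollary \ref{badorder6sym} plus Adams' volume classification rules this out; and for $\mathcal{E}_1\cup\mathcal{E}_4$ one must exhibit explicit covers $M\to O_{(2,3,6),(2,2,2,2)}$ with prescribed cusp behavior (this is what the \cite{orbcenpract} utilities are actually used for) and then apply Proposition \ref{coversmallestmulticusp}. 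Your proposal has no mechanism for any of these exceptional cases, and they are where the real content of the proof lies.
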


A hyperbolic $3$-manifold is called \textit{tetrahedral} if it has a decomposition into regular ideal tetrahedra and it is a \textit{homology link complement} if it is isometric to a link complement in a homology sphere. In \cite{FGGTV}, Fominykh, Garoufalidis, Goerner, Tarkaev and Vesnin provided a census of tetrahedral manifolds. Their census includes all the orientable tetrahedral manifolds made up of $25$ or fewer tetrahedra and all the non-orientable tetrahedral manifolds made up of $21$ or fewer tetrahedra. 

Now, \cite[Theorem 2.4, Proposition 2.7]{NeRe} implies that all \textit{cusp shapes} of the \textit{tetrahedral manifolds} belong to $\mathbb{Q}(\sqrt{-3})$. This along with \cite[Corollary 1.4]{CDM} (or \cite[Theorem 2.6]{CDHMMWIMRN}) makes the tetrahedral links ideal candidates for our search of hidden symmetries in infinite families of knot complements obtained from Dehn filling all but one cusp of a link complement that geometrically converge to that link complement. 

Even though it is difficult to identify which tetrahedral manifolds are link complements, by \cite[Lemma 6.1]{FGGTV}, it is quite straightforward to identify the tetrahedral homology links in the orientable census of \cite{FGGTV} by looking at their first integral homology groups. Since the (orientable) tetrahedral census of \cite{FGGTV} can be accessed from SnapPy \cite{snappy}, we compute their first homology groups on SnapPy \cite{snappy} and use \cite[Lemma 6.1]{FGGTV} to get hold of all the $882$ tetrahedral homology links from the census \cite{FGGTV}. We should also keep in mind that if Dehn filling all but one cusp of a hyperbolic $3$-manifold produces a hyperbolic knot complement, the original hyperbolic $3$-manifold has to be a link complement and therefore a homolgy link complement as well. To establish Theorem \ref{tetracompthm}, we draw extensively on the symmetries of the horoball packings of $\mathbb{H}^3$ that map to the maximal cusp neighborhoods of these tetrahedral homology links.

We also consider the infinite families of tetrahedral links $B^{0,n}$ and $(6^2_2)^{0,n}$ whose complements are the cyclic covers of respectively the Berge manifold (see the second picture in \cite[Figure 3]{FGGTV}) and the complement of the $6^2_2$ link from \cite[Appendix C]{Rolfsen} (see the third picture in \cite[Figure 3]{FGGTV}). When $n$ is a multiple of $7$, $B^{0,n}$ has eight components and its complement covers the complement of the eight component link shown in Figure \ref{berge7fold}, otherwise it has two components. On the other hand, when $n$ is a multiple of $3$, $(6^2_2)^{0,n}$ has four components and its complement covers the complement of the four component link identified as \texttt{L12n2208} from the Hoste-Thistlewaite census on SnapPy \cite{snappy}, otherwise $(6^2_2)^{0,n}$ has two components. We prove the following two results. 

\newcommand\bergecyclic{Let $n \in \mathbb{N}$. Then, a family of hyperbolic knot complements obtained by Dehn filling all but a fixed component of $B^{0,n}$ and geometrically converging to $\mathbb{S}^3-B^{0,n}$ can only have at-most finitely many elements with hidden symmetries. 
}
\theoremstyle{plain}
\newtheorem*{Bergecyclicthm}{Theorem \ref{Bergecyclic}}
\begin{Bergecyclicthm}
\bergecyclic
\end{Bergecyclicthm}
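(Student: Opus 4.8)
The plan is to deduce the statement for every $n$ from a single computation on the Berge manifold $B$, exploiting that $M_n := \mathbb{S}^3 - B^{0,n}$ is an $n$-fold cyclic cover of $B$. Let $p_n \colon M_n \to B$ be the covering. First I would invoke the reduction results established earlier in the paper: if the family of knot complements obtained by filling all cusps of $M_n$ but $K_0$ and geometrically converging to $M_n$ had infinitely many members with hidden symmetries, then the horoball packing of $M_n$ coming from the maximal cusp neighborhood at $K_0$ would have to carry a ``hidden'' rotational symmetry, i.e.\ an isometry of $\mathbb{H}^3$ fixing the parabolic point of $K_0$ (normalized to $\infty$), preserving the packing, acting on the boundary $\mathbb{C}$ as a rotation, and not induced by any element of $\mathrm{Isom}(M_n)$. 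Because $M_n$ is tetrahedral its cusp shapes lie in $\mathbb{Q}(\sqrt{-3})$, so by \cite[Corollary 1.4]{CDM} the only relevant rotations have order $3$ or $6$. It therefore suffices to rule out such a hidden rotation, uniformly in $n$.

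The key structural input is that $M_n$, $B$, and (when $7 \mid n$) the eight-component link complement $L$ of Figure \ref{berge7fold} are all finite covers of $B$, hence pairwise commensurable, so they share a single commensurator $C = \mathrm{Comm}(\pi_1 B)$ that does not depend on $n$. Lifting $K_0$ to $\infty$, every candidate hidden rotation lies in the stabilizer $C_\infty$, whose rotational part consists of sixth roots of unity since the cusp shapes lie in $\mathbb{Q}(\sqrt{-3})$; thus the finite set of rotations to be excluded is the same for all $n$. Now the cusp $K_0$ covers a fixed cusp $c_0 = p_n(K_0)$ of $B$, and the cyclic deck group $\mathbb{Z}/n$ acts on the boundary horosphere by translations, so the translation lattice $\Lambda_n$ of $K_0$ is a finite-index sublattice of the lattice $\Lambda_{c_0}$ of $c_0$. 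The plan is to use this periodicity to identify the maximal cusp packing of $M_n$ at $\infty$ with the $\Lambda_n$-periodic lift of the maximal cusp packing of $B$ at $c_0$; then any rotational symmetry of the $M_n$-packing restricts from one of the $B$-packing, and a hidden rotation for $M_n$ would descend to a rotation of the $B$-packing not accounted for by $\mathrm{Isom}(B)$.

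With the problem reduced to the fixed base manifolds, I would run the paper's SnapPy algorithm on the single manifold $B$, and additionally on $L$ for the subfamily $7 \mid n$, to verify directly that the maximal cusp packing admits no hidden order-$3$ or order-$6$ rotation. As $B$ and $L$ are individual manifolds this is a finite computation, and it simultaneously disposes of all $M_n$.

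The hard part will be the middle step: justifying that the maximal cusp packing of the cover $M_n$ really is the lift of that of $B$. A priori a finite cover can create extra room, so $K_0$ might expand to a strictly larger maximal horoball than the lift of the maximal cusp at $c_0$, altering the packing and possibly enlarging its symmetry group. I expect to resolve this by exploiting that the cover is \emph{cyclic}: the $\mathbb{Z}/n$ deck action is by translations commuting with the entire configuration, so the tangency pattern determining the maximal cusp at $c_0$ lifts verbatim to $K_0$ and forces the maximal cusps to coincide. A secondary subtlety is that an honest symmetry of $B$ which fails to respect the covering data of $p_n$ could masquerade as a hidden symmetry of $M_n$; I would exclude this by checking that any rotation preserving the $\Lambda_n$-periodic packing must normalize $\Lambda_n$, so that the only surviving candidates are exactly the rotations already tested on $B$.
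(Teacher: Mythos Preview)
Your overall strategy---reduce to the base manifolds $B$ and $B^{0,7}$ via the covering, then check the horoball packing there once and for all---is exactly what the paper does. But the criterion you extract from the reduction is wrong, and this is a genuine gap.

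You claim the packing at $K_0$ must carry a rotation ``not induced by any element of $\mathrm{Isom}(M_n)$''. That is not what Theorem~\ref{hexlatt} (via Theorem~\ref{CDHMMWorb}) gives. The conclusion there is that the $K_0$-circle packing carries an order-$3$ or order-$6$ rotation \emph{whose fixed point is not the horocenter of any $K_j$-horoball for $j \neq 0$}. Whether such a rotation comes from $\mathrm{Isom}(M_n)$ is irrelevant: the orbifold $O$ of Theorem~\ref{CDHMMWorb} may well have $\pi_1^{Orb}(O) \supset N(\pi_1(M_n))$, so its peripheral rotations can be honest isometries of $M_n$. What obstructs $O$ is that those rotations, being peripheral at the \emph{rigid} cusp, cannot fix a parabolic point of a \emph{smooth} cusp of $O$---hence cannot fix the horocenter of any filled cusp of $M_n$. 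The paper checks precisely this: by a direct geometric argument on the hexagonal pattern visible in Figure~\ref{Bergeredhp} (not by running the SnapPy code), it shows that every order-$3$ symmetry of the Berge manifold's red-cusp circle packing has its fixed point at a blue horocenter. Your ``secondary subtlety'' about symmetries of $B$ masquerading as hidden symmetries of $M_n$ is a symptom of the same confusion; once the criterion is stated correctly it evaporates.

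Your ``hard part'' is also resolved by a simpler mechanism than the one you propose. The maximal $K_0$-packing of $M_n$ coincides with the maximal $c_0$-packing of the base not because the deck group is cyclic acting by translations, but because $K_0$ is the \emph{only} preimage of $c_0$: the set of parabolic fixed points in that cusp orbit is then literally the same for $\pi_1(M_n)$ and $\pi_1(B)$, so the tangency constraints defining the maximal cusp are identical. When $7 \mid n$ this fails for the cover $M_n \to B$ on the seven non-violet cusps; the paper routes through $B^{0,7}$ (where the cusp map is one-to-one), uses that all eight cusps of $B^{0,7}$ are symmetric, and then observes that the violet cusp of $B^{0,7}$ is the unique preimage of a cusp of $B$, thereby identifying every cusp packing of every $B^{0,n}$ with the single packing of Figure~\ref{Bergeredhp}.
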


\newcommand\sixtwotwocyclic{For each $n \in \mathbb{N}$, a family of hyperbolic knot complements obtained by Dehn filling all but a fixed component of $(6^2_2)^{0,n}$ and geometrically converging to $\mathbb{S}^3-(6^2_2)^{0,n}$ can only have at-most finitely many elements with hidden symmetries. 
}
\theoremstyle{plain}
\newtheorem*{622cyclicthm}{Theorem \ref{622cyclicresult}}
\begin{622cyclicthm}
\sixtwotwocyclic
\end{622cyclicthm}

Theorem \ref{Bergecyclic} follows quite easily by analyzing the horoball packings associated with the links $B^{0,n}$. Theorem \ref{622cyclicresult} however requires other subtle observations and techniques addressed at the end of the introduction.

We now give a brief overview of the contents of the paper. In Section \ref{cuspsection}, \ref{hidsymsection} and \ref{packingbackground}, we set up the background that we would need for the rest of the paper. In Section \ref{cuspsection}, we review the definitions of cusps, Dehn fillings and geometric convergence. In Section \ref{hidsymsection}, we recall the definitions and the known results from the literature regarding hidden symmetries. In Section \ref{packingbackground}, we go over the definitions and terminologies related to the horoball packings of $\mathbb{H}^3$ and give a brief account on how one can work with the horoball packings in the $3$-manifold software SnapPy \cite{snappy}. 

Before we delve into our review of Section \ref{hscp}, we recall that Neumann and Reid \cite[Proposition 9.1]{NeRe} characterized the non-arithmetic hyperbolic knot complements with hidden symmetries as the ones that non-normally cover an orbifold with a rigid cusp.  This fact was  extended to the Dehn filling scenario by \cite[Theorem 2.6]{CDHMMWIMRN}): if there is an infinite family of hyperbolic knot complements with hidden symmetries obtained by Dehn filling all but one component $K$ of a hyperbolic link $L$ and geometrically converging to $\mathbb{S}^3-L$, then $\mathbb{S}^3-L$ covers an orbifold $O$ with exactly one rigid cusp and at-least one smooth cusp so that the only cusp of $L$ that maps to the rigid cusp of $O$ is the cusp corresponding to the $K$-component.

In Section \ref{hscp}, we focus on retrieving more geometric information from \cite[Theorem 2.6]{CDHMMWIMRN}. With this aim, in the following proposition we translate \cite[Theorem 2.6]{CDHMMWIMRN} in the semantics of the symmetries of the horoball packing of $\mathbb{H}^3$ that map to the maximal cusp neighborhood corresponding to the un-filled component of $L$. 

\newcommand\wallthm{For a hyperbolic link $L$ with components $K_1, \dots, K_n$, where $n \ge 2$, if there is an infinite family $\mathcal{F}$ of hyperbolic knot complements with hidden symmetries obtained by Dehn filling all cusps of $L$ but the $K_1$-cusp which geometrically converges to $\mathbb{S}^3-L$, then the symmetry group of the $K_1$-circle packing contains the wallpaper group $W_{(3,3,3)}$ such that  for any $j \in \{2, \dots, n\}$, the stabilizer of the horocenter of each $K_j$-horoball in $W_{(3,3,3)}$ is trivial and the co-area of the (maximal) translational subgroup of $W_{(3,3,3)}$ is $$\frac{\text{maximal cusp area of the }K_1\text{-cusp}}{4m}$$ for some $m\in \mathbb N$.}
\theoremstyle{plain}
\newtheorem*{wall333thm}{Theorem \ref{hexlatt}}
\begin{wall333thm}\label{introwallthm}
\wallthm
\end{wall333thm}

The above result is built upon Neil Hoffman's observation which we prove in Theorem \ref{2comp_sym} of this paper. In addition, Hoffman's degree formulae (\cite[Part 1 of Lemma 5.5]{Hoff_smallknot} and \cite[Theorem 1.2]{Hoff_cusp}) play a vital role in the proving the above result. 

Using properties of the wallpaper group $W_{(3,3,3)}$ we can apply Theorem \ref{hexlatt} to obtain Corollary \ref{neardistinctrot} which loosely says the following: if there is an infinite family of hyperbolic knot complements with hidden symmetries obtained by Dehn filling all components of a hyperbolic link $L$ but component $K$ which geometrically converge to $\mathbb{S}^3-L$, then for each circle in the $K$-circle packing, the $K$-circle packing has two distinct order $3$ symmetries whose fixed points are ``close" to the center of that circle and not the horocenter of a horoball corresponding to a filled component of $L$. We use this result in Section \ref{hsalgorithm} to develop an algorithm, which when run as a Python code on SnapPy \cite{snappy}, would detect when such close symmetries do not exist.

We begin Section \ref{coversection} by noting in Proposition \ref{8v0prop} that \cite[Theorem 2.6]{CDHMMWIMRN} can also be applied to obtain a lower bound on the volume of the link complements whose Dehn fillings we should study. This can be seen as an extension of \cite[Theorem 4.1]{CDHMMWIMRN} for link complements. Hoffman's degree formula (\cite{Hoff_smallknot} and \cite{Hoff_cusp}) and Adams' classification of low volume multi-cusped orbifolds \cite{Adams_multi} are also crucial for this result.

\newcommand\eightvprop{Let $L$ be a hyperbolic link with two or more components whose volume is less than or equal to $8v_0$ where $v_0$ is the volume of a regular ideal tetrahedron.  Then for any family $\mathcal{F}$ of hyperbolic knot complements obtained by Dehn filling all but one component of $L$ and geometrically converging to $\mathbb{S}^3-L$,  $\mathcal{F}$ has at-most finitely many elements with hidden symmetries. }
\theoremstyle{plain}
\newtheorem*{8v0thm}{Proposition \ref{8v0prop}}
\begin{8v0thm}
\eightvprop
\end{8v0thm}

Another question that \cite[Theorem 2.6]{CDHMMWIMRN} poses is which orbifolds $O$ with exactly one rigid cusp and at-least one smooth cusp could appear as the one laid out in \cite[Theorem 2.6]{CDHMMWIMRN}. Not many orbifolds with exactly one rigid cusp and at-least one smooth cusp are known in the literature. In his thesis, Tyler Gaona \cite{GTthesis} studied an orbifold of volume $\frac{5v_0}{6}$ with exactly one $(2,3,6)$ cusp and one pillowcase cusp, which we will denote in this paper by $O_{(2,3,6), (2,2,2,2)}$. Gaona showed in \cite[Theorem 3.2]{GTthesis} that this is the smallest volume hyperbolic $3$-orbifold with one $(2,3,6)$-cusp and one smooth cusp. Application of the cusp killing homomorphism (see \cite{Hoff_smallknot}) rules out this orbfifold as the one whose Dehn filling on the smooth cusp could produce an orbifold covered by a knot complement. In fact, Adams' classification \cite{Adams_multi}, \cite[Theorem 2.6]{CDHMMWIMRN} and ideas from \cite{GoHeHo} corroborate the following stronger implication.

\newcommand\coversmallestmulticuspthm{Let $c$ and $c'$ be two symmetric cusps ($c$ can be same as $c'$) of a hyperbolic link $L$. Suppose $\mathbb{S}^3-L$ covers $O_{(2,3,6), (2,2,2,2)}$ in a way such that the only cusp of $\mathbb{S}^3-L$ that maps to the $(2,3,6)$ cusp $c_{(2,3,6)}$ of $O_{(2,3,6), (2,2,2,2)}$ is $c$. Then 
\begin{enumerate}
\item the symmetry group of the $c'$-circle packing contains a $W_{(2,3,6)}$ group of symmetries whose elements of order $3$ or $6$ do not fix the horocenter of any $K$-horoball where $K$ is any cusp of $L$ different from $c'$,
\item for any family $\mathcal{F}$ of hyperbolic knot complements obtained by Dehn filling all cusps of $L$ but $c'$ and geometrically converging to $\mathbb{S}^3-L$,  $\mathcal{F}$ has at-most finitely many elements with hidden symmetries. 
\end{enumerate}}
\theoremstyle{plain}
\newtheorem*{coversmallestmulticusp}{Proposition \ref{coversmallestmulticusp}}
\begin{coversmallestmulticusp}
\coversmallestmulticuspthm
\end{coversmallestmulticusp}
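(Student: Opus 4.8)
The plan is to prove (1) by transporting the given covering from $c$ to $c'$ along the symmetry relating them and reading the wallpaper symmetries off the peripheral structure of the $(2,3,6)$ cusp, and to prove (2) by contradiction: an infinite subfamily with hidden symmetries would, by \cite[Theorem 2.6]{CDHMMWIMRN}, force $\mathbb{S}^3-L$ to cover an orbifold whose smooth cusps can be filled to produce knot-complement-covered orbifolds, a configuration that the covering of $O_{(2,3,6),(2,2,2,2)}$ will be seen to preclude.

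For (1), set $\Gamma = \pi_1(\mathbb{S}^3-L)$ and $\Gamma_O = \pi_1^{\mathrm{orb}}(O_{(2,3,6),(2,2,2,2)})$, so that $\Gamma \le \Gamma_O$ with finite index, and normalize the $(2,3,6)$ cusp $c_{(2,3,6)}$ to sit at $\infty$. Then $P := \mathrm{Stab}_\infty(\Gamma_O)$ is the orientation-preserving $(2,3,6)$ triangle group, a copy of $W_{(2,3,6)}$ acting on $\mathbb{C} = \partial\H^3 \setminus \{\infty\}$, and $\mathrm{Stab}_\infty(\Gamma) = \Gamma \cap P$ is its translation lattice $\Lambda$ of index $6$. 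By the mechanism behind Theorem~\ref{2comp_sym}, each element of $P$ preserves the horoball packing underlying the $c$-circle packing and hence descends to a symmetry of it, so $P \cong W_{(2,3,6)}$ embeds in the symmetry group of the $c$-circle packing. An order $3$ or order $6$ element $r \in P$ cannot fix the horocenter $w \in \mathbb{C}$ of a $K$-horoball with $K \ne c$: otherwise $r$ would be a rotation of that order in $\Gamma_O$ fixing the parabolic point $w$, so the $K$-cusp would map to a rigid cusp of $O$ carrying a rotation of order $3$ or $6$; but $c_{(2,3,6)}$ is the only rigid cusp, only $c$ maps to it, and every other cusp maps to the pillowcase cusp $(2,2,2,2)$, whose peripheral group has rotations of order $2$ only. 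Finally, since $c$ and $c'$ are symmetric, a self-isometry $f$ of $\mathbb{S}^3-L$ with $f(c)=c'$ (take $f=\mathrm{id}$ if $c=c'$) carries the $c$-circle packing to the $c'$-circle packing and conjugates $P$ to a $W_{(2,3,6)}$ inside the symmetry group of the $c'$-circle packing, transporting the fixed-point property verbatim because $f$ sends cusps $\ne c$ to cusps $\ne c'$. This establishes (1), and in particular shows that the $c'$-cusp carries a $(2,3,6)$-type (hexagonal) cross-section.

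For (2), suppose toward a contradiction that some family $\mathcal{F}$ of fillings on all cusps but $c'$ has infinitely many members with hidden symmetries. By \cite[Theorem 2.6]{CDHMMWIMRN}, $\mathbb{S}^3-L$ then covers an orbifold $O'$ with exactly one rigid cusp and at least one smooth cusp, $c'$ being the unique cusp over the rigid cusp; and, being obtained by Dehn filling, the members of $\mathcal{F}$ cover the one-cusped orbifolds produced by orbifold Dehn filling the smooth cusps of $O'$. The covering $\mathbb{S}^3-L \to O$ transported by $f$ as in (1) puts $c'$ over the $(2,3,6)$ cusp, so the rigid cusp of $O'$ shares the hexagonal cross-section of a $(2,3,6)$ cusp and is therefore of type $(2,3,6)$ or $(3,3,3)$ rather than $(2,4,4)$. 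Invoking the minimality of $O_{(2,3,6),(2,2,2,2)}$ among orbifolds with a $(2,3,6)$ cusp and a smooth cusp \cite[Theorem 3.2]{GTthesis}, Adams' classification of the low-volume multicusped orbifolds \cite{Adams_multi}, and the identification ideas of \cite{GoHeHo}, I would pin $O'$ down within its volume range and reduce the smooth-cusp filling of $O'$ to a smooth-cusp filling of $O_{(2,3,6),(2,2,2,2)}$ itself. The cusp killing homomorphism of \cite{Hoff_smallknot} then forbids such a filling from being covered by a knot complement: a one-cusped filling covered by a knot complement would have to be covered by a $\mathbb{Z}$-homology-sphere complement, and the homological constraint imposed by the rigid $(2,3,6)$ cusp is incompatible with this for all but finitely many fillings. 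This contradicts the infinitude of $\mathcal{F}$ and proves (2).

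The main obstacle is the identification step in (2): one must show that covering $O_{(2,3,6),(2,2,2,2)}$ with $c'$ over the rigid cusp already forces every limiting orbifold $O'$ supplied by \cite[Theorem 2.6]{CDHMMWIMRN} to inherit the smooth-cusp filling obstruction, and in particular one must rule out a genuinely different $O'$ with a $(3,3,3)$ rigid cusp that might evade the $(2,3,6)$ cusp-killing argument. Making the reduction to $O_{(2,3,6),(2,2,2,2)}$ precise --- controlling $O'$ through \cite{Adams_multi}, \cite{GTthesis} and \cite{GoHeHo} and verifying that the homomorphism of \cite{Hoff_smallknot} survives the orbifold Dehn filling --- is where the real work lies; by comparison the cusp cross-section and index bookkeeping in (1) and the wallpaper-group containment $W_{(3,3,3)} \le W_{(2,3,6)}$ used to align (1) with Theorem~\ref{hexlatt} are routine.
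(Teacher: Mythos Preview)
Your argument for part (1) is essentially the paper's: read off $W_{(2,3,6)}$ from the peripheral subgroup at $c_{(2,3,6)}$, observe that order $3$ or $6$ rotations cannot fix horocenters over the pillowcase cusp, and transport along the self-isometry exchanging $c$ and $c'$. This part is fine.

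Part (2), however, has a genuine gap, and it is not the one you flag. Your plan is to ``pin $O'$ down within its volume range'' and then reduce its smooth-cusp fillings to fillings of $O_{(2,3,6),(2,2,2,2)}$. But there is \emph{no} volume hypothesis on $\mathbb{S}^3-L$ in this proposition, so there is no a priori volume bound on $O'$, and Gaona's minimality result gives you nothing here. Nor is there any reason $O'$ should cover or be covered by $O_{(2,3,6),(2,2,2,2)}$: the two orbifolds are produced by unrelated covering maps from $\mathbb{S}^3-L$, and neither need factor through the other. So the reduction you propose does not get off the ground.

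The paper's actual move is different and is the missing idea. One takes the group $\Gamma_1 = \langle \pi_1^{\mathrm{orb}}(O_{(2,3,6),(2,2,2,2)}),\, \pi_1^{\mathrm{orb}}(O')\rangle$ generated by \emph{both} orbifold groups inside the symmetry group of the common $c$-maximal horoball packing, and uses \cite[Lemma~2.1 and proof of Lemma~2.2]{GoHeHo} to show $\Gamma_1$ is Kleinian. One checks that $\mathbb{H}^3/\Gamma_1$ still has two cusps, one of them $(2,3,6)$. The isotropy-graph obstruction (Lemma~\ref{isovertex}, which packages Hoffman's cusp-killing argument) shows $O'$ cannot cover $O_{(2,3,6),(2,2,2,2)}$, since the latter has no $(2,3,3)$, $(2,3,4)$ or $(2,3,5)$ vertex; hence $\Gamma_1$ \emph{properly} contains $\pi_1^{\mathrm{orb}}(O_{(2,3,6),(2,2,2,2)})$, forcing $\mathrm{vol}(\mathbb{H}^3/\Gamma_1)=\frac{5v_0}{6m}$ with $m\ge 2$. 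Adams' classification \cite[Lemma~2.2]{Adams_multi} and the integrality of $m$ then pin $\mathbb{H}^3/\Gamma_1$ down as the two-$(2,3,6)$-cusped orbifold of volume $\frac{5v_0}{12}$, i.e.\ $m=2$; but a pillowcase cusp covering a $(2,3,6)$ cusp requires degree divisible by $3$, contradicting $m=2$. The leverage comes entirely from passing to the \emph{generated} group, which is what produces a small-volume orbifold to which Adams applies; your proposal never makes this move.
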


In Section \ref{coversection}, we also show (in the proposition stated below) that the converse of Theorem \ref{hexlatt} is not true, i.e., existence of $W_{(3,3,3)}$ wallpaper groups satisfying the properties laid out in Theorem \ref{hexlatt} does not imply that $\mathcal{F}$ can have infinitely many knot complements with hidden symmetries. We show that extra ``bad" order 6 symmetries outside $W_{(3,3,3)}$ may obstruct $\mathcal{F}$ to have infinitely many elements with hidden symmetries.

\newcommand\twosixcuspthm{Let $c$ be a cusp of a hyperbolic link $L$ with two or more components whose volume is less than or equal to $24 v_0$, where $v_0$ is the volume of a regular ideal tetrahedron. Suppose $\mathbb{S}^3-L$ covers a hyperbolic orbifold $O'$ with at-least two $(2,3,6)$-cusps such that $c$ covers a $(2,3,6)$ cusp $c'$ of $O'$ and that the only cusp of $\mathbb{S}^3-L$ that covers $c'$ is $c$. Then for any family $\mathcal{F}$ of hyperbolic knot complements obtained by Dehn filling all cusps of $L$ but $c$ and geometrically converging to $\mathbb{S}^3-L$,  $\mathcal{F}$ has at-most finitely many elements with hidden symmetries. }
\theoremstyle{plain}
\newtheorem*{twosixcusp}{Proposition \ref{two236cusps}}
\begin{twosixcusp}
\twosixcuspthm
\end{twosixcusp}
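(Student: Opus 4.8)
The plan is to argue by contradiction: suppose such a family $\mathcal{F}$ has infinitely many members with hidden symmetries. Applying Theorem \ref{hexlatt} with $c$ in the role of the unfilled component, the symmetry group $S$ of the $c$-circle packing contains a wallpaper group $W_{(3,3,3)}$ whose \emph{maximal} translational subgroup has co-area equal to $\frac{1}{4m}$ of the maximal cusp area of $c$ for some $m\in\mathbb{N}$, and whose order-$3$ elements fix no horocenter coming from a filled component. On the other hand, since $c$ is the unique cusp of $\mathbb{S}^3-L$ lying over the rigid $(2,3,6)$-cusp $c'$ of $O'$, Theorem \ref{2comp_sym} applies: the image $\rho$ of $c$ in the commensurator quotient is itself a $(2,3,6)$-cusp (a $(2,3,6)$-cusp covers nothing smaller), so the orientation-preserving part $S^{+}$ of $S$ is exactly the cusp group $W_{(2,3,6)}$ of $\rho$. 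Because the translational subgroup produced by Theorem \ref{hexlatt} is maximal in $S$, it agrees with that of $W_{(2,3,6)}$; hence $W_{(3,3,3)}\le W_{(2,3,6)}$ with index $2$ and a common lattice. Since every order-$3$ element of $W_{(2,3,6)}$ already lies in $W_{(3,3,3)}$ (its image in $W_{(2,3,6)}/W_{(3,3,3)}\cong\mathbb{Z}/2$ is trivial), \emph{every} order-$3$ or order-$6$ rotation centre of $W_{(2,3,6)}$ is an order-$3$ centre of $W_{(3,3,3)}$. The argument therefore reduces to exhibiting a single filled horocenter sitting at such a centre.

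I would extract this horocenter from the second $(2,3,6)$-cusp $c''$, working in a common orbifold $O_{\min}$ covered by $\mathbb{S}^3-L$ and covering both $O'$ and the one-rigid-cusp orbifold supplied by \cite[Theorem 2.6]{CDHMMWIMRN}; we may reduce to the non-arithmetic case where $\mathrm{Comm}(\Gamma_L)$ is discrete and $O_{\min}=\mathbb{H}^3/\mathrm{Comm}(\Gamma_L)$ plays this role, the arithmetic case being controlled separately. First suppose $c''$ also maps to $\rho$. Then the preimages of $c''$, all belonging to filled components, are $\rho$-lifts distinct from the $c$-lifts, so $\rho$ has more than one preimage in $\mathbb{S}^3-L$. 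Using Hoffman's degree formulae (\cite{Hoff_smallknot}, \cite{Hoff_cusp}) together with the explicit horoball geometry of a rigid $(2,3,6)$-cusp, I would locate among these extra $\rho$-lifts a point $q$ lying on an order-$3$ (or order-$6$) cone axis through $\rho$, so that an order-$3$ element of $S=W_{(2,3,6)}$ fixes $q$. By the first paragraph $q$ is then an order-$3$ centre of $W_{(3,3,3)}$ fixing a filled horocenter, contradicting Theorem \ref{hexlatt}.

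The volume hypothesis $\mathrm{vol}(\mathbb{S}^3-L)\le 24v_0$ would be used to dispose of the complementary case, in which $c''$ maps to a rigid cusp $\rho'\neq\rho$ of $O_{\min}$. Then $O_{\min}$ carries at least two distinct $(2,3,6)$-cusps, so by Adams' classification of low-volume multicusped orbifolds \cite{Adams_multi} its volume is bounded below by an explicit constant; combined with the volume bound this bounds the degree of $\mathbb{S}^3-L\to O_{\min}$ and leaves only finitely many candidate orbifolds $O_{\min}$. As in Proposition \ref{coversmallestmulticusp}, each is eliminated by the cusp-killing homomorphism (and the ideas of \cite{GoHeHo}): none admits a filling of its smooth cusps yielding an orbifold covered by a knot complement, so $\mathcal{F}$ cannot contain infinitely many hidden-symmetry knot complements. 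I expect the main obstacle to be the geometric heart of the second paragraph — proving that a preimage of $c''$ genuinely populates a rotation centre of the cusp group of $\rho$. This amounts to locating the order-$3$ and order-$6$ cone axes of a rigid $(2,3,6)$-cusp relative to its hexagonal horoball lattice and checking that one terminates at a $c''$-preimage rather than at a finite cone vertex or a $c$-lift; it is precisely here that Hoffman's degree formula and the co-area normalization of Theorem \ref{hexlatt} must be combined, and where the rigidity of the \emph{second} $(2,3,6)$-cusp, as opposed to a generic torus cusp, is essential.
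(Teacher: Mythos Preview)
Your approach has a genuine gap at its first step, and the rest of the argument never recovers from it. You write: ``Because the translational subgroup produced by Theorem \ref{hexlatt} is maximal in $S$, it agrees with that of $W_{(2,3,6)}$.'' But Theorem \ref{hexlatt} does not say this. The phrase ``(maximal) translational subgroup of $W$'' refers to the translation lattice inside the specific wallpaper group $W$ coming from the peripheral subgroup of $\pi_1^{\mathrm{Orb}}(O)$ at $c_{rigid}$; it is \emph{not} asserted to be maximal in the full symmetry group $S$ of the $c$-circle packing. In fact $W$ can sit inside $S$ with finite index on the translation side, so there is no reason the $W_{(3,3,3)}$ from $O$ and the $W_{(2,3,6)}$ from $O'$ (or from the commensurator quotient) share a translation lattice, and hence no reason the former is an index-$2$ subgroup of the latter. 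Without this containment, your reduction ``it suffices to exhibit a filled horocenter at an order-$3$ centre of $W_{(2,3,6)}$'' does not touch the $W_{(3,3,3)}$ that Theorem \ref{hexlatt} actually protects. (You also invoke Theorem \ref{2comp_sym} to identify $S^{+}$ with the cusp group of $\rho$; that theorem concerns two-component links and says nothing of the sort.) The second paragraph then asserts, without argument, that some preimage of $c''$ lands on an order-$3$ cone axis; nothing in the setup forces this, and you acknowledge it as the ``main obstacle'' without indicating how Hoffman's degree formulae would resolve it.

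The paper takes a different, more direct route that avoids all of this geometric juggling. Rather than trying to pin a filled horocenter onto a rotation centre, it lets $\Gamma_O$ (from Theorem \ref{CDHMMWorb}) and $\Gamma_{O'}$ generate a Kleinian group $\Gamma_1$ via \cite{GoHeHo}, and then argues purely with covering degrees: because a smooth cusp of $O$ must map to the second $(2,3,6)$-cusp $c_2$ of $O_1=\mathbb{H}^3/\Gamma_1$, the degree $O\to O_1$ is divisible by $3$; combined with Hoffman's formulae (Theorems \ref{Hoff_333} and \ref{Hoff_236}) one gets $72\mid[\Gamma_L:\Gamma_1]$, whence $\operatorname{vol}(O_1)\le 24v_0/72=v_0/3$. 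Adams' classification then forces $O_1$ to be the unique two-$(2,3,6)$-cusped orbifold of volume $v_0/3$, and Lemma \ref{coveradams} (via the cusp-killing homomorphism) rules out $O$ covering it. The volume bound and the second $(2,3,6)$-cusp are used only through these divisibility and index computations; no explicit horoball-packing geometry near $c''$ is needed.
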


To prove the above proposition, we need to use the tools employed in proving Theorem \ref{hexlatt} along with consequences of cusp killing homomorphism \cite{Hoff_smallknot}, Adams' classification of low volume multi-cusped orbifolds \cite{Adams_multi} and discreteness of symmetry groups of the horoball packings of $\mathbb{H}^3$ related to the maximal cusps from Goodman-Heard-Hodgson \cite{GoHeHo}.

In Section \ref{hsalgorithm}, we use Corollary \ref{neardistinctrot} from Section \ref{hscp} to build an algorithm which tests when the $K_1$-circle packing of $\mathbb{C}$ cannot have two distinct order $3$ rotational symmetries neither fixing the centers of other $K_j$-horoballs and satisfying the distance bounds given in that corollary. In particular, in Proposition \ref{codeprop}, we show that if Dehn filling all but $K_1$-cusp of a hyperbolic link $L$ produces an infinitely family of knot complements with hidden symmetries which geometrically converge to $\mathbb{S}^3-L$, then given the center $\mathbf{c}_0$ of a circle in the $K_1$-circle packing, there is a finite set $\mathcal{E}_{strong}$ in $\mathbb{C}$ where we would see the centers of two distinct order $3$ rotations satisfying the distance bounds in Corollary \ref{neardistinctrot}. The code that we write chooses such a $\mathbf{c}_0$ and rules out cases when the corresponding $\mathcal{E}_{strong}$ can't have two such distinct order $3$ rotations which do not fix the centers of the other $K_j$-horoballs. 

In Section \ref{tlcom}, we discuss the tetrahedral manifolds from the census of \cite{FGGTV} and prove Theorem \ref{tetracompthm}. In order to prove Theorem \ref{tetracompthm}, we first run the code from Section \ref{hsalgorithm} on SnapPy \cite{snappy} for the $882$ tetrahedral homology link complements (with two or more cusps) in the census \cite{FGGTV}. This gives us a set $\mathcal{E}$ consisting of $86$ exceptional pairs $(M,i)$, where $M$ is one such tetrahedral homology link complement and $i$ one of its cusps, such that for $M$, our code does not guarantee non-existence of $W_{(3,3,3)}$ wallpaper group of symmetries for the $i$-cusp circle packing of $\mathbb{C}$ satisfying the properties given in Theorem \ref{hexlatt}. $\mathcal{E}$ can be split into four mutually disjoint proper subsets $\mathcal{E}_1, \mathcal{E}_2, \mathcal{E}_3$ and $\mathcal{E}_4$. We argue that even for each such $86$ pairs $(M,i)$, Dehn filling on all cusps but cusp $i$ of $M$ does not produce an infinite family of hyperbolic knot complements with hidden symmetries which geometrically converge to $M$. 

For the pairs in $\mathcal{E}_3$, $i$-cusp maximal horoball packing of $\mathbb{H}^3$ does not have the required $W_{(3,3,3)}$ wallpaper group of symmetries even though the corresponding circle packing of the complex plane does. On the other hand, for each pair in $\mathcal{E}_2$, the $i$-cusp maximal horoball packing has a symmetry of order $6$ joining the $i$-cusp and a different cusp. So, this case too is taken care of by Corollary \ref{badorder6sym}. 

For the pairs $(M,i)$ in $\mathcal{E}_1$ and $\mathcal{E}_4$, we use the existence of ``good" covers from $M$ onto the orbifold $O_{(2,3,6), (2,2,2,2)}$. In particular, using utilities \texttt{SigToSeq.py} and \texttt{TestForCovers.py} of \cite{orbcenpract} we argue that there is covering map from $M$ onto $O_{(2,3,6), (2,2,2,2)}$ such that the only cusp of $M$ that maps to the $(2,3,6)$ cusp of $O_{(2,3,6), (2,2,2,2)}$ via this cover is (symmetric to) $i$. We then use Proposition \ref{coversmallestmulticusp} to complete the argument for these two remaining cases as well.

In Section \ref{cycliccovers}, we prove Theorem \ref{Bergecyclic} and Theorem \ref{622cyclicresult}. After careful observation of the horoball packings of $\mathbb{H}^3$ that map to the concerned maximal cusp neighborhoods associated with the $B^{0,n}$ complement, we can use Theorem \ref{hexlatt} to prove Theorem \ref{Bergecyclic}. However, Theorem \ref{hexlatt} alone can't take care of the general $(6^2_2)^{0,n}$ case as the concerned horoball packings of $\mathbb{H}^3$ corresponding to the maximal cusp neighborhoods of the $(6^2_2)^{0,n}$ complement will have $W_{(3,3,3)}$ wallpaper group of symmetries that Theorem \ref{hexlatt} prescribes. Nevertheless, the analogous result for $(6^2_2)^{0,n}$ holds as well. 

In addition to Theorem \ref{hexlatt}, proof of Theorem \ref{622cyclicresult} uses additional observations on the concerned horoball packings of $\mathbb{H}^3$ associated with the $(6^2_2)^{0,n}$ complement and the discreteness of their symmetry groups following Goodman, Heard and Hodgson \cite[Lemma 2.1 and (Proof of) Lemma 2.2]{GoHeHo}. Moreover, we use applications of the cusp killing homomorphism from \cite{Hoff_smallknot} and the classification of the small volume multi-cusped orbifolds given by Adams \cite{Adams_multi}. We would also need to use the utilities \texttt{TestForCovers.py} and  \texttt{SigToSeq.py} from \cite{orbcenpract}.

\subsection*{Terminology and convention}

Throughout this paper, we will use to term \textit{hyperbolic $3$-orbifold} to mean a smooth orientable $3$-orbifold orbifold-diffeomorphic to $\mathbb{H}^3/\Gamma$ where $\Gamma$ is a finite co-volume Kleinian group (i.e. a discrete subgroup of $\operatorname{PSL}(2,\mathbb{C})$ whose fundamental domains in $\mathbb{H}^3$ have finite volume) such that $\infty$ is a parabolic fixed point of $\Gamma$. 

\subsection*{A note on the computations} 

The codes and data associated with this article can be accessed from the GitHub repository \cite{tetra_code}. The codes are written in Python. The codes that need to use SnapPy \cite{snappy} import SnapPy as a Python module\footnote{See \url{https://snappy.computop.org/installing.html}} and can be run as normal python files. On the other hand, the codes that use Regina \cite{regina} needs to be run in \texttt{regina-python}\footnote{See \url{https://regina-normal.github.io/docs/man-regina-python.html}}. The other modules, files and data that the codes import are detailed when they are discussed in the article. 

\subsection*{Acknowledgement}
This article is an extension of parts of author's PhD thesis. The author expresses his gratitude to Jason DeBlois for many valuable suggestions. The author would also like to thank Neil Hoffman for communicating Theorem \ref{2comp_sym} in a SQuaRE program of American Institute of Mathematics (AIM). The author conveys his thanks to AIM for hosting the said SQuaRE program, to NSF for their support for the AIM SQuaREs and to his fellow SQuaRE team members Eric Chesebro, Michelle Chu, Jason DeBlois, Neil Hoffman and Genevieve Walsh for many helpful discussions. The author also thanks Hongbin Sun for many helpful suggestions.

\newcommand\censuspaperlinks{If $L$ is one of the $25$ tetrahedral links with more than one component that are explicitly listed in the Fominykh-Garoufalidis-Goerner-Tarkaev-Vesnin paper \cite{FGGTV}, then for any cusp $c$ of $L$, a family of hyperbolic knot complements obtained by Dehn filling all cusps of $L$ but $c$ and geometrically converging to $\mathbb{S}^3-L$ cannot have infinitely many members with hidden symmetries.}

\section{Cusps, Dehn filling and Geometric Convergence}\label{cuspsection}
Let $\Gamma$ be a finite co-volume Kleinian group. Consider the finite volume hyperbolic $3$-orbifold $O=\mathbb{H}^3/\Gamma$. We know that the non-compact ends of $O$ referred to as the \textit{cusp ends} of $O$ (or simply the \textit{cusps} of $O$) can be written as $B/W$ where $B$ is a horoball in $\mathbb{H}^3$ and $W$ is an oriented wallpaper group such that $W$ is the stabilizer of the horocenter of $B$ in $\Gamma$ and $B/W$ embeds into $O$ (see \cite[Corollary 2.2]{DunMeyer}). Now, $\partial B$ inherits a Euclidean metric from the hyperbolic metric on $\mathbb{H}^3$ and the wallpaper group $W$ acts as (oriented) Euclidean isometries on $\partial B$. This makes $\partial B/W$ an (oriented) Euclidean $2$-orbifold (see \cite[Section 13.3]{Thurs} for a discussion on $2$-orbifolds). We refer to $\partial B/W$ as the \textit{cusp cross-section} of the cusp $B/W$. There are five (isomorphic classes) of oriented Euclidean $2$-orbifolds - $\mathbb{T}^2$ (aka torus), $\mathbb{S}^2(2,2,2,2)$ (aka \textit{pillowcase}), $\mathbb{S}^2(3,3,3)$, $\mathbb{S}^2(2,3,6)$ and $\mathbb{S}^2(2,4,4)$. We refer to the torus and pillowcase cusps as the \textit{smooth cusps} and $\mathbb{S}^2(3,3,3)$, $\mathbb{S}^2(2,3,6)$, $\mathbb{S}^2(2,4,4)$ as the \textit{rigid cusps}. We make the following definition. 

\begin{defn}
Two cusps $c$ and $c'$ of a hyperbolic $3$-orbifold are said to be \textit{symmetric} if there is a self-isometry of $O$ sending them to each other.

\end{defn}

\subsection{Review of the wallpaper groups}
We will now recall a couple of crucial facts about wallpaper groups from \cite{Sch} - (i). the \textit{translational subgroup} of a wallpaper group (i.e. the subgroup solely containing all the translational isometries) is generated by two linearly independent translations and (ii). two wallpaper groups are \textit{equivalent} iff there is an isomorphism from one to the other mapping the translational subgroup of the former onto that of the later. 
We will denote the wallpaper groups corresponding to the rigid cusps $\mathbb{S}^2(3,3,3)$ and $\mathbb{S}^2(2,3,6)$ as respectively $W_{(3,3,3)}$ and $W_{(2,3,6)}$. We will use $\Lambda_{(3,3,3)}$ (respectively, $\Lambda_{(2,3,6)}$) to denote the translational subgroup of $W_{(3,3,3)}$ (respecitively, $W_{(2,3,6)}$). We now describe $W_{(3,3,3)}$ and $W_{(2,3,6)}$  up-to equivalency. Let $r_{n,\mathbf{p}}$ denote a counter-clockwise rotation of angle $\frac{2\pi}{n}$ around the point $\mathbf{p}$ in $\mathbb{C}$. Then, $W_{(2,3,6)}$  is equivalent to the group $W^{s}_{(2,3,6)}$ generated by the order $6$ rotation $r_{6,(0,0)}$ and the order $3$ rotation $r_{3,(\frac{1}{2},\frac{1}{2\sqrt{3}})}$. On the other hand, $W_{(3,3,3)}$  is equivalent to the group $W^{s}_{(3,3,3)}$ generated by the order $3$ rotations $r_{3,(0,0)}$ and $r_{3,(\frac{1}{2},\frac{1}{2\sqrt{3}})}$. Our choice of generators here is similar to that in Hoffman's treatment in \cite[Section 2]{Hoff_cusp} for $\mathbb{S}^2(2,3,6)$ and $\mathbb{S}^2(2,4,4)$ (the generating rotations there are clockwise though).

For $\mathbf{p} \in \mathbb{C}$, let $t_{\mathbf{p}}$ denote translational isometry sending $\mathbf{x}$ to $\mathbf{p}+\mathbf{x}$ for $\mathbf{x} \in \mathbb{C}$. Note that $t_{(1,0)}=r_{3,(\frac{1}{2},\frac{1}{2\sqrt{3}})} \circ r^{-1}_{3,(0,0)}$ and $t_{(\frac{1}{2},\frac{\sqrt{3}}{2})}=r^{-1}_{3,(\frac{1}{2},\frac{1}{2\sqrt{3}})} \circ r_{3,(0,0)}$. The group generated by $t_{(1,0)}$ and $t_{(\frac{1}{2},\frac{\sqrt{3}}{2})}$ is the translational subgroup for both $W^{s}_{(2,3,6)}$ and $W^{s}_{(3,3,3)}$.
Using the fact that $r_{3,(0,0)}=r^2_{6,(0,0)}$, from now on, we will identify $W_{(3,3,3)}$ as an index $2$ subgroup of $W_{(2,3,6)}$ such that $\Lambda_{(3,3,3)}=\Lambda_{(2,3,6)}$. We note here that $\Lambda_{(3,3,3)}$ (which is $\Lambda_{(2,3,6)}$) has index $3$ in $W_{(3,3,3)}$ and index $6$ in $W_{(2,3,6)}$. 

Let us implicitly fix an equivalent map (of wallpaper groups) from $W^{s}_{(3,3,3)}$to $W_{(3,3,3)}$ and denote the image of $w \in W^{s}_{(3,3,3)}$ in $W_{(3,3,3)}$ as $\widetilde{w}$. Figure \ref{W333fd} shows the fundamental domains $D_{\Lambda_{(3,3,3)}}$ and $D_{W_{(3,3,3)}}$ of $\Lambda_{(3,3,3)}$ and $W_{(3,3,3)}$ respectively. Both $D_{\Lambda_{(3,3,3)}}$ and $D_{W_{(3,3,3)}}$ are hexagonal rhombuses with sets of vertices $\{\mathbf{v}_1, \mathbf{v}_2, \mathbf{v}_3, \mathbf{v}_4\}$ and $\{\mathbf{v}_2, \mathbf{v}_6, \mathbf{v}_4, \mathbf{v}_5\}$ respectively. We should note that $r_{3,\mathbf{v}_i} \in W_{(3,3,3)}$ for each $i =1,\dots, 6$. In particular, we can write 
$r_{3,\mathbf{v}_1}=\widetilde{r_{3,(0,0)}},r_{3,\mathbf{v}_5}=\widetilde{r_{3,(\frac{1}{2},\frac{1}{2\sqrt{3}})}}$.

\begin{figure}
\centering 
\captionsetup{justification=centering}
\begin{tikzpicture}[scale=3]
\draw (0,0)--(.5,.867);
\draw (0,0)--(1,0);
\draw (.5,.867)--(1.5,.867);
\draw (1,0)--(1.5,.867);
\draw (.5,.29)--(.5,.867);
\draw (.5,.29)--(1,0);
\draw (1,0)--(1,.578);
\draw (1,.578)--(.5,.867);
\node at (-.1,-.1){$\mathbf{v}_1$};
\node at (1.1,-.1){$\mathbf{v}_2$};
\node at (1.6,.877){$\mathbf{v}_3$};
\node at (.4,.877){$\mathbf{v}_4$};
\node at (.4,.23){$\mathbf{v}_5$};
\node at (1.1,.588){$\mathbf{v}_6$};
\end{tikzpicture}

\caption{A fundamental domain of $W_{(3,3,3)}$ contained in a fundamental domain of $\Lambda_{(3,3,3)}$.}
\label{W333fd}
\end{figure}
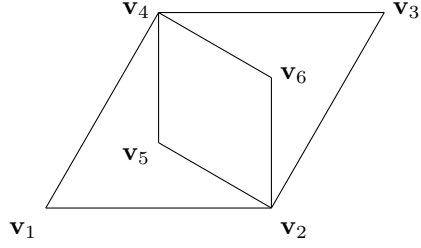
\subsection{Cusp moduli and cusp field}
Given an ordered pair $(\mathbf{m}, \mathbf{l})$ of linearly independent vectors in $\mathbb{R}^2$, we would define $\operatorname{sign}(\mathbf{m}, \mathbf{l})$ as the sign of the determinant of the $ 2 \times 2$ matrix $\begin{pmatrix}\mathbf{m} & \mathbf{l} \end{pmatrix}$. Let $c$ be a torus cusp of a hyperbolic $3$-orbifold $O$. Then $\partial c\cong \partial B/W$ where $\partial B$ is a horosphere and $W=\left \langle t_{\mathbf{m}}, t_{\mathbf{l}} \right \rangle$ is a lattice. Then the complex number $\left(\frac{\mathbf{l}}{\mathbf{m}}\right)^{\operatorname{sign}(\mathbf{m}, \mathbf{l})}$ is said to be a \textit{cusp moduli} of $c$. Note that a cusp moduli always belongs to the upper-half plane. If  $\operatorname{sign}(\mathbf{m}, \mathbf{l})$ is positive, we say $(\mathbf{m}, \mathbf{l})$ is an \textit{oriented pair} of generators for $c$. 

Now, given a cusp $c$ of a hyperbolic $3$-manifold $M$, the \textit{cusp field associated to cusp $c$} of $M$ is defined to be $\mathbb{Q}(\tau_c)$ where $\tau_c$ is a cusp moduli of cusp $c$. Note that this definition is independent of the choice of the cusp moduli. The cusp field of a hyperbolic $3$-manifold $M$ is defined to be the smallest field containing all the cusp fields associated to all the cusps of $M$. 

\subsection{Dehn filling and geometric convergence}
We will now define what \textit{Dehn filling} on a smooth cusp means (see \cite[Section 4]{DunMeyer}, \cite[Section 2.5]{BMP}, \cite[Section 2.1]{CDHMMWIMRN}). We first define some notations and terminologies. Let $m$ denotes a curve on $\mathbb{S}^1\times \mathbb{S}^1=\partial (\mathbb{S}^1 \times \mathbb{D}^2)$ which bounds a disk in $\mathbb{S}^1 \times \mathbb{D}^2$ and $r_{d}$ denotes the order $d$ rotation of $\mathbb{S}^1 \times \mathbb{D}^2$ around the core curve $\mathbb{S}^1 \times \{0\}$ where $d$ is a positive integer. Denote the quotient of $\mathbb{S}^1 \times \mathbb{D}^2$ by the action of $r_{d}$ by $(\mathbb{S}^1 \times \mathbb{D}^2)_d$. Topologically, $(\mathbb{S}^1 \times \mathbb{D}^2)_d$ is a solid torus but it has an orbifold structure and it contains cone points of order $d$ (when $d$ is $1$, the cone points are not really cone points and orbifold structure is the trivial manifold structure). Let $m_d$ denotes the image of $m$ in $(\mathbb{S}^1 \times \mathbb{D}^2)_d$. 

Now, $(\mathbb{S}^1 \times \mathbb{D}^2)_d$ has an order $2$ involution $i_2$ which fixes four points in $\partial\left((\mathbb{S}^1 \times \mathbb{D}^2)_d\right)-m_d$. We refer to the quotient of $(\mathbb{S}^1 \times \mathbb{D}^2)_d$ by $i_2$ as an order $d$ \textit{orbi-pillow} and denote it as $P_d$. Note that the boundary $\partial P_d$ of the orbifold $P_d$ is the pillowcase orbifold $\mathbb{S}^2(2,2,2,2)$. Denote the image of $m_d$ in $P_d$ as $m_d^{pillow}$. 

We first define Dehn filling on a torus cusp. Let $c$ be a torus cusp of an orbifold $O$. Choose a generating pair $(m,l)$ of $H_1(\partial c)$. Let $(p,q)$ be a pair of integers and $d=gcd(p,q)$. Let $h$ be a homeomorphism from 
$\partial ((\mathbb{S}^1 \times \mathbb{D}^2)_d)$, which is topologically a torus, onto $\partial c$ so that $h(m_d)=\frac{p}{d}m +\frac{q}{d} l$. Then we say that the orbifold $\left(O-int(c)\right)\cup_{h} (\mathbb{S}^1 \times \mathbb{D}^2)_d$ is \textit{obtained by $(p,q)$-Dehn filling on cusp $c$ of $O$}. See Figure 1. (a) of \cite{CDHMMWIMRN}. 

Now, let $c$ be a pillowcase cusp of an orbifold $O$. Then there is a quotient map $\phi: \mathbb{S}^1\times \mathbb{S}^1\to \partial c$. Choose generators $m$ and $l$ of $H_1(\mathbb{S}^1\times \mathbb{S}^1)$ so that $\phi(\frac{p}{d}m +\frac{q}{d} l)$ does not contain any cone points. Denote $\phi(\frac{p}{d}m +\frac{q}{d} l)$ by $\gamma$. Let $h$ be a homeomorphism from $\partial P_d$ onto $\partial c$ such that $h(m_d^{pillow})=\gamma$. Then we say that the orbifold $\left(O-int(c)\right)\cup_{h} P_d$ is \textit{obtained by $(p,q)$-Dehn filling on cusp $c$ of $O$}. See Figure 1. (b) of \cite{CDHMMWIMRN}. 

In this paper, we are interested in the Dehn filling on the cusps of hyperbolic link complements. Suppose $L=K_1 \cup \ldots \cup K_n$ is an $n$-component hyperbolic link. Then, for each $j\in \{1,2,\dots,n\}$, we will refer to the cusp of $\mathbb{S}^3-L$ corresponding to the $K_j$-component as the \textit{$K_j$-cusp}. Given some $j\in \{1,2,\dots,n\}$, if we don't fill the $K_j$-cusp of $\mathbb{S}^3-L$, (using the standard convention) we will say that the filling coefficient at the $K_j$-cusp is $\infty$.  We recall from Thurston's Dehn surgery theorem \cite[Theorem 5.8.2]{Thurs} that there is a compact set $C$ in $(\mathbb{S}^1)^{n}$ (which depends on the hyperbolic link $L$) such that whenever $(p_1,q_1, p_2,q_2, \dots, p_n,q_n)$ is outside $C$, the manifold obtained by $(p_j,q_j)$-Dehn filling the $K_j$-cusp for each $j \in \{1, \dots, n\}$ is hyperbolic. We also recall that \cite[Theorem 5.3]{DunMeyer} shows that the same result holds in general for the orbifolds obtained by Dehn filling the smooth cusps of a hyperbolic $3$-orbifold. 
\begin{defn}
Let $L=K_1 \cup \ldots \cup K_n$ be an $n$-component hyperbolic link where $n$ is greater than $1$. Let $\mathcal{F}=\left \{\mathbb{S}^3-K'_{\infty, (p^i_2, q^i_2), \dots, (p^i_n, q^i_n)}\right \}_{i\in \mathbb{N}}$ be a family of hyperbolic knot complements such that for each $i$, the knot complement $\mathbb{S}^3-K'_{\infty, (p^i_2, q^i_2), \dots, (p^i_n, q^i_n)}$ is obtained by $(p^i_j, q^i_j)$-Dehn filling the $K_j$-component of $L$ for each $j$ in $\left \{2, \dots, n\right \}$. If for each $j$ in $\left \{2, \dots, n\right \}$, $(p^i_j, q^i_j) \to \infty$ as $i \to \infty$, then we say that the family $\mathcal{F}$ geometrically converges to $\mathbb{S}^3-L$. If $\mathcal{F}$ geometrically converges to $\mathbb{S}^3-L$, we also say $\left \{K'_{\infty, (p^i_2, q^i_2), \dots, (p^i_n, q^i_n)}\right \}_{i\in \mathbb{N}}$ geometrically converges to $L$. 
\end{defn}
We remark that geometric convergence in general is a much broader concept. See \cite[Chapter 4]{Marden_hm} for details. We end this section with the following fact. 
\begin{fact}\label{symmfact}
Let $c$ and $c'$ be two symmetric cusps ($c$ and $c'$ can be same) of a hyperbolic link complement $\mathbb{S}^3-L$, i.e. there is a self-isometry of $\mathbb{S}^3-L$ exchanging the cusps. Then for each family $\mathcal{F}$ of hyperbolic knot complements obtained by Dehn filling all cusps of $\mathbb{S}^3-L$ but $c$ and geometrically converging to $\mathbb{S}^3-L$, there exists a family $\mathcal{F}'$ of hyperbolic knot complements obtained by Dehn filling all cusps of $\mathbb{S}^3-L$ but $c'$ and geometrically converging to $\mathbb{S}^3-L$ such that each member of $\mathcal{F}$ is isometric to a member of $\mathcal{F}'$ and vice versa.
\end{fact}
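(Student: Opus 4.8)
The plan is to use the self-isometry exchanging the two cusps to transport the filling data of $\mathcal{F}$ to a new family $\mathcal{F}'$, and then to invoke Mostow--Prasad rigidity to promote the resulting homeomorphisms to isometries. First I would fix a self-isometry $\psi \co \mathbb{S}^3 - L \to \mathbb{S}^3 - L$ with $\psi(c) = c'$. Since $\psi$ permutes the cusps of $\mathbb{S}^3 - L$, it induces a permutation $\sigma$ of the components carrying the $c$-cusp to the $c'$-cusp, and for each $j$ it restricts to a homeomorphism $\partial c_j \to \partial c_{\sigma(j)}$. Fixing meridian-longitude bases on all cusps, this restriction induces an isomorphism $\psi_* \co H_1(\partial c_j) \to H_1(\partial c_{\sigma(j)})$ recorded by a fixed matrix $A_j \in \operatorname{GL}_2(\mathbb{Z})$.

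Given a member $N = \mathbb{S}^3 - K'_{\infty, (p_2, q_2), \dots, (p_n, q_n)}$ of $\mathcal{F}$, obtained by filling every cusp except $c$, I would define the corresponding member $N'$ of $\mathcal{F}'$ by filling, for each filled cusp $c_j$ of $N$, the cusp $c_{\sigma(j)}$ along the pushforward slope $\psi_*(p_j m_j + q_j l_j)$. Because $\psi(c) = c'$, the only unfilled cusp of $N'$ is the $c'$-cusp, so $N'$ is indeed a filling of $\mathbb{S}^3 - L$ on all cusps but $c'$. Since the members are knot complements, each filling is a genuine solid-torus filling, and the homeomorphism $\psi$ carries the filling slopes of $N$ onto those of $N'$; hence $\psi$ extends to a homeomorphism $\bar\psi \co N \to N'$. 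As $N$ is a finite-volume hyperbolic manifold, $N'$ is homeomorphic to it and therefore hyperbolic, and Mostow--Prasad rigidity upgrades $\bar\psi$ to an isometry $N \cong N'$. Declaring $\mathcal{F}'$ to be the collection $\{N'_i\}$ built in this way from the members $\{N_i\}$ of $\mathcal{F}$ yields a bijection under which each member of $\mathcal{F}$ is isometric to a member of $\mathcal{F}'$ and vice versa.

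It then remains to verify that $\mathcal{F}'$ geometrically converges to $\mathbb{S}^3 - L$. Writing the slope on $c_j$ as the coefficient vector $(p_j, q_j)$ in the chosen basis, the pushforward slope on $c_{\sigma(j)}$ is the image of $(p_j, q_j)$ under the fixed matrix $A_j \in \operatorname{GL}_2(\mathbb{Z})$, which does not depend on the index $i$ of the family. A fixed invertible linear map is proper, so $(p^i_j, q^i_j) \to \infty$ as $i \to \infty$ forces the transported slopes to tend to $\infty$ as well, for each $j \in \{2, \dots, n\}$. Thus every cusp of $\mathbb{S}^3 - L$ other than $c'$ is filled along slopes going to $\infty$, which is precisely the defining condition for geometric convergence of $\mathcal{F}'$ to $\mathbb{S}^3 - L$.

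The argument is largely bookkeeping: the conceptual heart is the observation that an isometry of the cusped manifold matching the filling data produces homeomorphic, hence (by rigidity) isometric, fillings. The only point requiring genuine care is the final paragraph, where one must confirm that transporting the filling slopes through the fixed integral automorphisms $A_j$ preserves the condition ``$\to \infty$''; beyond this I expect no real obstacle once the steps are stated carefully.
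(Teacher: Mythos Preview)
The paper states this result as a \emph{Fact} without proof, so there is no argument in the paper to compare against; your proof is correct and is precisely the standard justification the author is implicitly invoking. The only minor addition worth making explicit is that since each $N'_i$ is homeomorphic to the knot complement $N_i$, it is itself a hyperbolic knot complement, so $\mathcal{F}'$ really is a family of the required type.
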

\section{Hidden symmetries and covering maps to orbifolds}\label{hidsymsection}
In this section we recall some basic facts and important results on hidden symmetries. We refer the readers to \cite{NeRe} for more details on hidden symmetries. Let $M$ be a finite volume hyperbolic $3$-manifold. Denote its fundamental group by $\Gamma$. Suppose $g$ is an isometry between two finite-index covers of $M$. Then, $g$ is said to be a \textit{hidden symmetry} of $M$ if it is not a lift of any self-isometry of $M$. It should be noted that isometries between finite covers of $M$ can be realized as elements of the \textit{commensurator} of $\Gamma$ in $\operatorname{PSL}(2,\mathbb{C})$
$$\operatorname{Comm}(\Gamma)=\{g \in \operatorname{PSL}(2,\mathbb{C}): [\Gamma:\Gamma \cap g \Gamma g^{-1}]<\infty,  [g\Gamma g^{-1}:\Gamma \cap g \Gamma g^{-1}]<\infty\}.$$

On the other hand, if such an element $g \in \operatorname{Comm}(\Gamma)$ is a lift of a self-isometry of $M$, then $g \in \operatorname{N}(\Gamma)$. So, the existence of hidden symmetries of $M$ is equivalent to the existence of more than one cosets of $\operatorname{N}(\Gamma)$ in $\operatorname{Comm}(\Gamma)$. It is well known (see \cite[Corollary 8.4.5]{MacRe}) that for an arithmetic Kleinian group $\Gamma$, $\operatorname{Comm}(\Gamma)$ is dense in $\operatorname{PSL}(2,\mathbb{C})$ (Margulis \cite{Mar} showed that the converse is true as well). This guarantees the existence of hidden symmetries in arithmetic hyperbolic $3$-manifolds. In particular, the figure-eight knot complement, which is the only arithmetic hyperbolic knot complement by Reid \cite[Theorem 2]{Reid_arith}, has hidden symmetries.

The only non-arithmetic hyperbolic knot complements that are known to admit hidden symmetries are the two dodecahedral knots of Aitchison and Rubinstein \cite{AiRu}. The existence of hidden symmetries in these two knot complements is argued in Neumann and Reid \cite{NeRe} (after the proof of Proposition 9.1 on p. 307) using the following fundamental characterization from their paper. 
\begin{proposition}[Neumann-Reid, \cite{NeRe}, Proposition 9.1] \label{hscharac}
A non-arithmetic hyperbolic knot complement has hidden symmetries if and only if it non-normally covers an orbifold with a rigid cusp. 
\end{proposition}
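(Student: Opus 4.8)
The plan is to pass to the commensurator and then localize the whole problem at the single cusp. Write $M=\mathbb{H}^3/\Gamma$. Since $M$ is non-arithmetic, Margulis' theorem makes $\operatorname{Comm}(\Gamma)$ discrete, so $O_{\mathrm{min}}=\mathbb{H}^3/\operatorname{Comm}(\Gamma)$ is a finite-volume orbifold, minimal in its commensurability class, which every manifold commensurable with $M$ covers. As recalled in the excerpt, $M$ has hidden symmetries precisely when $\operatorname{N}(\Gamma)\subsetneq\operatorname{Comm}(\Gamma)$, i.e. precisely when the cover $M\to O_{\mathrm{min}}$ is non-normal. Because $M$ is a knot complement it has a single (torus) cusp, whose image is the unique cusp of $O_{\mathrm{min}}$; I fix a parabolic fixed point $\infty$ of this cusp and let $\Gamma_\infty,\,N_\infty,\,C_\infty$ denote the stabilizers of $\infty$ in $\Gamma,\operatorname{N}(\Gamma),\operatorname{Comm}(\Gamma)$, each a wallpaper group acting on the horosphere at $\infty$.

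First I would establish the \emph{localization identity} $[\operatorname{Comm}(\Gamma):\operatorname{N}(\Gamma)]=[C_\infty:N_\infty]$. The key point is that commensurable Kleinian groups share the same set of parabolic fixed points (a suitable power of any parabolic of $\Gamma$ lies in any finite-index subgroup), and $M$ has a single cusp, so all parabolic points of $\Gamma$ form one $\Gamma$-orbit. Hence for each $g\in\operatorname{Comm}(\Gamma)$ the point $g(\infty)$ is $\Gamma$-equivalent to $\infty$, producing $\gamma\in\Gamma$ with $\gamma g\in C_\infty$; thus $\operatorname{Comm}(\Gamma)=\Gamma\,C_\infty$, and the same argument gives $\operatorname{N}(\Gamma)=\Gamma\,N_\infty$. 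The coset bijection $\Gamma\backslash\Gamma C_\infty\cong\Gamma_\infty\backslash C_\infty$ then yields $[\operatorname{Comm}(\Gamma):\Gamma]=[C_\infty:\Gamma_\infty]$ and $[\operatorname{N}(\Gamma):\Gamma]=[N_\infty:\Gamma_\infty]$, so that $[\operatorname{Comm}(\Gamma):\operatorname{N}(\Gamma)]=[C_\infty:N_\infty]$. Consequently $M$ has hidden symmetries \emph{if and only if} $C_\infty\supsetneq N_\infty$, and the entire question is reduced to the cusp stabilizer of the minimal orbifold.

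Next comes the knot-theoretic input. An orientation-preserving self-isometry of $M$ preserves its unique cusp and acts on the peripheral torus fixing the meridian slope up to sign, since the meridian generates $H_1(M)\cong\mathbb{Z}$ while the longitude is canonical (null-homologous); because $\operatorname{N}(\Gamma)/\Gamma=\operatorname{Isom}^+(M)$, this forces the rotational part of $N_\infty$ to be $\pm\mathrm{id}$, so the cusp of $\mathbb{H}^3/\operatorname{N}(\Gamma)$ is smooth. With this, the $(\Leftarrow)$ direction is immediate: if $M$ covers an orbifold $O'$ with a rigid cusp, then the stabilizer of that cusp in $\Gamma_{O'}\subseteq\operatorname{Comm}(\Gamma)$ contains a rotation $\rho$ of order $3$, $4$ or $6$ about the cusp axis. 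Such a $\rho$ cannot lie in $\operatorname{N}(\Gamma)$, for it would descend to a self-isometry of $M$ rotating the cusp torus by an order $\ge 3$ rotation, contradicting meridian-preservation; hence $C_\infty\supsetneq N_\infty$ and $M$ has hidden symmetries. (The same remark shows a knot complement can never \emph{normally} cover a rigid-cusped orbifold, so the qualifier ``non-normally'' is automatic here.)

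The remaining direction $(\Rightarrow)$ is the genuinely delicate one and is where I expect the main obstacle. Assuming hidden symmetries I have $C_\infty\supsetneq N_\infty$, and I must show the unique cusp of $O_{\mathrm{min}}$, with stabilizer $C_\infty$, is rigid; equivalently I must rule out that $C_\infty$ is itself smooth (torus or pillowcase) while strictly containing $N_\infty$. The mechanism I would use is that any element of $C_\infty$ whose rotational part has order at most $2$ preserves the meridian slope (and the canonical longitude) up to sign, hence is compatible with the meridian Dehn filling of $M$ that recovers $\mathbb{S}^3$; such a commensurator element should be realized by a genuine symmetry of the pair $(\mathbb{S}^3,K)$, and therefore by a self-isometry of $M$, so it already lies in $N_\infty$. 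A smooth $C_\infty$ would then collapse to $N_\infty$, contradicting the existence of hidden symmetries, and so $C_\infty$ must contain a rotation of order $3$, $4$ or $6$ — the cusp of $O_{\mathrm{min}}$ is rigid and $M$ non-normally covers it. Making precise the step that every meridian-compatible cusp symmetry in the commensurator is realized by an honest symmetry of the knot complement — controlling the peripheral data under the commensurator and invoking that a hyperbolic knot complement determines its knot — is exactly the point at which the careful cusp analysis of Neumann and Reid is needed.
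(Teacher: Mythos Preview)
The paper does not prove this proposition—it is quoted from Neumann--Reid without proof—so there is no in-paper argument to compare against. Your localization $[\operatorname{Comm}(\Gamma):\operatorname{N}(\Gamma)]=[C_\infty:N_\infty]$ is correct, as is the claim that $N_\infty$ has rotational part in $\{\pm\mathrm{id}\}$ (the essential point being that any self-isometry of $M$ must preserve the null-homologous slope on $\partial M$, while a rotation of order $\ge 3$ on the cusp torus preserves no rank-one sublattice); your $(\Leftarrow)$ direction then goes through.

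The gap you flag in $(\Rightarrow)$ is genuine, and your proposed mechanism does not close it. You want every $g\in C_\infty$ with rotational part $\pm 1$ to lie in $N_\infty$, and you argue that such a $g$ ``should be realized by a symmetry of $(\mathbb{S}^3,K)$''. But $g$ induces only an isometry $\mathbb{H}^3/\Gamma\to\mathbb{H}^3/g\Gamma g^{-1}$ between two \emph{a priori different} quotients of $\mathbb{H}^3$; it is not a self-map of $M$, and turning it into one is exactly the statement $g\Gamma g^{-1}=\Gamma$ you are trying to prove. What your hypothesis actually yields is that $g$ normalizes the \emph{peripheral} subgroup $\Gamma_\infty$; the passage from ``$g$ normalizes $\Gamma_\infty$'' to ``$g$ normalizes $\Gamma$'' is the substantive step, uses the special features of knot groups (normal generation by the meridian, $H_1\cong\mathbb{Z}$) in an essential way, and is where the real content of Neumann--Reid's argument lies. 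As written, your $(\Rightarrow)$ is a correct identification of the target, not a proof of it.
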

We remark that the figure-eight knot complement covers the rigid cusped orbifold $\mathbb{H}^3/\operatorname{PGL}(2,O_3)$ which has a $(2,3,6)$-cusp. The fact below which is an immediate consequence of the proposition above is very important in detecting probable candidates for hyperbolic knot complements with hidden symmetries. 
\begin{fact}[Corollary 2.2, \cite{ReWa}] \label{cusp3i}
If a hyperbolic knot complement has hidden symmetries, then its cusp field is $Q(i)$ or $Q(i\sqrt{3})$. 
\end{fact}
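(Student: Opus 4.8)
The plan is to reduce, in every case, to understanding which Euclidean lattice underlies the cusp cross-section, and to split according to whether the knot complement $M=\mathbb{S}^3-K$ is arithmetic. The single cusp of $M$ is a torus, with peripheral subgroup $\Lambda\cong\mathbb{Z}^2$ and a cusp modulus $\tau_c=(\mathbf{l}/\mathbf{m})^{\pm1}$ generating the cusp field $\mathbb{Q}(\tau_c)$; I want to show $\Lambda$ sits inside a hexagonal or a square lattice, which forces $\mathbb{Q}(\tau_c)$ to be $\mathbb{Q}(\sqrt{-3})$ or $\mathbb{Q}(i)$.

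If $M$ is arithmetic, then by Reid \cite[Theorem 2]{Reid_arith} it is the figure-eight knot complement, which covers $\mathbb{H}^3/\operatorname{PGL}(2,O_3)$ with its $(2,3,6)$-cusp, so the cusp analysis below applies to it as well and yields cusp field $\mathbb{Q}(\sqrt{-3})=\mathbb{Q}(i\sqrt{3})$. If instead $M$ is non-arithmetic and has hidden symmetries, then Proposition \ref{hscharac} provides a non-normal covering $p\colon M\to O$ onto an orbifold $O$ with a rigid cusp. The key reduction is that, because $M$ has exactly one cusp, $O$ has exactly one cusp: the image of the cusp of $M$ under $p$ is a single connected cusp of $O$, and every cusp of $O$ is covered by a cusp of $M$. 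Hence that unique cusp of $O$ is the rigid one, and the cusp of $M$ covers it.

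Now I restrict $p$ to the cusps. After conjugating so that the two horoballs are centered at $\infty$, the peripheral subgroup $\Lambda$ is realized as a finite-index subgroup of the wallpaper group $W$ of the rigid cusp, both acting by orientation-preserving Euclidean isometries on $\partial B\cong\mathbb{C}$. Since every nontrivial finite-order element of such a wallpaper group is an elliptic rotation while $\Lambda$ is torsion-free, $\Lambda$ contains no rotations and therefore lies in the translational lattice $\Lambda_W$ of $W$. A generating pair $(\mathbf m,\mathbf l)$ of $\Lambda$ thus consists of vectors in $\Lambda_W$.

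It remains to identify $\Lambda_W$, up to similarity, for each rigid type. For $\mathbb{S}^2(3,3,3)$ and $\mathbb{S}^2(2,3,6)$, with the normalization of Section \ref{cuspsection}, one may scale so that $\Lambda_W=\Lambda_{(3,3,3)}=\Lambda_{(2,3,6)}=\mathbb{Z}+\mathbb{Z}\,\omega$ with $\omega=\tfrac12+\tfrac{\sqrt3}{2}i=\tfrac{1+\sqrt{-3}}{2}$; for $\mathbb{S}^2(2,4,4)$ one may scale so that $\Lambda_W=\mathbb{Z}+\mathbb{Z}\,i$ (scaling leaves the modulus $\tau_c$ unchanged and preserves $\Lambda\subseteq\Lambda_W$). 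In either case $\mathbf m,\mathbf l\in\mathbb{Z}+\mathbb{Z}\,\omega$ (resp.\ $\mathbb{Z}+\mathbb{Z}\,i$), so $\tau_c\in\mathbb{Q}(\omega)=\mathbb{Q}(\sqrt{-3})$ (resp.\ $\tau_c\in\mathbb{Q}(i)$). Finally, since $\tau_c$ lies in the upper half-plane it is non-real, so $\mathbb{Q}(\tau_c)$ is a nontrivial extension of $\mathbb{Q}$ sitting inside an imaginary quadratic field, and therefore equals $\mathbb{Q}(\sqrt{-3})=\mathbb{Q}(i\sqrt{3})$ or $\mathbb{Q}(i)$, as claimed. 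I expect the only genuinely delicate point—beyond invoking Proposition \ref{hscharac}, which carries all the serious geometry—to be the one-cusp reduction guaranteeing that the cusp of $M$ covers the \emph{rigid} cusp rather than a smooth cusp of some larger orbifold; the torsion-free placement of $\Lambda$ in $\Lambda_W$ and the field computation are then routine.
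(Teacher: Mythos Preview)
Your proof is correct and follows exactly the route the paper implicitly suggests: the paper does not give its own proof of this fact but simply cites \cite{ReWa} and remarks that it is ``an immediate consequence'' of Proposition~\ref{hscharac}; you have spelled out that consequence in full (one-cusp reduction, torsion-free embedding of $\Lambda$ into the translational lattice of the rigid wallpaper group, and identification of that lattice as hexagonal or square).
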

Proposition \ref{hscharac} directs the attention to one-rigid cusped orbifolds and their possible (knot complement) covers. Understanding the knot complement covers of rigid cusp orbifolds is a difficult task. However, in a recent result, Hoffman \cite{Hoff_cusp} sheds more light on this. He proved the following. 
\begin{theorem}[Hoffman, Theorem 1.1, \cite{Hoff_cusp}]\label{hoff_cusptype}
A hyperbolic knot complement cannot cover an orbifold with a $(2,4,4)$-cusp. 
\end{theorem}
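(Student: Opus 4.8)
The plan is to argue by contradiction and to localize everything at the unique cusp. Suppose $M=\mathbb{S}^3-K$ covers an orbifold $O$ with a $(2,4,4)$-cusp $c_O$ under a map $p\co M \to O$. Because $M$ has a single cusp $c$, the orbifold $O$ has exactly one cusp, namely $c_O$, which $c$ covers, and the induced map of cross-sections presents a torus $T$ as a cover of the Euclidean $2$-orbifold $\mathbb{S}^2(2,4,4)$. The modulus of a $(2,4,4)$-cusp is $i$, so the cusp field of $M$ is $\mathbb{Q}(i)$, consistent with Fact \ref{cusp3i}; in particular $M$ is non-arithmetic, since the only arithmetic knot complement, the figure eight, has cusp field $\mathbb{Q}(\sqrt{-3})$. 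I would first extract the relevant symmetry: the order $4$ cone point of $c_O$ lifts to an order $4$ elliptic $g \in \operatorname{Comm}(\Gamma)$ fixing a lift $\tilde{c}=\infty$ of $c$ and acting on the bounding horosphere as the rotation $z \mapsto i z$, with $g^2\co z\mapsto -z$. Now split into cases according to Proposition \ref{hscharac}. If $p$ were a regular cover, then $g$ would descend to a self-isometry of $M$; but any self-isometry of a knot complement preserves the meridian slope, whereas $z\mapsto iz$ preserves no real line in the cusp, a contradiction. Hence $p$ is irregular, $M$ has hidden symmetries, and $g\in \operatorname{Comm}(\Gamma)\setminus\operatorname{N}(\Gamma)$ is a genuine hidden symmetry. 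The peripheral lattice $\Lambda=\Gamma_\infty=\langle \mu,\lambda\rangle$ then embeds as a finite-index sublattice of the $g$-invariant square lattice $\Lambda_{(2,4,4)}$.

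The central mechanism for the irregular case is the cusp-killing homomorphism. Trivial $(1,0)$-filling of $M$ along the meridian $\mu$ returns $\mathbb{S}^3$. Since $\Gamma=\pi_1(M)$ has finite index in $\Delta=\pi_1^{orb}(O)$, this filling descends: quotienting $\Delta$ by the normal closure of the image of $\mu$ kills the cusp $c_O$ and yields a closed orientable orbifold $\hat{O}$ for which $\mathbb{S}^3$ is a finite cover. Hence $\hat{O}=\mathbb{S}^3/G$ is a spherical $3$-orbifold with $G$ finite, and I would read off its singular locus from how the image slope $p(\mu)$ meets the cone arcs of orders $2,4,4$ near the filled cusp. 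The aim is to show that the local spherical structure forced at the core of the filling is incompatible with the order $4$ data. Hoffman's degree formula (\cite[Theorem 1.2]{Hoff_cusp}) enters here to pin down how $\mu$ and $\lambda$ sit relative to the cone points, and hence to constrain the descended slope $p(\mu)$ and the local group at the filling core through the index-$4$ relationship between $\Lambda$ and $\Lambda_{(2,4,4)}$.

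The hard part, and where the argument must become genuinely arithmetic rather than soft, is to see why the $(2,4,4)$-cusp is obstructed while the $(2,3,6)$- and $(3,3,3)$-cusps are not. A naive argument — that $g$ acts as $z\mapsto iz$ and so fixes no real slope — proves nothing in the irregular case, since $g$ need not descend to a self-isometry of $M$; indeed the same reasoning would wrongly exclude the figure eight, which does cover a $(2,3,6)$-cusped orbifold. The real obstruction should be a parity phenomenon: because $\tfrac12+\tfrac14+\tfrac14=1$, the order $4$ cone structure introduces a $\mathbb{Z}/2$ into the would-be spherical filling $\hat{O}=\mathbb{S}^3/G$ that cannot be reconciled with $\mu$ generating $H_1(M)\cong\mathbb{Z}$ while $\lambda$ is nullhomologous, whereas the $(2,3,6)$ and $(3,3,3)$ fillings avoid it. Making this precise is the main obstacle: I expect to combine the degree formula with a careful homological accounting of the image of $\mu$ in $H_1(\hat{O})$, showing that the resulting finite group $G$ cannot act on $\mathbb{S}^3$ with the required quotient singular locus. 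Verifying that this $\mathbb{Z}/2$ truly obstructs, and does so only in the order $4$ case, is the delicate computation on which the whole argument turns.
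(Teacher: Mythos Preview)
The paper does not prove this theorem. It is imported verbatim as Hoffman's result \cite[Theorem 1.1]{Hoff_cusp} and used as a black box throughout; there is no ``paper's own proof'' to compare your attempt against.

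As to your proposal on its own merits: it is an outline, not a proof, and you say so yourself. The regular-cover case is fine --- an order~$4$ deck transformation would be a self-isometry of $\mathbb{S}^3-K$, hence would preserve the meridian slope up to sign, which $z\mapsto iz$ cannot do. But the irregular case, which is the entire content of the theorem, is left open: you correctly identify the cusp-killing homomorphism as the right tool and correctly note that the obstruction must distinguish $(2,4,4)$ from $(3,3,3)$ and $(2,3,6)$, but you then write that ``making this precise is the main obstacle'' and that the decisive computation is ``the delicate computation on which the whole argument turns.'' That computation is the theorem.

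One further issue: you invoke ``Hoffman's degree formula \cite[Theorem 1.2]{Hoff_cusp}'' to constrain the descended slope. That reference is the $(2,3,6)$ degree formula (Theorem~\ref{Hoff_236} here), not a $(2,4,4)$ statement; there is no degree formula for $(2,4,4)$ covers precisely because the theorem you are trying to prove says no such covers exist. So that step is either a misattribution or circular. If you want to complete the argument you will have to carry out the analysis of the spherical quotient $\hat{O}=\mathbb{S}^3/G$ directly, classifying which finite $G$ and which singular loci are compatible with the $(2,4,4)$ cone data at the filled cusp, and ruling them all out --- which is exactly what Hoffman does in \cite{Hoff_cusp}.
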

The above theorem helps streamline the study of hyperbolic knot complements with hidden symmetries to a great degree. We will use this theorem multiple times in our paper. Note that Theorem \ref{hoff_cusptype} also simplifies Fact \ref{cusp3i} into the following fact. 
\begin{fact}
If a hyperbolic knot complement has hidden symmetries, then its cusp field is $Q(i\sqrt{3})$. 
\end{fact}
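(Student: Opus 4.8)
The plan is to start from Fact~\ref{cusp3i}, which already pins the cusp field of a hyperbolic knot complement $M$ with hidden symmetries down to one of the two candidates $\mathbb{Q}(i)$ and $\mathbb{Q}(i\sqrt{3})$, and then to eliminate $\mathbb{Q}(i)$ by invoking Hoffman's Theorem~\ref{hoff_cusptype}. Thus the entire argument reduces to showing that the cusp field $\mathbb{Q}(i)$ is incompatible with the existence of hidden symmetries.

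First I would dispose of the arithmetic case. By Reid's theorem the figure-eight knot complement is the only arithmetic hyperbolic knot complement, and (as recalled in the excerpt) it covers $\mathbb{H}^3/\operatorname{PGL}(2,O_3)$, an orbifold with a $(2,3,6)$-cusp; hence its cusp field is $\mathbb{Q}(i\sqrt{3})$. Consequently, if the cusp field of $M$ were $\mathbb{Q}(i)$, then $M$ would have to be non-arithmetic, and I may assume this from here on.

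Next, with $M$ non-arithmetic and carrying hidden symmetries, Proposition~\ref{hscharac} supplies a non-normal covering $M \to O$ onto an orbifold $O$ with a rigid cusp. Since $M$ is a knot complement it has a single (connected) cusp, whose image lies in a single cusp of $O$; as the covering is surjective on cusps, $O$ therefore has exactly one cusp, which is rigid. The cusp field is a commensurability invariant, so the cusp field of $O$ coincides with that of $M$; I would make this precise by observing that the translational lattice of the cusp of $M$ is a finite-index sublattice of the translational lattice $\Lambda$ underlying the rigid cusp of $O$, and a sublattice has the same field of moduli as $\Lambda$. Among the three rigid Euclidean $2$-orbifolds, only $\mathbb{S}^2(2,4,4)$ carries the square lattice, with modulus $i$ and hence cusp field $\mathbb{Q}(i)$, whereas $\mathbb{S}^2(3,3,3)$ and $\mathbb{S}^2(2,3,6)$ both carry the hexagonal lattice with cusp field $\mathbb{Q}(i\sqrt{3})$. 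Therefore, if the cusp field of $M$ is $\mathbb{Q}(i)$, the rigid cusp of $O$ must be a $(2,4,4)$-cusp, so $M$ covers an orbifold with a $(2,4,4)$-cusp, contradicting Theorem~\ref{hoff_cusptype}. This rules out $\mathbb{Q}(i)$ and forces the cusp field to be $\mathbb{Q}(i\sqrt{3})$.

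The main obstacle I anticipate lies in the middle step: cleanly verifying that the cusp field is genuinely transported across the covering $M \to O$ and that the value $\mathbb{Q}(i)$ singles out precisely the $(2,4,4)$ cusp type among the rigid cusps, via the sublattice-modulus computation. Once that lattice bookkeeping is secured, the combination of Fact~\ref{cusp3i}, Proposition~\ref{hscharac} and Theorem~\ref{hoff_cusptype} closes the argument immediately.
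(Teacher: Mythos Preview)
Your proposal is correct and follows essentially the same approach as the paper: the paper simply remarks that Theorem~\ref{hoff_cusptype} ``simplifies Fact~\ref{cusp3i}'' into the stated fact, and your argument spells out precisely this simplification by showing that the $\mathbb{Q}(i)$ alternative in Fact~\ref{cusp3i} would force a $(2,4,4)$-cusped quotient, which Theorem~\ref{hoff_cusptype} excludes. Your treatment of the arithmetic case and the lattice--modulus bookkeeping are more explicit than what the paper records, but the underlying route is the same.
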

The lack of examples of hyperbolic knot complements with hidden symmetries led Neumann and Reid to ask the following question which is pivotal in the study of hyperbolic knot complements with hidden symmetries. 
\begin{question}[Question 1, \S 9, \cite{NeRe}]
Is there any hyperbolic knot other than the dodecahedral knots of Aitchison and Rubinstein \cite{AiRu} and the figure-eight knot whose complement has hidden symmetries?
\end{question}

This question seems very challenging and there are no tools or techniques in the literature that provides a good understanding on how to attack this question. In order to get a better grasp of the above Neumann-Reid question, the following weaker conjecture was stated in \cite{CDM}. 

\begin{conjecture}[Conjecture 0.1, \cite{CDM}]
Given a positive number $v$, there are at-most finitely many hyperbolic knot complements admitting hidden symmetries with volume less than $v$. 
\end{conjecture}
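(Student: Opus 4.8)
This is the weak form of the Neumann--Reid question and remains open in general; the proposal below is the reduction strategy underlying this paper, with an indication of exactly where it stalls. The plan is to argue by contradiction and pass, via geometric convergence, to a statement about individual link complements. Suppose that for some $v$ there were infinitely many hyperbolic knot complements of volume less than $v$ carrying hidden symmetries. By J\o rgensen--Thurston compactness and Thurston's hyperbolic Dehn surgery theorem \cite[Theorem 6.5.6]{Thurs}, an infinite subfamily converges geometrically to a complete cusped hyperbolic manifold $M$ with $\operatorname{vol}(M)\le v$, and the members of the subfamily are obtained from $M$ by Dehn filling all of its cusps but one, say $K_0$, with the relevant filling slopes tending to $\infty$. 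Because a manifold admitting a knot-complement filling is itself a link complement in $\mathbb{S}^3$, the limit $M$ is an $n$-component link complement with $n\ge 2$. Thus the conjecture would follow from the per-manifold assertion that for every such $M$ and every choice of $K_0$, only finitely many members of a family geometrically converging to $M$ through $K_0$ have hidden symmetries.

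For the small-volume range this per-manifold assertion already holds \emph{uniformly}: since $\operatorname{vol}(M)\le v$, whenever $v\le 8v_0$ every candidate limit satisfies $\operatorname{vol}(M)\le 8v_0$, and Proposition \ref{8v0prop} forbids an infinite hidden-symmetry subfamily converging to it, giving a contradiction; hence the conjecture is true for $v\le 8v_0$. The mechanism of Proposition \ref{8v0prop} is the covering characterization combined with low-volume control: after discarding the figure-eight complement (the only arithmetic knot complement, by Reid \cite{Reid_arith} and Fact \ref{cusp3i}), Proposition \ref{hscharac} together with the Dehn-filling refinement \cite[Theorem 2.6]{CDHMMWIMRN} forces $M$ to cover an orbifold $O$ with exactly one rigid cusp, at least one smooth cusp, and $K_0$ the unique cusp lying over the rigid one; Hoffman's degree formulae \cite{Hoff_smallknot}, \cite{Hoff_cusp} and Theorem \ref{hoff_cusptype} pin the rigid cusp to type $(2,3,6)$ or $(3,3,3)$, and Adams' classification of low-volume multi-cusped orbifolds \cite{Adams_multi} then leaves no room for such an $O$ below $8v_0$.

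Above $8v_0$ the volume no longer forces a contradiction, and the per-manifold assertion must be established link complement by link complement. Here the strategy is to translate the forbidden cover onto $O$ into a symmetry of the maximal horoball packing associated with $K_0$: by Theorem \ref{hexlatt} (and its $(2,3,6)$ analogue) such a cover would make the symmetry group of the $K_0$-circle packing contain a $W_{(3,3,3)}$ or $W_{(2,3,6)}$ wallpaper group whose nontrivial rotations avoid the horocenters of the remaining cusps and whose translational co-area is commensurable with the maximal cusp area in the prescribed way. For a fixed $M$ this is a finite, decidable condition on the combinatorics of the packing, certifiable false by the algorithm derived from Corollary \ref{neardistinctrot}; and when the packing does carry the wallpaper symmetry, one falls back on the covering analyses of Proposition \ref{coversmallestmulticusp} and Proposition \ref{two236cusps}. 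Carrying this out for every candidate limit $M$ would complete the proof.

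The main obstacle is precisely this last quantifier. Whereas a volume bound below $8v_0$ yields a uniform obstruction, above $8v_0$ there are infinitely many hyperbolic link complements of volume at most $v$, and geometric convergence supplies no uniform criterion deciding which of them admit a non-normal cover onto a one-rigid-cusped orbifold compatible with a chosen cusp. Each such link complement must instead be examined individually through its horoball packing, which is why the present paper settles only explicit families---the tetrahedral links and the cyclic covers of Theorems \ref{tetracompthm}, \ref{Bergecyclic} and \ref{622cyclicresult}---rather than all link complements at once. A genuine proof of the conjecture would therefore require either a classification of the (few currently known) one-rigid-cusped orbifolds with a smooth cusp that can arise as such quotients, or a packing-symmetry obstruction valid uniformly across all link complements of bounded volume.
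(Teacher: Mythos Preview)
The statement is an open conjecture that the paper does not attempt to prove; it is recorded only as motivation for the per-link-complement results that follow. Your discussion is accurate: you correctly describe the J\o rgensen--Thurston reduction to per-link statements, correctly note that Proposition \ref{8v0prop} together with this reduction already settles the case $v\le 8v_0$ (a corollary the paper leaves implicit), and correctly identify the obstruction above $8v_0$---infinitely many candidate limit link complements, each requiring individual packing-symmetry or covering analysis---as the reason the paper restricts to the explicit families of Theorems \ref{tetracompthm}, \ref{Bergecyclic}, and \ref{622cyclicresult}.
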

This conjectures motivates us to investigate whether a family of hyperbolic knot complements with a volume bound can have infinitely many elements with hidden symmetries. So, in light of Thurston's Dehn surgery theorem \cite[Theorem 5.8.2]{Thurs} (also see \cite[Theorem 5.3]{DunMeyer}) and \cite [Theorem 6.4]{DunMeyer} (see \cite[Theorem 6.5.6]{Thurs} as well), one should focus on the knot complements obtained by Dehn filling all but one cusp of a hyperbolic $3$-manifold with two or more cusps. The following result which was proved in \cite{CDHMMWIMRN} is an extension of the Neumann and Reid characterization from Proposition \ref{hscharac} in this Dehn filling set up.
\begin{theorem}[\cite{CDHMMWIMRN}, Theorem 2.6] \label{CDHMMWorb}
Let $M$ be a hyperbolic $3$-manifold with $n$-cusps where $n \ge 2$. Let $\{M_{\infty, (p_2^i,q_2^i), \dots, (p_n^i, q_n^i)}\}_i$ be a family of hyperbolic orbifolds obtained by Dehn filling all but one cusp of $M$ such that for each $j$ in $\{2, \dots, n\}$, the filling coefficients $(p_j^i,q_j^i) \rightarrow \infty$ as $i \rightarrow \infty$. Suppose that for each $i$, there exists an orbifold cover $\phi_i:M_{\infty, (p_2^i,q_2^i), \dots, (p_n^i, q_n^i)} \rightarrow O_i$ where $O_i$ is a rigid cusped orbifold. Then, there exists a hyperbolic $3$-orbifold $O$ with exactly one rigid cusp and at-least one smooth cusp, and, an orbifold cover $\phi: M \rightarrow O$ such that the only cusp of $M$ which covers the rigid cusp of $O$ is the un-filled cusp and after taking a subsequence we can further say that, 
\begin{enumerate}
\item for each $i$, the orbifold $O_i$ is obtained by some Dehn fillings on all the smooth cusps of $O$, 
\item each $\phi_i$ has the same degree, which is equal to the degree of $\phi$. 
\end{enumerate}
\end{theorem}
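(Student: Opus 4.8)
The plan is to realize $O$ as a geometric (Chabauty) limit of the rigid-cusped quotients $O_i$, using a volume bound to control the covering degrees. Normalize the holonomy of $M$ so that $\Gamma=\pi_1(M)\subset\operatorname{PSL}(2,\mathbb{C})$ has the un-filled cusp at $\infty$, and realize the fillings via Thurston's hyperbolic Dehn surgery theorem \cite[Theorem 5.8.2]{Thurs} (in the orbifold form \cite[Theorem 5.3]{DunMeyer}) as discrete groups $\Gamma_i=\pi_1^{orb}(M_{\infty,(p^i_2,q^i_2),\dots,(p^i_n,q^i_n)})$ converging geometrically to $\Gamma$, with the un-filled cusp persisting at $\infty$ while the cores of the filled solid tori become short (cone) geodesics. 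Writing $\Gamma_i\subset\Gamma_i'$ for the group with $O_i=\mathbb{H}^3/\Gamma_i'$ and $d_i=[\Gamma_i':\Gamma_i]=\deg\phi_i$, I would first record that each $M_i$ has a single cusp, the un-filled one; since a cover maps cusps onto cusps and the preimage of each cusp is nonempty, $O_i$ has exactly one cusp, which by hypothesis is rigid and is covered precisely by that un-filled cusp.

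Next I would bound the degrees. By Thurston $\operatorname{vol}(M_i)$ increases to $\operatorname{vol}(M)$, so $\operatorname{vol}(M_i)<\operatorname{vol}(M)$, while the covolume of any hyperbolic orbifold carrying a rigid cusp is bounded below by a universal constant $v_{\min}>0$ (realized by the minimal-volume cusped orbifold with its $(2,3,6)$-cusp). As $\operatorname{vol}(O_i)=\operatorname{vol}(M_i)/d_i\ge v_{\min}$, we get $d_i\le\operatorname{vol}(M)/v_{\min}$, so after passing to a subsequence I may assume $d_i\equiv d$; this is the ``common degree'' half of conclusion (2). With the index uniformly $d$, choose coset representatives $1=g^{(1)}_i,\dots,g^{(d)}_i$ of $\Gamma_i$ in $\Gamma_i'$ moving a fixed thick-part basepoint a uniformly bounded distance. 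Discreteness of $\Gamma_i'$ together with $\Gamma_i\to\Gamma$ lets me pass to a further subsequence with $g^{(k)}_i\to g^{(k)}$, whence $\Gamma':=\bigcup_{k=1}^d g^{(k)}\Gamma$ is discrete, $\Gamma\subset\Gamma'$, $[\Gamma':\Gamma]\le d$, and $\Gamma_i'\to\Gamma'$ geometrically. To exclude a drop in index I invoke lower semicontinuity of covolume: $\operatorname{vol}(\mathbb{H}^3/\Gamma')\le\liminf_i\operatorname{vol}(O_i)=\operatorname{vol}(M)/d$, while $\operatorname{vol}(\mathbb{H}^3/\Gamma')=\operatorname{vol}(M)/[\Gamma':\Gamma]\ge\operatorname{vol}(M)/d$. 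Hence $[\Gamma':\Gamma]=d$, and $\phi\colon M\to O:=\mathbb{H}^3/\Gamma'$ has degree $d$ equal to that of each $\phi_i$, completing conclusion (2).

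It remains to identify the cusps of $O$. The stabilizer of $\infty$ in $\Gamma_i'$ is the rigid wallpaper group of the single cusp of $O_i$, and under geometric convergence it limits onto $\operatorname{Stab}_{\Gamma'}(\infty)$; since its rotational part is finite of a fixed order and discreteness is preserved in the limit, this stabilizer is again a rigid wallpaper group, so the image of the un-filled cusp is a rigid cusp of $O$. The remaining cusps of $O$ are the images of the filled cusps $K_2,\dots,K_n$: by construction the short cone geodesics of $M_i$ lying over them map to the cores of the Dehn fillings producing $O_i$ from $O$, giving conclusion (1), and the cusps of $O$ they cover are therefore smooth cusps admitting an orbifold Dehn filling. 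As $n\ge2$ there is at least one such, so $O$ has at least one smooth cusp; and since every cusp of $O$ covered by some $K_j$ with $j\ge2$ is smooth, the only cusps of $M$ that can cover a rigid cusp are the un-filled one. Because that cusp covers a single (rigid) cusp, $O$ has exactly one rigid cusp, as claimed.

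I expect the main obstacle to be the geometric-limit step: one must choose the coset representatives so that they remain bounded and convergent, confirm that the Chabauty limit of the $\Gamma_i'$ is exactly the group generated by the limiting representatives, and—most delicately—rule out index collapse, which is precisely where the covolume identity is needed. A secondary subtlety is the compatibility of covering with Dehn filling underlying conclusion (1): one must verify that the filled cusps of $M$ genuinely map to \emph{smooth} cusps of $O$ that are being Dehn filled to form $O_i$, rather than to rigid cusps, and this is exactly the heart of the statement that only the un-filled cusp covers the rigid cusp of $O$.
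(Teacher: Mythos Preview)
The paper does not give its own proof of this theorem; it is quoted from \cite{CDHMMWIMRN}, and the only commentary is that ``the proof of \cite[Theorem 2.6]{CDHMMWIMRN} builds on machineries developed in \cite{HMW}.'' So there is no in-paper argument to compare against. Your geometric-convergence outline is the correct strategy and is consistent with that remark: one bounds the degrees by volume, passes to a subsequence, and extracts a Chabauty limit $\Gamma'\supset\Gamma$.

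Where your sketch becomes thin is precisely where the cited machinery is needed. For conclusion~(1) you write that ``the short cone geodesics of $M_i$ lying over them map to the cores of the Dehn fillings producing $O_i$ from $O$,'' but this presupposes what must be shown: that $O_i$ is a Dehn filling of $O$ at all, with the filled cusps of $M$ sitting over \emph{smooth} cusps of $O$. Geometric convergence $\Gamma_i'\to\Gamma'$ tells you that $O_i$ looks like $O$ on larger and larger compact pieces, but turning this into the statement ``$O_i$ is obtained from $O$ by orbifold Dehn filling its smooth cusps, compatibly with $\phi_i$ and $\phi$'' requires tracking the Margulis tubes/cusp cross-sections through the limit and matching them up with the covering maps. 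Likewise, your argument that the other cusps of $O$ are smooth (``smooth cusps admitting an orbifold Dehn filling'') is circular in the same way: you need an independent reason why the images of $K_2,\dots,K_n$ cannot be rigid, and that comes from analyzing how the peripheral subgroups behave under the limit, not from the Dehn-filling description you have not yet established. A smaller point: the claim that coset representatives can be chosen moving the basepoint a uniformly bounded distance deserves a sentence of justification (e.g.\ via a Dirichlet-domain or thick--thin argument using the uniform lower volume bound on $O_i$).

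In short, the skeleton is right, but the passage from ``$\Gamma_i'\to\Gamma'$ geometrically'' to ``$O_i$ is a Dehn filling of $O$ on exactly the smooth cusps, covered by the filled cusps of $M$'' is the substantive content of the theorem and is where \cite{HMW} and \cite{CDHMMWIMRN} do the real work.
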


The form in which the above result is stated here gives less information than how it is stated in \cite[Theorem 2.6]{CDHMMWIMRN}. The original statement of Part 2 of the above result in \cite[Theorem 2.6]{CDHMMWIMRN} is more involved and technical. But, we will only use the form we state here in the proof of our results in this paper. We make a note that the proof of \cite[Theorem 2.6]{CDHMMWIMRN} builds on machineries developed in \cite{HMW}.

\section{Horoball packings and circle packings associated with an orbifold}\label{packingbackground}
 In this section, we go over the notions related to the horoball packings of $\mathbb{H}^3$ that we need for the rest of the paper. Let $O=\mathbb{H}^3/\Gamma$ be a finite volume hyperbolic $3$-orbifold with cusps such that $\infty$ is a parabolic fixed point of $\Gamma$. Let $c$ be a cusp of $O$. A \textit{horoball packing of $\mathbb{H}^3$} is a union of horoballs such that any two such horoballs are \textit{interior-disjoint}, i.e., they do not intersect each other in the interior. 

Recall from Section \ref{cuspsection} that $c$ is of the form $B/W$ where $B$ is a horoball in $\mathbb{H}^3$ and $W$ the stabilizer of the horocenter of $B$ in $\Gamma$. Then, all the horoballs obtained by translating $B$ by the elements of $\Gamma$ project down to cusp $c$ as well and their union is the pre-image of $c$. 
The fact that the projection of $B$ to $O$ factors through an embedding of $B/W$ implies that the distinct $\Gamma$-translations of $B$ are interior-disjoint from $B$, and no two translates intersect in the interior unless they are equal. So, the pre-image of $c$ becomes a horoball packing of $\mathbb{H}^3$ and this collection of horoballs is $\Gamma$-invariant since they are all $\Gamma$-translates of $B$. Now, if we take multiple cusp neighborhoods of $O$ interior-disjoint from each other in similar manner, then, the pre-image of this union of mutually interior-disjoint cusp neighborhoods is also a horoball packing of $\mathbb{H}^3$, which too is $\Gamma$-invariant. So, we can conclude the following fact. 
  \begin{fact}
 The pre-image in $\mathbb{H}^3$ of a union of mutually interior-disjoint cusp neighborhoods of $O$ is a $\Gamma$-invariant horoball packing of $\mathbb{H}^3$. 
 \end{fact}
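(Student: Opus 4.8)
The plan is to unwind the statement directly from the discussion preceding it, organizing the preimage as a union over the individual cusp neighborhoods and then checking the two defining properties of a horoball packing (each piece is a horoball, and the pieces are pairwise interior-disjoint) together with $\Gamma$-invariance. Write $\pi\co \mathbb{H}^3 \to O=\mathbb{H}^3/\Gamma$ for the orbifold covering projection. Since $\pi$ is the quotient map by $\Gamma$, we have $\pi\circ\gamma=\pi$ for every $\gamma\in\Gamma$, so $\gamma\big(\pi^{-1}(S)\big)=\pi^{-1}(S)$ for any $S\subseteq O$; thus the $\Gamma$-invariance clause is automatic and needs no separate argument. Because preimages commute with unions, if the chosen mutually interior-disjoint cusp neighborhoods are $N_1,\dots,N_r$ then $\pi^{-1}\big(\bigcup_k N_k\big)=\bigcup_k \pi^{-1}(N_k)$, and it suffices to understand each $\pi^{-1}(N_k)$ and how the different preimages interact.

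First I would fix a single cusp neighborhood $N_k=B_k/W_k$, where $B_k$ is a horoball and $W_k\le\Gamma$ the stabilizer of its horocenter, and record that
$$\pi^{-1}(N_k)=\bigcup_{\gamma\in\Gamma}\gamma B_k=\bigcup_{\gamma W_k\in\Gamma/W_k}\gamma B_k,$$
the second equality holding because $W_k$ stabilizes $B_k$ setwise. Every $\gamma\in\Gamma\subseteq\operatorname{PSL}(2,\mathbb{C})$ acts as a hyperbolic isometry, and isometries carry horoballs to horoballs, so each $\gamma B_k$ is again a horoball and $\pi^{-1}(N_k)$ is a union of horoballs. The interior-disjointness among these translates is exactly the content of the paragraph before the Fact: the hypothesis that $B_k/W_k$ embeds in $O$ says that $B_k$ is precisely invariant under $W_k$ in $\Gamma$, i.e.\ $\mathrm{int}(\gamma B_k)\cap \mathrm{int}(B_k)=\emptyset$ whenever $\gamma\notin W_k$. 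Translating by an arbitrary $\gamma'\in\Gamma$, two translates $\gamma B_k$ and $\gamma' B_k$ are either equal (when $\gamma W_k=\gamma' W_k$) or interior-disjoint, so $\pi^{-1}(N_k)$ is itself a horoball packing.

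Next I would treat two horoballs coming from distinct neighborhoods $N_k$ and $N_l$ (with $k\ne l$). If a translate from $\pi^{-1}(N_k)$ and a translate from $\pi^{-1}(N_l)$ met in their interiors at a point $p$, then $\pi(p)$ would be a common interior point of $N_k$ and $N_l$ in $O$, contradicting the assumption that the chosen neighborhoods are mutually interior-disjoint; here I use that $\pi$ is an open map carrying the interior of a horoball into the interior of its image neighborhood. Combining this with the single-cusp analysis shows that all the horoballs appearing in $\bigcup_k\pi^{-1}(N_k)$ are pairwise interior-disjoint, so the preimage is a $\Gamma$-invariant horoball packing, as claimed.

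The one point I expect to require care, rather than pure bookkeeping, is the step identifying ``$B_k/W_k$ embeds in $O$'' with ``$B_k$ is precisely invariant under $W_k$ in $\Gamma$'': this must be read as an embedding of \emph{orbifolds}, and in the presence of torsion in $\Gamma$ one cannot naively argue from point-injectivity alone, since interior points of $B_k$ lying on rotation axes of $W_k$ have nontrivial stabilizer. The clean way around this is to take the equivalence of the embedding with precise invariance as the working definition (consistent with \cite[Corollary 2.2]{DunMeyer} and the paragraph preceding the Fact) and to phrase the interior-disjointness directly in terms of precise invariance, rather than rederiving it from injectivity of the underlying map of spaces.
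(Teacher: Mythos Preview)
Your proposal is correct and follows essentially the same approach as the paper: the paper does not give a separate formal proof of this Fact but derives it in the paragraph immediately preceding it, arguing first that for a single cusp the $\Gamma$-translates of $B$ are interior-disjoint because $B/W$ embeds, and then that the multi-cusp case follows since the downstairs neighborhoods are mutually interior-disjoint. Your write-up is a more detailed and careful version of the same reasoning, and your closing remark about reading the embedding as precise invariance in the orbifold setting is a genuine clarification the paper leaves implicit.
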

 The \textit{$c$-maximal horoball packing of $\mathbb{H}^3$} is a horoball packing of $\mathbb{H}^3$ that maps to a maximal cusp neighborhood of cusp $c$. For the rest of the paper, we will always assume that the horoball at $\infty$ in a $c$-maximal horoball packing of $\mathbb{H}^3$ maps to the $c$-cusp neighborhood. An \textit{$O$-horoball packing of $\mathbb{H}^3$} is a horoball packing $\mathcal{H}$ of $\mathbb{H}^3$ that maps to a union of mutually interior-disjoint cusp neighborhoods of the cusps of $O$ such that for each cusp of $O$ there is a horoball in $\mathcal{H}$ mapping to a neighborhood of that cusp. We will refer to a horoball in an $O$-horoball packing as a \textit{$c$-horoball} if it maps to a cusp neighborhood of $c$. An \textit{$O$-maximal horoball packing of $\mathbb{H}^3$} is an $O$-horoball packing of $\mathbb{H}^3$ which is not contained in a bigger $O$-horoball packing of $\mathbb{H}^3$. A \textit{$(c,O)$-maximal horoball packing of $\mathbb{H}^3$} is a $O$-maximal horoball packing $\mathcal{H}$ of $\mathbb{H}^3$ such that the union of the $c$-horoballs in $\mathcal{H}$ forms the $c$-maximal horoball packing of $\mathbb{H}^3$ (and so, the horoball at $\infty$ in $\mathcal{H}$ maps to the $c$-cusp neighborhood).

Given an $O$-horoball packing $\mathcal{H}$ of $\mathbb{H}^3$ for an orbifold $O=\mathbb{H}^3/\Gamma$, there is a hyperbolic element $g$ in $\operatorname{PSL}(2,\mathbb{C})$ such that $g(H_{\infty})$ is the horoball centered at $\infty$ at height $1$ where $H_{\infty} \in \mathcal{H}$ is the horoball centered at $\infty$. 
Now, $g(\mathcal{H})$ is an $O_g$-horoball packing of $\mathbb{H}^3$ where $O_g=\mathbb{H}^3/g\Gamma g^{-1}$. But, $O$ and $O_g$ are isometric. So, in light of this fact we will always assume that for an orbifold $O$, the horoball centered at $\infty$ of an $O$-horoball packing is at height $1$. We will use the term \text{full sized horoballs} to refer to the horoballs centered at points in $\mathbb{C}$ that have diameter $1$. So, the full sized horoballs are tangent to the horoball at height $1$. 

Let $\mathcal{H}$ be a horoball packing of $\mathbb{H}^3$ contained an $O$-maximal horoball packing of $\mathbb{H}^3$. 
The circle packing of $\mathbb{C}$ obtained by projecting down the boundaries of the full-sized horoballs of $\mathcal{H}$ onto $\mathbb{C}$ is called the \textit{$\mathcal{H}$-circle packing of $\mathbb{C}$}. 
For a cusp $c'$ of $O$, a circle $C$ in the $\mathcal{H}$-circle packing is called a \textit{$c'$-circle} if the horoball in $\mathcal{H}$ projecting down to the disk bounded by $C$ maps to a cusp neighborhood of $c'$ in $O$. If $\mathcal{H}_c$ denotes the $c$-maximal horoball packing of $\mathbb{H}^3$ (so the horoball centered at $\infty$ projects down to a cusp neighborhood of cusp $c$), then, we will refer to the $\mathcal{H}_c$-circle packing as the \textit{$c$-circle packing of $\mathbb{C}$}. So, the elements of the $c$-circle packing of $\mathbb{C}$ are the boundaries of the projections of the full-sized $c$-horoballs in $\mathcal{H}_c$ onto the complex plane. 

Given a horoball packing $\mathcal{H}$ of $\mathbb{H}^3$ contained an $O$-maximal horoball packing of $\mathbb{H}^3$, a symmetry of the $\mathcal{H}$-circle packing is a self-automorphism of the circle packing that sends the $c'$-circles to the $c'$-circles for each cusp $c'$ of $O$.

\begin{remark}
For a hyperbolic link $L$ with components $K_1, \dots, K_n$, we will use the term \textit{$K_j$-circle packing} to mean the $K_j$-cusp circle packing, where $j \in \{1, \dots, n\}$. 
\end{remark}

Let us consider the $c$-maximal horoball packing $\mathcal{H}_c$ of $\mathbb{H}^3$ for $O$. So, the horoball in $\mathcal{H}_c$ centered at $\infty$ projects down to a $c$-cusp neighborhood. The volume of this cusp neighborhood is called the \textit{(maximal) cusp volume} of $c$. $\operatorname{Stab}(\infty)$ in $\Gamma$ acts on the $c$-circle packing as a wallpaper group $W$. The area of the fundamental domains of $W$ is called the \textit{(maximal) cusp area} of $c$. When $O$ is a hyperbolic manifold, $W$ is a lattice wallpaper group and a fundamental domain of $W$ is called a \textit{cusp parallelogram} of $c$. We will use the following fact later.
\begin{fact}
For a cusp $c$ of an orbifold $O$, the cusp area of $c$ is twice the cusp volume of $c$. 
\end{fact}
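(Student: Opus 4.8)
The plan is to work in the upper half-space model and compute both quantities directly, exploiting the fact that the choice of normalizing height cancels when the two are compared. First I would fix coordinates $\mathbb{H}^3 = \{(z,t) : z \in \mathbb{C},\ t > 0\}$ with metric $ds^2 = (|dz|^2 + dt^2)/t^2$ and hyperbolic volume form $dV = t^{-3}\,dx\,dy\,dt$, where $z = x+iy$. Since $\infty$ is the parabolic fixed point whose stabilizer in $\Gamma$ is $W$, I may realize the cusp neighborhood $B/W$ by taking $B = \{(z,t) : t \ge h\}$ for some height $h > 0$, with $W$ acting by (orientation-preserving) Euclidean isometries of $\mathbb{C}$ extended so as to preserve each horosphere $\{t = \text{const}\}$.

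Next I would set up the two measurements in terms of a common fundamental domain. Let $D \subseteq \mathbb{C}$ be a fundamental domain for the action of $W$ on $\mathbb{C}$, and write $A_0$ for its standard Euclidean area. On the horosphere $\partial B = \{t = h\}$, the metric induced from $\mathbb{H}^3$ is $|dz|^2/h^2$, i.e. the standard Euclidean metric rescaled by the factor $1/h^2$; hence the cusp cross-section $\partial B/W$ has area $A_0/h^2$, which is the cusp area of $c$. A fundamental domain for the action of $W$ on the whole horoball $B$ is $\{(z,t) : z \in D,\ t \ge h\}$, so the cusp volume of $c$ is the hyperbolic volume
\[
\int_{D}\int_{h}^{\infty} \frac{dt}{t^{3}}\,dx\,dy \;=\; \int_{D} \frac{1}{2h^{2}}\,dx\,dy \;=\; \frac{A_0}{2h^{2}}.
\]
Comparing the two displays gives immediately that the cusp area $A_0/h^2$ equals twice the cusp volume $A_0/(2h^2)$.

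This computation is elementary once the coordinates are fixed, so there is no substantial obstacle; the only point worth flagging is the bookkeeping that the same fundamental domain $D$ for $W$ on $\mathbb{C}$ governs both the horosphere and the full horoball, which is what lets the induced-area and volume integrals be compared on the nose. I would also emphasize the cancellation: the height $h$ disappears from the ratio, so the factor of $2$ is independent of the particular horoball chosen (in particular it applies to the maximal cusp neighborhood used throughout the paper), and the possible presence of rotations in $W$ for rigid cusps only alters $D$ and $A_0$ while leaving the ratio unchanged.
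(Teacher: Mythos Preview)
Your proof is correct. The paper does not actually supply a proof of this fact; it is stated as a well-known fact without justification. Your upper half-space computation is the standard argument, and it is entirely compatible with the paper's conventions (in particular, the paper normalizes the horoball at $\infty$ to have height $1$, so in its set-up $h=1$ and the cusp area is simply the Euclidean area $A_0$ of the fundamental domain, with cusp volume $A_0/2$).
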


\subsection{Cusp neighborhoods in SnapPy}

We note here that the $3$-manifold software SnapPy \cite{snappy} denotes the cusps of a hyperbolic $3$-manifold by indices staring from $0$. On SnapPy, one can view the horoball packing of $\mathbb{H}^3$ that come from the cusp neighborhoods of a hyperbolic $3$-manifold. This view of horoball packing is from $\infty$ above and one has the option to select which cusp of the hyperbolic $3$-manifold is to be arranged at $\infty$. The SnapPy horoballs are labelled by different colors. Each color corresponds to a cusp. For example, if a hyperbolic $3$-manifold has seven cusps, then the horoballs projecting down to cusp $0$ is labelled red, cusp $1$ blue, cusp $2$ green, cusp $3$ light blue, cusp $4$ magenta, cusp $5$ yellow, cusp $6$ orange and cusp $7$ purple. One can also (equivariantly) maximize the horoballs of a single cusp until they touch one of their own tangentially. Maximizing (the horoballs corresponding to the) different cusps gives us different pictures of horoball packings. Now, we will discuss some examples of horoball packings using pictures from SnapPy. 

\begin{remark}
In this paper, given a SnapPy manifold $M$ and a cusp $c$ of $M$, pictures of the $c$-maximal or a $(c,M)$-maximal horoball packing of $\mathbb{H}^3$ obtained from SnapPy \cite{snappy} has a $c$-horoball at $\infty$ (eye). 
\end{remark}
\subsection{Examples}

\begin{figure}

\centering 
\captionsetup{justification=centering}
\includegraphics[scale=.14]{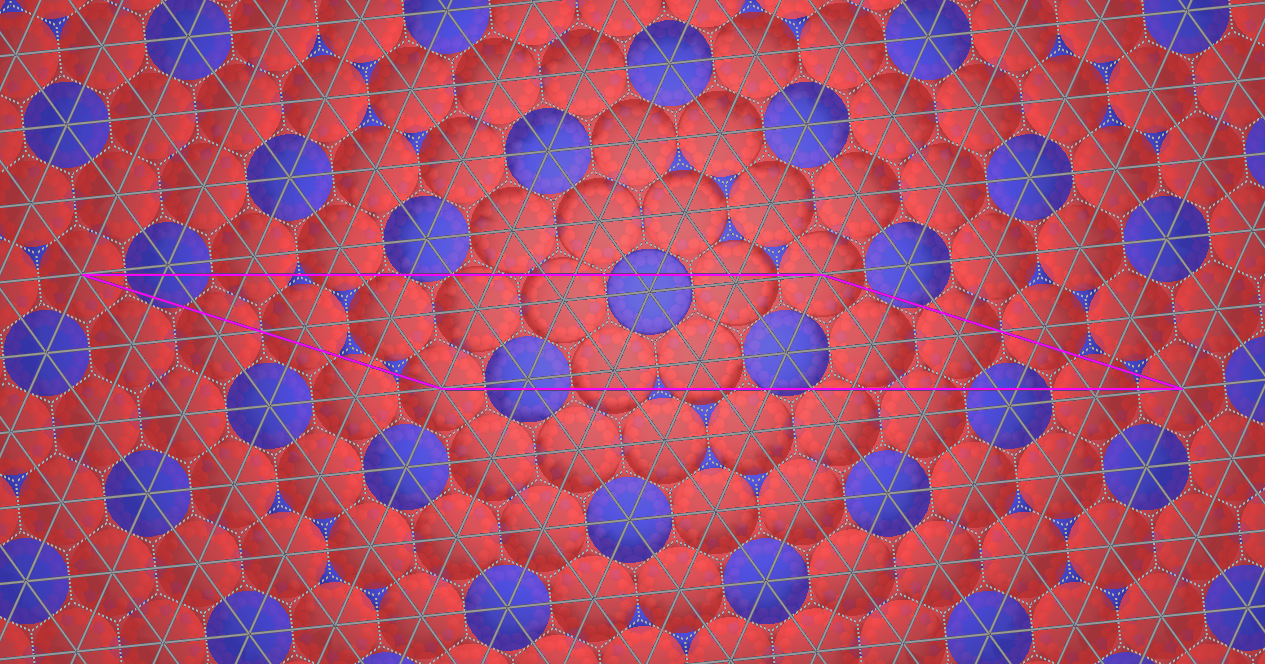} \includegraphics[scale=.14]{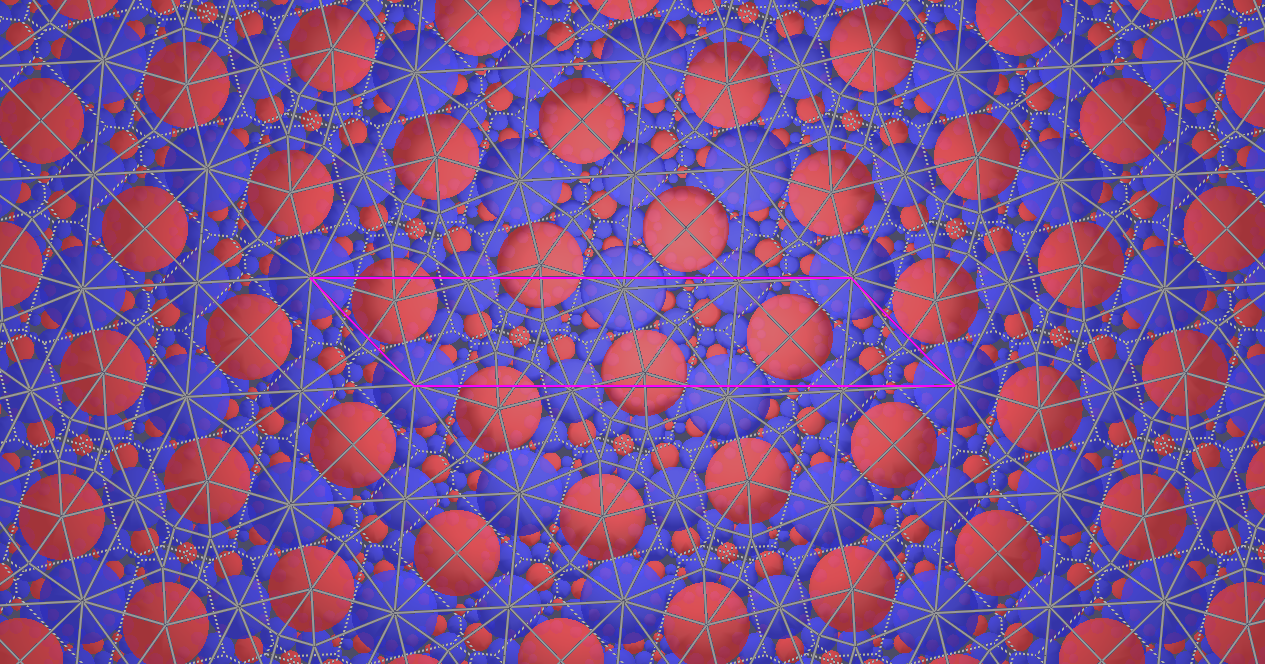}
\caption{Left: $(0, \mathbb{S}^3-\texttt{L14n24613})$-maximal horoball packing of $\mathbb{H}^3$, 
Right: $(1, \mathbb{S}^3-\texttt{L14n24613})$-maximal horoball packing of $\mathbb{H}^3$  (pictures obtained from SnapPy \cite{snappy}).}
\label{L14n24613}
\end{figure}

Figure \ref{L14n24613} shows two $(\mathbb{S}^3-\texttt{L14n24613})$-maximal horoball packings of $\mathbb{H}^3$. \texttt{L14n24613} is a $2$-component hyperbolic link from the Hoste-Thistlewaite census (refer to Figure 3 in \cite{FGGTV} for a picture of the link). The cusps of \texttt{L14n24613} are labelled in red (cusp $0$) and blue (cusp $1$) in Figure \ref{L14n24613}. Left of Figure \ref{L14n24613} shows how the $(0, \mathbb{S}^3-\texttt{L14n24613})$-maximal horoball packing of $\mathbb{H}^3$ looks like from $\infty$, where $\infty$ corresponds to cusp $0$, i.e., the horoball centered at $\infty$ maps down to a cusp neighborhood of cusp $0$. Here the horoball centered at $\infty$ (the horizontal plane) is at Euclidean height $1$. We obtained this maximal horoball packing of $\mathbb{H}^3$ by first (equivariantly) maximizing the horoballs corresponding to cusp $0$, and then,  (equivariantly) maximizing the horoballs corresponding to cusp $1$. Similarly, the picture in the right of Figure \ref{L14n24613} shows how the $(1, \mathbb{S}^3-\texttt{L14n24613})$-maximal horoball packing of $\mathbb{H}^3$ looks like from $\infty$, where $\infty$ corresponds to the cusp $1$. This is obtained by first maximizing the blue horoballs and then the red horoballs. The parallelograms drawn in pink in both pictures are the cusp parallelograms. 

Note that if we only consider the full sized red (respectively, blue) horoballs in the picture in the left (respectively, right) of Figure \ref{L14n24613}, it will give us the $0$-cusp (respectively, $1$-cusp)-circle packing of $\mathbb{C}$.

\section{Hidden symmetry and symmetries of circle packings of $\mathbb{C}$}\label{hscp}
In this section, we extract geometric information from Theorem \ref{CDHMMWorb} in the set up of symmetries of the corresponding horoball packings of $\mathbb{H}^3$. As we will see in the rest of the paper, we can capitalize on this geometric translation of Theorem \ref{CDHMMWorb} to a far greater degree. We begin with a key observation of Neil Hoffman on two component hyperbolic links. 

\subsection{Hoffman's observation}
The following result was communicated by Neil Hoffman in an AIM SQuaRE program in 2019. It aligns with an approach taken by Millichap \cite[Proof of Proposition 7.5]{Millichap} (cf. \cite[Lemma 3.2]{HMW}). We include a proof below. 

\begin{theorem} \label{2comp_sym}
Let $L$ be a $2$-component hyperbolic link with components $K_1$ and $K_2$. Let $\mathcal{F}$ be a family of hyperbolic knot complements obtained from Dehn filling the $K_2$-cusp of $L$ which geometrically converges to $\mathbb{S}^3-L$. If infinitely many elements of $\mathcal{F}$ have hidden symmetries then for any $(\mathbb{S}^3-L)$-maximal horoball packing $\mathcal{H}$ of $\mathbb{H}^3$ containing a $K_1$-horoball at $\infty$, the $\mathcal{H}$-circle packing of $\mathbb{C}$ has a rotational symmetry of order $3$ or $6$ whose fixed point is not the horocenter of any $K_2$-horoball.   
\end{theorem}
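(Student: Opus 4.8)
The plan is to feed the hypothesis into Theorem \ref{CDHMMWorb} to manufacture a covering orbifold whose rigid cusp group supplies the desired rotation, and then to read off how that rotation acts on the horoball packing. First I would pass to the non-arithmetic situation: since the figure-eight knot complement is the only arithmetic hyperbolic knot complement (\cite[Theorem 2]{Reid_arith}), at most one element of $\mathcal{F}$ is arithmetic, so infinitely many elements of $\mathcal{F}$ are non-arithmetic knot complements with hidden symmetries, and by Proposition \ref{hscharac} each of these non-normally covers a rigid-cusped orbifold. This is precisely the input for Theorem \ref{CDHMMWorb} with $n=2$: it produces a hyperbolic orbifold $O=\mathbb{H}^3/\Gamma_O$ with exactly one rigid cusp and at least one smooth cusp, together with an orbifold cover $\phi\co \mathbb{S}^3-L\to O$ whose only cusp over the rigid cusp is the $K_1$-cusp. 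Writing $\Gamma=\pi_1(\mathbb{S}^3-L)\le\Gamma_O$, and noting that the given packing $\mathcal{H}$ has a $K_1$-horoball at $\infty$ (so $\infty$ is a parabolic fixed point of $\Gamma_O$ lying over the rigid cusp), the stabilizer $W=\operatorname{Stab}_{\Gamma_O}(\infty)$ is the wallpaper group of a $(3,3,3)$-, $(2,3,6)$-, or $(2,4,4)$-cusp. Each knot complement of $\mathcal{F}$ covers an orbifold $O_i$ that, by part (1) of Theorem \ref{CDHMMWorb}, shares the rigid cusp of $O$; so Hoffman's Theorem \ref{hoff_cusptype} rules out the $(2,4,4)$ possibility, leaving $W=W_{(3,3,3)}$ or $W=W_{(2,3,6)}$, which contains a rotation $r$ of order $3$ or $6$ fixing $\infty$.

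The heart of the argument, and the step I expect to be the main obstacle, is to show that this $r$ descends to a symmetry of the $\mathcal{H}$-circle packing for an \emph{arbitrary} $(\mathbb{S}^3-L)$-maximal packing $\mathcal{H}$ with a $K_1$-horoball at $\infty$, not merely for the $\Gamma_O$-invariant packing. The crucial observation is that, because $K_1$ is the unique cusp of $\mathbb{S}^3-L$ over the rigid cusp, the set of all $K_1$-horocenters is exactly the orbit $\Gamma\cdot\infty$, and this coincides with the full orbit $\Gamma_O\cdot\infty$: any $g\in\Gamma_O$ sends $\infty$ to a parabolic point over the rigid cusp, hence to a point of the $K_1$-cusp, and two parabolic points over the same cusp of $\mathbb{S}^3-L$ are $\Gamma$-equivalent. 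Consequently the set of $K_1$-horocenters is $r$-invariant, and likewise the set of $K_2$-horocenters (the parabolic points over the smooth cusps of $O$) is $r$-invariant. Since in $\mathcal{H}$ all $K_1$-horoballs are mutually congruent and all $K_2$-horoballs are mutually congruent, while $r$ is an isometry of $\mathbb{H}^3$ fixing $\infty$ and therefore preserving Euclidean height, $r$ carries each horoball of $\mathcal{H}$ to a horoball of $\mathcal{H}$ of the same size and type. Thus $r(\mathcal{H})=\mathcal{H}$, and $r$ induces a symmetry of the $\mathcal{H}$-circle packing that matches $K_1$-circles with $K_1$-circles and $K_2$-circles with $K_2$-circles, acting on $\mathbb{C}$ as a rotation of order $3$ or $6$.

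It then remains to verify that the fixed point $p$ of $r$ is not the horocenter of any $K_2$-horoball. If it were, then $p$ would be a parabolic fixed point of $\Gamma_O$ lying over a smooth cusp of $O$, and $r$, having order $3$ or $6$ and fixing $p$, would lie in $\operatorname{Stab}_{\Gamma_O}(p)$, a conjugate of the wallpaper group of a torus or pillowcase cusp. But a torus cusp group consists only of translations and the pillowcase cusp group consists of translations and order-$2$ rotations, so neither contains a rotation of order $3$ or $6$, a contradiction. Hence the fixed point of $r$ is not a $K_2$-horocenter, which is the final assertion. The only delicate point in the whole argument is the identification $\Gamma\cdot\infty=\Gamma_O\cdot\infty$ together with the congruence of same-type horoballs, which is what upgrades ``$r$ preserves the horocenter sets'' to ``$r$ preserves every maximal packing $\mathcal{H}$''; everything else is a direct application of Proposition \ref{hscharac}, Theorem \ref{CDHMMWorb}, and Theorem \ref{hoff_cusptype}.
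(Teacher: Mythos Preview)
Your approach is essentially the paper's: apply Proposition~\ref{hscharac} and Theorem~\ref{CDHMMWorb} to produce the cover $\phi\colon\mathbb{S}^3-L\to O$, use Theorem~\ref{hoff_cusptype} to force the rigid cusp to be $(3,3,3)$ or $(2,3,6)$, extract an order $3$ or $6$ rotation $r$ from $\operatorname{Stab}_{\Gamma_O}(\infty)$, and rule out a $K_2$-horocenter as its fixed point by noting the smooth cusp group has no such rotations. All of this matches the paper.

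The one place where you work harder than necessary---and where your justification has a small gap---is the step showing $r$ preserves the given packing $\mathcal{H}$. You correctly prove $\Gamma\cdot\infty=\Gamma_O\cdot\infty$ and the analogous statement for $K_2$-horocenters, but the passage from ``$r$ permutes the horocenter sets'' to ``$r$ permutes the horoballs of $\mathcal{H}$'' via ``all $K_j$-horoballs are mutually congruent and $r$ preserves Euclidean height'' does not quite close: distinct $K_1$-horoballs at finite centers can have \emph{different} Euclidean diameters, so knowing that $r$ preserves Euclidean height and sends a $K_1$-horocenter to a $K_1$-horocenter does not by itself force $r(B)$ to coincide with the $K_1$-horoball of $\mathcal{H}$ sitting at that center. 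The paper handles this in one line by observing that, since the two cusps of $L$ biject with the two cusps of $O$, $\mathcal{H}$ is already an $O$-maximal horoball packing and hence $\Gamma_O$-invariant. Equivalently, your horocenter identity upgrades to horoballs because whenever $g\in\Gamma_O$, $h\in\Gamma$ satisfy $g(\infty)=h(\infty)$, the element $h^{-1}g\in\operatorname{Stab}_{\Gamma_O}(\infty)$ fixes $B_\infty$, giving $\Gamma\cdot B_\infty=\Gamma_O\cdot B_\infty$ (and similarly for a $K_2$-horoball). With that one-line fix your argument is complete.
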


\begin{proof}
Let $\mathcal{F}=\left \{\mathbb{S}^3-K'_{\infty, (p^i,q^i)}\right \}_{i\in \mathbb{N}}$ where the knot complement $\mathbb{S}^3-K'_{\infty, (p^i,q^i)}$ is obtained by $(p^i,q^i)$-Dehn filling on the $K_2$-cusp of $L$. By the Neumann-Reid characterization in Proposition \ref{hscharac}, each $\mathbb{S}^3-K'_{\infty, (p^i,q^i)}$ having a hidden symmetry will cover an orbifold $O_i$ with a rigid cusp. Since the knot complements in $\mathcal{F}$ geometrically converge to $\mathbb{S}^3-L$, Theorem \ref{CDHMMWorb} implies that $\mathbb{S}^3-L$ covers a $2$-cusped orbifold $O$ with one rigid cusp and one smooth cusp such that the rigid cusp is covered only by the $K_1$-cusp of $\mathbb{S}^3-L$ and after taking a subsequence, we may assume that each $O_i$ is obtained by Dehn filling the smooth cusp of $O$.

Let us denote the rigid cusp of $O$ by $c$. Since the $K_1$-cusp covers $c$ and the $K_2$-cusp covers the smooth cusp of $O$, $\mathcal{H}$ is also an $O$-maximal horoball packing of $\mathbb{H}^3$. The elements of $\pi_1^{Orb}(O)$ act as symmetries of any $O$-maximal horoball packing of $\mathbb{H}^3$ and so of $\mathcal{H}$. Note that the cusp of $O$ at $\infty$ is $c$ and the elements of $\operatorname{Stab}(\infty)$ in $\pi_1^{Orb}(O)$ act as Euclidean isometries of $\mathbb{C}$.  So, since $\operatorname{Stab}(\infty)$, a subgroup of $\pi_1^{Orb}(O)$, acts as a group of symmetries of $\mathcal{H}$, $\operatorname{Stab}(\infty)$ acts as a group of symmetries of the $\mathcal{H}$-circle packing of $\mathbb{C}$. Now, $O_i$ is covered by the knot complement $\mathbb{S}^3-K'_{\infty, (p^i,q^i)}$. So, by Theorem \ref{hoff_cusptype}, we see that cusp $c$ is $(3,3,3)$ or $(2,3,6)$. So, $\operatorname{Stab}(\infty)$ in $\pi_1^{Orb}(O)$ is $W_{(3,3,3)}$ or $W_{(2,3,6)}$. So, there is an order $3$ or $6$ rotational symmetry $r$ of the $\mathcal{H}$-circle packing of $\mathbb{C}$. 

Now, $r$ is an elliptic element in $\pi_1^{Orb}(O)$ of order $3$ or $6$ which fixes all the points in the geodesic joining its fixed point in $\mathbb{C}$ and $\infty$. So, if the fixed point of $r$ in $\mathbb{C}$ is the center of a $K_2$-horoball, then that would imply that the cross-section of the cusp of $O$ to which that $K_2$-horoball maps has a cone point of order $3$ or $6$ contradicting that it is a smooth cusp of $O$. This completes the proof. 
\end{proof}
\subsection{Hyperbolic link with three or more components}
In general, when a hyperbolic $3$-manifold $M$ covers a hyperbolic three orbiold $O$ and two or more cusps of $M$ map to the same cusp of $O$, the $M$-maximal horoball packing of $\mathbb{H}^3$ might not be an $O$-maximal packing of $\mathbb{H}^3$. This obstructs us in extending Theorem \ref{2comp_sym} for links with three or more components as in the current form. We therefore use the one-to-one nature between the un-filled cusps in the orbifold covering map provided by Theorem \ref{CDHMMWorb} and prove the following result.    

\begin{theorem}\label{hexlatt}
Let $L$ be a hyperbolic link with $n$ components $K_1, K_2, \ldots, K_n$, where $n \ge 2$. If there is an infinite family $\mathcal{F}$ of hyperbolic knot complements with hidden symmetries obtained from Dehn filling all cusps of $L$ but the $K_1$-cusp which geometrically converges to $\mathbb{S}^3-L$, then the following results hold: 
\begin{enumerate}
\item Symmetry group of the $K_1$-circle packing contains a wallpaper group $W$ such that $W$ is $W_{(3,3,3)}$ or $W_{(2,3,6)}$ and the order $3$ or $6$ (if any) elements of $W$ do not fix the horocenter of any $K_j$-horoball for $j \in \{2, \dots, n\}$. Consequently, the $K_1$-circle packing has an order $3$ or $6$ rotational symmetry that does not fix the horocenter of any $K_j$-horoball for $j \in \{2, \dots, n\}$. \label{hexsym}
\item Co-area of the (maximal) translational subgroup of $W=$ $$\frac{\text{maximal cusp area of the }K_1\text{-cusp}}{4m}$$ for some $m\in \mathbb N$. \label{hexarea}
\end{enumerate}
\end{theorem}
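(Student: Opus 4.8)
The plan is to feed the family $\mathcal{F}$ into Theorem \ref{CDHMMWorb} and then transport all of its conclusions into the geometry of the $K_1$-maximal horoball packing $\mathcal{H}_{K_1}$. First I would apply Theorem \ref{CDHMMWorb} to produce a hyperbolic $3$-orbifold $O=\H^3/\Gamma_O$ with exactly one rigid cusp $c$ and at least one smooth cusp, a cover $\phi\co \mathbb{S}^3-L=\H^3/\Gamma_L\to O$ whose only cusp over $c$ is the $K_1$-cusp, and (after passing to a subsequence) orbifolds $O_i$ obtained by Dehn filling the smooth cusps of $O$ with knot-complement covers $\phi_i\co \mathbb{S}^3-K'_i\to O_i$ satisfying $\deg\phi_i=\deg\phi$. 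Placing $c$ at $\infty$, the group $W:=\operatorname{Stab}_{\Gamma_O}(\infty)$ is the wallpaper group of the rigid cusp, hence $W_{(3,3,3)}$, $W_{(2,3,6)}$, or $W_{(2,4,4)}$. Since each $O_i$ retains the rigid cusp $c$ and is covered by the knot complement $\mathbb{S}^3-K'_i$, Theorem \ref{hoff_cusptype} rules out $c$ being a $(2,4,4)$-cusp, so $W$ is $W_{(3,3,3)}$ or $W_{(2,3,6)}$.

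The heart of the argument, and the step I expect to be the main obstacle, is to show that $W$ actually acts as a symmetry group of the $K_1$-\emph{circle} packing, even though in general (as noted just before Theorem \ref{hexlatt}) the $L$-maximal packing need not be $O$-maximal. Here the one-to-one correspondence furnished by Theorem \ref{CDHMMWorb} is decisive: because $K_1$ is the \emph{only} cusp of $L$ over $c$, I would argue that the preimage $\phi^{-1}(N_c)$ of a maximal cusp neighborhood $N_c$ of $c$ is an embedded cusp neighborhood of $K_1$ (a self-overlap of $\phi^{-1}(N_c)$ at smaller size would project, via the local isometry $\phi$, to one of $N_c$, contradicting maximality), while the self-tangency of $N_c$ lifts to one of $\phi^{-1}(N_c)$. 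Thus the maximal $K_1$-cusp neighborhood equals $\phi^{-1}(N_c)$, so its full preimage in $\H^3$, namely $\mathcal{H}_{K_1}=\pi_L^{-1}\phi^{-1}(N_c)=\pi_O^{-1}(N_c)$ with $\pi_O\co\H^3\to O$, is $\Gamma_O$-invariant. Consequently $W$ preserves $\mathcal{H}_{K_1}$ and acts on its circle packing; since the $c$-parabolic points of $\Gamma_O$ are exactly the $K_1$-horocenters, $W$ sends $K_1$-circles to $K_1$-circles and preserves the set of remaining circles, giving the required symmetry group (the only labelling subtlety, when several filled cusps share an image cusp of $O$ and $W$ permutes their circles, is immaterial to the conclusions below).

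For the remaining clause of part (1), I would argue exactly as in the final paragraph of the proof of Theorem \ref{2comp_sym}. Every $K_j$-horoball with $j\ge 2$ maps under $\phi$ to a smooth cusp of $O$, since $c$ is the unique rigid cusp and is covered solely by $K_1$. If an order $3$ or $6$ element $r\in W$ fixed the horocenter $p$ of such a horoball, then $r$ would fix the entire geodesic from $p$ to $\infty$ and hence be an order $3$ or $6$ elliptic in $\Gamma_O$ stabilizing $p$; this forces an order $3$ or $6$ cone point in the cross-section of the smooth cusp at $p$, contradicting that a torus or pillowcase cusp carries only order $2$ cone points. Therefore the order $3$/$6$ elements of $W$ fix no $K_j$-horocenter, which in particular produces the advertised rotational symmetry and completes part (1).

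Finally, for part (2) I would run the degree bookkeeping through Hoffman's formulae. Writing $\Lambda_L=\operatorname{Stab}_{\Gamma_L}(\infty)$ (a lattice, as $K_1$ is a torus cusp) and $\Lambda_W$ for the translational subgroup of $W$, one has $\Lambda_L\le\Lambda_W\le W$, so the maximal cusp area of the $K_1$-cusp, which is the co-area of $\Lambda_L$, equals $[\Lambda_W:\Lambda_L]$ times the co-area of $\Lambda_W$; it thus suffices to show $4\mid[\Lambda_W:\Lambda_L]$. Because $K_1$ is the unique cusp over $c$, the local degree of $\phi$ at the $K_1$-cusp equals $\deg\phi=[W:\Lambda_L]=[W:\Lambda_W]\,[\Lambda_W:\Lambda_L]$, where $[W:\Lambda_W]$ is $3$ for $W_{(3,3,3)}$ and $6$ for $W_{(2,3,6)}$. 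Applying Hoffman's degree formulae (\cite[Part 1 of Lemma 5.5]{Hoff_smallknot} and \cite[Theorem 1.2]{Hoff_cusp}) to the knot-complement cover $\phi_i$, whose degree equals $\deg\phi$ by Theorem \ref{CDHMMWorb}, the degree is divisible by $12$ when $c$ is $(3,3,3)$ and by $24$ when $c$ is $(2,3,6)$. In either case the factorization of $[W:\Lambda_L]$ forces $4\mid[\Lambda_W:\Lambda_L]$, so $[\Lambda_W:\Lambda_L]=4m$ for some $m\in\mathbb{N}$, and the maximal cusp area of the $K_1$-cusp equals $4m$ times the co-area of $\Lambda_W$, which is precisely the claimed identity.
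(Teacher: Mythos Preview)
Your proof is correct and follows essentially the same route as the paper: invoke Theorem \ref{CDHMMWorb} together with Theorem \ref{hoff_cusptype} to obtain the orbifold $O$ with a $(3,3,3)$ or $(2,3,6)$ cusp covered solely by $K_1$, use this one-to-one cusp correspondence to identify the $K_1$-maximal horoball packing with the $c_{rigid}$-maximal one so that $W=\operatorname{Stab}_{\Gamma_O}(\infty)$ acts on the $K_1$-circle packing, rule out order $3$/$6$ fixed points at $K_j$-horocenters by the smoothness of the other cusps, and then combine the degree equality in Theorem \ref{CDHMMWorb} with Hoffman's formulae (Theorems \ref{Hoff_333} and \ref{Hoff_236}) for the co-area computation. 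Your index-based bookkeeping $[W:\Lambda_L]=[W:\Lambda_W][\Lambda_W:\Lambda_L]$ is just a repackaging of the paper's area computation, and your more explicit justification that $\phi^{-1}(N_c)$ is the maximal $K_1$-cusp neighborhood fills in a step the paper asserts in one line.
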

\begin{proof}
We can use Proposition \ref{hscharac}, Theorem \ref{CDHMMWorb} and Theorem \ref{hoff_cusptype} to obtain a proof of Part \ref{hexsym} which follows similarly as that of Theorem \ref{2comp_sym}. The only difference is that unlike the $n=2$ case, for the general $n\ge 2$ case, an $(\mathbb{S}^3-L)$-maximal horoball packing might not be an $O$-maximal horoball packing for the orbifold $O$ from Theorem \ref{CDHMMWorb} since more than one cusp of $L$ might map to a single smooth cusp of $O$. But, since only the $K_1$-cusp of $L$ covers the rigid cusp, say $c_{rigid}$, of $O$, the $K_1$-maximal horoball packing is also a $c_{rigid}$-maximal horoball packing. So, arranging the cusp at $\infty$ to be $c_{rigid}$, one can proceed similarly as in the proof of Theorem \ref{2comp_sym} and take $\operatorname{Stab}(\infty)$ in $\pi_1^{Orb}(O)$ as $W$, which is either $W_{(3,3,3)}$ or $W_{(2,3,6)}$, and acts as a group symmetries of the $K_1$-circle packing of $\mathbb C$. Furthermore, for $j=2,\dots,n$, the order $3$ or order $6$ (if any) elements of $W$ can't fix the center of a $K_j$-horoball, otherwise, it would violate the fact that the cusp of $O$ that the $K_j$-cusp of $L$ covers is smooth, where $j \in \{2, \dots, n\}$. So, we are done with Part \ref{hexsym}. 

Before we prove Part \ref{hexarea}, we recall the following two degree formulae of Hoffman which we will use in the proof. 
\begin{theorem}[Hoffman, Part 1 of Lemma 5.5, \cite{Hoff_smallknot}]\label{Hoff_333}
If $\phi$ is an orbifold covering from a manifold $M$ to a $(3,3,3)$-cusped orbifold $O$ such that $M$ is covered by a hyperbolic knot complement , then the index of $\phi$ is $12m$ for some $m \in \mathbb{N}$. 
\end{theorem}
\begin{theorem}[Hoffman, Theorem 1.2, \cite{Hoff_cusp}]\label{Hoff_236}
For an orbifold covering $\phi$ from a manifold $M$ to a $(2,3,6)$-cusped orbifold $O$ where $M$ is covered by a hyperbolic knot complement, the index of $\phi$ is $24m$ for some $m \in \mathbb{N}$. 
\end{theorem}

Let $\Lambda$ denote the (maximal) translational subgroup of $W$. We note that the maximal cusp area of $c_{rigid}$ is equal to the co-area of $W$. If $c_{rigid}$ is  $(3,3,3)$, then using the degree formula in Theorem \ref{CDHMMWorb} and applying Theorem \ref{Hoff_333} we get, $\operatorname{degree}(\mathbb{S}^3-L \rightarrow O)= 12m$ for some $m \in \mathbb N$. Now, the only cusp of $L$ that covers $c_{rigid}$ is the $K_1$-cusp. So, the maximal cusp area of $c_{rigid}$ is 
$$\frac{\text{maximal cusp area of the }K_1\text{-cusp}}{12m}.$$ 
But, this is same as $\frac{\text{co-area of }\Lambda}{3}$. This implies 
$$\text{co-area of }\Lambda =\frac{\text{maximal cusp area of the }K_1\text{-cusp}}{4m}.$$ 
If the rigid cusp of $O$ is $(2,3,6)$, then the degree formula in Theorem \ref{CDHMMWorb} and Theorem \ref{Hoff_236} implies that $\operatorname{degree}(\mathbb{S}^3-L \rightarrow O)=24m$ for some $m \in \mathbb N$. So, 
$$\frac{\text{maximal cusp area of the }K_1\text{-cusp}}{24m}=\text{maximal cusp area of }c_{rigid}=\frac{\text{co-area of }\Lambda}{6}.$$
Thus, co-area of $\Lambda$=$\frac{\text{maximal cusp area of the }K_1\text{-cusp}}{4m}$ in this case as well. This ends the proof. 
 \end{proof}
 \begin{remark}\label{hexlattrem}
 Note that the wallpaper group $W_{(2,3,6)}$ contains the wallpaper group $W_{(3,3,3)}$ such that both  $W_{(2,3,6)}$ and $W_{(3,3,3)}$ have the same translational subgroup. Also, the non-identity elements of $W_{(3,3,3)}$ having fixed points are the order $3$ rotations. So, Theorem \ref{hexlatt} can be reformulated as the version mentioned in the introduction. 
  \end{remark}
 We emphasize the following crucial fact which we need later. 
 \begin{fact}\label{hexsymmhoro}
 We note from the proof of Theorem \ref{hexlatt} that the elements of the $W_{(2,3,6)}$ or $W_{(3,3,3)}$ wallpaper group of symmetries of the $K_1$-circle packing are also symmetries of the $K_1$-maximal horoball packing $\mathcal{H}$ of $\mathbb{H}^3$ (with a $K_1$-horoball at $\infty$). So, in $\mathcal{H}$, they must send a non-full sized $K_1$-horoball to another $K_1$-horoball of the same (Euclidean) size.
 \end{fact}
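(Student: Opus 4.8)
The plan is to observe that the wallpaper group $W$ furnished by Theorem \ref{hexlatt} is not merely an abstract automorphism group of a two-dimensional circle packing but is realized by honest isometries of $\mathbb{H}^3$, and then to exploit the fact that these isometries fix $\infty$ and act on $\mathbb{C}$ as orientation-preserving Euclidean isometries, which are exactly the maps that preserve Euclidean horoball diameters.

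First I would revisit the proof of Theorem \ref{hexlatt} and recall that $W$ was produced as $\operatorname{Stab}(\infty)$ inside $\pi_1^{Orb}(O)$, where $O$ is the orbifold of Theorem \ref{CDHMMWorb} with its rigid cusp $c_{rigid}$ placed at $\infty$; thus every $w \in W$ is an actual element of $\pi_1^{Orb}(O) \subset \operatorname{PSL}(2,\mathbb{C})$ acting on $\mathbb{H}^3$. Next I would invoke the identification, established in that same proof, that because $K_1$ is the unique cusp of $L$ covering $c_{rigid}$, the $K_1$-maximal horoball packing $\mathcal{H}$ coincides with the $c_{rigid}$-maximal horoball packing of $O$. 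Since the latter is the pre-image in $\mathbb{H}^3$ of a maximal cusp neighborhood of $c_{rigid}$, it is $\pi_1^{Orb}(O)$-invariant, as recorded in Section \ref{packingbackground}. Hence each $w \in W \subseteq \pi_1^{Orb}(O)$ carries $\mathcal{H}$ to itself, so $W$ acts as symmetries of the full three-dimensional packing $\mathcal{H}$ and not only of the associated circle packing; this is the first assertion.

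For the size statement I would use that each $w \in W$ fixes $\infty$. In the upper-half-space model an orientation-preserving isometry fixing $\infty$ has the form $(z,t) \mapsto (az+b,\,|a|\,t)$; because $W$ is a wallpaper group, its elements are rotations and translations of $\mathbb{C}$, so $|a|=1$ and $w$ preserves Euclidean heights, and therefore Euclidean diameters of horoballs. As $w$ preserves $\mathcal{H}$, which consists solely of $K_1$-horoballs, it must send every $K_1$-horoball to a $K_1$-horoball of the same Euclidean diameter; in particular a non-full-sized $K_1$-horoball goes to a $K_1$-horoball of the same Euclidean size, as claimed.

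The only step requiring genuine care — the main, if mild, obstacle — is the identification $\mathcal{H} = c_{rigid}$-maximal packing, since this is precisely what promotes the two-dimensional circle-packing symmetries to bona fide $\mathbb{H}^3$-isometries preserving the entire three-dimensional packing. It is here that the hypothesis that $K_1$ is the \emph{unique} cusp of $L$ over the rigid cusp is essential: over a smooth cusp several cusps of $L$ may collide, so the corresponding maximal packings need not agree and the argument would break down there.
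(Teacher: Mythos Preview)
Your proposal is correct and follows essentially the same reasoning the paper intends: you correctly identify $W$ as $\operatorname{Stab}(\infty)$ in $\pi_1^{Orb}(O)$, use that the $K_1$-maximal packing coincides with the $c_{rigid}$-maximal packing (since $K_1$ alone covers $c_{rigid}$) to see that $W$ preserves $\mathcal{H}$, and then observe that elements fixing $\infty$ with $|a|=1$ preserve Euclidean horoball diameters. You have in fact spelled out the size-preservation step more explicitly than the paper, which simply records the Fact as an immediate consequence of the proof of Theorem \ref{hexlatt}.
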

 
We described Theorem \ref{hexlatt} in terms of the $K_1$-circle packing since it will be easier for us to code the theorem into an algorithm and use the code for a large collection of links.  We develop the coding mechanism in Section \ref{hsalgorithm}. 
\subsection{Examples}\label{hexthmeg}
\begin{figure}
\centering 
\captionsetup{justification=centering}
\includegraphics[scale=.14]{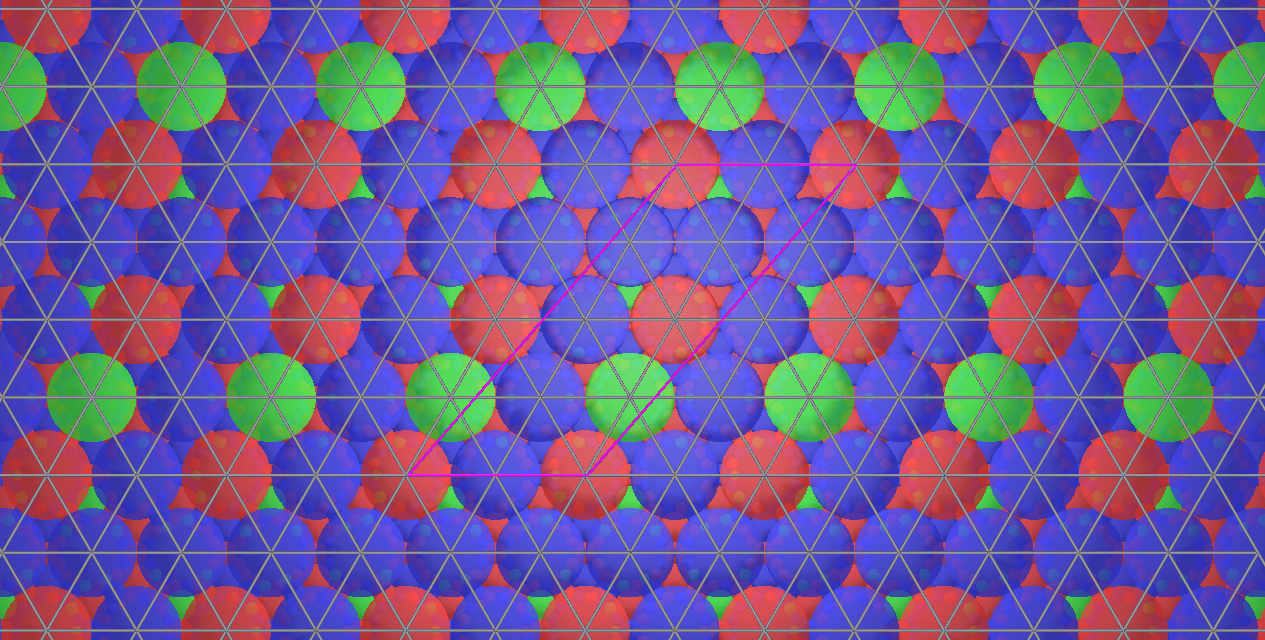}
\caption{$(0, \mathbb{S}^3-\texttt{L11n354})$-maximal horoball packing of $\mathbb{H}^3$ (picture obtained from SnapPy \cite{snappy}).}
\label{L11n354red}
\end{figure}
We now discuss some examples of circle packings coming from the links \texttt{L11n354}, \texttt{L10n101}, \texttt{L8a20} and \texttt{L14n24613} from the Hoste-Thistlewaite census (these links are \textit{tetrahedral links}, see Section \ref{tlcom}), and comment on whether the required order $3$ or $6$ symmetries from Theorem \ref{hexlatt} exist for them. See Figure 3 and Figure 4 in \cite{FGGTV} for pictures of these links. 

Figure \ref{L11n354red} shows a $(0, \mathbb{S}^3-\texttt{L11n354})$-maximal horoball packing of $\mathbb{H}^3$ where $\infty$ corresponds to cusp $0$. We can see that $0$-cusp circle packing of $\mathbb{C}$ in Figure \ref{L11n354red} has no order $3$ or $6$ rotational symmetry. 

\begin{figure}
\centering
\captionsetup{justification=centering}
 \includegraphics[scale=.125]{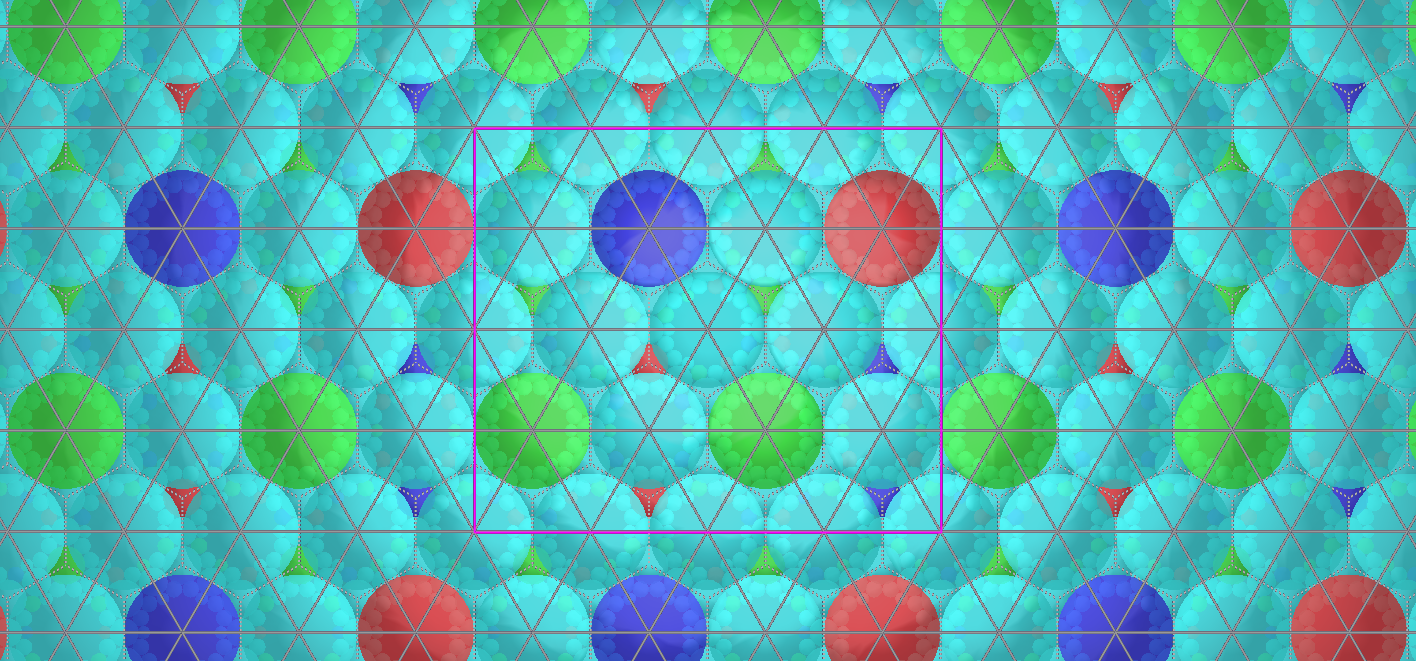}
\caption{$(3, \mathbb{S}^3-\texttt{L10n101})$-maximal horoball packing of $\mathbb{H}^3$ (picture obtained from SnapPy \cite{snappy}).}
\label{L10n101_3}
\end{figure}
A $(3, \mathbb{S}^3-\texttt{L10n101})$-maximal horoball packing of $\mathbb{H}^3$ is shown in Figure \ref{L10n101_3} where $\infty$ is at cusp $3$. The $3$-cusp circle packing in Figure \ref{L10n101_3} has rotational symmetries of order $6$ (and so of order $3$ as well). But, note that these symmetries fix the centers of red, blue or green horoballs. 

\begin{figure}
\centering
\captionsetup{justification=centering}
\includegraphics[scale=.14]{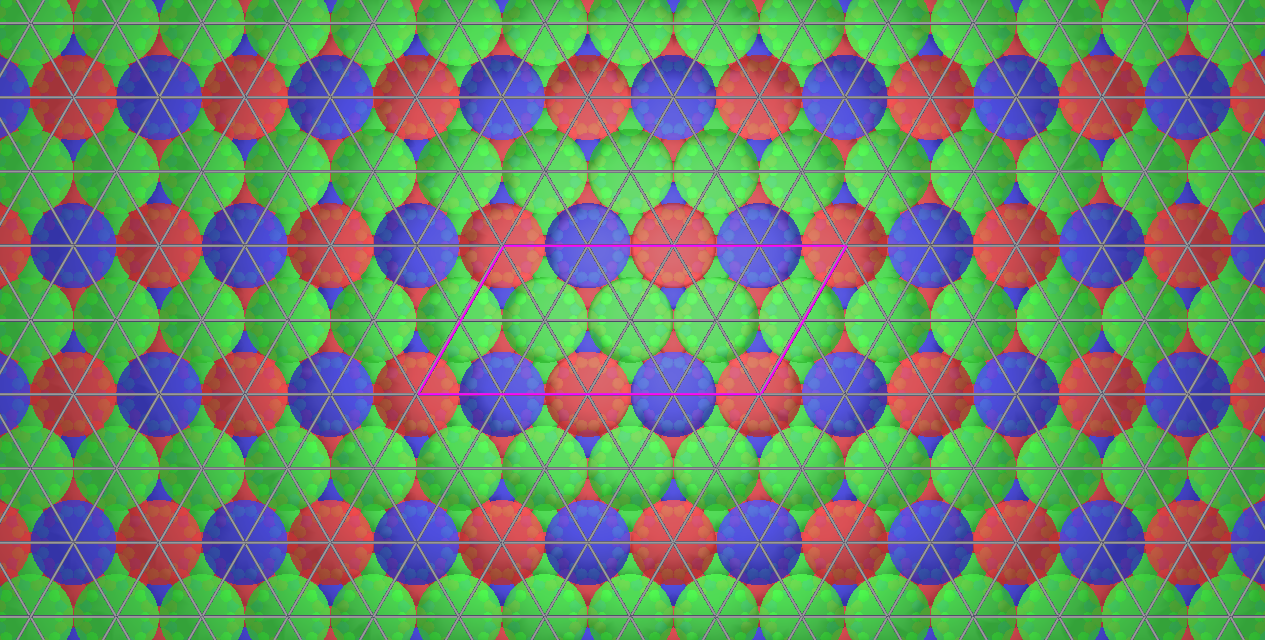}
\caption{$(0, \mathbb{S}^3-\texttt{L8a20})$-maximal horoball packing of $\mathbb{H}^3$ (picture obtained from SnapPy \cite{snappy}).}
\label{L8a20red}
\end{figure}
If we analyze Figure \ref{L8a20red}, which depicts the view of a $(0, \mathbb{S}^3-\texttt{L8a20})$-maximal horoball packing of $\mathbb{H}^3$ from cusp $0$ at $\infty$, we see that the $0$-cusp circle packing of $\mathbb{C}$ has order $3$ as well as order $6$ rotational symmetries which do not fix the centers of horoballs corresponding to the blue or green cusp. One can see that the (maximal) cusp area of cusp $0$ in $\texttt{L8a20}$ is $16 a$ where $a=\frac{\sqrt{3}}{4}$. But, the $0$-cusp circle packing does not have a hexagonal lattice symmetry of co-area less than or equal to $4a$ and so cannot have a hexagonal lattice symmetry of co-area of the form $\frac{16a}{4m}$ for some $m \in \mathbb{N}$. 

If we consider the horoball packings that we have seen in Figure \ref{L14n24613}, we see that neither the $0$-cusp circle packing of $\mathbb{C}$ in left of Figure \ref{L14n24613} nor the $1$-cusp circle packing of $\mathbb{C}$ has any order $3$ rotational symmetry. 

\subsection{Two corollaries}
One can derive Corollary \ref{nearrot} and \ref{neardistinctrot} below from Theorem \ref{hexlatt}. Corollary \ref{neardistinctrot} will play a vital role in the development of the algorithm discussed in Section \ref{hsalgorithm}. 
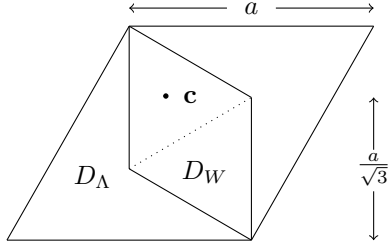
\begin{figure}
\centering 
\captionsetup{justification=centering}
\begin{tikzpicture}[scale=3.25]
\draw (0,0)--(1,0);
\draw (0,0)--(.5,.876);
\draw (.5, .876)--(1.5, .876);
\draw (1,0)--(1.5, .876);
\draw (1,0)--(.5, .292);
\draw (.5, .292)--(.5, .876);
\draw (1,0)--(1, .584);
\draw (1, .584)--(.5, .876);
\draw[dotted] (.5, .292)--(1, .584);
\filldraw[black] (.65,.59) circle (.25pt);

\node at (.75, .584){$\mathbf{c}$};
\node at (.81, .28){$D_{W}$};
\node at (.35, .26){$D_{\Lambda}$};

\draw [<-] (.5, .95)--(.9, .95);
\node at (1, .95){$a$};
\draw [->] (1.1, .95)--(1.5, .95);
\draw [<-] (1.5, .584)--(1.5, .37);
\node at (1.5, .29){$\frac{a}{\sqrt{3}}$};
\draw [->] (1.5, .2)--(1.5, 0);

\end{tikzpicture}

\caption{Order $3$ rotational symmetries with fixed point near $\mathbf{c}$.}
\label{coropic}
\end{figure}
\begin{corollary}\label{nearrot}
Let $L=K_1\sqcup K_2\ldots \sqcup K_n$ be an $n$-component hyperbolic link where $n \ge 2$. Let $\mathcal{F}$ be an infinite family of hyperbolic knot complements with hidden symmetries obtained by Dehn filling all cusps of $L$ but the $K_1$-cusp and geometrically converging to $\mathbb{S}^3-L$. Then for each circle $C$ in the $K_1$-circle packing of $\mathbb{C}$, there is an order $3$ or $6$ rotational symmetry $r$ of the $K_1$-circle packing of $\mathbb{C}$ that does not fix the horocenter of any $K_j$-horoball for $j \in \{2,\ldots,n\}$ such that the distance between the center of $C$ and the fixed point of $r$ is less than or equal to $\frac{\sqrt{\text{maximal cusp area of the }K_1\text{-cusp}}}{\sqrt{2}3^{\frac{5}{4}}}$. 
\end{corollary}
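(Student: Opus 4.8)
The plan is to deduce the statement from Theorem \ref{hexlatt} together with a covering-radius estimate for the lattice of order-$3$ rotation centers of $W_{(3,3,3)}$. Write $A$ for the maximal cusp area of the $K_1$-cusp. First I would invoke Theorem \ref{hexlatt}: since $\mathcal{F}$ is infinite, the symmetry group of the $K_1$-circle packing contains a wallpaper group $W$, equal to $W_{(3,3,3)}$ or $W_{(2,3,6)}$, whose order $3$ or $6$ elements fix no $K_j$-horocenter, and whose translational subgroup $\Lambda$ has co-area $\frac{A}{4m}$ for some $m \in \mathbb{N}$. Because $W_{(3,3,3)} \subseteq W$ with the same translational subgroup $\Lambda$, it suffices to use the order $3$ rotations of $W_{(3,3,3)}$; each of these lies in $W$, so its fixed point is never a $K_j$-horocenter, and the rotation $r$ produced at the end will automatically satisfy the ``good'' condition required by the corollary.

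The central step is to describe the set of fixed points of the order $3$ rotations of $W_{(3,3,3)}$ and to measure how densely it sits in $\mathbb{C}$. Using the presentation of $W_{(3,3,3)}$ recalled earlier, I would check that this set is the union of three cosets of $\Lambda$ --- the lattice points together with the two ``centroid'' classes of the associated triangular tiling (the vertices $\mathbf{v}_1, \dots, \mathbf{v}_6$ of Figure \ref{W333fd} give representatives) --- and that it is itself a triangular lattice $\Lambda'$ containing $\Lambda$ with index $3$. Consequently $\Lambda'$ has co-area $\frac{A}{12m}$, and if $\ell'$ denotes its lattice constant then $\frac{\sqrt 3}{2}(\ell')^2 = \frac{A}{12m}$.

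Given the center $\mathbf{c}$ of a circle $C$, I would then choose a point $\mathbf{v} \in \Lambda'$ nearest to $\mathbf{c}$, so that $|\mathbf{c} - \mathbf{v}|$ is at most the covering radius of $\Lambda'$. For a triangular lattice the deepest holes are the centroids of its Delaunay triangles, so the covering radius equals the circumradius $\frac{\ell'}{\sqrt 3}$ of a fundamental equilateral triangle. Combining this with the area relation above gives
\[
|\mathbf{c} - \mathbf{v}| \;\le\; \frac{\ell'}{\sqrt 3} \;=\; \sqrt{\frac{A}{18\sqrt 3\, m}} \;=\; \frac{\sqrt{A}}{\sqrt 2\, 3^{5/4}\sqrt m} \;\le\; \frac{\sqrt{A}}{\sqrt 2\, 3^{5/4}},
\]
the last inequality because $m \ge 1$. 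Taking $r = r_{3,\mathbf{v}}$, which is an order $3$ rotation in $W_{(3,3,3)} \subseteq W$ and hence fixes no $K_j$-horocenter, finishes the argument; Figure \ref{coropic} illustrates exactly this configuration of nearby rotation centers around $\mathbf{c}$.

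The hard part will be the second paragraph: correctly identifying the fixed-point set as the index-$3$ triangular sublattice $\Lambda'$ and pinning down its lattice constant $\ell'$, since any slip there feeds directly into the final constant $\frac{1}{\sqrt 2\, 3^{5/4}}$. The covering-radius step is then standard, and both the reduction to $W_{(3,3,3)}$ and the area bookkeeping are immediate from Theorem \ref{hexlatt}.
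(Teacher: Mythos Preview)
Your proposal is correct and follows essentially the same approach as the paper. The paper works directly with a fundamental domain $D_W$ of $W$ (a hexagonal rhombus with side length $a/\sqrt{3}$, where $a$ is the side of $D_\Lambda$) and observes that any point in $D_W$ lies within $a/3$ of a vertex; you repackage this same computation as the covering radius of the index-$3$ triangular lattice $\Lambda'$ of order-$3$ rotation centers, arriving at the identical bound $\ell'/\sqrt{3}=a/3$.
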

\begin{proof}
Since $\mathcal{F}$ has infinitely many hyperbolic knot complements with hidden symmetries which geometrically converge to $\mathbb{S}^3-L$, Theorem \ref{hexlatt} tells us that the symmetry group of the $K_1$-circle packing of $\mathbb{C}$ contains a wallpaper group $W$ that is either $W_{(3,3,3)}$ or $W_{(2,3,6)}$ and satisfies the properties laid out in that theorem. 
Let $\Lambda$ be the (maximal) translational subgroup of $W$ and $\mathbf{c}$ be the center of $C$. Then $\mathbf{c}\in D_W \subset D_{\Lambda}$ where $D_{\Lambda}$ and $D_W$ are some fundamental domains of $\Lambda$ and $W$ respectively. $D_{\Lambda}$ is the region bounded by a hexagonal rhombus. If the length of the sides of $D_{\Lambda}$ is $a$, then by Part \ref{hexarea} of Theorem \ref{hexlatt}, 
\begin{equation*}
\frac{\sqrt{3} a^2}{2}=2 \left(\frac{\sqrt{3} a^2}{4}\right) \le \frac{\text{maximal cusp area of the }K_1\text{-cusp}}{4}.
\end{equation*}
This means $a\le \frac{\sqrt{\text{maximal cusp area of the }K_1\text{-cusp}}}{\sqrt{2}3^{\frac{1}{4}}}$. Now, the region $D_W$ is also bounded by a hexagonal rhombus such that the length of the sides of $D_W$  is $\frac{2}{3}\frac{\sqrt{3}a}{2}=\frac{a}{\sqrt{3}}$ (see Figure \ref{coropic}). 
So, given any point in $D_W$, there is at-least one vertex of $D_W$ whose distance from that point is less than or equal to $\frac{2}{3} \frac{\sqrt{3}}{2}\frac{a}{\sqrt{3}}=\frac{a}{3}$ since this given point would belong to one of the equilateral triangles divided by one of $D_W$'s diagonal as shown in Figure \ref{coropic}. But, $\frac{a}{3} \le \frac{\sqrt{\text{maximal cusp area of the }K_1\text{-cusp}}}{\sqrt{2}3^{\frac{5}{4}}}$ and each vertex of $D_W$ is the fixed point of a rotation $r$ in $W$ of order $3$ or $6$. Moreover, $r$ does not fix the center of a $K_j$-horoball for $j \in \{2, \dots, n\}$ . That completes the proof.

\end{proof}
\begin{remark}\label{order3rotation}
Note that the fixed point of $r$ in the above corollary is always the fixed point of an order $3$ rotational symmetry of the $K_1$-circle packing of $\mathbb{C}$ (see Remark \ref{hexlattrem}). 
\end{remark}
We could argue similarly as in the proof above to get slightly different distance bounds as in the following result. Recall from Section \ref{cuspsection} that $r_{n,\mathbf{x}}$ denotes order $n$ rotational symmetry of the complex plane with fixed point $\mathbf{x}$. 
\begin{corollary}\label{neardistinctrot}
Let $L=K_1\sqcup K_2\ldots \sqcup K_n$ be an $n$-component hyperbolic link where $n \ge 2$. Let $\mathcal{F}$ be an infinite family of hyperbolic knot complements with hidden symmetries obtained by Dehn filling all cusps of $L$ but the $K_1$-cusp and geometrically converging to $\mathbb{S}^3-L$. Let $\mathbf{c}$ denote the center of a circle $C$ in the $K_1$-circle packing of $\mathbb{C}$. Then the $K_1$-circle packing of $\mathbb{C}$ has symmetries $r_{3,\mathbf{p_1}}$ and $r_{3,\mathbf{p_2}}$ neither fixing the horocenter of any $K_j$-horoball for $j \in \{2,\ldots,n\}$ such that $\mathbf{p_1}\ne\mathbf{p_2}$ and $0 <\lVert \mathbf{c}-\mathbf{p}_m\rVert\le \frac{\sqrt{\text{maximal cusp area of the }K_1\text{-cusp}}}{\sqrt{2}3^{\frac{3}{4}}}$ for each $m=1,2$. 
\end{corollary}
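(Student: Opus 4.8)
The plan is to follow the template of the proof of Corollary \ref{nearrot}, upgrading the conclusion from a single nearby order-$3$ center to two distinct ones and correspondingly relaxing the distance bound by a factor of $\sqrt{3}$. First I would invoke Theorem \ref{hexlatt} to produce the wallpaper group $W$ (either $W_{(3,3,3)}$ or $W_{(2,3,6)}$) sitting inside the symmetry group of the $K_1$-circle packing, together with its translational subgroup $\Lambda$ whose fundamental domain $D_{\Lambda}$ is a hexagonal rhombus of side $a$. Exactly as in Corollary \ref{nearrot}, Part \ref{hexarea} of Theorem \ref{hexlatt} gives $\frac{\sqrt{3}a^{2}}{2}\le \frac{\text{maximal cusp area of the }K_1\text{-cusp}}{4}$, hence $a\le \frac{\sqrt{\text{maximal cusp area of the }K_1\text{-cusp}}}{\sqrt{2}\,3^{1/4}}$, so that $\frac{a}{\sqrt{3}}\le \frac{\sqrt{\text{maximal cusp area of the }K_1\text{-cusp}}}{\sqrt{2}\,3^{3/4}}$, which is precisely the target bound.

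The geometric heart of the argument is to describe the set $\mathcal{L}$ of fixed points of order-$3$ rotations in $W$. Whether $W$ equals $W_{(3,3,3)}$ or $W_{(2,3,6)}$, every order-$6$ center also carries the order-$3$ rotation $r_{6,\mathbf{p}}^{2}$, so $\mathcal{L}$ coincides with the set of order-$3$ centers of the $W_{(3,3,3)}$ subgroup (cf.\ Remark \ref{hexlattrem} and Remark \ref{order3rotation}); these are the $\Lambda$-translates of $\{\mathbf{v}_1,\mathbf{v}_5,\mathbf{v}_6\}$ from Figure \ref{W333fd}, and they form a triangular lattice whose nearest-neighbor distance is $\frac{a}{\sqrt{3}}$, i.e.\ the side length of the fundamental domain $D_W$. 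The point $\mathbf{c}$ lies in (the closure of) some equilateral fundamental triangle $T$ of $\mathcal{L}$ with side $\frac{a}{\sqrt{3}}$ and whose three vertices belong to $\mathcal{L}$. Since the distance from a point of an equilateral triangle to a fixed vertex is maximized at one of the two opposite vertices, it never exceeds the side length, so \emph{all three} vertices of $T$ lie within $\frac{a}{\sqrt{3}}$ of $\mathbf{c}$.

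I would then pick $\mathbf{p}_1,\mathbf{p}_2$ from among the three vertices of $T$. As these vertices are pairwise distinct, at most one of them can coincide with $\mathbf{c}$, so at least two of them, say $\mathbf{p}_1\ne \mathbf{p}_2$, satisfy $0<\lVert \mathbf{c}-\mathbf{p}_m\rVert\le \frac{a}{\sqrt{3}}\le \frac{\sqrt{\text{maximal cusp area of the }K_1\text{-cusp}}}{\sqrt{2}\,3^{3/4}}$ for $m=1,2$. Because each $\mathbf{p}_m\in \mathcal{L}$, the rotation $r_{3,\mathbf{p}_m}$ is an order-$3$ element of $W$, hence a symmetry of the $K_1$-circle packing, and by Part \ref{hexsym} of Theorem \ref{hexlatt} it fixes the horocenter of no $K_j$-horoball with $j\in\{2,\dots,n\}$. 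This produces the two required symmetries.

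The only real subtlety, and the step I expect to need the most care, is the elementary metric observation that every point of the equilateral triangle $T$ lies within one side length of \emph{all three} of its vertices, rather than merely of the nearest one as sufficed for Corollary \ref{nearrot}. It is this sharper fact that both forces two \emph{distinct} order-$3$ centers into the enlarged radius and lets us discard the at most one vertex coinciding with $\mathbf{c}$ while still retaining the strict lower bound $0<\lVert \mathbf{c}-\mathbf{p}_m\rVert$.
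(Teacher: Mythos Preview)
Your proposal is correct and is precisely the argument the paper has in mind: the paper does not spell out a proof of Corollary \ref{neardistinctrot}, merely remarking that one argues ``similarly as in the proof above to get slightly different distance bounds,'' and your use of the equilateral triangle $T$ of side $a/\sqrt{3}$ in the lattice of order-$3$ centers is exactly that similar argument, with the observation that a point of $T$ lies within one side length of \emph{all three} vertices (not just within the circumradius of one) supplying the two distinct centers and the relaxed bound $a/\sqrt{3}$ in place of $a/3$.
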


\section{Hidden symmetries, covering maps and horoball packings}\label{coversection}
In this section, we explore some further implications of Theorem \ref{CDHMMWorb}. We begin with a proposition which is a Dehn filling version of \cite[Theorem 4.1]{CDHMMWIMRN} for link complements. \cite[Theorem 4.1]{CDHMMWIMRN} says that if a manifold with volume less than or equal to $6v_0$ is covered by a hyperbolic knot complement with hidden symmetries, then the knot is the figure eight knot. The following proposition says that this volume bound can be extended to $8v_0$ in the Dehn filling set up of a link complement. 

\begin{proposition}\label{8v0prop}
\eightvprop
\end{proposition}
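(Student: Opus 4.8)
The plan is to argue by contradiction, closely following the opening moves in the proof of Theorem \ref{2comp_sym}. Suppose $\mathcal{F}$ has infinitely many elements with hidden symmetries, and relabel the components of $L$ so that the un-filled cusp is the $K_1$-cusp, writing $\mathcal{F}=\{\mathbb{S}^3-K_i'\}_i$. By Proposition \ref{hscharac}, each such knot complement with hidden symmetries non-normally covers a rigid-cusped orbifold $O_i$. Since the members of $\mathcal{F}$ geometrically converge to $\mathbb{S}^3-L$, Theorem \ref{CDHMMWorb} produces a single hyperbolic $3$-orbifold $O$ with exactly one rigid cusp $c_r$ and at least one smooth cusp, and a cover $\phi\colon \mathbb{S}^3-L\to O$ of some degree $d$, such that only the $K_1$-cusp covers $c_r$ and (after passing to a subsequence) each $O_i$ is obtained by Dehn filling the smooth cusps of $O$ with $\deg(\phi_i)=\deg(\phi)=d$.

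Next I would pin down the type of $c_r$ and extract a degree bound. Because the smooth-cusp Dehn fillings leave $c_r$ untouched, each $O_i$ inherits a rigid cusp of the same type as $c_r$, and that cusp is covered by the knot complement $\mathbb{S}^3-K_i'$. Theorem \ref{hoff_cusptype} forbids a knot complement from covering a $(2,4,4)$-cusp, so $c_r$ is either $(3,3,3)$ or $(2,3,6)$. Applying Hoffman's degree formulae (Theorems \ref{Hoff_333} and \ref{Hoff_236}) to $\phi_i\colon \mathbb{S}^3-K_i'\to O_i$ and using $\deg(\phi)=\deg(\phi_i)=d$, I obtain $d=12m$ in the $(3,3,3)$ case and $d=24m$ in the $(2,3,6)$ case for some $m\in\mathbb{N}$; in particular $d\ge 12$ always, and $d\ge 24$ when $c_r$ is $(2,3,6)$.

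The volume estimate then reads $\operatorname{vol}(O)=\operatorname{vol}(\mathbb{S}^3-L)/d\le 8v_0/d$. If $c_r$ is $(2,3,6)$, then $d\ge 24$ forces $\operatorname{vol}(O)\le v_0/3$; but $O$ carries a $(2,3,6)$-cusp together with a smooth cusp, and Gaona's theorem \cite[Theorem 3.2]{GTthesis} gives that the smallest such orbifold has volume $5v_0/6>v_0/3$, a contradiction. If $c_r$ is $(3,3,3)$, then $d\ge 12$ yields only $\operatorname{vol}(O)\le 2v_0/3$, and I must rule out the existence of an orientable hyperbolic $3$-orbifold with a $(3,3,3)$-cusp and at least one smooth cusp of volume this small. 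Here I would appeal to Adams' classification of low-volume multi-cusped orbifolds \cite{Adams_multi}: enumerating the orientable multi-cusped orbifolds of volume at most $2v_0/3$ and inspecting their cusp types shows that none has exactly one rigid cusp of type $(3,3,3)$ together with a smooth cusp, producing the final contradiction.

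I expect the $(3,3,3)$ case to be the main obstacle. The $(2,3,6)$ subcase closes immediately from Gaona's sharp bound, since there the degree is a multiple of $24$ and the volume ceiling drops to $v_0/3$. In the $(3,3,3)$ subcase the degree is only forced to be a multiple of $12$, so the ceiling $2v_0/3$ is twice as large and sits well above the volume of the smallest \emph{single}-cusped $(3,3,3)$-orbifold; the real work is to use Adams' classification to certify that adjoining even one smooth cusp to a $(3,3,3)$-cusped orbifold already pushes the volume past $2v_0/3$. This multi-cusp input is precisely what allows the bound $6v_0$ of \cite[Theorem 4.1]{CDHMMWIMRN} to be raised to $8v_0$ in the present Dehn-filling setting.
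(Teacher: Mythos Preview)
Your proposal is correct and follows essentially the same route as the paper: contradiction via Theorem \ref{CDHMMWorb}, Hoffman's degree formulae to bound $\operatorname{vol}(O)$, and then Adams' low-volume classification to see that no multi-cusped orbifold of the resulting volume can have one rigid cusp and a smooth cusp. The only cosmetic difference is that for the $(2,3,6)$ subcase the paper cites \cite[Lemma 2.2]{Adams_multi} (forcing two $(2,3,6)$ cusps) rather than Gaona's $5v_0/6$ bound, and for the $(3,3,3)$ subcase it invokes \cite[Lemma 2.3]{Adams_multi} explicitly (forcing two $(3,3,3)$ cusps); both arguments yield the same contradiction.
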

\begin{proof}
Let $L$ be a hyperbolic link with two or more components such that the volume of $\mathbb{S}^3-L$ is less than or equal to $8v_0$. Suppose for contradiction we have an infinite family $\mathcal{F}$ of hyperbolic knot complements with hidden symmetries obtained from Dehn filling all but a fixed cusp of $L$ which geometrically converges to $\mathbb{S}^3-L$. If we index the knots whose complements belong to $\mathcal{F}$ by $K'_i$ (i.e. $\mathcal{F}=\left \{\mathbb{S}^3-K'_i\right \}_i$),  then, by the Neumann-Reid characterization in Proposition \ref{hscharac} and Hoffman's result in Theorem \ref{hoff_cusptype}, we can see that the complement of each $K'_i$ covers an orbifold $O_i$ with a $(3,3,3)$ or $(2,3,6)$ cusp via some covering map $\phi_i: \mathbb{S}^3-K'_i \rightarrow O_i$. Now, we can use Theorem \ref{CDHMMWorb} to see that there is an orbifold covering $\phi:\mathbb{S}^3-L \rightarrow O$ where orbifold $O$ has exactly one rigid cusp $c_{rigid}$ and at-least one smooth cusp such that $c_{rigid}$ is only covered by the un-filled cusp of $L$ and after taking a subsequence one can say that each $O_i$ is obtained by Dehn filling the smooth cusps of $O$ and degree of each $\phi_i$ is equal to the degree of $\phi$. 

Now if $c_{rigid}$ is $(3,3,3)$ (respectively $(2,3,6)$), then, by Hoffman's degree formula in Theorem \ref{Hoff_333} (respectively Theorem \ref{Hoff_236}), we see that the degree of $\phi$ is $12m$ (respectively $24m$) for some $m \in \mathbb{N}$. Then, $O$ is a multi-cusped orbifold with a $(3,3,3)$ cusp (respectively $(2,3,6)$ cusp) and volume less than $\frac{8v_0}{12m} \le \frac{2v_0}{3}$ (respectively, $\frac{8v_0}{24m} \le \frac{v_0}{3}$). Then, \cite[Lemma 2.3]{Adams_multi} (respectively, \cite[Lemma 2.2]{Adams_multi}) implies that $O$ is a two cusped orbifold with two $(3,3,3)$ cusps (respectively, two $(2,3,6)$ cusps). This contradicts that $O$ has at-least one smooth cusp. So, we conclude that no such family $\mathcal{F}$ can exist, which completes the proof. 
\end{proof}

Theorem \ref{CDHMMWorb} also alludes us to investigate the hyperbolic orbifolds with exactly one rigid cusp and at-least one smooth cusp, and the one rigid-cusped orbifolds obtained by Dehn filling all of their smooth cusps. Unfortunately, we do not know many orbifolds with exactly one rigid cusp and at-least one smooth cusp. However, it is prudent to study the ones that are low-volume as we could potentially leverage volume obstructions in the study of their Dehn fillings. Tyler Gaona in his PhD thesis \cite{GTthesis} examined a $2$-cusped orbifold of volume $\frac{5v_0}{6}$ with one $(2,3,6)$ cusp and one pillowcase cusp. Its isotropy graph can be seen from the picture in the right in \cite[Figure 5]{GTthesis}. In this paper we will refer to this orbifold as $O_{(2,3,6),(2,2,2,2)}$. In fact, Gaona proved in \cite[Theorem 3.2]{GTthesis} that $O_{(2,3,6),(2,2,2,2)}$ has the least volume amongst the hyperbolic $3$-orbifolds with one $(2,3,6)$-cusp and one smooth cusp. Proposition \ref{coversmallestmulticusp} below shows that covering $O_{(2,3,6),(2,2,2,2)}$ with only one cusp mapping to the $(2,3,6)$ cusp of $O_{(2,3,6),(2,2,2,2)}$ obstructs a link complement from being a good candidate for finding infinitely many knot complements with hidden symmetries via (appropriate) Dehn fillings. 

We first need the following lemma which is set in the context of (Dehn fillings of) orbifolds and can be obtained by combining Hoffman's work from \cite[Section 2.3]{Hoff_smallknot}, \cite[Proof of Part 1 of Lemma 5.5]{Hoff_smallknot} and  \cite[Theorem 1.2]{Hoff_cusp}. Hence, we attribute it to Hoffman.

\begin{lemma}[Hoffman]\label{isovertex}
Let $O$ be a hyperbolic $3$-orbifold with exactly one rigid cusp and at-least one smooth cusp. Suppose there exists a one-cusped hyperbolic $3$-orbifold obtained by Dehn filling all the smooth cusps of $O$ that is covered by a hyperbolic knot complement. Then the isotropy graph of $O$ has a finite vertex $p$ of the type  $(2,3,3)$ or $(2,3,4)$ or $(2,3,5)$, i.e., the isotropy group $G_p$ of $p$ in $\pi_1^{Orb}(O)$ is either $A_4$ or $S_4$ or $A_5$.
\end{lemma}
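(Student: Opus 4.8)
The plan is to first Dehn fill all the smooth cusps of $O$ to produce the one-cusped orbifold $O'$ whose existence is hypothesized, and then to locate the desired vertex inside $O'$ before checking that it already lives in $O$. Let $N$ be the hyperbolic knot complement covering $O'$, write $\Gamma=\pi_1^{Orb}(O')$, and let $W$ be the peripheral (wallpaper) subgroup of the single cusp of $O'$, with translation lattice $\Lambda\le W$. Since $O$ has exactly one rigid cusp and we only fill the smooth ones, the cusp of $O'$ is the rigid cusp of $O$; as $O'$ is covered by a knot complement, Theorem \ref{hoff_cusptype} rules out a $(2,4,4)$-cusp, so this cusp is of type $(3,3,3)$ or $(2,3,6)$ and $W$ is $W_{(3,3,3)}$ or $W_{(2,3,6)}$.

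The engine is Hoffman's cusp-killing homomorphism $\psi\colon\Gamma\to F:=\Gamma/\langle\langle\Lambda\rangle\rangle$ obtained by killing the normal closure of the peripheral translation lattice. The key observation is that $\pi_1(N)\le\ker\psi$: the cusp of $N$ is a torus, so its peripheral subgroup $\pi_1(N)\cap W$ is torsion-free and therefore contained in $\Lambda$; in particular the meridian $\mu$ of the knot lies in $\Lambda$, so $\psi(\mu)=1$; and since $\mu$ normally generates the knot group $\pi_1(N)$, we get $\psi(\pi_1(N))=\langle\langle\psi(\mu)\rangle\rangle=1$. Consequently $F$ is finite and $|F|$ divides the covering degree $\deg(N\to O')$. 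This is exactly the mechanism underlying the degree formulae in Theorem \ref{Hoff_333} and Theorem \ref{Hoff_236}: there $|F|=12=|A_4|$ in the $(3,3,3)$-case and $|F|=24=|S_4|$ in the $(2,3,6)$-case.

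Next I would extract the vertex from the structure of $\psi$, following \cite[Section 2.3 and Proof of Part 1 of Lemma 5.5]{Hoff_smallknot} and \cite[Theorem 1.2]{Hoff_cusp}. The rigid cusp carries an order-$3$ rotational axis, giving an order-$3$ edge of the singular locus running into the interior; tracing it through the finite trivalent singular graph produces a finite vertex $p$ whose isotropy group $G_p$ is a spherical triangle group containing an order-$3$ element, and hence is cyclic, dihedral, or one of the platonic groups $A_4$, $S_4$, $A_5$. The content of Hoffman's computation is that the knot-complement hypothesis --- encoded precisely in the vanishing $\psi(\pi_1(N))=1$ together with $H_1(N)=\mathbb{Z}$ --- forces the surviving finite quotient $F$ to contain a platonic group and rules out the cyclic and dihedral possibilities for $G_p$, leaving $G_p\in\{A_4,S_4,A_5\}$, i.e. a vertex of type $(2,3,3)$, $(2,3,4)$ or $(2,3,5)$. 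I expect this step --- faithfully transcribing Hoffman's elimination of the cyclic and dihedral cases into the orbifold-to-orbifold language used here --- to be the main obstacle, since the cited papers phrase it for covers of a single rigid-cusped orbifold by a manifold rather than for the filled orbifold $O'$.

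Finally I would check that the vertex is already present in $O$. The only modifications to the singular locus in passing from $O$ to $O'$ occur inside the filling pieces: a filled torus cusp contributes $(\mathbb{S}^1\times\mathbb{D}^2)_d$, whose singular set is a single cyclic core circle with no vertices, while a filled pillowcase cusp contributes $P_d$, whose singular set adds only $(2,2,d)$-vertices, i.e. dihedral ones. Thus Dehn filling smooth cusps never creates a vertex of platonic type, so the vertex $p$ found in $O'$ must lie in $O$, which completes the argument.
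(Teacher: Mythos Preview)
Your proposal is correct and follows essentially the same approach as the paper. The paper handles the step you flag as the main obstacle by passing, in the $(2,3,6)$ case, to the double cover $O''$ of $O'$ (which has a $(3,3,3)$ cusp and is still covered by the knot complement by \cite[Theorem~1.2]{Hoff_cusp}), applying the cusp-killing argument of \cite[Proof of Part~1 of Lemma~5.5]{Hoff_smallknot} there to produce a platonic vertex joined to the cusp by a label-$3$ edge, and then pushing this vertex down to $O'$ (using that $A_4$, $S_4$, $A_5$ are not subgroups of dihedral groups) and finally back to $O$ exactly as you describe.
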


\begin{proof}

\begin{figure}
\centering 
\captionsetup{justification=centering}
\includegraphics[scale=.5]{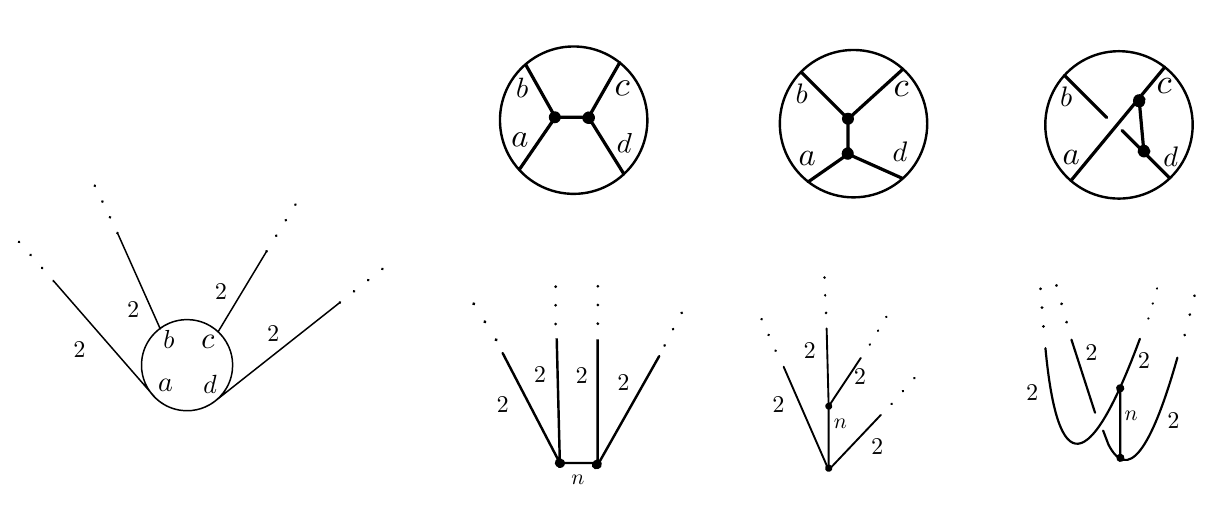}
\caption{First: Neighborhood of a pillowcase cusp in the isotropy graph, Second: Type I filling (top), Local picture of the isotropy graph after type I filling (bottom), Third: Type II filling (top), Local picture of the isotropy graph after type II filling (bottom), Fourth: Type III filling (top), Local picture of the isotropy graph after type III filling (bottom).}
\label{pillowcasefilling}
\end{figure}

Let $O'$ be a one-cusped hyperbolic $3$-orbifold obtained by Dehn filling all the smooth cusps of $O$ that is covered by a hyperbolic knot complement $\mathbb{S}^3-K$. Hoffman's result in Theorem \ref{hoff_cusptype} implies that the cusp of $O'$ is of type $(3,3,3)$ or $(2,3,6)$. If $O'$ has a $(2,3,6)$-cusp, then \cite[Theorem 1.2]{Hoff_cusp} implies that $\mathbb{S}^3-K$ also covers the double cover of $O'$ which has a $(3,3,3)$ cusp. Define $O''$ to be this double cover when $O'$ has a $(2,3,6)$ cusp, and $O'$ when $O'$ has a $(3,3,3)$ cusp. Since the cusp of $O''$ is $(3,3,3)$ in either case and $O''$ is covered by a hyperbolic knot complement, the argument using cusp killing homomorphism (see \cite[Section 2.3]{Hoff_smallknot}) given in \cite[Proof of Part 1 of Lemma 5.5]{Hoff_smallknot} tells us that the isotropy graph of $O''$ will have a finite vertex $p''$ of type $(2,3,3)$ or $(2,3,4)$ or $(2,3,5)$ connected to the $(3,3,3)$-cusp by an edge $e''$ of label $3$.  

First consider the case when $O'$ has a $(2,3,6)$-cusp. Since neither of $A_4$, $S_4$ and $A_5$ is a subgroup of a finite dihedral group, image of $p''$ in $O'$ is also a finite vertex of the type $(2,3,3)$ or $(2,3,4)$ or $(2,3,5)$. Moreover, the image of $e''$ in isotropy graph of $O'$ joins the cusp of $O'$ and the image of $p''$  via a collection of edges whose labels are multiples of $3$. So, if there are finite vertices in the image of $e''$ other than the image of $p''$, they would be of the type $(2,3,3)$. Let $p'$ denote the first finite vertex in the image of $e''$ which is joined to the cusp of $O'$ (note that $p'$ could be the image of $p''$). 

When $O'$ has $(3,3,3)$-cusp, take $p'=p''$. This means, $O'$ always has a finite vertex $p'$ of the type $(2,3,3)$ or $(2,3,4)$ or $(2,3,5)$ which is connected to its cusp by an edge of its isotropy graph. We observe that the isotropy graph of $O'$ is obtained from the isotropy graph of $O$ by replacing the neighborhoods of all the pillowcase cusp vertices (if any) in one of the three ways as shown in Figure \ref{pillowcasefilling}. In particular, if all the smooth cusps of $O$ are torus cusps, then the isotropy graph of $O'$ is same as the isotropy graph of $O$. In all of these cases of Figure \ref{pillowcasefilling}, existence of a finite vertex $p'$ in the isotropy graph of $O'$ implies that the isotropy graph of $O$ has a finite vertex $p$ of the type  $(2,3,3)$ or $(2,3,4)$ or $(2,3,5)$, i.e., the isotropy group $G_p$ of $p$ in $\pi_1^{Orb}(O)$ is either $A_4$ or $S_4$ or $A_5$.

\end{proof}

\begin{proposition}\label{coversmallestmulticusp}
\coversmallestmulticuspthm
\end{proposition}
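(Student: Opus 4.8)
The plan is to treat the two conclusions in turn, proving the symmetry statement directly from the given cover and then deducing Part (2) by contradiction using the hypothetical family $\mathcal{F}$.

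For Part (1), I would first reduce to working with the cusp that lies over the rigid cusp. Since $c$ and $c'$ are symmetric, a self-isometry $\sigma$ of $\mathbb{S}^3-L$ with $\sigma(c)=c'$ lets us post-compose the given cover $\phi\co\mathbb{S}^3-L\to O_{(2,3,6),(2,2,2,2)}$ to obtain $\phi\circ\sigma^{-1}$, under which the unique cusp over $c_{(2,3,6)}$ is $\sigma(c)=c'$; so I may assume $c'$ is the only cusp of $L$ over the $(2,3,6)$-cusp. Now I would run the argument from the proof of Theorem \ref{hexlatt}: because $c'$ is the only cusp over the rigid cusp, the $c'$-maximal horoball packing of $\mathbb{H}^3$ is simultaneously the $c_{(2,3,6)}$-maximal packing for $O_{(2,3,6),(2,2,2,2)}$, so placing $c_{(2,3,6)}$ at $\infty$ makes $\operatorname{Stab}(\infty)$ in $\pi_1^{Orb}(O_{(2,3,6),(2,2,2,2)})$ a copy of $W_{(2,3,6)}$ acting as symmetries of the $c'$-circle packing. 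Its order $3$ and order $6$ elements cannot fix the horocenter of any $K$-horoball with $K\ne c'$, since every such $K$ covers the pillowcase (smooth) cusp, which has no cone point of order $3$ or $6$; a fixed horocenter there would force such a cone point. This gives Part (1).

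For Part (2), suppose for contradiction that some $\mathcal{F}$ has infinitely many elements with hidden symmetries. By Proposition \ref{hscharac}, Theorem \ref{hoff_cusptype} and Theorem \ref{CDHMMWorb}, $\mathbb{S}^3-L$ covers an orbifold $O'$ with exactly one rigid cusp $c'_{rigid}$ (of type $(3,3,3)$ or $(2,3,6)$) and at least one smooth cusp, with $c'$ the only cusp over $c'_{rigid}$, and such that Dehn filling all the smooth cusps of $O'$ yields one-cusped orbifolds covered by the knot complements of $\mathcal{F}$. Lemma \ref{isovertex} then applies to $O'$ and produces a finite vertex of $O'$ of type $(2,3,3)$, $(2,3,4)$ or $(2,3,5)$, i.e.\ with isotropy group $A_4$, $S_4$ or $A_5$. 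Next I would combine the two covers: both $\pi_1^{Orb}(O')$ and $\pi_1^{Orb}(O_{(2,3,6),(2,2,2,2)})$ preserve the $c'$-maximal packing $\mathcal{H}_{c'}$ (again because $c'$ is the unique cusp over the rigid cusp in each), and by Goodman--Heard--Hodgson \cite{GoHeHo} the symmetry group of $\mathcal{H}_{c'}$ is discrete. Hence $\Gamma^*=\langle \pi_1^{Orb}(O'),\pi_1^{Orb}(O_{(2,3,6),(2,2,2,2)})\rangle$ is discrete of finite covolume and defines an orbifold $O^*$ covered by $\mathbb{S}^3-L$ and covered onto by both $O'$ and $O_{(2,3,6),(2,2,2,2)}$. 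The stabilizer of $\infty$ in $\Gamma^*$ contains the $W_{(2,3,6)}$ coming from $O_{(2,3,6),(2,2,2,2)}$; as a discrete planar group whose point group contains $C_6$ it is itself a $W_{(2,3,6)}$ group, so the cusp $c^*$ of $O^*$ at $\infty$ is a $(2,3,6)$-cusp. Pushing the finite vertex of $O'$ forward along $O'\to O^*$ keeps its isotropy group inside $\{A_4,S_4,A_5\}$ (isotropy can only grow downstairs), so $O^*$ too has a finite vertex of type $(2,3,3)$, $(2,3,4)$ or $(2,3,5)$.

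Finally, I would identify $O^*$ with $O_{(2,3,6),(2,2,2,2)}$. Since $O_{(2,3,6),(2,2,2,2)}$ covers $O^*$ and both carry a $(2,3,6)$-cusp, any proper cover forces $\operatorname{vol}(O^*)<\tfrac{5v_0}{6}$; Adams' classification of low-volume multi-cusped orbifolds \cite{Adams_multi}, combined with Gaona's result \cite[Theorem 3.2]{GTthesis} that $O_{(2,3,6),(2,2,2,2)}$ has least volume among orbifolds with a $(2,3,6)$-cusp and a smooth cusp, should pin down $O^*=O_{(2,3,6),(2,2,2,2)}$. But the isotropy graph of $O_{(2,3,6),(2,2,2,2)}$ (\cite[Figure 5]{GTthesis}) contains no finite vertex of type $(2,3,3)$, $(2,3,4)$ or $(2,3,5)$, contradicting the previous paragraph; so no such $\mathcal{F}$ exists, which is Part (2). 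The hardest step will be exactly this identification: one must track which cusps of $O^*$ are smooth versus rigid — in particular ruling out, using the admissible covers of a $(2,3,6)$-cusp (e.g.\ the degree-$3$ cover $\mathbb{S}^2(2,2,2,2)\to\mathbb{S}^2(2,3,6)$), the degenerate possibility that the pillowcase cusp folds onto $c^*$ so that $O^*$ carries no smooth cusp — so that the Adams--Gaona volume and cusp-type bounds genuinely corner $O^*$ into equaling $O_{(2,3,6),(2,2,2,2)}$. Establishing discreteness of $\Gamma^*$ and carrying out this cusp bookkeeping is the technical heart of the argument.
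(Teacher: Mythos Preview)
Your Part (1) is fine and matches the paper's argument. Your setup for Part (2) is also correct and tracks the paper: you obtain $O'$ from Theorem \ref{CDHMMWorb}, extract the $(2,3,3)/(2,3,4)/(2,3,5)$ vertex via Lemma \ref{isovertex}, form $\Gamma^*=\langle \pi_1^{Orb}(O'),\pi_1^{Orb}(O_{(2,3,6),(2,2,2,2)})\rangle$, use \cite{GoHeHo} for discreteness, and push the vertex down to $O^*$. All of that is what the paper does.

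The gap is in your endgame. You try to prove $O^*=O_{(2,3,6),(2,2,2,2)}$ by invoking Gaona's minimality; but Gaona's theorem needs $O^*$ to have a \emph{smooth} cusp, and you cannot establish that. The $P_r/P_s$ bookkeeping you allude to shows $O^*$ has exactly two cusps, but it does not force the second cusp to be smooth --- and in fact it is not: in the contradictory scenario the paper reaches, the second cusp of $O^*$ is $(2,3,6)$. So your proposed identification $O^*=O_{(2,3,6),(2,2,2,2)}$ is actually false under the running hypothesis, and the argument collapses.

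The paper runs the logic in the opposite direction. Precisely because $O^*$ inherits a $(2,3,3)/(2,3,4)/(2,3,5)$ vertex from $O'$ while $O_{(2,3,6),(2,2,2,2)}$ has none, one gets $\Gamma^*\supsetneq\pi_1^{Orb}(O_{(2,3,6),(2,2,2,2)})$, so $\operatorname{vol}(O^*)=\tfrac{5v_0}{6m}$ with integer $m\ge 2$. Adams' lower bound $\tfrac{v_0}{3}$ for multi-cusped orbifolds with a $(2,3,6)$ cusp then forces $m=2$ and identifies $O^*$ with the unique volume-$\tfrac{5v_0}{12}$ orbifold, which has \emph{two} $(2,3,6)$ cusps. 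The contradiction now comes from the cover $O_{(2,3,6),(2,2,2,2)}\to O^*$: the pillowcase cusp would have to cover a $(2,3,6)$ cusp with degree $2$, but any such cover has degree a multiple of $3$. What you are missing is this integrality-of-degree lever combined with Adams' list; trying to route through Gaona's theorem instead cannot work because the smooth-cusp hypothesis fails.
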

\begin{proof}
To prove the first part of the proposition, we observe that since $c$ and $c'$ are symmetric, it is enough to prove that the $c$-circle packing has a $W_{(2,3,6)}$ group of symmetries whose elements of order $3$ or $6$ do not fix the center of a $K$-horoball where $K$ is a cusp of $L$ different from $c$. Now, $c$ is the only cusp of $L$ covering $c_{(2,3,6)}$ and the other cusp of $O_{(2,3,6), (2,2,2,2)}$ is a smooth pillowcase cusp. Hence, we can argue as in the proof of Theorem \ref{2comp_sym} and Theorem \ref{hexlatt} to see that the peripheral subgroup of $\pi_1^{Orb}(O_{(2,3,6), (2,2,2,2)})$ corresponding $c_{(2,3,6)}$ is $W_{(2,3,6)}$ which acts as a group of symmetries on the $c_{(2,3,6)}$-circle packing (which is same as the $c$-circle packing) of $\mathbb{C}$ and its order $3$ or $6$ elements do not fix the centers of horoballs corresponding to the cusps of $L$ different from $c$. 

We now prove the second part. As in the first part, since $c$ and $c'$ are symmetric, it is enough to show that each family $\mathcal{F}$ of hyperbolic knot complements obtained by Dehn filling all cusps of $L$ but $c$ and geometrically converging to $\mathbb{S}^3-L$ contains at-most finitely many elements with hidden symmetries (see Fact \ref{symmfact}). Assume for contradiction that such a family $\mathcal{F}$ contains infinitely many knot complements $\{\mathbb{S}^3-K'_i\}_i$ with hidden symmetries. 

As in the proof of Theorem \ref{2comp_sym}, Theorem \ref{hexlatt} and Proposition \ref{8v0prop}, we can then use Theorem \ref{CDHMMWorb} along with the Neumann and Reid characterization in Proposition \ref{hscharac} and Hoffman's result in Theorem \ref{hoff_cusptype} to see that there exists orbifold covering maps $\phi_i: \mathbb{S}^3-K'_i \rightarrow O_i$ and $\phi:\mathbb{S}^3-L \rightarrow O$ where $O$ is an orbifold with exactly one rigid cusp $c_{rigid}$ of type $(3,3,3)$ or $(2,3,6)$ and at-least one smooth cusp such that $c_{rigid}$ is only covered by cusp $c$ of $L$, and after taking a subsequence, the orbifolds $O_i$ are obtained by Dehn filling the smooth cusps of $O$ and furthermore, degree of $\phi_i$ is equal to the degree of $\phi$ for all $i$. 

Let $\mathcal{H}$ denote the $c$-maximal horoball packing of $\mathbb{H}^3$. Let $\Gamma$ and $\Gamma_O$ denote the orbifold fundamental groups of $O_{(2,3,6), (2,2,2,2)}$ and $O$ respectively. Note that $\mathcal{H}$ is both the $c_{rigid}$- and the $c_{(2,3,6)}$- maximal horoball packings of $\mathbb{H}^3$ and that both $\Gamma$ and $\Gamma_O$ are subgroups of the symmetry group of $\mathcal{H}$. Let 
$$\Gamma_1=\left \langle \Gamma, \Gamma_O \right \rangle.$$
Then, we can apply \cite[Lemma 2.1 and Proof of Lemma 2.2]{GoHeHo} to see that $\Gamma_1$ is a Kleinian group. (We emphasize here that we need to use a slightly
tweaked version of \cite[Lemma 2.2]{GoHeHo} via an identical proof therein to see that the parabolic fixed points corresponding to cusp $c$ span $\mathbb{H}^3$.)

 Now since $\Gamma$ and $\Gamma_O$ are commensurable, they have the same parabolic fixed points. Now, the sets of parabolic fixed points corresponding to $c_{(2,3,6)} $ and $c_{rigid}$ are same as this is the set of parabolic fixed points of $c$ as well. Call this set $P_r$. Since both $\Gamma$ and $\Gamma_O$ have only one rigid cusp, the total set of parabolic fixed points corresponding to all the smooth cusps of $O_{(2,3,6)}$ and $O$ are same as well. Call this set $P_s$. Since both $P_r$ and $P_s$ are invariant under the action of both $\Gamma$ and $\Gamma_O$, they are invariant under the action of $\Gamma_1$. So, $\mathbb{H}^3/\Gamma_1$  has at-least two cusps. This means that $\mathbb{H}^3/\Gamma_1$ has exactly two cusps since $\mathbb{H}^3/\Gamma_1$ is covered by the two-cusped orbifold $O_{(2,3,6), (2,2,2,2)}$. Also note that the cusp of $\mathbb{H}^3/\Gamma_1$ covered by $c_{(2,3,6)}$ is a $(2,3,6)$ cusp. 

Since $O_i$'s are covered by the knot complements $\mathbb{S}^3-K'_i$, Lemma \ref{isovertex} implies that the isotropy graph of any orbifold that $O$ covers has a finite vertex of type $(2,3,3)$ or $(2,3,4)$ or $(2,3,5)$. We now observe from \cite[Figure 5]{GTthesis} that the isotropy graph of $O_{(2,3,6), (2,2,2,2)}$ does not contain any finite vertex of type $(2,3,3)$ or $(2,3,4)$ or $(2,3,5)$. So, $O_{(2,3,6), (2,2,2,2)}$ is not covered by $O$. So, $\Gamma_O$ is not a subgroup of $\Gamma$ and consequently, $\Gamma_1$ properly contains $\Gamma$. So, since the volume of $O_{(2,3,6), (2,2,2,2)}$ is $\frac{10v_0}{12}$, the volume of $\mathbb{H}^3/\Gamma_1$ is equal to $\frac{10v_0}{12m}$ for some integer $m \ge 2$. Since $\mathbb{H}^3/\Gamma_1$ is a two cusped orbifold with a $(2,3,6)$ cusp, \cite[Lemma 2.2]{Adams_multi} implies that $\mathbb{H}^3/\Gamma_1$ is one of the unique two cusped orbifolds with a $(2,3,6)$ cusp of volume $\frac{v_0}{3}$ or $\frac{5v_0}{12}$. Since $m$ is an integer, the volume of $\mathbb{H}^3/\Gamma_1$ must be  $\frac{5v_0}{12}$, i.e., $m=2$. Hence, $\mathbb{H}^3/\Gamma_1$ is the unique two-cusped orbifold of volume $\frac{5v_0}{12}$ with two $(2,3,6)$ cusps given in  Adams \cite{Adams_multi}. Since $\mathbb{H}^3/\Gamma_1$ is covered by the two cusped orbifold $O_{(2,3,6), (2,2,2,2)}$ with a $(2,2,2,2)$ cusp, $m$ has to be at-least be $3$. This contradicts that $m=2$. So, $\mathcal{F}$ could have at-most finitely many knot complements with hidden symmetries. 
\end{proof} 

In the remaining of this section, we show that the converse of Theorem \ref{hexlatt} is not true. In particular, we will see that for a cusp of a hyperbolic link with two or more components and volume less than or equal to $24v_0$, the existence of some order $6$ symmetries of the full sized horoballs corresponding to that cusp is actually the crux of the issue. We lay out this phenomenon in Proposition \ref{two236cusps} and Corollary \ref{badorder6sym}. One can see some explicit examples (of tetrahedral homology links) pertaining to this situation later in Subsection \ref{snappyrun}. 
Before we state and prove Proposition \ref{two236cusps}, we record a lemma that concerns a class of orbifolds covering the unique multi-cusped orbifolds of volume $\frac{v_0}{3}$ and $\frac{v_0}{2}$ with a $(2,3,6)$ cusp discussed in Adams \cite{Adams_multi}. 

\begin{lemma}\label{coveradams}
Let $O$ be a hyperbolic $3$-orbifold with exactly one rigid cusp and at-least one smooth cusp such that $O$ covers one of the unique multi-cusped orbifolds with a $(2,3,6)$ cusp of volume $\frac{v_0}{3}$ and  $\frac{v_0}{2}$ given in Adams \cite{Adams_multi}. Then no one-cusped hyperbolic $3$-orbifold obtained by Dehn filling all the smooth cusps of $O$ is covered by a hyperbolic knot complement. 
\end{lemma}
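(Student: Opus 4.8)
The plan is to argue by contradiction using Lemma~\ref{isovertex} and then to transport the resulting finite vertex down the covering $O \to Q$, where $Q$ denotes whichever of the two Adams orbifolds (the unique ones of volume $\frac{v_0}{3}$ or $\frac{v_0}{2}$ with a $(2,3,6)$ cusp) is covered by $O$. Suppose, for contradiction, that some one-cusped hyperbolic $3$-orbifold $O'$ obtained by Dehn filling all the smooth cusps of $O$ is covered by a hyperbolic knot complement. Since $O$ has exactly one rigid cusp and at-least one smooth cusp, the hypotheses of Lemma~\ref{isovertex} are met, and so the isotropy graph of $O$ must have a finite vertex $p$ of type $(2,3,3)$, $(2,3,4)$ or $(2,3,5)$; equivalently, its isotropy group $G_p$ in $\pi_1^{Orb}(O)$ is $A_4$, $S_4$ or $A_5$.

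Next I would push this vertex down the cover. Let $\pi \co O \to Q$ be the covering and $q=\pi(p)$. For an orbifold covering the local isotropy group can only grow, so $G_p$ embeds in $G_q$; hence $G_q$ contains a copy of $A_4$, $S_4$ or $A_5$. Because the only finite subgroups of $\mathrm{SO}(3)$ containing one of $A_4$, $S_4$, $A_5$ are again among $\{A_4, S_4, A_5\}$, the orbifold $Q$ must itself carry a finite vertex of type $(2,3,3)$, $(2,3,4)$ or $(2,3,5)$. Thus the proof reduces to showing that neither of the two orbifolds named in the statement has such a vertex.

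For this last step I would combine an arithmetic obstruction with Adams' explicit description. Since $Q$ has a $(2,3,6)$ cusp, its cusp field is $\mathbb{Q}(\sqrt{-3})$, and these minimal-volume orbifolds lie in the Bianchi commensurability class with invariant trace field $\mathbb{Q}(\sqrt{-3})$. A vertex of type $(2,3,4)$ forces an order-$4$ elliptic element, which contributes $\sqrt{2}$ to the trace field, and a vertex of type $(2,3,5)$ forces an order-$5$ elliptic, which contributes $\sqrt{5}$; as neither $\sqrt{2}$ nor $\sqrt{5}$ lies in $\mathbb{Q}(\sqrt{-3})$, $Q$ can have no $(2,3,4)$ or $(2,3,5)$ vertex. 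The remaining possibility, an $A_4$ (type $(2,3,3)$) vertex, is \emph{not} excluded by the trace field, since the elliptics of $A_4$ have orders only $2$ and $3$ and hence rational trace. Here I would instead appeal directly to Adams' classification \cite{Adams_multi}, reading off the singular loci of the unique volume-$\frac{v_0}{3}$ and volume-$\frac{v_0}{2}$ orbifolds with a $(2,3,6)$ cusp and verifying that neither contains a finite vertex of type $(2,3,3)$. This yields the desired contradiction and completes the argument.

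The main obstacle is precisely this final $A_4$ case: it cannot be dispatched by the clean trace-field computation and instead rests on the concrete structure of the two small orbifolds from Adams. I expect it to require either their explicit singular graphs or a sharp volume lower bound showing that a multi-cusped orbifold in this commensurability class carrying a $(2,3,3)$ vertex must have volume exceeding $\frac{v_0}{2}$.
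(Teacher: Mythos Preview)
Your overall architecture matches the paper's proof exactly: assume for contradiction, invoke Lemma~\ref{isovertex} to produce a finite vertex of type $(2,3,3)$, $(2,3,4)$ or $(2,3,5)$ in $O$, push it down the cover to $Q$, and then check that neither Adams orbifold can carry such a vertex. The paper dispatches that final check in one stroke: it simply reads off from the explicit isotropy graphs (reproduced in Figure~\ref{O4andO6_1isopic}) that the only finite vertices of $O_{v_0/3}$ are of type $(2,2,3)$ and those of $O_{v_0/2}$ are of type $(2,2,6)$, both dihedral, and none of $A_4$, $S_4$, $A_5$ embeds in a finite dihedral group. No case analysis is needed.

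Your trace-field detour for the $(2,3,4)$ and $(2,3,5)$ cases is unnecessary and, for $(2,3,4)$, actually flawed as written. You pass from ``lies in the Bianchi commensurability class with invariant trace field $\mathbb{Q}(\sqrt{-3})$'' to ``$\sqrt{2}$ in the trace field is a contradiction,'' but the \emph{invariant} trace field does not see $\sqrt{2}$ from an order-$4$ elliptic (the square of such an element has order $2$ and trace $0$), while the ordinary trace field is not a commensurability invariant and hence is not pinned down by the Bianchi-class hypothesis. So that step does not close. The order-$5$ case does survive via the invariant trace field, but since you must appeal to the explicit singular loci for the $A_4$ case anyway, you may as well do so uniformly---which is precisely the paper's route and eliminates the obstacle you flag at the end.
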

\begin{proof}
Let $O_{\frac{v_0}{3}}$ and $O_{\frac{v_0}{2}}$ denote the unique multi-cusped orbifolds with a $(2,3,6)$ cusp of volume respectively $\frac{v_0}{3}$ and $\frac{v_0}{2}$ given in Adams \cite{Adams_multi}. Suppose for contradiction there is a one-cusped orbifold $O'$ obtained by Dehn filling all the smooth cusps of $O$ such that $O'$ is covered by a hyperbolic knot complement $\mathbb{S}^3-K$. Then, by Lemma \ref{isovertex}, the isotropy graph of $O$ has a finite vertex $p$ of the type  $(2,3,3)$ or $(2,3,4)$ or $(2,3,5)$ and the isotropy group $G_p$ of $p$ in $\pi_1^{Orb}(O)$ is either $A_4$ or $S_4$ or $A_5$. So, the image of $p$ in the isotropy graph of $O_{\frac{v_0}{3}}$ or $O_{\frac{v_0}{2}}$ is also a finite vertex of the type $(2,3,3)$ or $(2,3,4)$ or $(2,3,5)$ since neither of $A_4$, $S_4$ and $A_5$ is a subgroup of a finite dihedral group. This is a contradiction to the fact that the isotropy graph of $O_{\frac{v_0}{3}}$ (respectively, $O_{\frac{v_0}{2}}$) has only finite vertices of the type $(2,2,3)$ (respectively, $(2,2,6)$), which can be seen from Figure \ref{O4andO6_1isopic} taken from \cite{orbcenpract}. 
\end{proof}

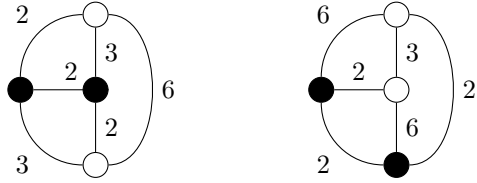
\begin{figure}
\centering 
\captionsetup{justification=centering}
\begin{tikzpicture}
\node at (-1,0)(finiteleft)[circle, draw, fill=black!100]{};

\node at (0,0)(finiteright)[circle, draw, fill=black!100]{};
\node at (0,1)(cuspup)[circle, draw]{};
\node at (0,-1)(cuspdown) [circle, draw]{};

\draw [-] (cuspup.west) [out=180,in=90] to node[auto,swap]{2} (finiteleft.north);
\draw [-] (finiteleft.east) to node[auto]{2}(finiteright.east);
\draw [-] (finiteright.south) to node[auto]{2} (cuspdown.north);
\draw [-] (cuspup.east) [out=0,in=0] to node[auto]{6} (cuspdown.east);
\draw [-] (finiteleft.south) [out=270,in=180] to node[auto,swap]{3} (cuspdown.west);
\draw [-] (cuspup.south) to node[auto]{3} (finiteright.north);

\begin{scope}[xshift=4cm]
\node at (-1,0)(finiteleft)[circle, draw, fill=black!100]{};

\node at (0,0)(cuspdown)[circle, draw]{};
\node at (0,1)(cuspup)[circle, draw]{};
\node at (0,-1)(finitedown) [circle, draw, fill=black!100]{};

\draw [-] (cuspup.west) [out=180,in=90] to node[auto,swap]{6} (finiteleft.north);
\draw [-] (finiteleft.east) to node[auto]{2}(cuspdown.west);
\draw [-] (cuspdown.south) to node[auto]{6} (finitedown.north);
\draw [-] (cuspup.east) [out=0,in=0] to node[auto]{2} (finitedown.east);
\draw [-] (finiteleft.south) [out=270,in=180] to node[auto,swap]{2} (finitedown.west);
\draw [-] (cuspup.south) to node[auto]{3} (cuspdown.north);
\end{scope}
\end{tikzpicture}
\caption{Isotropy graphs of $O_{\frac{v_0}{3}}$ (left) and $O_{\frac{v_0}{2}}$ (right) from \cite{Adams_multi} (both pictures reproduced from \cite{orbcenpract}).}

\label{O4andO6_1isopic}

\end{figure}
We are now ready to state and prove Proposition \ref{two236cusps}. 
\begin{proposition}\label{two236cusps}
\twosixcuspthm

\end{proposition}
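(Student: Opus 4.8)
The plan is to argue by contradiction, running the same machine as in the proof of Proposition \ref{coversmallestmulticusp}, but with $O_{(2,3,6),(2,2,2,2)}$ replaced by the given orbifold $O'$. By Fact \ref{symmfact} it is enough to treat the family $\mathcal{F}$ filling all cusps but $c$, so suppose $\mathcal{F}$ contains infinitely many knot complements $\{\mathbb{S}^3-K'_i\}_i$ with hidden symmetries. As in the proofs of Theorem \ref{2comp_sym}, Theorem \ref{hexlatt} and Proposition \ref{8v0prop}, combining Proposition \ref{hscharac}, Theorem \ref{hoff_cusptype} and Theorem \ref{CDHMMWorb} produces covers $\phi_i\colon\mathbb{S}^3-K'_i\to O_i$ and $\phi\colon\mathbb{S}^3-L\to O$, where $O$ has exactly one rigid cusp $c_{rigid}$ of type $(3,3,3)$ or $(2,3,6)$ and at least one smooth cusp, $c_{rigid}$ is covered only by $c$, each $O_i$ is a one-cusped orbifold obtained by Dehn filling the smooth cusps of $O$, and $\deg\phi_i=\deg\phi$. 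Hoffman's degree formulae (Theorem \ref{Hoff_333}, Theorem \ref{Hoff_236}) then bound $\mathrm{vol}(O)\le 2v_0$ when $c_{rigid}$ is $(3,3,3)$ and $\mathrm{vol}(O)\le v_0$ when $c_{rigid}$ is $(2,3,6)$.

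Next I would form the group generated by the two commensurable orbifold groups. Put $\Gamma_O=\pi_1^{Orb}(O)$, $\Gamma'=\pi_1^{Orb}(O')$ and $\Gamma_1=\langle\Gamma_O,\Gamma'\rangle$. Since $c_{rigid}$ and $c'$ are each covered only by $c$, the $c$-maximal horoball packing $\mathcal{H}$ of $\mathbb{H}^3$ is simultaneously the $c_{rigid}$- and $c'$-maximal packing, so both $\Gamma_O$ and $\Gamma'$ act as symmetries of $\mathcal{H}$; exactly as in Proposition \ref{coversmallestmulticusp}, the discreteness results of \cite[Lemma 2.1, Proof of Lemma 2.2]{GoHeHo} show $\Gamma_1$ is Kleinian, and it has finite covolume because it contains the lattice $\Gamma_O$. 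I would then read off the cusps of $\mathbb{H}^3/\Gamma_1$ from the common set of parabolic fixed points: the set $P_c$ of fixed points of $c$ is invariant under both $\Gamma_O$ and $\Gamma'$, hence under $\Gamma_1$, and is a single $\Gamma_1$-cusp whose stabilizer at $\infty$ contains the $W_{(2,3,6)}$ coming from $c'$, so the cusp at $\infty$ is a $(2,3,6)$ cusp; the second $(2,3,6)$ cusp $c''$ of $O'$ supplies a further $(2,3,6)$ cusp of $\mathbb{H}^3/\Gamma_1$ disjoint from $P_c$. Thus $\mathbb{H}^3/\Gamma_1$ is multi-cusped with at least two $(2,3,6)$ cusps. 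A short argument gives $\Gamma_1\supsetneq\Gamma_O$: otherwise $O'$ would cover $O$, forcing the rigid cusp $c''$ to map either to $c_{rigid}$ (impossible, as then $c_{rigid}$ would be covered by a cusp other than $c$) or to a smooth cusp of $O$ (impossible, since a rigid cusp cannot cover a smooth cusp).

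I would then pin down $\mathbb{H}^3/\Gamma_1$ by a local-degree count followed by a volume bound. Writing $d_1=[\Gamma_1:\Gamma_O]$ and comparing local degrees over the two $(2,3,6)$ cusps eliminates the case that $c_{rigid}$ is $(3,3,3)$: the cusp at $\infty$ is covered by $c_{rigid}$ with local degree $2$ and otherwise by smooth cusps of $O$ (each contributing a multiple of $3$, since a torus or pillowcase covering a $(2,3,6)$ cusp has degree divisible by $3$), giving $d_1\equiv 2\pmod 3$, whereas the other $(2,3,6)$ cusp is covered only by smooth cusps of $O$, giving $d_1\equiv 0\pmod 3$, a contradiction. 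Hence $c_{rigid}$ is $(2,3,6)$, so $\mathrm{vol}(O)\le v_0$, and since $\Gamma_1\supsetneq\Gamma_O$ we obtain $\mathrm{vol}(\mathbb{H}^3/\Gamma_1)=\mathrm{vol}(O)/d_1\le v_0/2$. Adams' classification \cite{Adams_multi} of low-volume multi-cusped orbifolds then forces $\mathbb{H}^3/\Gamma_1$ to be one of the finitely many orbifolds with two $(2,3,6)$ cusps and volume at most $v_0/2$ (those of volume $\tfrac{v_0}{3}$, $\tfrac{5v_0}{12}$ and $\tfrac{v_0}{2}$), each of whose isotropy graphs carries only finite vertices of cyclic or dihedral type (compare Figure \ref{O4andO6_1isopic} and the data of \cite{orbcenpract}), so none has a finite vertex with isotropy $A_4$, $S_4$ or $A_5$.

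Finally I would extract the contradiction from the cusp-killing input. Since each $O_i$ is a one-cusped Dehn filling of the smooth cusps of $O$ covered by the knot complement $\mathbb{S}^3-K'_i$, Lemma \ref{isovertex} guarantees that the isotropy graph of $O$ has a finite vertex $p$ with isotropy $G_p$ equal to $A_4$, $S_4$ or $A_5$; and because $O$ covers $\mathbb{H}^3/\Gamma_1$ (as $\Gamma_O\subseteq\Gamma_1$), the isotropy group of the image of $p$ contains $G_p$ and is therefore again $A_4$, $S_4$ or $A_5$, contradicting the previous paragraph. Thus $\mathcal{F}$ has at most finitely many elements with hidden symmetries. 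The main obstacle in this plan is the middle step: proving $\Gamma_1$ is discrete (the \cite{GoHeHo} input, with the tweak that the parabolic points of $c$ span $\mathbb{H}^3$) and then controlling $\mathbb{H}^3/\Gamma_1$ tightly enough — through the $\bmod\,3$ local-degree count and the volume bound — to land it inside Adams' explicit list, and in particular to verify that the borderline orbifold of volume $\tfrac{5v_0}{12}$ carries no tetrahedral, octahedral or icosahedral vertex.
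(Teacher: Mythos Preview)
Your overall architecture matches the paper's, and most of the ingredients (the construction of $\Gamma_1$, the discreteness via \cite{GoHeHo}, the identification of two $(2,3,6)$ cusps of $\mathbb{H}^3/\Gamma_1$, and the final appeal to Lemma \ref{isovertex}) are exactly right. There is, however, a genuine gap in the middle step.

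Your attempt to eliminate the case $c_{rigid}\cong(3,3,3)$ via a $\bmod\,3$ contradiction does not work. You assert that the local degree of $c_{rigid}\to c_1$ equals $2$, but there is no reason for this: you only know that $\operatorname{Stab}_{\Gamma_1}(\infty)$ contains the $W_{(2,3,6)}$ coming from $c'$, not that it equals it, so the local index $[\operatorname{Stab}_{\Gamma_1}(\infty):W_{(3,3,3)}]$ is merely a multiple of $2$. Since you have already shown that $c_{rigid}$ is the \emph{only} cusp of $O$ over $c_1$ (by $\Gamma_1$-invariance of $P_c$), this local index equals $d_1$, so you get $2\mid d_1$, not $d_1\equiv 2\pmod 3$. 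Combined with your correct observation $3\mid d_1$ (from the smooth cusps over the second $(2,3,6)$ cusp), this yields $6\mid d_1$, and with Hoffman's $\deg\phi=12m$ one obtains $72\mid\deg(\mathbb{S}^3-L\to O_1)$; in the $(2,3,6)$ case, $3\mid d_1$ and $\deg\phi=24m$ give the same divisibility. This is exactly how the paper proceeds: both cases give $\operatorname{vol}(O_1)\le 24v_0/72=v_0/3$, so by \cite[Lemma 2.2]{Adams_multi} the orbifold $O_1$ is forced to be the unique volume-$\tfrac{v_0}{3}$ orbifold, and Lemma \ref{coveradams} finishes.

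This also removes the need for your separate argument that $\Gamma_1\supsetneq\Gamma_O$ and, more importantly, avoids the weaker bound $\operatorname{vol}(O_1)\le v_0/2$ you land on in the $(2,3,6)$ case (where you use only $d_1\ge 2$ rather than $3\mid d_1$). That weaker bound forces you to rule out the volume-$\tfrac{5v_0}{12}$ orbifold as well, whose isotropy data is not supplied in the paper; the sharper bound $\le v_0/3$ makes this check unnecessary.
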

\begin{proof}
Assume for contradiction that $\mathcal{F}$ has an infinite subfamily of knot complements $\{\mathbb{S}^3-K'_i\}_i$ with hidden symmetries. We proceed similarly as in the proof of Theorem \ref{2comp_sym}, Theorem \ref{hexlatt}, Proposition \ref{8v0prop} and Proposition \ref{coversmallestmulticusp}, and use Theorem \ref{CDHMMWorb} along with the Neumann and Reid characterization in Proposition \ref{hscharac} and Hoffman's result in Theorem \ref{hoff_cusptype} to see the existence of the covering maps $\phi_i: \mathbb{S}^3-K'_i \rightarrow O_i$ and $\phi:\mathbb{S}^3-L \rightarrow O$ where $O$ is an orbifold with exactly one rigid cusp $c_{rigid}$ and at-least one smooth cusp so that $c_{rigid}$ is either $(3,3,3)$ or $(2,3,6)$ and the only cusp of $L$ that covers $c_{rigid}$ is $c$. Moreover, after taking a subsequence, the orbifolds $O_i$ are all obtained by Dehn filling all the smooth cusps of $O$ and for all $i$, the degree of $\phi_i$ is equal to that of $\phi$.

By Hoffman's degree formulae in Theorem \ref{Hoff_333} and Theorem \ref{Hoff_236}, $\operatorname{degree}(\phi)=12m$ when $c_{rigid}$ is $(3,3,3)$ and $24m$ when $c_{rigid}$ is $(2,3,6)$, where $m$ is a positive integer. 

Let $\mathcal{H}$ be the $c$-maximal horoball packing of $\mathbb{H}^3$. Suppose $\Gamma_O$ and $\Gamma_{O'}$ denote the fundamental groups of $O$ and $O'$ respectively. Arguing similarly as in the proof of Proposition \ref{coversmallestmulticusp}, we can use  \cite[Lemma 2.1 and Proof of Lemma 2.2]{GoHeHo} to see that $\Gamma_1=\left \langle  \Gamma_O, \Gamma_{O'}\right  \rangle $ is a Kleinian group. Let $O_1=\mathbb{H}^3/\Gamma_1$. Since the only cusp of $\mathbb{S}^3-L$ that maps to $c'$ is $c$, we can again argue as in proof of Proposition \ref{coversmallestmulticusp} to see that both $c_{rigid}$ and $c'$ maps to the same cusp of $O_1$, say $c_1$ and that the only cusp of $O$ (respectively, $O'$) that maps to $c_1$ is $c_{rigid}$ (respectively, $c'$). Now, since $c'$ is $(2,3,6)$, $c_1$ is $(2,3,6)$ as well.  Let $c_2$ denote the cusp of $O_1$ where another $(2,3,6)$ cusp of $O'$ maps to. So, $c_2$ is $(2,3,6)$ too. Now, any cusp $c_s$ of $O$ that maps to $c_2$ is smooth and hence, at the cusp level, $\operatorname{degree}(c_s\to c_2)$ is a multiple of $3$. This means $\operatorname{degree}(O\to O_1)$ is also a multiple of $3$. We now consider the following two cases. 

At first, we consider the case when $c_{rigid}$ is $(3,3,3)$, then, at the cusp level, $\operatorname{degree}(c_{rigid} \to c_1)$ is a multiple of $2$. Since $c_{rigid}$ is the only cusp of $O$ that maps to $c_1$, this implies that $\operatorname{degree}(O\to O_1)$ is a multiple of $2$ and consequently, a multiple of $6$. Since  $\operatorname{degree}(\phi)=12m$ where $m \in \mathbb{N}$, we can conclude that $\operatorname{degree}(\mathbb{S}^3-L \to O_1)$ is a multiple of $72$. 

The second case is when $c_{rigid}$ is $(2,3,6)$. Then, since $\operatorname{degree}(\phi)=24m$ where $m \in \mathbb{N}$, we see that $\operatorname{degree}(\mathbb{S}^3-L \to O_1)$ in this case is also a multiple of $72$. 

So, in both cases, volume of $O_1$ is less than or equal to $\frac{24 v_0}{72}=\frac{v_0}{3}$. Since $O_1$ is a multi-cusped orbifold with at-least one $(2,3.6)$ cusp, Adams' result 
\cite[Lemma 2.2]{Adams_multi} implies that $O_1$ is the unique multi-cusped orbifold with two $(2, 3, 6)$ cusps of volume $\frac{v_0}{3}$ given in Adams \cite{Adams_multi}. But, $O$ covers $O_1$. This contradicts Lemma \ref{coveradams}. Hence, we are done. 

\end{proof}

We end this section with the following corollary. We will use this corollary later in Subsection \ref{snappyrun} to prove Theorem \ref{tetracompthm}. 

\begin{corollary}\label{badorder6sym}
Let $c$ and $\tilde{c}$ be two distinct cusps of a hyperbolic link $L$ of volume less than or equal to $24 v_0$ where $v_0$ is the volume of a regular ideal tetrahedron. Let $\mathcal{H}$ denote the $c$-maximal horoball packing of $\mathbb{H}^3$ whose horoball at $\infty$ maps to cusp $c$. Suppose there is an order $6$ rotational symmetry $r$ of $\mathcal{H}$ whose axis is the geodesic joining $\infty$ and the horocenter of a $\tilde{c}$-horoball. Then for any family $\mathcal{F}$ of hyperbolic knot complements obtained by Dehn filling all cusps of $L$ but $c$ and geometrically converging to $\mathbb{S}^3-L$,  $\mathcal{F}$ has at-most finitely many elements with hidden symmetries. 
\end{corollary}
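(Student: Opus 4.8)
The plan is to use the order-$6$ symmetry $r$ to build an orbifold cover of $\mathbb{S}^3-L$ satisfying the hypotheses of Proposition \ref{two236cusps}. Write $\mathbb{S}^3-L=\mathbb{H}^3/\Gamma$, and let $\infty$ be the horocenter of the $c$-horoball of $\mathcal{H}$ sitting at $\infty$ and $p$ the horocenter of the $\tilde c$-horoball lying on the axis of $r$; both $\infty$ and $p$ are parabolic fixed points of $\Gamma$. First I would set $G=\langle\Gamma,r\rangle$, which lies in $\operatorname{PSL}(2,\mathbb{C})$ since $r$ is a rotation. Because $\Gamma$ acts on $\mathbb{H}^3$ by symmetries of $\mathcal{H}$ and $r$ is a symmetry of $\mathcal{H}$ by hypothesis, arguing exactly as in the proofs of Proposition \ref{coversmallestmulticusp} and Proposition \ref{two236cusps} (via \cite[Lemma 2.1 and Proof of Lemma 2.2]{GoHeHo}, using that the parabolic fixed points of cusp $c$ span $\mathbb{H}^3$) shows that $G$ is a Kleinian group. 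As $G$ contains the finite-covolume lattice $\Gamma$, it has finite covolume and $[\,G:\Gamma\,]<\infty$, so $O':=\mathbb{H}^3/G$ is a hyperbolic $3$-orbifold and the natural map $\phi\colon \mathbb{S}^3-L\to O'$ is a finite orbifold cover.

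I would then locate two $(2,3,6)$ cusps of $O'$. The isometry $r$ fixes $\infty$, so $r\in\operatorname{Stab}_G(\infty)$; since $\infty$ is a parabolic fixed point of the finite-covolume group $G$, the group $\operatorname{Stab}_G(\infty)$ is an oriented wallpaper group, and because it contains the order-$6$ rotation $r$ it must be $W_{(2,3,6)}$ (among the oriented Euclidean $2$-orbifolds listed in Section \ref{cuspsection}, only $\mathbb{S}^2(2,3,6)$ carries a cone point of order $6$). Hence the image $c'$ of $\infty$ in $O'$ is a $(2,3,6)$ cusp, and the $c$-cusp of $L$ covers it. The axis of $r$ also terminates at $p$, so $r\in\operatorname{Stab}_G(p)$, and after conjugating $p$ to $\infty$ the same reasoning shows that the image of $p$ in $O'$ is a $(2,3,6)$ cusp as well.

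To finish verifying the hypotheses of Proposition \ref{two236cusps}, I would exploit that both generators of $G$ respect the cusp labeling of $\mathcal{H}$: the deck group $\Gamma$ sends $c$-horoballs to $c$-horoballs and $\tilde c$-horoballs to $\tilde c$-horoballs, and $r$ does likewise by hypothesis, so $G$ preserves the labeling. Therefore $G\cdot\infty$ consists only of $c$-horocenters; since the $c$-horocenters form the single $\Gamma$-orbit $\Gamma\cdot\infty$, we obtain $G\cdot\infty=\Gamma\cdot\infty$, which means the only cusp of $\mathbb{S}^3-L$ covering $c'$ is $c$. Label-preservation also shows that $\infty$ and $p$ lie in different $G$-orbits, because they carry the distinct labels $c$ and $\tilde c$, so the $(2,3,6)$ cusp at the image of $p$ is a second cusp of $O'$, distinct from $c'$. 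Thus $\mathbb{S}^3-L$ covers $O'$, which has at least two $(2,3,6)$ cusps, with $c$ covering the $(2,3,6)$ cusp $c'$ and no other cusp of $\mathbb{S}^3-L$ covering $c'$; since $\operatorname{vol}(\mathbb{S}^3-L)\le 24v_0$, Proposition \ref{two236cusps} yields that $\mathcal{F}$ has at most finitely many elements with hidden symmetries.

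The step I expect to be the main obstacle is the discreteness of $G=\langle\Gamma,r\rangle$. A priori $r$ is merely an isometric symmetry of the horoball packing, so one must ensure that adjoining it to $\Gamma$ keeps the group Kleinian; this is precisely the role of the Goodman--Heard--Hodgson argument, and as the authors emphasize in the earlier proofs one must verify the spanning condition on the parabolic fixed points of cusp $c$ in order to invoke the tweaked form of \cite[Lemma 2.2]{GoHeHo}. Everything else is a direct translation of the symmetry $r$ into cusp data for $O'$.
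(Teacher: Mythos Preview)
Your proof is correct and follows essentially the same route as the paper's: form $G=\langle\Gamma,r\rangle$, invoke Goodman--Heard--Hodgson for discreteness, argue that the $c$-horocenter set $S_c$ is $G$-invariant so that only $c$ covers $c'$ and the image of $p$ is a distinct cusp, observe both cusps are $(2,3,6)$ because $r$ has order $6$, and apply Proposition~\ref{two236cusps}. One small imprecision: you assert that $r$ sends $\tilde c$-horoballs to $\tilde c$-horoballs ``by hypothesis'', but $\mathcal{H}$ is the $c$-maximal packing and contains no $\tilde c$-horoballs---what you actually need (and what the paper uses) is only that $r$ preserves $S_c$, hence preserves its complement $S_o$ among all parabolic fixed points, which already gives that $\infty$ and $p$ lie in different $G$-orbits.
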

\begin{proof}
Let $\Gamma$ be the fundamental group of $\mathbb{S}^3-L$. Consider $\Gamma'=\left \langle \Gamma, r\right \rangle$. As in the proof of Proposition \ref{coversmallestmulticusp}, we use \cite[Lemma 2.1 and Proof of Lemma 2.2]{GoHeHo} to argue that $\Gamma'$ is a Kleinian group. Let $O'$ denote the hyperbolic $3$-orbifold $\mathbb{H}^3/\Gamma'$, and $c'$ and $\tilde{c}'$ the cusps of $O'$ that respectively $c$ and $\tilde{c}$ maps to. Note that the set of parabolic fixed points of $\Gamma$ is same as that of $\Gamma'$. Let $S_c$ denote the parabolic fixed points of $\Gamma$ corresponding to cusp $c$ and $S_{o}$ denote the rest of the parabolic fixed points of $\Gamma$. Since $r$ is a symmetry of $\mathcal{H}$, $S_c$  and $S_o$ are both invariant under $r$ and consequently $\Gamma'$ as well. This means $O'$ has at-least two cusps ($c'$ and $\tilde{c}'$) and the only cusp of $\mathbb{S}^3-L$ that maps to $c'$ is $c$. Furthermore, $r$ has order $6$ and it fixes the horocenters of a $c$-horoball and a $\tilde{c}$-horoball. So, both $c'$ and $\tilde{c}'$ are $(2,3,6)$ as their cross-sections will have cone points of order $6$. Now, the corollary follows from Proposition \ref{two236cusps}.

\end{proof}

\section{An algorithm for testing symmetries}\label{hsalgorithm}
We are interested in understanding whether there exist a hyperbolic link (with two or more components) such that the Dehn fillings on all but one component of that link produces infinitely many hyperbolic knot complements with hidden symmetries geometrically converging to the complement of the link. Our approach in this pursuit is to use Theorem \ref{hexlatt} to exclude as many hyperbolic links as possible until we find a potential candidate for such phenomenon. In this section, we describe an algorithm which can be implemented in SnapPy \cite{snappy} to determine for which cusps the corresponding circle packings of $\mathbb{C}$ do not have the required symmetry prescribed by Theorem \ref{hexlatt}.

Unless otherwise specified, throughout this section, $L$ will be a hyperbolic link with components $K_1, \dots, K_n$ where $n \ge 2$. Let $\mathcal{C}$ denote the collection of the centers of all the circles in the $K_1$-circle packing and $P$ denote a cusp parallelogram of the $K_1$-cusp. Observe that a rotational symmetry of $\mathbb{C}$ is a symmetry of the $K_1$-circle packing if and only if its a symmetry $\mathcal{C}$. We recall from Section \ref{cuspsection} that $r_{n,\mathbf{p}}$ denotes the order $n$ counter-clockwise rotation of $\mathbb{C}$ around $\mathbf{p}$. Now, for an $r_{n,\mathbf{p}}$ of $\mathcal{C}$, there exists $g$ in $\operatorname{Stab}(\infty)$ in $\pi_1(\mathbb{S}^3-L)$ such that $g(\mathbf{p}) \in P$. Now, since $g$ is also symmetry of $\mathcal{C}$, so is $gr_{n,\mathbf{p}}g^{-1}=r_{n,g(\mathbf{p})}$, which has order $n$. Since $g^{-1} \in \pi_1(\mathbb{S}^3-L)$, it sends the horocenter of a $K_j$-horoball to that of another $K_j$-horoball. This means if $\mathbf{p}$ is not the horocenter of a $K_j$-horoball where $j \in \{2, \dots, n\}$, so not is $g(\mathbf{p})$. We write this observation in the following fact.
\begin{fact}
If the $K_1$-circle packing has some order $n$ rotational symmetry $r$, then for each cusp parallelogram $P$ of the $K_1$-cusp, the $K_1$-circle packing has an order $n$ rotational symmetry $r'$ whose fixed point lies inside $P$. Furthermore, if $r$ does not fix the horocenter of a $K_j$-horoball for $j \in \{2, \dots, n\}$, then so does not $r'$. 
\label{cuspparasym}
\end{fact}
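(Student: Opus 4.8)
The plan is to exploit that the peripheral subgroup $\operatorname{Stab}(\infty)$ of $\pi_1(\mathbb{S}^3-L)$ acts on $\mathbb{C}$ as the lattice of translations for which the cusp parallelogram $P$ is precisely a fundamental domain, and that this lattice already sits inside the symmetry group of the $K_1$-circle packing (equivalently, of the center set $\mathcal{C}$). First I would record the two elementary facts used throughout: that a Euclidean rotation is a symmetry of the $K_1$-circle packing exactly when it preserves $\mathcal{C}$, and that conjugating a symmetry of $\mathcal{C}$ by another symmetry of $\mathcal{C}$ again produces a symmetry of $\mathcal{C}$.

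Given the order-$n$ rotation $r=r_{n,\mathbf{p}}$, the first step is to push its fixed point into $P$. Since $P$ is a fundamental domain for the translation lattice $\operatorname{Stab}(\infty)$, there is an element $g\in\operatorname{Stab}(\infty)$ with $g(\mathbf{p})\in P$. Because $g$ is a deck transformation fixing $\infty$, it acts as a translation of $\mathbb{C}$ and hence as a symmetry of the packing; therefore $r':=g\,r\,g^{-1}$ is again a symmetry of $\mathcal{C}$, and since conjugating a rotation by a translation merely relocates its center, $r'=r_{n,g(\mathbf{p})}$ is an order-$n$ rotation whose fixed point $g(\mathbf{p})$ lies in $P$. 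This produces the $r'$ demanded by the statement.

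For the ``furthermore'' clause I would use that every element of $\operatorname{Stab}(\infty)\subseteq\pi_1(\mathbb{S}^3-L)$, being a deck transformation, permutes the $K_j$-horoballs among themselves for each fixed $j$, and so permutes their horocenters, i.e. the centers of the $K_j$-circles. Thus $g^{-1}$ carries $K_j$-horocenters to $K_j$-horocenters. If $r'$ fixed some $K_j$-horocenter $\mathbf{h}$, then $r=g^{-1}r'g$ would fix $g^{-1}(\mathbf{h})$, which is again a $K_j$-horocenter, contradicting the hypothesis that $r$ fixes no such horocenter. The contrapositive is exactly the assertion.

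The argument is short, and the points needing care are conceptual rather than computational. One must keep straight that although the rotation $r$ is only assumed to be a symmetry of the planar circle packing (and need not lie in $\pi_1$), the conjugating element $g$ is a genuine peripheral deck transformation, so it plays two roles: (i) it acts as a packing symmetry, which is what makes $r'$ a symmetry, and (ii) it respects the cusp labelling, which is what guarantees it preserves each family of $K_j$-horocenters. Pinning down these two roles of $g$ is the main (mild) obstacle; once they are in place, the conjugation identity $g\,r_{n,\mathbf{p}}\,g^{-1}=r_{n,g(\mathbf{p})}$ and the horocenter bookkeeping are routine.
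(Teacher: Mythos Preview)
Your argument is correct and matches the paper's own proof essentially line for line: pick $g\in\operatorname{Stab}(\infty)\subset\pi_1(\mathbb{S}^3-L)$ with $g(\mathbf{p})\in P$, set $r'=g\,r\,g^{-1}=r_{n,g(\mathbf{p})}$, and use that $g^{-1}$ preserves each family of $K_j$-horocenters to transfer the ``does not fix a $K_j$-horocenter'' condition. The only cosmetic difference is that the paper phrases the last step directly (if $\mathbf{p}$ is not a $K_j$-horocenter then neither is $g(\mathbf{p})$) rather than via the contrapositive you wrote, but the content is identical.
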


\subsection{General Algorithm}

 The cusp parallelograms that SnapPy \cite{snappy} determines always have horizontal sides. Since we will implement our results in a SnapPy/Python code, we are going to henceforth assume that two parallel sides of $P$ are horizontal. Let $\mathbf{l}$ be the complex number (with imaginary part $0$) that represents the vector these horizontal sides of $P$ determine from left to right and $\mathbf{m}$ the complex number that represents the vector the other two parallel sides of $P$ determine from bottom left to top left. Note that $\mathbf{l}$ is the length of the horizontal sides of $P$.   

Let $\mathcal{C}_P$ be the set of points in $\mathcal{C}$ that lies in $P$. Fix $\mathbf{c_0}\in \mathcal{C}_P$. Denote the angle between $\mathbf{m}$ and $\mathbf{l}$ by $\theta$. We take $d$ to be the maximum of $\lVert \mathbf{m} \rVert$, $\lVert \mathbf{l} \rVert$, $\lVert \mathbf{m}+\mathbf{l} \rVert$ and $\lVert \mathbf{m}-\mathbf{l} \rVert$. By $\mathbb{R}_{>0}$, we will denote the set of positive real numbers. We now define three functions $k_{\mathbf{h}}, k, k_{\mathbf{l}}: \mathbb{R}_{>0} \to \mathbb{R}_{>0}$ as

\begin{align*}
k_{\mathbf{h}}(x)&= \left\lceil \frac{(x+1)d+1}{\lVert \mathbf{m} \rVert |\sin \theta|}  \right \rceil, \\
k(x)&=\left \lceil \frac{k_{\mathbf{h}} (x) \lVert \mathbf{m} \rVert |\cos \theta| }{\lVert \mathbf{l} \rVert }\right \rceil, \\
k_\mathbf{l}(x)&=\left \lceil \frac{(x+1)d+1}{\lVert \mathbf{l} \rVert}\right \rceil
\end{align*}

where $x \in \mathbb{R}_{>0}$. 
We need one more definition before we proceed further. For each $x$ in $\mathbb{R}_{>0}$, we define,  
\begin{align*}
\mathcal{C}_P(x)&=\left\{ \mathbf{c}+p\, \mathbf{m}+q\, \mathbf{l}:\mathbf{c} \in \mathcal{C}_P, p,q \in \mathbb{Z}\text{ and } |p|\le k_{\mathbf{h}}(x), |q| \le k(x)+k_\mathbf{l}(x) \right \}.
\end{align*}

Observe that for each $x$ in $\mathbb{R}_{>0}$, elements of $\mathcal{C}_P(x) $ are certain translations of the elements of $\mathcal{C}_P$. The following lemma will be crucial in the  proof of Proposition \ref{codeprop}. 

\begin{lemma}\label{circleradius}
For each $x$ in $\mathbb{R}_{>0}$, $\mathcal{C}_P(x)$ contains all elements of $\mathcal{C}$ that are within distance $xd$ from $\mathbf{c_0}$. \end{lemma}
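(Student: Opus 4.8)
The plan is to exploit the lattice periodicity of $\mathcal{C}$. Since $\operatorname{Stab}(\infty)$ in $\pi_1(\mathbb{S}^3-L)$ acts as a wallpaper group of symmetries of the $K_1$-circle packing and contains the translations $t_{\mathbf{m}}$ and $t_{\mathbf{l}}$, the set $\mathcal{C}$ is invariant under the lattice $\Lambda=\langle t_{\mathbf{m}}, t_{\mathbf{l}}\rangle$, for which the cusp parallelogram $P$ is a fundamental domain. Hence every $\mathbf{z}\in\mathcal{C}$ can be written as $\mathbf{z}=\mathbf{c}+p\,\mathbf{m}+q\,\mathbf{l}$ with $\mathbf{c}\in\mathcal{C}_P$ and $p,q\in\mathbb{Z}$ (translate $\mathbf{z}$ back into $P$ by an element of $\Lambda$; invariance of $\mathcal{C}$ keeps the representative in $\mathcal{C}_P$). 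Thus proving the lemma reduces to showing that whenever $\lVert\mathbf{z}-\mathbf{c_0}\rVert\le xd$ one has $|p|\le k_{\mathbf{h}}(x)$ and $|q|\le k(x)+k_{\mathbf{l}}(x)$, since then $\mathbf{z}$ is exactly one of the listed translates comprising $\mathcal{C}_P(x)$.

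The first step is to bound $\lVert\mathbf{z}-\mathbf{c}\rVert$. Because $\mathbf{c},\mathbf{c_0}\in P$ and the diameter of the parallelogram $P$ is exactly $d=\max(\lVert\mathbf{m}\rVert,\lVert\mathbf{l}\rVert,\lVert\mathbf{m}+\mathbf{l}\rVert,\lVert\mathbf{m}-\mathbf{l}\rVert)$ (the longest of its two diagonals, the maximum being attained at vertices), we get $\lVert\mathbf{c}-\mathbf{c_0}\rVert\le d$, so by the triangle inequality $\lVert\mathbf{z}-\mathbf{c}\rVert\le\lVert\mathbf{z}-\mathbf{c_0}\rVert+\lVert\mathbf{c_0}-\mathbf{c}\rVert\le(x+1)d$.

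The second step is to decouple $p$ from $q$ using that $\mathbf{l}$ is horizontal. Taking $\mathbf{l}$ along the positive real axis, $\operatorname{Im}(\mathbf{l})=0$ and $\mathbf{m}=\lVert\mathbf{m}\rVert(\cos\theta+i\sin\theta)$, so the imaginary part of $\mathbf{z}-\mathbf{c}=p\,\mathbf{m}+q\,\mathbf{l}$ equals $p\lVert\mathbf{m}\rVert\sin\theta$, free of $q$. Then $|p|\lVert\mathbf{m}\rVert|\sin\theta|=|\operatorname{Im}(\mathbf{z}-\mathbf{c})|\le\lVert\mathbf{z}-\mathbf{c}\rVert\le(x+1)d$, whence $|p|\le\frac{(x+1)d}{\lVert\mathbf{m}\rVert|\sin\theta|}<k_{\mathbf{h}}(x)$, and since $|p|$ and $k_{\mathbf{h}}(x)$ are integers, $|p|\le k_{\mathbf{h}}(x)$. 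Feeding this into the real part $\operatorname{Re}(\mathbf{z}-\mathbf{c})=p\lVert\mathbf{m}\rVert\cos\theta+q\lVert\mathbf{l}\rVert$ gives
\[
|q|\lVert\mathbf{l}\rVert\le|\operatorname{Re}(\mathbf{z}-\mathbf{c})|+|p|\lVert\mathbf{m}\rVert|\cos\theta|\le(x+1)d+k_{\mathbf{h}}(x)\lVert\mathbf{m}\rVert|\cos\theta|.
\]
Dividing by $\lVert\mathbf{l}\rVert$ and recognizing the first summand as being $<k_{\mathbf{l}}(x)$ and the second as $\le k(x)$ (each $k_{\mathbf{l}}(x)$, $k(x)$ being the ceiling of the corresponding term), we conclude $|q|<k(x)+k_{\mathbf{l}}(x)$, hence $|q|\le k(x)+k_{\mathbf{l}}(x)$.

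The main obstacle is not analytic but structural: one must justify that $P$ is a fundamental domain of $\Lambda$ and that $\mathcal{C}$ is $\Lambda$-invariant, so that the representative $\mathbf{c}\in\mathcal{C}_P$ and the integers $p,q$ exist. The one genuinely clever point is the decoupling order — estimating $|p|$ first from the \emph{vertical} coordinate (which is $q$-free precisely because $\mathbf{l}$ is horizontal), and only then estimating $|q|$ from the \emph{horizontal} coordinate; this is exactly why the admissible range for $q$ must be enlarged by the term $k(x)$ coming from the already-established bound on $p$. The only place to slip is the bookkeeping of strict versus non-strict inequalities against the ceilings, which is why each of $k_{\mathbf{h}}$ and $k_{\mathbf{l}}$ carries the extra ``$+1$'' of slack in its numerator.
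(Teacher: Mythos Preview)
Your proof is correct. Both your argument and the paper's exploit the same key fact---that $\mathbf{l}$ is horizontal, so the vertical displacement of $\mathbf{z}-\mathbf{c}$ depends only on $p$---but the executions differ. The paper works geometrically: it replaces the ball of radius $xd$ about $\mathbf{c_0}$ by the disk $D_{(x+1)d}$ about the bottom-left vertex $\mathbf{v_0}$ of $P$, then covers that disk by a vertical stack of rectangles $R+j\mathbf{h}$ (with $\mathbf{h}=\lVert\mathbf{m}\rVert|\sin\theta|\,\mathbf{i}$), shows each rectangular strip lies inside a union of $\mathbf{l}$-translates of a parallelogram strip $P_{(x+1)d}^{\pm}$, and reads off the bounds on $p$ and $q$ from how many strips and how many $\mathbf{l}$-translates are needed. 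You bypass all the strip constructions by writing $\mathbf{z}-\mathbf{c}=p\,\mathbf{m}+q\,\mathbf{l}$ in coordinates and reading off $|p|$ from the imaginary part, then $|q|$ from the real part. Your route is shorter and more transparent; the paper's route is more pictorial (and is accompanied by a figure), which may make the roles of $k_{\mathbf{h}}$, $k$, and $k_{\mathbf{l}}$ easier to visualize. The two are equivalent in content, and the ``decoupling order'' you single out---bound $p$ first, then $q$---is exactly what the paper's nested strip inclusions encode.
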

\begin{proof}

\begin{figure}
\centering 
\captionsetup{justification=centering}
\begin{tikzpicture}[scale=.8]
\draw[fill=none](0,0) circle (2.8);

\filldraw[color=red!60!black!70,thick,fill=red!30!black!20](0,0)--(0,2)--(1,2)--(1,0)--(0,0);
\node at (.35,1.3){$R$};

\filldraw[color=magenta,thick,fill=magenta!20](0,0)--(1.25,2)--(2.25,2)--(1,0)--(0,0);
\node at (1.25,1.3){$P$};

\draw (-3,0)--(3,0);
\draw (-3,2)--(3,2);
\draw (-3,4)--(3,4);
\draw (-3,-2)--(3,-2);
\draw (-3,-4)--(3,-4);
\draw (-3,-4)--(-3,4);
\draw (-2,-4)--(-2,4);
\draw (-1,-4)--(-1,4);
\draw (0,-4)--(0,4);
\draw (1,-4)--(1,4);
\draw (2,-4)--(2,4);
\draw (3,-4)--(3,4);

\draw[color=magenta, thick] (0,0)--(2.5,4)--(3.5,4)--(1,0)--(0,0);
\node at (3.25,5){$P_{(x+1)d}^{+}$};
\node at (-2.25,-4.5){$P_{(x+1)d}^{-}$};

\draw (-1,0)--(1.5,4);
\draw (-2,0)--(.5,4);
\draw (-3,0)--(-.5,4);

\draw[color=green!60!black!80, thick] (0,0)--(-2.5,-4)--(-1.5,-4)--(1,0)--(0,0);

\draw(2,0)--(-.5,-4);
\draw(3,0)--(.5,-4);
\draw(4,0)--(1.5,-4);
\draw(4,0)--(3,0);

\draw[color=red!60!black!70,thick](0,0)--(0,4)--(1,4)--(1,0)--(0,0);
\node at (.5,5){$R_{(x+1)d}^{+}$};
\node at (.5,-4.5){$R_{(x+1)d}^{-}$};

\draw[color=red,thick](0,0)--(0,-4)--(1,-4)--(1,0)--(0,0);

\draw [dotted,->,thick] (0,-.15)--(1,-.15);
\draw [dotted,->,thick] (-.15,0)--(-.15,2);
\node at (-.32,1){$\mathbf{h}$};
\node at (.5,-.32){$\mathbf{l}$};

\begin{scope}[xshift=-.1cm]
\draw [dotted,->,thick] (0,0)--(1.25,2);
\node at (.22,.72){$\mathbf{m}$};
\end{scope}

\begin{scope}[yshift=.1cm]
\draw[<->](0,4)--(2.5,4);
\node at (1.5,4.2){$k_{\mathbf{h}}(x) \lVert \mathbf{m} \rVert \lvert \cos (\theta)\rvert$};
\end{scope}
\draw[color=magenta] (1,0) coordinate (A) -- (0,0) coordinate (B) -- (1.25,2) coordinate (C)
pic[draw, ->, black, angle radius=.35cm]{angle=A--B--C};

\node at(.5,.4){$\theta$};

\begin{scope}[xshift=10cm]

\draw[fill=none](0,0) circle (2.8);

\filldraw[color=red!60!black!70,thick,fill=red!30!black!20](0,0)--(0,2)--(1,2)--(1,0)--(0,0);
\node at (.35,1.3){$R$};

\filldraw[color=magenta,thick,fill=magenta!20](1,0)--(-.75,2)--(-1.75,2)--(0,0)--(1,0);
\node at (-.6,1.3){$P$};

\draw (-3,0)--(3,0);
\draw (-3,2)--(3,2);
\draw (-3,4)--(3,4);
\draw (-3,-2)--(3,-2);
\draw (-3,-4)--(3,-4);
\draw (-3,-4)--(-3,4);
\draw (-2,-4)--(-2,4);
\draw (-1,-4)--(-1,4);
\draw (0,-4)--(0,4);
\draw (1,-4)--(1,4);
\draw (2,-4)--(2,4);
\draw (3,-4)--(3,4);

\draw[color=magenta, thick] (0,0)--(-3.5,4)--(-2.5,4)--(1,0)--(0,0);
\node at (-3,5){$P_{(x+1)d}^{+}$};
\node at (4,-4.5){$P_{(x+1)d}^{-}$};

\draw (2,0)--(-1.5,4);
\draw (3,0)--(-.5,4);
\draw (4,0)--(.5,4);
\draw (5,0)--(1.5,4);
\draw (5,0)--(3,0);

\draw[color=green!60!black!80, thick] (0,0)--(3.5,-4)--(4.5,-4)--(1,0)--(0,0);

\draw(-1,0)--(2.5,-4);
\draw(-2,0)--(1.5,-4);
\draw(-3,0)--(.5,-4);
\draw(-4,0)--(-.5,-4);
\draw(-4,0)--(-3,-0);

\draw(3,-4)--(3.5,-4);

\draw[color=red!60!black!70,thick](0,0)--(0,4)--(1,4)--(1,0)--(0,0);
\node at (.5,5){$R_{(x+1)d}^{+}$};
\node at (.5,-4.5){$R_{(x+1)d}^{-}$};

\draw[color=red,thick](0,0)--(0,-4)--(1,-4)--(1,0)--(0,0);

\draw [dotted,->,thick] (0,-.15)--(1,-.15);
\draw [dotted,->,thick] (-.15,0)--(-.15,2);
\node at (-.32,1){$\mathbf{h}$};
\node at (.5,-.32){$\mathbf{l}$};

\begin{scope}[xshift=-.13cm]
\draw [dotted,->,thick] (0,0)--(-1.75,2);
\node at (-1.1,.72){$\mathbf{m}$};
\end{scope}

\begin{scope}[yshift=.1cm]
\draw[<->](0,4)--(-3.5,4);
\node at (-2,4.2){$k_{\mathbf{h}}(x) \lVert \mathbf{m} \rVert \lvert \cos (\theta)\rvert$};
\end{scope}
\draw[color=magenta] (1,0) coordinate (A) -- (0,0) coordinate (B) -- (-1.75,2) coordinate (C)
pic[draw, ->, black, angle radius=.35cm]{angle=A--B--C};

\node at(.5,.4){$\theta$};

\end{scope}
\end{tikzpicture}
\caption{$\mathcal{C}_P(x)$ contains $\mathcal{C} \cap D_{(x+1)d}$ - Left: $\theta$ is actue, Right: $\theta$ is obtuse.}
\label{obtactlemma}

\end{figure}
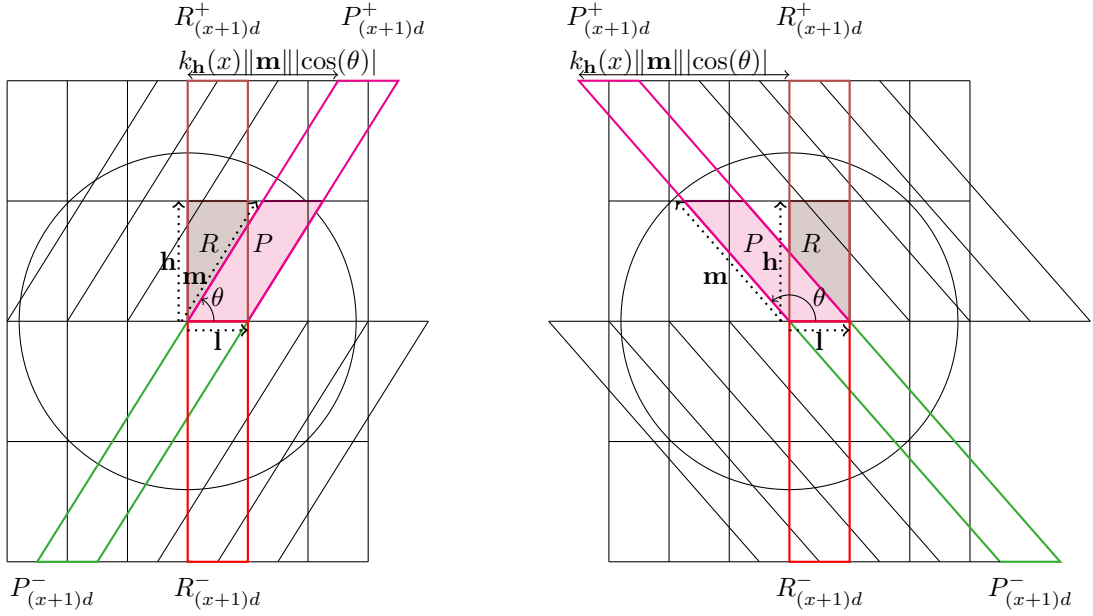

Let $x \in \mathbb{R}_{>0}$. Suppose $\mathbf{v_0}$ denotes the left bottom vertex of $P$. 
Let us denote the disk of radius $s$ centered $\mathbf{v_0}$ by $D_s$. We should note that the distance between any two points in $P$ is less than or equal to $d$. So, $\lVert \mathbf{c_0}-\mathbf{v_0} \rVert \le d$.

Let $\mathbf{c} \in \mathcal{C}$ such that $\lVert \mathbf{c}-\mathbf{c_0} \rVert \le xd$. We have, 
$$\lVert \mathbf{c}-\mathbf{v_0} \rVert \le \lVert \mathbf{c}-\mathbf{c_0} \rVert + \lVert \mathbf{c_0}-\mathbf{v_0} \rVert \le xd+d=(x+1)d.$$

So, if we prove that $\mathcal{C}_P(x)$ contains $\mathcal{C} \cap D_{(x+1)d}$,  it will imply $\mathcal{C}_P(x)$ contains all elements of $\mathcal{C}$ that are within distance $xd$ from $\mathbf{c}_0$. 
 
We define $\mathbf{h}$ to be the vector equal to $\lVert \mathbf{m} \rVert | \sin \theta | \,\mathbf{i}$ with initial point $\mathbf{v_0}$. Note that $\mathbf{h}$ is the vector starting from $\mathbf{v_0}$ determined by the height of $P$. Let $R$ be the rectangle determined by $\mathbf{h}$ and $\mathbf{l}$ with the bottom left vertex as $\mathbf{v_0}$. We define, 
$$R_{(x+1)d}^{+}=\bigcup\limits_{j=0}^{k_{\mathbf{h}}(x)-1} R+j\, \mathbf{h} \quad \text { and } \quad R_{(x+1)d}^{-}=\bigcup\limits_{j=-k_{\mathbf{h}}(x)}^{-1} R+j\, \mathbf{h}.
$$ 
Note that $R_{(x+1)d}^{+}$ and $R_{(x+1)d}^{-}$ respectively are the vertical strips (and so in the direction of $\mathbf{h}$) over and below $R$. 

We similarly define, 
$$P_{(x+1)d}^{+}=\bigcup\limits_{j=0}^{k_{\mathbf{h}}(x)-1} P+j\, \mathbf{m}\quad \text{ and } \quad P_{(x+1)d}^{-}=\bigcup\limits_{j=-k_{\mathbf{h}}(x)}^{-1} P+j\, \mathbf{m}.
$$
Note that $P_{(x+1)d}^{+}$ (respectively, $P_{(x+1)d}^{-}$) is the slanted strip (in the direction of $\mathbf{m}$) based over (respectively, below) $P$. 

Now,
$$R_{(x+1)d}^{+} \subset \bigcup\limits_{j=-k(x)}^{k(x)} P_{(x+1)d}^{+} +j\, \mathbf{l} \quad \text{ and } \quad R_{(x+1)d}^{-} \subset \bigcup\limits_{j=-k(x)}^{k(x)} P_{(x+1)d}^{-} +j\, \mathbf{l}.
$$ 
(See Figure \ref{obtactlemma} for a pictorial explanation.)

If we define, $P_{(x+1)d}= P_{(x+1)d}^{+} \cup P_{(x+1)d}^{-}$, then, $P_{(x+1)d}=\bigcup\limits_{j=-k_{\mathbf{h}}(x)}^{k_{\mathbf{h}}(x)-1} P+j\, \mathbf{m}$. This would mean that both $R_{(x+1)d}^{+}$ and $R_{(x+1)d}^{-}$ are contained in  $\bigcup\limits_{j=-k(x)}^{k(x)} P_{(x+1)d} +j\, \mathbf{l}$.  So, $R_{(x+1)d}=R_{(x+1)d}^{+}  \cup R_{(x+1)d}^{-}$ is also contained in  $\bigcup\limits_{j=-k(x)}^{k(x)} P_{(x+1)d} +j\, \mathbf{l}$. 

Now, we note that $D_{(x+1)d} \subset  \bigcup\limits_{n=-k_\mathbf{l}(x)}^{k_\mathbf{l}(x)} R_{(x+1)d} +n\, \mathbf{l}$ (See Figure \ref{obtactlemma}). This means 
\begin{align*}
D_{(x+1)d} &\subset  \bigcup\limits_{n=-k_\mathbf{l}(x)}^{k_\mathbf{l}(x)}  ( \bigcup\limits_{j=-k(x)}^{k(x)} P_{(x+1)d} +j\, \mathbf{l} )+ n\, \mathbf{l}\\
&=\bigcup\limits_{n=-(k(x)+k_\mathbf{l}(x))}^{k(x)+k_\mathbf{l}(x)} P_{(x+1)d} +n\, \mathbf{l} \\
&= \bigcup\limits_{n=-(k(x)+k_\mathbf{l}(x))}^{k(x)+k_\mathbf{l}(x)} (\bigcup\limits_{j=-k_{\mathbf{h}}(x)}^{k_{\mathbf{h}}(x)-1} P+j\, \mathbf{m})+ n\, \mathbf{l}\\
&\subset \bigcup\limits_{n=-(k(x)+k_\mathbf{l}(x))}^{k(x)+k_\mathbf{l}(x)} \bigcup\limits_{j=-k_{\mathbf{h}}(x)}^{k_{\mathbf{h}}(x)} P+j\, \mathbf{m}+ n\, \mathbf{l}.
\end{align*}

The intersection of the right most union above with $\mathcal{C}$ is $\mathcal{C}_P(x)$. Therefore, $\mathcal{C} \cap D_{(x+1)d}$ is contained in $\mathcal{C}_P(x)$.

\end{proof}
We now make a note of the following fact. 
\begin{fact}\label{equilatfact}
Let $r$ be an order $3$ symmetry of $\mathcal{C}$. Then the orbits of the action of the group $\{1,r,r^2\}$ on $\mathcal{C}$ have either $1$ element or $3$ elements. If this action has an orbit with one element, then it is unique and the element in that orbit is the fixed point of $r$. On the other hand, the elements in an orbit with $3$ elements form an equilateral triangle. 
\end{fact}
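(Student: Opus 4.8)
The plan is to exploit the fact that $r$, being an order-$3$ symmetry of the planar point set $\mathcal{C}$, is realized by an orientation-preserving Euclidean isometry of $\mathbb{C}$ of order $3$. Since every orientation-reversing isometry of $\mathbb{C}$ is a reflection (order $2$) or a glide reflection (infinite order), no such isometry has order $3$; hence $r$ must be a rotation, and the only rotations of order $3$ are those by $\pm 2\pi/3$ about a single center $\mathbf{p}$. Writing $\omega = e^{2\pi i/3}$ and identifying $\mathbb{C}$ in the usual way, I would record $r(\mathbf{z}) = \mathbf{p} + \omega^{\pm 1}(\mathbf{z} - \mathbf{p})$; after possibly replacing $r$ by $r^2$ (which generates the same group $G = \{1, r, r^2\}$ and has the same orbits), I may assume $r(\mathbf{z}) = \mathbf{p} + \omega(\mathbf{z}-\mathbf{p})$.

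Next, the statement about orbit sizes is immediate from the orbit–stabilizer theorem: $G \cong \mathbb{Z}/3\mathbb{Z}$ acts on $\mathcal{C}$, and since $|G| = 3$ is prime, every orbit has cardinality dividing $3$, i.e.\ $1$ or $3$. An orbit $\{\mathbf{c}\}$ of size $1$ is exactly a point with $r(\mathbf{c}) = \mathbf{c}$. From the formula for $r$, the equation $r(\mathbf{c}) = \mathbf{c}$ forces $(\omega - 1)(\mathbf{c} - \mathbf{p}) = 0$, and since $\omega \ne 1$ this gives $\mathbf{c} = \mathbf{p}$. Thus $\mathbf{p}$ is the unique fixed point of $r$ in $\mathbb{C}$, so a size-$1$ orbit occurs if and only if $\mathbf{p} \in \mathcal{C}$, in which case it is the unique singleton orbit and its point is the center of the rotation, as claimed.

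Finally, for a size-$3$ orbit I would take $\mathbf{c} \in \mathcal{C}$ with $\mathbf{c} \ne \mathbf{p}$; its orbit is $\{\mathbf{c},\ \mathbf{p}+\omega(\mathbf{c}-\mathbf{p}),\ \mathbf{p}+\omega^2(\mathbf{c}-\mathbf{p})\}$. All three points lie at the common distance $\lVert \mathbf{c}-\mathbf{p}\rVert$ from $\mathbf{p}$ because multiplication by $\omega$ preserves modulus, and they are carried to one another by successive rotations of $2\pi/3$ about $\mathbf{p}$; hence they are the vertices of an equilateral triangle centered at $\mathbf{p}$. Concretely, each pairwise distance equals $\lVert \mathbf{c}-\mathbf{p}\rVert\,|1-\omega| = \sqrt{3}\,\lVert \mathbf{c}-\mathbf{p}\rVert$, which confirms that the triangle is equilateral.

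There is essentially no hard step here: the only point requiring a moment of care is the first one, namely justifying that an order-$3$ symmetry of $\mathcal{C}$ is genuinely a rotation about a well-defined center (equivalently, that it has a unique fixed point), which is exactly what makes both the uniqueness of the singleton orbit and the equilateral-triangle conclusion fall out immediately.
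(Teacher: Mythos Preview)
Your proof is correct. The paper states this as a ``Fact'' without proof, so there is no argument to compare against; your approach---identifying an order-$3$ Euclidean isometry as a rotation about a unique center, applying orbit--stabilizer for the cyclic group of prime order $3$, and computing pairwise distances via $|1-\omega|=\sqrt{3}$---is exactly the natural justification the paper leaves implicit.
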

Now, if $x$, $y$ and $z$ are the vertices of an equilateral triangle $\Delta$, then the centroid of $\Delta$ is fixed by any symmetry of $\Delta$. With this in mind and motivated by Corollary \ref{nearrot} and \ref{neardistinctrot}, we define the sets
\begin{align*}
\mathcal{E}_{weak}&=\left \{\mathbf{c_0}\right\} \bigcup \left\{\frac{\mathbf{x}+\mathbf{y}+\mathbf{c_0}}{3}: \mathbf{x}, \mathbf{y} \in \mathcal{C}_P\left(\frac{1}{3}\right), 0  < \lVert \mathbf{x}-\mathbf{c_0} \rVert=\lVert \mathbf{y}-\mathbf{c_0} \rVert \le \frac{\sqrt{\operatorname{Area}(P)}}{\sqrt{2} 3^{\frac{3}{4}}},\right. \\
 \qquad &\left.  \frac{\lVert (\mathbf{x}-\mathbf{c_0}).(\mathbf{y}-\mathbf{c_0}) \rVert}{ \lVert \mathbf{x}-\mathbf{c_0} \rVert \lVert \mathbf{y}-\mathbf{c_0} \rVert}=\frac{1}{2}\right\}, \text{ and }, \\
\mathcal{E}_{strong} &=\left\{\frac{\mathbf{x}+\mathbf{y}+\mathbf{c_0}}{3}: \mathbf{x}, \mathbf{y} \in \mathcal{C}_P\left(\frac{4}{7}\right), 0  < \lVert \mathbf{x}-\mathbf{c_0} \rVert=\lVert \mathbf{y}-\mathbf{c_0} \rVert \le \frac{\sqrt{\operatorname{Area}(P)}}{\sqrt{2} 3^{\frac{1}{4}}},\right. \\
 \qquad &\left.  \frac{\lVert (\mathbf{x}-\mathbf{c_0}).(\mathbf{y}-\mathbf{c_0}) \rVert}{ \lVert \mathbf{x}-\mathbf{c_0} \rVert \lVert \mathbf{y}-\mathbf{c_0} \rVert}=\frac{1}{2}\right\}.
\end{align*}
Observe that the point $\mathbf{c_0}$ as well as the centroids of the equilateral triangles with one vertex at $\mathbf{c_0}$ and two other vertices from $\mathcal{C}_P\left(\frac{1}{3}\right)$, and length of its sides less than or equal to $\frac{\sqrt{\operatorname{Area}(P)}}{\sqrt{2} 3^{\frac{3}{4}}}$ belong to $\mathcal{E}_{weak}$. Similarly, the centroids of the equilateral triangles with one vertex at $\mathbf{c_0}$ and two other vertices from $\mathcal{C}_P\left(\frac{4}{7}\right)$, and length of its sides less than or equal to $\frac{\sqrt{\operatorname{Area}(P)}}{\sqrt{2} 3^{\frac{1}{4}}}$ belong to $\mathcal{E}_{strong}$.

Recall from Section \ref{cuspsection} that $r_{n,\mathbf{x}}$ denotes the counterclockwise rotation of order $n$ around $\mathbf{x} \in \mathbb{C}$. We have the following proposition. 

\begin{proposition}\label{codeprop}
If there exists an infinite family $\mathcal{F}$ of hyperbolic knot complements with hidden symmetries obtained by Dehn filling all cusps of $L$ but the $K_1$-cusp which geometrically converges to $\mathbb{S}^3-L$, then, 
\begin{enumerate}
\item there exists $\mathbf{e}\in \mathcal{E}_{weak}$ such that $\lVert \mathbf{e}-\mathbf{c_0} \rVert <\frac{d}{5}$, both $r_{3,\mathbf{e}}\left(\mathcal{C}_P\left(\frac{1}{3}\right)\right)$ and $r_{3,\mathbf{e}}^2\left(\mathcal{C}_P\left(\frac{1}{3}\right)\right)$ are contained in $\mathcal{C}_P\left(\frac{7}{5}+k_{\mathbf{h}}(\frac{1}{3})+k(\frac{1}{3})+k_{\mathbf{l}}(\frac{1}{3})\right)$ and $\mathbf{e}$ is not the horocenter of any $K_j$-horoball where $j \in \{2,3, \dots, n\}$, and, 

\item there exists $\mathbf{e}_1, \mathbf{e}_2 \in \mathcal{E}_{strong}$ such that $\mathbf{e}_1\ne \mathbf{e}_2$, and for each $i \in \{1,2\}$, $0<\lVert \mathbf{e}_i-\mathbf{c_0} \rVert <\frac{d}{3}$, both $r_{3,\mathbf{e}_i}\left(\mathcal{C}_P\left(\frac{4}{7}\right)\right)$ and $r_{3,\mathbf{e}_i}^2\left(\mathcal{C}_P\left(\frac{4}{7}\right)\right)$ are contained in $\mathcal{C}_P\left(\frac{5}{3}+k_{\mathbf{h}}(\frac{4}{7})+k(\frac{4}{7})+k_{\mathbf{l}}(\frac{4}{7})\right)$ and neither of $\mathbf{e}_1$ and $\mathbf{e}_2$ is the horocenter of any $K_j$-horoball where $j \in \{2,3, \dots, n\}$. 
\end{enumerate}
\end{proposition}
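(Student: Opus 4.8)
The plan is to handle both parts by the same mechanism: extract from Corollary \ref{nearrot} (for Part 1) and Corollary \ref{neardistinctrot} (for Part 2) an order-$3$ rotational symmetry of the $K_1$-circle packing whose fixed point lies close to $\mathbf{c}_0$, and then recognize that fixed point as the centroid of an explicit equilateral triangle having one vertex at $\mathbf{c}_0$. Throughout I will use that $P$ is a fundamental domain for the $K_1$-cusp lattice, so $\operatorname{Area}(P)$ equals the maximal cusp area of the $K_1$-cusp, and that $d$ is the diameter of $P$, whence $\operatorname{Area}(P)=\lVert\mathbf{m}\rVert\lVert\mathbf{l}\rVert\lvert\sin\theta\rvert\le\lVert\mathbf{m}\rVert\lVert\mathbf{l}\rVert\le d^2$ since $\lVert\mathbf{m}\rVert,\lVert\mathbf{l}\rVert\le d$.

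For Part 1, I apply Corollary \ref{nearrot} to the circle $C$ centered at $\mathbf{c}_0$, obtaining an order $3$ or $6$ symmetry $r$ of the $K_1$-circle packing, fixing no $K_j$-horocenter, whose fixed point $\mathbf{p}$ satisfies $\lVert\mathbf{p}-\mathbf{c}_0\rVert\le\sqrt{\operatorname{Area}(P)}/(\sqrt2\,3^{5/4})$. By Remark \ref{order3rotation} there is an order-$3$ symmetry $r_{3,\mathbf{p}}$ with this same fixed point, and $\mathbf{p}$ is not a $K_j$-horocenter for $j\ge2$ (else $r_{3,\mathbf{p}}$ would fix it). Set $\mathbf{e}=\mathbf{p}$. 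If $\mathbf{e}=\mathbf{c}_0$ then $\mathbf{e}\in\mathcal{E}_{weak}$ trivially. Otherwise Fact \ref{equilatfact} shows that $\mathbf{x}:=r_{3,\mathbf{e}}(\mathbf{c}_0)$ and $\mathbf{y}:=r_{3,\mathbf{e}}^2(\mathbf{c}_0)$ together with $\mathbf{c}_0$ form an equilateral triangle with centroid $\mathbf{e}$, so $\mathbf{e}=(\mathbf{x}+\mathbf{y}+\mathbf{c}_0)/3$, the normalized dot product of $\mathbf{x}-\mathbf{c}_0$ and $\mathbf{y}-\mathbf{c}_0$ equals $1/2$, and $\lVert\mathbf{x}-\mathbf{c}_0\rVert=\lVert\mathbf{y}-\mathbf{c}_0\rVert>0$. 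Since the vertex-to-centroid distance is $1/\sqrt3$ of the side, $\lVert\mathbf{x}-\mathbf{c}_0\rVert=\sqrt3\,\lVert\mathbf{e}-\mathbf{c}_0\rVert\le\sqrt{\operatorname{Area}(P)}/(\sqrt2\,3^{3/4})$, which is exactly the length bound in $\mathcal{E}_{weak}$; and because $\sqrt2\,3^{3/4}\ge3$ this is $\le d/3$, so Lemma \ref{circleradius} places $\mathbf{x},\mathbf{y}\in\mathcal{C}_P(1/3)$. Hence $\mathbf{e}\in\mathcal{E}_{weak}$, and $\lVert\mathbf{e}-\mathbf{c}_0\rVert\le d/(\sqrt2\,3^{5/4})<d/5$ since $\sqrt2\,3^{5/4}>5$.

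Part 2 runs identically, starting from Corollary \ref{neardistinctrot}, which supplies distinct $\mathbf{p}_1\ne\mathbf{p}_2$ with $0<\lVert\mathbf{c}_0-\mathbf{p}_m\rVert\le\sqrt{\operatorname{Area}(P)}/(\sqrt2\,3^{3/4})$, neither a $K_j$-horocenter. As the lower bound is strict, $\mathbf{c}_0$ is moved by each $r_{3,\mathbf{p}_m}$, so Fact \ref{equilatfact} realizes $\mathbf{e}_m:=\mathbf{p}_m$ as the centroid of the equilateral triangle on $\mathbf{c}_0,\mathbf{x}_m,\mathbf{y}_m$ with $\lVert\mathbf{x}_m-\mathbf{c}_0\rVert=\sqrt3\,\lVert\mathbf{e}_m-\mathbf{c}_0\rVert\le\sqrt{\operatorname{Area}(P)}/(\sqrt2\,3^{1/4})$; since $\sqrt2\,3^{1/4}\ge7/4$, this is $\le4d/7$, so $\mathbf{x}_m,\mathbf{y}_m\in\mathcal{C}_P(4/7)$ by Lemma \ref{circleradius} and thus $\mathbf{e}_m\in\mathcal{E}_{strong}$, with $0<\lVert\mathbf{e}_m-\mathbf{c}_0\rVert\le d/(\sqrt2\,3^{3/4})<d/3$ because $\sqrt2\,3^{3/4}>3$. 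Here $\mathbf{e}_1\ne\mathbf{e}_2$ follows from $\mathbf{p}_1\ne\mathbf{p}_2$.

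It remains to verify the two containment statements, which is the one bookkeeping-heavy step. For any $\mathbf{z}=\mathbf{c}+p\mathbf{m}+q\mathbf{l}\in\mathcal{C}_P(x)$ with $\mathbf{c}\in\mathcal{C}_P\subseteq P$, $\lvert p\rvert\le k_{\mathbf{h}}(x)$ and $\lvert q\rvert\le k(x)+k_{\mathbf{l}}(x)$, the triangle inequality together with $\lVert\mathbf{m}\rVert,\lVert\mathbf{l}\rVert\le d$ and $\lVert\mathbf{c}-\mathbf{c}_0\rVert\le d$ gives $\lVert\mathbf{z}-\mathbf{c}_0\rVert\le(1+k_{\mathbf{h}}(x)+k(x)+k_{\mathbf{l}}(x))d$. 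Since $r_{3,\mathbf{e}}$ is an isometry fixing $\mathbf{e}$, I get $\lVert r_{3,\mathbf{e}}(\mathbf{z})-\mathbf{c}_0\rVert\le\lVert\mathbf{z}-\mathbf{c}_0\rVert+2\lVert\mathbf{e}-\mathbf{c}_0\rVert$, and likewise for $r_{3,\mathbf{e}}^2$. Substituting $\lVert\mathbf{e}-\mathbf{c}_0\rVert<d/5$ with $x=1/3$ produces the bound $(7/5+k_{\mathbf{h}}(1/3)+k(1/3)+k_{\mathbf{l}}(1/3))d$ for Part 1, while $\lVert\mathbf{e}_m-\mathbf{c}_0\rVert<d/3$ with $x=4/7$ produces $(5/3+k_{\mathbf{h}}(4/7)+k(4/7)+k_{\mathbf{l}}(4/7))d$ for Part 2; as $r_{3,\mathbf{e}}(\mathbf{z})$ and $r_{3,\mathbf{e}}^2(\mathbf{z})$ lie in $\mathcal{C}$, Lemma \ref{circleradius} places them in the prescribed $\mathcal{C}_P(\cdot)$. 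The main obstacle is not any single estimate but arranging that every elementary inequality $\sqrt2\,3^{3/4}\ge3$, $\sqrt2\,3^{5/4}>5$, $\sqrt2\,3^{1/4}\ge7/4$, and $\sqrt2\,3^{3/4}>3$ holds, so that the constants $1/5,\,1/3,\,4/7,\,7/5,\,5/3$ in the statement come out precisely; the conceptual content is entirely the centroid identification via Fact \ref{equilatfact}.
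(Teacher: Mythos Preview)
Your proof is correct and follows essentially the same approach as the paper: invoke Corollary~\ref{nearrot}/Remark~\ref{order3rotation} and Corollary~\ref{neardistinctrot}, identify the rotation centers as centroids via Fact~\ref{equilatfact}, use Lemma~\ref{circleradius} to place the orbit of $\mathbf{c}_0$ in the appropriate $\mathcal{C}_P(x)$, and finish with the same triangle-inequality bookkeeping (your inequality $\lVert r_{3,\mathbf{e}}(\mathbf{z})-\mathbf{c}_0\rVert\le\lVert\mathbf{z}-\mathbf{c}_0\rVert+2\lVert\mathbf{e}-\mathbf{c}_0\rVert$ is exactly what the paper obtains in two steps via $\mathbf{e}$). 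The numerical checks $\sqrt2\,3^{5/4}>5$, $\sqrt2\,3^{3/4}>3$, and $\sqrt2\,3^{1/4}>7/4$ are the same ones the paper uses implicitly.
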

\begin{proof}
By Corollary \ref{nearrot} (and Remark \ref{order3rotation}), and Corollary \ref{neardistinctrot}, there exists order $3$ symmetries $r_{3,\mathbf{e}}$, $r_{3,\mathbf{e}_1}$ and $r_{3,\mathbf{e}_2}$ of the $K_1$-circle packing of $\mathbb{C}$ such that $\mathbf{e}_1\ne \mathbf{e}_2$, 
$\lVert \mathbf{e}-\mathbf{c_0} \rVert  \le \frac{\sqrt{\operatorname{Area}(P)}}{\sqrt{2}3^{\frac{5}{4}}}$ and $0< \lVert \mathbf{e_i}-\mathbf{c_0} \rVert \le \frac{\sqrt{\operatorname{Area}(P)}}{\sqrt{2}3^{\frac{3}{4}}}$ for  $i=1,2$, where $\mathbf{e}, \mathbf{e}_1$ and $\mathbf{e}_2$ belong to $\mathbb{C}$. 

We will first prove the first part. Now, either $\mathbf{e}=\mathbf{c_0}$ or $\mathbf{c_0}$, $r_{3,\mathbf{e}}(\mathbf{c_0})$ and $r_{3,\mathbf{e}}^2(\mathbf{c_0})$ are the three distinct vertices of an equilateral triangle centered at $\mathbf{e}$. So, 
$$\lVert r_{3,\mathbf{e}}(\mathbf{c_0})-\mathbf{c_0} \rVert =\lVert r_{3,\mathbf{e}}^2(\mathbf{c_0})-\mathbf{c_0} \rVert =\sqrt{3}\lVert \mathbf{e}-\mathbf{c_0}  \rVert \le \frac{\sqrt{\operatorname{Area}(P)}}{\sqrt{2}3^{\frac{3}{4}}} \le \frac{\sqrt{\lVert \mathbf{m} \rVert \lVert \mathbf{l} \rVert}}{\sqrt{2}3^{\frac{3}{4}}}<\frac{d}{3}.
$$
Now by Lemma \ref{circleradius}, we have, $r_{3,\mathbf{e}}(\mathbf{c_0})$ and $r_{3,\mathbf{e}}^2(\mathbf{c_0})$ lie in $\mathcal{C}_P\left(\frac{1}{3}\right)$. So, $\mathbf{e} \in \mathcal{E}_{weak}$.  

Let $\mathbf{c'} \in \mathcal{C}_P\left(\frac{1}{3}\right)$. Then, $\mathbf{c'}=\mathbf{c}+p\,\mathbf{m}+q\,\mathbf{l}$ for some $\mathbf{c}  \in \mathcal{C}_P$, and $p$, $q$ $\in \mathbb{Z}$ such that $|p|\le k_{\mathbf{h}}\left(\frac{1}{3}\right)$ and $|q| \le k\left(\frac{1}{3}\right)+k_{\mathbf{l}}\left(\frac{1}{3}\right)$. So, we have,  
\begin{align*}
\lVert \mathbf{c'}-\mathbf{e} \rVert & \le \lVert \mathbf{c'}-\mathbf{c_0} \rVert +\lVert \mathbf{c_0} - \mathbf{e}\rVert \\
&\le \lVert \mathbf{c}-\mathbf{c_0}\rVert +|p| \lVert \mathbf{m}\rVert+|q| \lVert \mathbf{l} \rVert +\lVert \mathbf{c_0}-\mathbf{e}\rVert \\
&< d+k_{\mathbf{h}}\left(\frac{1}{3}\right) d+\left(k\left(\frac{1}{3}\right)+k_{\mathbf{l}}\left(\frac{1}{3}\right)\right) d+\frac{d}{5}\\
&=d\left(\frac{6}{5}+k_{\mathbf{h}}\left(\frac{1}{3}\right)+k\left(\frac{1}{3}\right)+k_{\mathbf{l}}\left(\frac{1}{3}\right)\right).
\end{align*}
Here, note that $\lVert \mathbf{c_0}-\mathbf{e} \rVert \le \frac{\sqrt{\operatorname{Area}(P)}}{\sqrt{2}3^{\frac{5}{4}}}\le \frac{\sqrt{\lVert m \rVert \lVert l \rVert}}{\sqrt{2}3^{\frac{5}{4}}}< \frac{d}{5}$. 

Since, $r_{3,\mathbf{e}}$ is an order $3$ rotation around $\mathbf{e}$, we have $\lVert r_{3,\mathbf{e}}(\mathbf{c'})-\mathbf{e}\rVert =\lVert r_{3,\mathbf{e}}^2(\mathbf{c'})=\mathbf{e}\rVert=\lVert \mathbf{c'}-\mathbf{e}\rVert$, which is less than $d\left(\frac{6}{5}+k_{\mathbf{h}}\left(\frac{1}{3}\right)+k\left(\frac{1}{3}\right)+k_{\mathbf{l}}\left(\frac{1}{3}\right)\right)$. So, 
\begin{align*}
\lVert r_{3,\mathbf{e}}(\mathbf{c'})-\mathbf{c_0}\rVert &\le \lVert r_{3,\mathbf{e}}(\mathbf{c'})-\mathbf{e}\rVert +\lVert \mathbf{e}-\mathbf{c_0}\rVert \\
& < d\left(\frac{6}{5}+k_{\mathbf{h}}\left(\frac{1}{3}\right)+k\left(\frac{1}{3}\right)+k_{\mathbf{l}}\left(\frac{1}{3}\right)\right)+ \frac{d}{5}\\
&=d\left(\frac{7}{5}+k_{\mathbf{h}}\left(\frac{1}{3}\right)+k\left(\frac{1}{3}\right)+k_{\mathbf{l}}\left(\frac{1}{3}\right)\right).
\end{align*}
Similarly, $\lVert r_{3,\mathbf{e}}^2(\mathbf{c'})-\mathbf{c_0}\rVert < d\left(\frac{7}{5}+k_{\mathbf{h}}\left(\frac{1}{3}\right)+k\left(\frac{1}{3}\right)+k_{\mathbf{l}}\left(\frac{1}{3}\right)\right)$. So, by Lemma \ref{circleradius}, 
$$r_{3,\mathbf{e}}(\mathbf{c'}), r_{3,\mathbf{e}}^2(\mathbf{c'}) \in \mathcal{C}_P\left(\frac{7}{5}+k_{\mathbf{h}}\left(\frac{1}{3}\right)+k\left(\frac{1}{3}\right)+k_{\mathbf{l}}\left(\frac{1}{3}\right)\right).$$ This completes the proof of the first part. 

Now, for the second part, we argue similarly as in the first part. In particular, for $i=1,2$, we get, 
$$\lVert r_{3,\mathbf{e}_i}(\mathbf{c_0})-\mathbf{c_0} \rVert =\lVert r_{3,\mathbf{e}_i}^2(\mathbf{c_0})-\mathbf{c_0} \rVert =\sqrt{3}\lVert \mathbf{e}_i-\mathbf{c_0}  \rVert \le \frac{\sqrt{\operatorname{Area}(P)}}{\sqrt{2}3^{\frac{1}{4}}} \le \frac{\sqrt{\lVert m \rVert \lVert l \rVert}}{\sqrt{2}3^{\frac{1}{4}}}<\frac{4d}{7}.
$$
Lemma \ref{circleradius} then implies that $\mathbf{e}_1, \mathbf{e}_2 \in \mathcal{E}_{strong}$. 

We observe that $\lVert \mathbf{c}_0-\mathbf{e}_i\rVert < \frac{d}{3}$ for $i=1,2$. Then, arguing again as in the first part, we can see that, for any $\mathbf{c}' \in \mathcal{C}_P(\frac{4}{7})$, 
\begin{align*}
\lVert \mathbf{c'}-\mathbf{e}_i\rVert &< d+k_{\mathbf{h}} \left(\frac{4}{7}\right)d+\left(k \left(\frac{4}{7}\right)+k_{\mathbf{l}}\left(\frac{4}{7}\right) \right) d+\frac{d}{3}\\
&=d\left(\frac{4}{3}+k_{\mathbf{h}}\left(\frac{4}{7}\right)+k\left(\frac{4}{7}\right)+k_{\mathbf{l}}\left(\frac{4}{7}\right)\right).
\end{align*}
Consequently, 
\begin{align*}
\lVert r_{3,\mathbf{e}_i}^{\pm1}(\mathbf{c'})-\mathbf{c_0}\rVert & \le \lVert r_{3,\mathbf{e}_i}^{\pm 1}(\mathbf{c'})-\mathbf{e}_i\rVert +\lVert \mathbf{e}_i-\mathbf{c_0}\rVert \\
& < d\left(\frac{4}{3}+k_{\mathbf{h}}\left(\frac{4}{7}\right)+k\left(\frac{4}{7}\right)+k_{\mathbf{l}}\left(\frac{4}{7}\right)\right)+ \frac{d}{3}\\
&=d\left(\frac{5}{3}+k_{\mathbf{h}}\left(\frac{4}{7}\right)+k \left(\frac{4}{7}\right)+k_{\mathbf{l}}\left(\frac{4}{7}\right)\right).
\end{align*}
Lemma \ref{circleradius} now completes the proof of the second part. 
\end{proof}

We end this section with the following proposition. 

\begin{proposition}\label{codeprop20v0}

Suppose the volume of $\mathbb{S}^3-L$ is $20v_0$ where $v_0$ is the volume of a regular ideal tetrahedron. Let $\phi: \mathbb{S}^3-L \to O_{(2,3,6),(2,2,2,2)}$ be an orbifold cover such that the only cusp of $\mathbb{S}^3-L$ that maps to the $(2,3,6)$ cusp of $O_{(2,3,6),(2,2,2,2)}$ is the $K_1$-cusp. Then, there exists $\mathbf{e}_1, \mathbf{e}_2 \in \mathcal{E}_{strong}$ such that $\mathbf{e}_1\ne \mathbf{e}_2$, and for each $i \in \{1,2\}$, $0<\lVert \mathbf{e}_i-\mathbf{c_0} \rVert <\frac{d}{3}$, both $r_{3,\mathbf{e}_i}\left(\mathcal{C}_P\left(\frac{4}{7}\right)\right)$ and $r_{3,\mathbf{e}_i}^2\left(\mathcal{C}_P\left(\frac{4}{7}\right)\right)$ are contained in $\mathcal{C}_P\left(\frac{5}{3}+k_{\mathbf{h}}(\frac{4}{7})+k(\frac{4}{7})+k_{\mathbf{l}}(\frac{4}{7})\right)$ and neither of $\mathbf{e}_1$ and $\mathbf{e}_2$ is the horocenter of any $K_j$-horoball where $j \in \{2,3, \dots, n\}$. 

\end{proposition}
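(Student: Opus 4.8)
The plan is to bypass the infinite-family hypothesis of Theorem \ref{hexlatt} altogether and instead manufacture the two ingredients of that theorem—the wallpaper symmetry of Part \ref{hexsym} and the co-area bound of Part \ref{hexarea}—directly from the covering $\phi$, and then to invoke verbatim the arguments already used for Corollary \ref{neardistinctrot} and Part 2 of Proposition \ref{codeprop}. First I would take $c=c'$ equal to the $K_1$-cusp (which is symmetric to itself via the identity), so that Part 1 of Proposition \ref{coversmallestmulticusp} applies to $\phi$ and shows that the symmetry group of the $K_1$-circle packing contains a $W_{(2,3,6)}$ group $W$ whose elements of order $3$ or $6$ fix the horocenter of no $K_j$-horoball for $j\in\{2,\dots,n\}$. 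Since $W_{(2,3,6)}$ contains a $W_{(3,3,3)}$ with the same translational subgroup $\Lambda$, this already reproduces Part \ref{hexsym} of Theorem \ref{hexlatt} for the $K_1$-circle packing.

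The one genuinely new computation is the analogue of Part \ref{hexarea} of Theorem \ref{hexlatt}, which I would carry out from the volume rather than from Hoffman's degree formulae. The volume of $O_{(2,3,6),(2,2,2,2)}$ is $\frac{5v_0}{6}$, so $\phi$ has degree $\frac{20v_0}{5v_0/6}=24$. Because the $K_1$-cusp is the unique cusp of $\mathbb{S}^3-L$ lying over the $(2,3,6)$-cusp $c_{(2,3,6)}$, the $K_1$-maximal horoball packing coincides with the $c_{(2,3,6)}$-maximal horoball packing (exactly as recorded in the proof of Proposition \ref{coversmallestmulticusp}); hence $W=\operatorname{Stab}(\infty)$ realizes the maximal cusp area of $c_{(2,3,6)}$ as its co-area, and the cusp-level degree of $\phi$ at $K_1$ equals the full degree $24$. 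As $\Lambda$ has index $6$ in $W$, this yields
$$\text{co-area of }\Lambda = 6\,(\text{maximal cusp area of }c_{(2,3,6)}) = \tfrac{6}{24}(\text{maximal cusp area of the }K_1\text{-cusp}) = \frac{\text{maximal cusp area of the }K_1\text{-cusp}}{4},$$
so the co-area bound used in the proofs of Corollary \ref{nearrot} and Corollary \ref{neardistinctrot} holds, with $m=1$.

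Having re-established both conclusions of Theorem \ref{hexlatt} for the $K_1$-circle packing without any appeal to a family $\mathcal{F}$, the remainder of the proof is a replay of earlier arguments, since those use only the $W_{(2,3,6)}$/$W_{(3,3,3)}$ symmetry, the non-fixing of $K_j$-horocenters, and the co-area bound. I would first run the argument of Corollary \ref{neardistinctrot} to produce two distinct order-$3$ rotations $r_{3,\mathbf{e}_1}$, $r_{3,\mathbf{e}_2}$ of the $K_1$-circle packing, with $\mathbf{e}_1\neq\mathbf{e}_2$, neither fixing a $K_j$-horocenter, and $0<\lVert \mathbf{e}_i-\mathbf{c_0}\rVert\le \frac{\sqrt{\operatorname{Area}(P)}}{\sqrt{2}\,3^{3/4}}$ for $i=1,2$. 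Then I would feed these rotations into the computation in the proof of Part 2 of Proposition \ref{codeprop}—using $\operatorname{Area}(P)\le\lVert\mathbf{m}\rVert\lVert\mathbf{l}\rVert$ and Lemma \ref{circleradius}—to conclude that $\mathbf{e}_1,\mathbf{e}_2\in\mathcal{E}_{strong}$, that $0<\lVert\mathbf{e}_i-\mathbf{c_0}\rVert<\frac{d}{3}$, and that $r_{3,\mathbf{e}_i}(\mathcal{C}_P(\tfrac{4}{7}))$ and $r_{3,\mathbf{e}_i}^2(\mathcal{C}_P(\tfrac{4}{7}))$ lie in $\mathcal{C}_P\big(\frac{5}{3}+k_{\mathbf{h}}(\tfrac{4}{7})+k(\tfrac{4}{7})+k_{\mathbf{l}}(\tfrac{4}{7})\big)$, which is precisely the assertion.

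The main obstacle I anticipate is entirely confined to the middle step: justifying cleanly that the cusp-level degree of $\phi$ at $K_1$ equals the total degree $24$, i.e.\ that maximizing the $K_1$-cusp maximizes $c_{(2,3,6)}$ and conversely. This rests on $K_1$ being the sole preimage of $c_{(2,3,6)}$ together with the coincidence of the two maximal horoball packings already established in Proposition \ref{coversmallestmulticusp}. Once this is in place, every subsequent estimate is a verbatim repetition of the proofs of Corollary \ref{neardistinctrot} and Proposition \ref{codeprop}, so no new geometric input is required.
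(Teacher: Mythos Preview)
Your proposal is correct and follows essentially the same route as the paper: invoke Part~1 of Proposition~\ref{coversmallestmulticusp} to obtain the $W_{(2,3,6)}$ symmetry on the $K_1$-circle packing, compute the degree of $\phi$ as $24$ and hence the co-area of $\Lambda$ as one quarter of the maximal $K_1$-cusp area (this is exactly what the paper means by ``a similar line of argument as in the proof of Part~\ref{hexarea} of Theorem~\ref{hexlatt}''), and then rerun Corollary~\ref{neardistinctrot} and the second half of Proposition~\ref{codeprop} verbatim. The ``obstacle'' you flag is not one: once $K_1$ is the sole preimage of $c_{(2,3,6)}$, the local degree at $K_1$ equals the global degree automatically, and the paper uses this without further comment.
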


\begin{proof}

Let $c_{(2,3,6)}$ denote the $(2,3,6)$ cusp of $O_{(2,3,6),(2,2,2,2)}$. We note that the index of $\phi$ is $24$. By (the proof of) Part 1 of Proposition \ref{coversmallestmulticusp}, the symmetry group of the $K_1$-circle packing contains $W_{(2,3,6)}$ group of symmetries which is the peripheral subgroup of $\pi_1^{Orb}(O_{(2,3,6),(2,2,2,2)})$ corresponding to the cusp $c_{(2,3,6)}$. A similar line of argument as in the proof of Part 2 of Theorem \ref{hexlatt} shows that the co-area of the maximal translational subgroup $\Lambda_{(2,3,6)}$ of $W_{(2,3,6)}$ is $\frac{\text{maximal cusp area of the }{K_1\text{-cusp}}}{4}$. Following the arguments as in Corollary \ref{nearrot} (and Remark \ref{order3rotation}), we can obtain a similar conclusion as in Corollary \ref{neardistinctrot}: the $K_1$-circle packing of $\mathbb{C}$ has symmetries $r_{3,\mathbf{e}_1}$ and $r_{3,\mathbf{e}_2}$ neither fixing the center of a $K_j$-horoball for $j \in \{2,\ldots,n\}$ such that $\mathbf{e}_1\ne\mathbf{e}_2$ and $0 <\lVert \mathbf{c}_0-\mathbf{e}_m\rVert \le \frac{\sqrt{\text{maximal cusp area of the }K_1\text{-cusp}}}{\sqrt{2}3^{\frac{3}{4}}}$ for each $m=1,2$. We can now imitate the proof of Proposition \ref{codeprop} to conclude the result. 
\end{proof}

\begin{remark}\label{codeprop20v0remark}
Note that $r_{3,\mathbf{e}_1}$ and $r_{3,\mathbf{e}_2}$ in Proposition \ref{codeprop20v0} are actually symmetries of the $K_1$-horoball packing of $\mathbb{H}^3$ with a $K_1$-horoball at $\infty$ (see proofs of Proposition \ref{coversmallestmulticusp} and Theorem \ref{hexlatt}). 
\end{remark}

\subsection{Implementation of the algorithm in SnapPy}\label{hscodesubs}
We now use Proposition \ref{codeprop} (and Proposition \ref{codeprop20v0}) to set up a code on SnapPy \cite{snappy}. We describe the pseudocode in Algorithm \ref{twocentroidalg}. The code is available in the Python file \texttt{TwoCentroids.py} from \cite{tetra_code}. In Algorithm \ref{twocentroidalg}, we will use $\mathcal{C}'_P$ and $\mathcal{C}''_P$ to denote $\mathcal{C}_P(\frac{4}{7})$ and $\mathcal{C}_P\left(\frac{5}{3}+k_{\mathbf{h}}(\frac{4}{7}) +k(\frac{4}{7})+k_{\mathbf{l}}(\frac{4}{7})\right)$ respectively. 

The function \texttt{Free\_rot\_strng} in  Algorithm \ref{twocentroidalg} takes two input - a SnapPy manifold \texttt{M} and a cusp index \texttt{i} of \texttt{M}. The function \texttt{Free\_rot\_strng(M,i)} maximizes the $i$-cusp horoball packing of $\mathbb{H}^3$ in SnapPy with an $i$-horoball at $\infty$, considers the center $\mathbf{c}_0$ of a full-sized $i$-horoball in the cusp parallelogram $P$ for this maximized $i$-cusp neighborhood given by SnapPy, and tries to find two distinct complex numbers $\mathbf{e}_1$ and $\mathbf{e}_2$ in $\mathcal{E}_{strong}$ so that for each $i\in\{1,2\}$,  $0<\lVert \mathbf{e}_i-\mathbf{c_0} \rVert <\frac{d}{3}$ and both $r_{3,\mathbf{e}_i}\left(\mathcal{C}'_P\right)$, $r_{3,\mathbf{e}_i}^2\left(\mathcal{C}'_P\right)$ are contained in $\mathcal{C}''_P$, and neither of $\mathbf{e}_1$ and $\mathbf{e}_2$ belongs to the set \texttt{H\_diff} of the horocenters of a large number of non $i$-horoballs. The other two functions \texttt{Include} and \texttt{Centroid\_strng} are used inside the  \texttt{Free\_rot\_strng} function. 

The \texttt{Include} function checks whether a floating point (complex) number $\mathbf{x}$ is within $0.005$ distance of an element of the Python set \texttt{H}, i.e. whether $\mathbf{x}$ belongs to \texttt{H}. For the $i$-maximized horoball packing, the function \texttt{Centroid\_strng} is used inside the \texttt{Free\_rot\_strng} function to get hold of the Python set where we would find $\mathbf{e}_1$ and $\mathbf{e}_2$, i.e., the Python set representing $\left \{\mathbf{x} \in \mathcal{E}_{strong}-\texttt{H\_diff}: 0<\lVert \mathbf{x}-\mathbf{c_0} \rVert <\frac{d}{3}\right\}$.

\begin{algorithm}
\caption{An algorithm to check non-existence of required order $3$ symmetries}
\begin{algorithmic}
\Function{Include}{complex number $\mathbf{x}$, Python set H}
\State r $\gets$ False
\For {$\mathbf{y} \in$ H}
\If{$\lVert x - y \rVert <0.005$}
\State r$\gets$ True
\State break
\EndIf
\EndFor
\State \Return r
\EndFunction
\State 
\Function{Centroid\_strng}{complex number $\mathbf{h}$,  Python set S, Python set G, float v, float d}
\State L $\gets$ list version of S
\State E $\gets \{\}$
\For {$\mathbf{x} \in $ L}
\If {$0.005<\lVert \mathbf{x} -\mathbf{h}\rVert <\frac{\sqrt{v}}{3^{\frac{1}{4}}}+0.005$}
\For {$\mathbf{y} \in$ L with index $>$ index of $\mathbf{x}$ in L}
\If {$\left \lvert \lVert \mathbf{x}-\mathbf{h}\rVert-\lVert \mathbf{y}-\mathbf{h}\rVert \right\rvert <0.005$ and 
$\left \lvert \frac{\lvert (\mathbf{x}-\mathbf{h}).(\mathbf{y}-\mathbf{h})\rvert}{\lVert \mathbf{x}-\mathbf{h}\rVert \lVert \mathbf{y}-\mathbf{h}\rVert}-\frac{1}{2}\right\rvert<0.005$ and \\ $\hspace*{6.8em}$ $ \lVert \mathbf{h}-\frac{\mathbf{x}+\mathbf{y}+\mathbf{h}}{3}\rVert<\frac{d}{3}+0.005$ and Include($\frac{\mathbf{x}+\mathbf{y}+\mathbf{h}}{3}$,G)=False}
\State add $\frac{\mathbf{x}+\mathbf{y}+\mathbf{h}}{3}$ to E
\EndIf
\EndFor
\EndIf
\EndFor
\State \Return E
\EndFunction
\State
\Function{Free\_rot\_strng}{SnapPy manifold $M$, cusp $i$}
  \State Maximize the $i$-cusp neighborhood of $M$ and put the $i$-cusp at the eye at $\infty$
 \State Get $\mathbf{m}, \mathbf{l}, d, \mathcal{C}_P, k_{\mathbf{h}}(\frac{4}{7}), k(\frac{4}{7}), k_{\mathbf{l}}(\frac{4}{7}), \mathcal{C}'_P, \mathcal{C}''_P$  
 \State vol $\gets$ maximum $i$-cusp volume
  \State h $\gets \text{first element of the list of version of $\mathcal{C}_P$}$ \Comment{h is $\mathbf{c}_0$}

  \State H\_diff\_cen $\gets \{\text{centers of non $i$-horoballs of height $\ge 0.1$ in the $i$-cusp paralleogram}\}$
  \State H\_diff $\gets \{\mathbf{x}+p \mathbf{m}+q \mathbf{l}: \mathbf{x} \in \text{H\_diff\_cen},p,q \in \mathbb{Z}, \lvert p \rvert \le k_{\mathbf{h}}(\frac{4}{7}), \lvert q \rvert \le  k(\frac{4}{7})+k_{\mathbf{l}}(\frac{4}{7})\}$
\State exist\_free\_rot $\gets$ False 
\State H\_free\_rot $\gets$ $\{\}$

 \For {$\mathbf{x} \in$ Centroid\_strng(h,  $\mathcal{C}'_P$, H\_diff, vol,$d$)}
  \State r $\gets$ True
   \For {$\mathbf{y} \in  \mathcal{C}'_P$}
  \If{either of Include($r_{3,\mathbf{x}}(\mathbf{y})$,$\mathcal{C}''_P$)  or Include($r^2_{3,\mathbf{x}}(\mathbf{y})$, $\mathcal{C}''_P$) is False}
  \State r $\gets$ False
  \State break 
	     \EndIf
  	    \EndFor
	    \If {r is True and Include($\mathbf{x}$, H\_free\_rot)=False}
	    \State add $\mathbf{x}$ to H\_free\_rot
	    \EndIf
	    \If {H\_free\_rot has more then one element}
	    \State exist\_free\_rot $\gets$ True
	    \State break 
	    \EndIf
            \EndFor
            \State \Return exist\_free\_rot
\EndFunction
\end{algorithmic}
\label{twocentroidalg}
\end{algorithm}

\begin{remark}
We make a note of the following regarding Algorithm \ref{twocentroidalg} and the code \texttt{TwoCentroids.py}. 
\begin{itemize}
\item We imported SnapPy \cite{snappy} as a module inside \texttt{TwoCentroids.py}. We also imported the \texttt{sqrt} and \texttt{ceil} functions from the \texttt{math} module inside  \texttt{TwoCentroids.py}.

\item Observe that we use this code to rule out cases, i.e., if \texttt{Free\_rot\_strng(M, i)} returns \texttt{False} for a cusp $i$ of manifold $M$, we conclude that the $i$-circle packing does not have two required order $3$ symmetries that Proposition \ref{codeprop} and Proposition \ref{codeprop20v0} mandate. Since Python floating point numbers are not exact, in order to reduce approximation error, we have added ``0.005" to the bounds of some of the conditionals in the code.

\item In the actual code \texttt{TwoCentroids.py} (in \cite{tetra_code}), in order to get the full sized $i$-horoballs in the cusp parallelogram (i.e. horoballs with centers in $\mathcal{C}_P$), we have set the \texttt{cutoff} in the definition of the Python set \texttt{H\_circ\_cen} as $0.9$ instead of $1$ (\texttt{cutoff} in SnapPy \cite{snappy} means lower cutoff of the diameter of the horoballs) to reduce the error of missing out a full sized $i$-horoball. 
\item In order to get $r_{3,\mathbf{x}}(\mathbf{y})$ and $r^2_{3,\mathbf{x}}(\mathbf{y})$, we have written two functions \texttt{R\_1} and \texttt{R\_2} in the Python file \texttt{TwoCentroids.py} (in \cite{tetra_code}) both of which take two input $\mathbf{x}$ and $\mathbf{y}$. 
\item We wrote a function \texttt{Cos\_angle} in \texttt{TwoCentroids.py} (in \cite{tetra_code}) in order to shorten the expression $\frac{\lvert \mathbf{u}.\mathbf{v}\rvert}{\lVert \mathbf{u}\rVert \lVert \mathbf{v}\rVert}$, i.e., \texttt{Cos\_angle} returns this expression when $\mathbf{u}$ and $\mathbf{v}$ are passed as its two input. 
 
\item We also note in Algorithm \ref{twocentroidalg} that we set $0.1$ as \texttt{cutoff} in the definition of the Python set \texttt{H\_diff\_cen} which returns the centers of a list of non $i$-horoballs. Since we are trying to rule out the cases where the fixed points of the order $3$ rotational symmetries happen to be the centers of non $i$-horoballs, taking a smaller cutoff in the definition of the Python set \texttt{H\_diff\_cen} would (potentially) rule out more cases, but it will increase computation time. 
\end{itemize}
\end{remark}

\section{Tetrahedral links}\label{tlcom}
In this section, we will apply our results and the SnapPy code from the previous section to certain elements of a family of links called tetrahedral links. We explain below why this family of links are natural candidates to analyze in our study of knot complement with hidden symmetries. 

A hyperbolic manifold $M$ is called a \textit{tetrahedral manifold} if it can be triangulated into regular ideal hyperbolic tetrahedra. A hyperbolic link whose complement is a tetrahedral manifold is referred to as a \textit{tetrahedral link}. For example, the figure eight knot is a tetrahedral knot since its complement decomposes into two regular ideal tetrahedra. Fominykh, Garoufalidis, Goerner, Tarkaev and Vesnin \cite{FGGTV} found a census consisting of the orientable tetrahedral manifolds with decomposition into 25 or fewer (regular ideal) tetrahedra and the non-orientable tetrahedral manifolds with decomposition into 21 or fewer (regular ideal) tetrahedra. 

All the \textit{shape parameters} (see \cite[Section 2.2]{NeRe}, \cite[Chapter 4]{Thurs}) of a regular ideal tetrahedron are $\frac{1+i\sqrt{3}}{2}$. So, \cite[Theorem 2.4]{NeRe} implies that the invariant trace field of a tetrahedral manifold is $\mathbb{Q}(i\sqrt{3})$. So, the cusp field associated to a cusp of a tetrahedral manifold is $\mathbb{Q}(i\sqrt{3})$ as well since by \cite[Proposition 2.7]{NeRe} it lies in the invariant trace field  and cusp moduli are non-real complex numbers. Now, \cite[Corollary 1.4]{CDM} (or Theorem \ref{CDHMMWorb}) implies that if Dehn filling all but one component of a link $L$ produces an infinitely family of hyperbolic knot complements with hidden symmetries which geometrically converges to $\mathbb{S}^3-L$, then the cusp field of the non-filled cusp of $L$ must be $\mathbb{Q}(i)$ or $\mathbb{Q}(i\sqrt{3})$. Since the cusp field of all the cusps of a tetrahedral link is $\mathbb{Q}(i\sqrt{3})$, tetrahedral links are natural candidates for testing whether by Dehn filling all but one cusp we can obtain an infinitely family of hyperbolic knot complements with hidden symmetries and geometrically converging to the original link complement.

\subsection{Tetrahedral homology link complements}
Fominykh, Garoufalidis, Goerner, Tarkaev and Vesnin \cite{FGGTV} also gave an explicit list of tetrahedral links (see Figure 3, 4 and 5 in their paper). Their list consists of the figure eight knot and 25 other tetrahedral links all with two or more components. One of these links has 20 and the rest has 12 or fewer (regular ideal) tetrahedra in their triangulations. We note that the links \texttt{L14n24613}, \texttt{L11n354}, \texttt{L10a157} and \texttt{L8a20} of the Hoste-Thistlewaite census discussed in Section \ref{packingbackground} and \ref{hscp} belong to this list of $25$ links with two or more components from \cite{FGGTV}. The number of total link complements in their orientable tetrahedral census would be far bigger (to the author's knowledge this number is not known) and it is not an easy task to find out whether an orientable tetrahedral manifold is a link complement or not. But, we should note that tetrahedral links are links in the integral homology spheres and from the following result from \cite{FGGTV} one can easily check when a cusped hyperbolic $3$-manifold is a link complement in an integral homology sphere.

\begin{proposition}[Lemma 6.1, \cite{FGGTV}]\label{hlinkprop}
A (finite-volume) cusped hyperbolic $3$-manifold $M$ is a link complement in an integral homology sphere if and only if the first homology group $H_1(M,\mathbb{Z})$ of $M$ is torsion free and $rank(H_1(M,\mathbb{Z}))$ is equal to the number of cusps of $M$. 
\end{proposition}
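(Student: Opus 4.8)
The plan is to pass to a compact core and argue both implications homologically. Write $M=\operatorname{int}(X)$, where $X=\bar M$ is a compact orientable $3$-manifold whose boundary is a disjoint union of $k$ tori $T_1,\dots,T_k$, one for each cusp; since $X\simeq M$ we may compute with $X$. For the forward implication, suppose $M=\Sigma\setminus L$ with $\Sigma$ an integral homology sphere and $L=L_1\cup\dots\cup L_k$, so $k$ equals the number of cusps. Writing $\Sigma=X\cup N$ with $N=\bigsqcup_i N(L_i)$ a disjoint union of solid tori and $X\cap N=\partial X=\bigsqcup_i T_i$, the Mayer--Vietoris sequence together with $H_2(\Sigma)=H_1(\Sigma)=0$ shows that $H_1(\partial X)\to H_1(X)\oplus H_1(N)$ is an isomorphism. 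Since $H_1(\partial X)\cong\Z^{2k}$ and $H_1(N)\cong\Z^{k}$, this forces $H_1(X)\oplus\Z^k\cong\Z^{2k}$; as a direct summand of a free group $H_1(X)$ is free, and by rank count $H_1(M)\cong H_1(X)\cong\Z^k$, which equals the number of cusps.

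For the converse, assume $H_1(M;\Z)$ is free of rank $k$ equal to the number of cusps, so $\partial X$ has $k$ tori and $H_1(X;\Z)\cong\Z^k$. I would build an integral homology sphere by Dehn filling each $T_i$ along a suitable slope $\gamma_i$ (a primitive class in $H_1(T_i)$): by van Kampen and abelianization, filling along the $\gamma_i$ produces a closed orientable $\Sigma$ with $H_1(\Sigma)=H_1(X)/\langle j_*[\gamma_1],\dots,j_*[\gamma_k]\rangle$, where $j_*\colon H_1(\partial X;\Z)\to H_1(X;\Z)$ is induced by inclusion; a closed orientable $3$-manifold with $H_1=0$ is automatically an integral homology sphere, and the complement of the filling cores recovers $X$. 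The first point to settle is that $j_*$ is surjective: its cokernel is the image of $H_1(X)\to H_1(X,\partial X)$, a subgroup of $H_1(X,\partial X)\cong H^2(X)$, which is free because $H_1(X)$ is free forces $H^2(X)\cong\operatorname{Hom}(H_2(X),\Z)$; meanwhile the half-lives-half-dies principle makes $j_*$ rationally surjective, so the cokernel is simultaneously torsion and free, hence trivial.

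The heart of the matter is then to choose the slopes so that the filled manifold has trivial — not merely finite — first homology; equivalently the classes $j_*[\gamma_i]$ must form a $\Z$-basis of $\Z^k$. Here I would use the intersection form $\omega=\bigoplus_i\omega_i$ on $H_1(\partial X;\Z)\cong\Z^{2k}$, which is unimodular since each $\omega_i$ is the standard symplectic form on $H_1(T_i)\cong\Z^2$ and distinct boundary tori do not meet. Let $j_i\colon H_1(T_i)\to H_1(X)$ be the restriction of $j_*$. The kernel $\Lambda:=\ker j_*$ is saturated (because $H_1(X)$ is torsion-free), has rank $k$, and is isotropic for $\omega$ (the classical fact that boundary classes dying in $X$ have vanishing pairwise intersection), so $\Lambda$ is a Lagrangian sublattice. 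I would then induct on $k$, peeling off one torus at a time via the following lemma: some $T_i$ carries a primitive slope $\gamma_i$ with $j_*[\gamma_i]$ primitive in $H_1(X)$. Filling that $T_i$ yields $X'$ with $k-1$ boundary tori and $H_1(X')\cong\Z^{k-1}$ free, to which the inductive hypothesis applies.

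The main obstacle is precisely this integral slope lemma: over $\Q$ a transversal basis is immediate (Rado's theorem applies because isotropy of $\Lambda$ gives $\dim\bigl(\sum_{i\in S}\operatorname{im} j_i\bigr)\ge|S|$ for every $S$), but over $\Z$ one must avoid creating torsion. I would prove it by contradiction: if no $T_i$ admitted such a slope, then for each $i$ the projection $\pi_i(\Lambda)\subseteq H_1(T_i)$ would contain no primitive vector, whence $\pi_i(\Lambda)\subseteq p_i\,H_1(T_i)$ for some prime $p_i$. Reducing $\Lambda$ modulo $p=p_{i_0}$ places it inside $\bigoplus_{i\ne i_0}\bigl(H_1(T_i)\otimes\mathbb{F}_p\bigr)$, a symplectic $\mathbb{F}_p$-space of dimension $2(k-1)$; being isotropic there, its reduction has dimension at most $k-1<k$, so $p$ divides every maximal minor of $\Lambda$, contradicting saturation. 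This supplies the required slope at each stage, completes the induction, and exhibits $M$ as the complement of the link formed by the filling cores in the resulting integral homology sphere.
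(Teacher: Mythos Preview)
The paper does not provide its own proof of this statement; it is simply quoted as Lemma~6.1 of \cite{FGGTV} and used as a black box to identify homology link complements computationally. So there is nothing in the paper to compare against, and your write-up already goes well beyond what the paper does.

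Your argument is essentially correct. The forward direction via Mayer--Vietoris is standard and fine. For the converse, the inductive scheme and the surjectivity of $j_*$ are handled correctly, and the key integral lemma---that some $T_i$ carries a primitive slope $\gamma_i$ with $j_i(\gamma_i)$ primitive in $H_1(X)$---is true. There is one step you assert without justification: that ``no such slope on any $T_i$'' forces $\pi_i(\Lambda)$ to contain no primitive vector of $H_1(T_i)$. This equivalence is not obvious; it follows from the perfect pairing $\omega\colon\Lambda\times(H_1(\partial X)/\Lambda)\to\Z$ induced by the fact that $\Lambda$ is a saturated Lagrangian, under which $j_i(\gamma)$ corresponds to the functional $\lambda\mapsto\omega_i(\gamma,\pi_i(\lambda))$. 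Once you invoke that duality explicitly, your mod-$p$ isotropy count finishes the argument cleanly.

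A slightly shorter route to the same lemma avoids projections: if $j_i(H_1(T_i))\subseteq p\,H_1(X)$ for some prime $p$, then tensoring the split exact sequence $0\to\Lambda\to H_1(\partial X)\to H_1(X)\to 0$ with $\mathbb{F}_p$ shows $H_1(T_i)\otimes\mathbb{F}_p\subseteq\Lambda\otimes\mathbb{F}_p$; but the latter is isotropic while the former is a symplectic plane, which is impossible. This in fact shows that \emph{every} $T_i$ (not just some) admits the desired slope.
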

The orientable tetrahedral census of \cite{FGGTV} is embedded in SnapPy \cite{snappy}. We use the above proposition to write the functions \texttt{Hlgy\_tor} and \texttt{Link\_HS} on SnapPy \cite{snappy} and extract a Python list \texttt{K\_hlgy\_lk} on SnapPy \cite{snappy} which consists of all the homology links in the embedded orientable tetrahedral census of \cite{FGGTV} with more than one cusp (and $25$ or fewer regular ideal tetrahedra in their regular ideal tetrahedral decompositions). In addition, we also write the \texttt{Cusp\_list} function SnapPy \cite{snappy} which extracts a maximal list of pairwise non-symmetric cusps of a SnapPy manifold. The pseudocode for \texttt{Hlgy\_tor}, \texttt{Link\_HS},  \texttt{K\_hlgy\_lk} and \texttt{Cusp\_list}  is given in Algorithm \ref{K_hlgyLk algorithm}. The code is available in the Python file \texttt{Tetrahedral\_hlgy\_link.py} from \cite{tetra_code}. We imported SnapPy \cite{snappy} as a module inside \texttt{Tetrahedral\_hlgy\_link.py}.

\begin{algorithm}
\caption{Extracting the homology links in the orientable tetrahedral census of \cite{FGGTV} and their maximal lists of pair-wise non-symmetric cusps}
\begin{algorithmic}
\Function {Hlgy\_tor}{SnapPy manifold M}
\State r $\gets$ False
\If {the first elementary divisor of the first homology group of M in SnapPy $\ne 0$}
\State r $\gets$ True
\EndIf
\State \Return r 
\EndFunction
\State
\Function{Link\_HS}{SnapPy manifold M}
\State  r $\gets$ False
\If {Hlgy\_tor(M)=False}
\If {rank of the first homology group of M $=$ number of cusps of M}
\State r $\gets$ True
\EndIf
\EndIf
\State \Return r
\EndFunction
\State
\State K\_hlgy\_lk$\gets$ empty list
\State
\For {$i \in \{1, \dots, 25\}$}
\For {$M\in$ orientable tetrahedral census of \cite{FGGTV} in SnapPy with $i$ regular ideal tetrahedra}
\If{Link\_HS($M$)=True}
\If {$M$ has more than one cusp}
\State add $M$ to the list K\_hlgy\_lk
\EndIf
\EndIf
\EndFor
\EndFor

\State

\Function {Cusp\_list}{SnapPy manifold M}
\State L $\gets \{0, \dots, \text{number of cusps of M}-1\}$ as a list
\State K $\gets$ empty list
\For {each self-isometry $g$ of M}
\For {each $i\in$ L}
\If{$g(i)>i$ and $g(g(i))=i$}
\State add $g(i)$ to K
\EndIf
\EndFor
\EndFor
\State \Return L minus the members of K
\EndFunction
\end{algorithmic}
\label{K_hlgyLk algorithm}
\end{algorithm}

In particular, this \texttt{K\_hlgy\_lk} contains (may be strictly) all the tetrahedral links (with two or more components) in the orientable tetrahedral census of \cite{FGGTV} (i.e. they have 25 or fewer regular ideal tetrahedra in their triangulations). One can see (for example by typing \texttt{len(K\_hlgy\_lk)} in SnapPy/Python) that \texttt{K\_hlgy\_lk} contains 882 elements. 

The \texttt{Cusp\_list} function which when run on SnapPy \cite{snappy} returns a maximal list of (labels of) cusps of its argument \texttt{M} (a SnapPy manifold) with the property that no two elements in this returned list are exchanged by a self-isometry of \texttt{M}. 

We note that if a Dehn filling on all but one cusp of homology link complement gives a hyperbolic knot complement, the homology link complement must be a link complement. Our goal is to understand whether for each element $M$ of \texttt{K\_hlgy\_lk}, an infinite family of hyperbolic knot complements obtained from Dehn filling all but one cusp of $M$ and geometrically converging to $M$ can have hidden symmetries. We will use Theorem \ref{hexlatt} to find out whether for a given $M$ in \texttt{K\_hlgy\_lk} and a cusp $c$ of $M$, the $c$-circle packing of $\mathbb{C}$ can have the required order $3$ rotational symmetries. We will apply the algorithm from Subsection \ref{hscodesubs} on the elements of \texttt{K\_hlgy\_lk} for this objective. 

We recall from Fact \ref{symmfact} that if there is a self-isometry of a hyperbolic link complement $\mathbb{S}^3-L$ exchanging two of its cusps $c_1$ and $c_2$, then, the members of a family of hyperbolic knot complements obtained from Dehn filling all cusps of $\mathbb{S}^3-L$  but $c_1$ and geometrically converging to $\mathbb{S}^3-L$  are each isometric to a member of a family of hyperbolic knot complements obtained from Dehn filling all cusps of $\mathbb{S}^3-L$  but $c_2$ and geometrically converging to $\mathbb{S}^3-L$ . Motivated by this we are interested in the equivalence relation on the set of cusps of a manifold $M$ such that two cusps of $M$ belong to an equivalence class if and only if they are symmetric. Keeping this in mind, given a homology link complement $M$ in \texttt{K\_hlgy\_lk}, we will run the \texttt{Free\_rot\_strng} function from Subsection \ref{hscodesubs} on SnapPy \cite{snappy} only for a single representative of each of these equivalence classes, i.e. only for cusps of $M$ that appear in \texttt{Cusp\_list(M)}, in order to reduce computation time.  We introduce the following notation before we end this subsection.

 \begin{defn}
We define $\mathcal{C}(M)$ to be the set of numbers that belong to the Python list that the \texttt{Cusp\_list} function returns on SnapPy \cite{snappy} when the manifold $M$ is taken as an input. 
\end{defn}

\subsection{Running the SnapPy code from Subsection \ref{hscodesubs}} \label{snappyrun}

We recall from the algorithm in Subsection \ref{hscodesubs} that the \texttt{Free\_rot\_strng} function takes two arguments. For a pair of arguments \texttt{(M,i)} where \texttt{M} is a SnapPy manifold with $n$ cusps and \texttt{i} an integer in \texttt{[0, ... , n-1]} (a Python list), if \texttt{Free\_rot\_strng(M,i)} returns \texttt{False}, then, the symmetry group of the $i$-circle packing of $\mathbb{C}$ does not contain required $W_{(3,3,3)}$ group of symmetries as in Theorem \ref{hexlatt} . Now, one can check on Python/SnapPy that the set $\left\{(M,i): M \in \texttt{K\_hlgy\_lk}, i \in \mathcal{C}(M)\right\}$ has $3026$ elements. 

We run the \texttt{Free\_rot\_strng} function on SnapPy for all such $3026$ pairs, i.e., for all pairs $(M,i)$ such that $M \in \texttt{K\_hlgy\_lk}$ and $i\in \mathcal{C}(M)$, via \texttt{Compute.py} in \cite{tetra_code}. 
Define
\begin{align*}
\mathcal{N} &= \left \{(M,i): M \in \texttt{K\_hlgy\_lk}, i \in \mathcal{C}(M) \text{ and } \texttt{Free\_rot\_strng(M,i)}=\texttt{False} \right \}, \\
\mathcal{E} &= \left \{(M,i): M \in \texttt{K\_hlgy\_lk}, i \in \mathcal{C}(M) \text{ and } \texttt{Free\_rot\_strng(M,i)}=\texttt{True} \right \}.
\end{align*}

For an \texttt{(M,i)} in $\mathcal{N}$,  the symmetry group of $i$-circle packing of $\mathbb{C}$ does not contain $W_{(2,3,6)}$ or $W_{(3,3,3)}$ satisfying the conditions of Theorem \ref{hexlatt} whereas for an \texttt{(M,i)} in $\mathcal{E}$  we cannot conclude anything as such for the $i$-circle packing of $\mathbb{C}$. 

We import \texttt{TwoCentroids}, \texttt{Tetrahedral\_hlgy\_link} and \texttt{json} as Python modules in \texttt{Compute.py} (the code is available at \cite{tetra_code}).  \texttt{Compute.py} exports a list representing $\mathcal{E}$ in the \texttt{Excep\_tuple.json} file (can also be accessed from \cite{tetra_code}). The elements of this list are $4$-tuples each representing a member $(M,i)\in \mathcal{E}$. The $4$-tuple corresponding an $(M,i)\in \mathcal{E}$ is of the form $(j, \text{namestring}, i, \text{sigstring})$ where $j$ is the index of $M$ in $\texttt{K\_hlgy\_lk}$, $\text{namestring}$ is the name of $M$ (in the notation of \cite{FGGTV}) as a Python string and \text{sigstring} is the Python string that is the default isomorphism signature of $M$ in SnapPy, i.e. the isomorphism signature for the default tetrahedral decomposition of \texttt{Manifold(namestring)} in SnapPy \cite{snappy} (\texttt{Manifold(namestring).triangulation\_isosig(decorated=False)} in SnapPy \cite{snappy} would return sigstring).

We see that there are $86$ pairs $(M,i)$ in $\mathcal{E}$. We list the members of $\mathcal{E}$ in Appendix \ref{allEi}. So, Theorem \ref{hexlatt} and Proposition \ref{codeprop} implies that for each $(M,i) \in \left\{(M,i): M \in \texttt{K\_hlgy\_lk}, i \in \mathcal{C}(M)\right\}-\mathcal{E}$, any family of hyperbolic knot complements obtained from Dehn filling all cusps of $M$ but cusp $i$ and geometrically converging to $M$ can have at-most finitely many elements with hidden symmetries.

For an $M$ in \texttt{K\_hlgy\_lk}, we define
$$\mathcal{C}_{\mathcal{E}}(M)=\left\{ i \in \mathcal{C}(M): (M,i) \in \mathcal{E}\right\}.$$

From Appendix \ref{allEi}, one can see that we have split the set $\mathcal{E}$ into four mutually disjoint subsets $\mathcal{E}_1$, $\mathcal{E}_2$, $\mathcal{E}_3$ and $\mathcal{E}_4$ so that $\bigcup\limits_{i=1}^{4} \mathcal{E}_i=\mathcal{E}$. We delay the discussion on how we get this split (i.e. how we get $\mathcal{E}_1$ and $\mathcal{E}_4$) to Subsection \ref{E1E4} as it would require us to know the notion of orbifold triangulation from \cite{orbcenpract} and \cite{orbcentheory}, which we review in Subsection \ref{censusutilities}. In Subsection \ref{E1E4}, we also analyze the elements of $\mathcal{E}_1$ and $\mathcal{E}_4$ (we need the utilities \texttt{SigToSeq.py} and \texttt{TestForCovers.py} of \cite{orbcenpract}, reviewed in Subsection \ref{censusutilities} and available from \cite{orbtricode}, for this analysis). 
In the rest of this subsection, we discuss the pairs in $\mathcal{E}_2$ and $\mathcal{E}_3$, i.e., pairs in $\mathcal{E}-\left(\mathcal{E}_1 \cup \mathcal{E}_4\right)$.

At first, we focus on the elements of $\mathcal{E}_2$. The $i$-maximal horoball packings of $\mathbb{H}^3$ for $M$ with an $i$-horoball at $\infty$ where $(M,i) \in \mathcal{E}_2$ are shown in Figures \ref{bad6sym_3}, \ref{bad6sym_4}, \ref{bad6sym_22}, \ref{bad6sym_58}, \ref{bad6sym_74}, \ref{bad6sym_79}, \ref{bad6sym_98}, \ref{bad6sym_127}, \ref{bad6sym_145}, \ref{bad6sym_219}, \ref{bad6sym_245}, \ref{bad6sym_299}, \ref{bad6sym_307}, \ref{bad6sym_425}, \ref{bad6sym_633}, \ref{bad6sym_634}, \ref{bad6sym_636},  \ref{bad6sym_645}, \ref{bad6sym_681}, \ref{bad6sym_682}, \ref{bad6sym_683}, \ref{bad6sym_684} and \ref{bad6sym_821} . From these figures, we can see that each such horoball packing of $\mathbb{H}^3$ has an order $6$ rotational symmetry fixing a geodesic joining $\infty$ and the center of a $\tilde{c}$-horoball of $M$ where cusp $\tilde{c}$ of $M$ is different from $i$. Since the homology link complements in \texttt{K\_hlgy\_lk} that appear in $\mathcal{E}_2$ have volume less than or equal to $24 v_0$, Corollary \ref{badorder6sym} then would imply that for each $(M,i) \in \mathcal{E}_2$, a family of hyperbolic knot complements obtained from Dehn filling all cusps of $M$ but cusp $i$ and geometrically converging to $M$ can have at-most finitely many elements with hidden symmetries. 

We now consider the elements of $\mathcal{E}_3$. Figures \ref{fullhoronosym_160}, \ref{fullhoronosym_167}, \ref{fullhoronosym_172}, \ref{fullhoronosym_175}, \ref{fullhoronosym_301}, \ref{fullhoronosym_331}, \ref{fullhoronosym_333}, \ref{fullhoronosym_334}, \ref{fullhoronosym_335} and \ref{fullhoronosym_336} record the $i$-maximal horoball packings of $\mathbb{H}^3$ for $M$ with an $i$-horoball at $\infty$ where $(M,i) \in \mathcal{E}_3$. In each of these figures, we can see that $i$-maximal horoball packings do not have any order $3$ rotational symmetry fixing $\infty$ (even though the $i$-circle packings do). So, Fact \ref{hexsymmhoro} implies that for each $(M,i) \in \mathcal{E}_3$, a family of hyperbolic knot complements obtained from Dehn filling all cusps of $M$ but cusp $i$ and geometrically converging to $M$ cannot have infinitely many elements with hidden symmetries. 


\begin{figure} 
\centering 
\captionsetup{justification=centering}
\includegraphics[scale=.1]{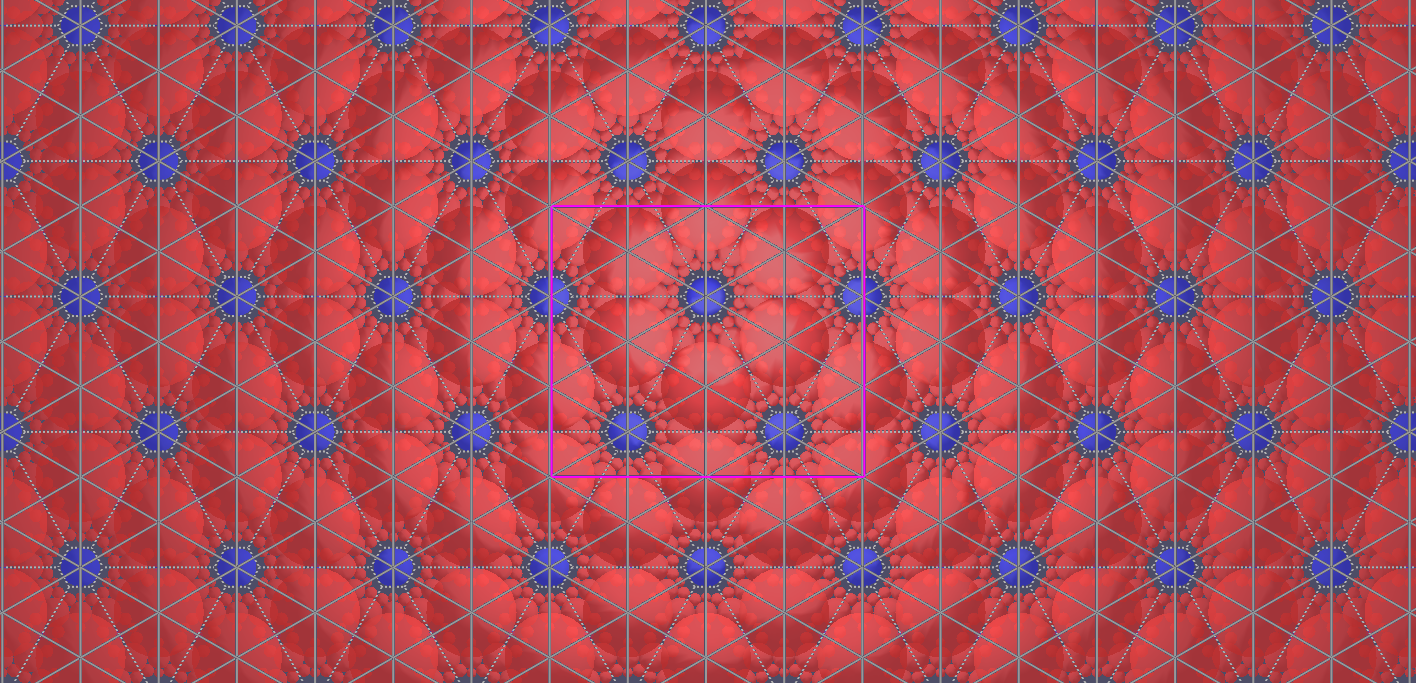} \includegraphics[scale=.1]{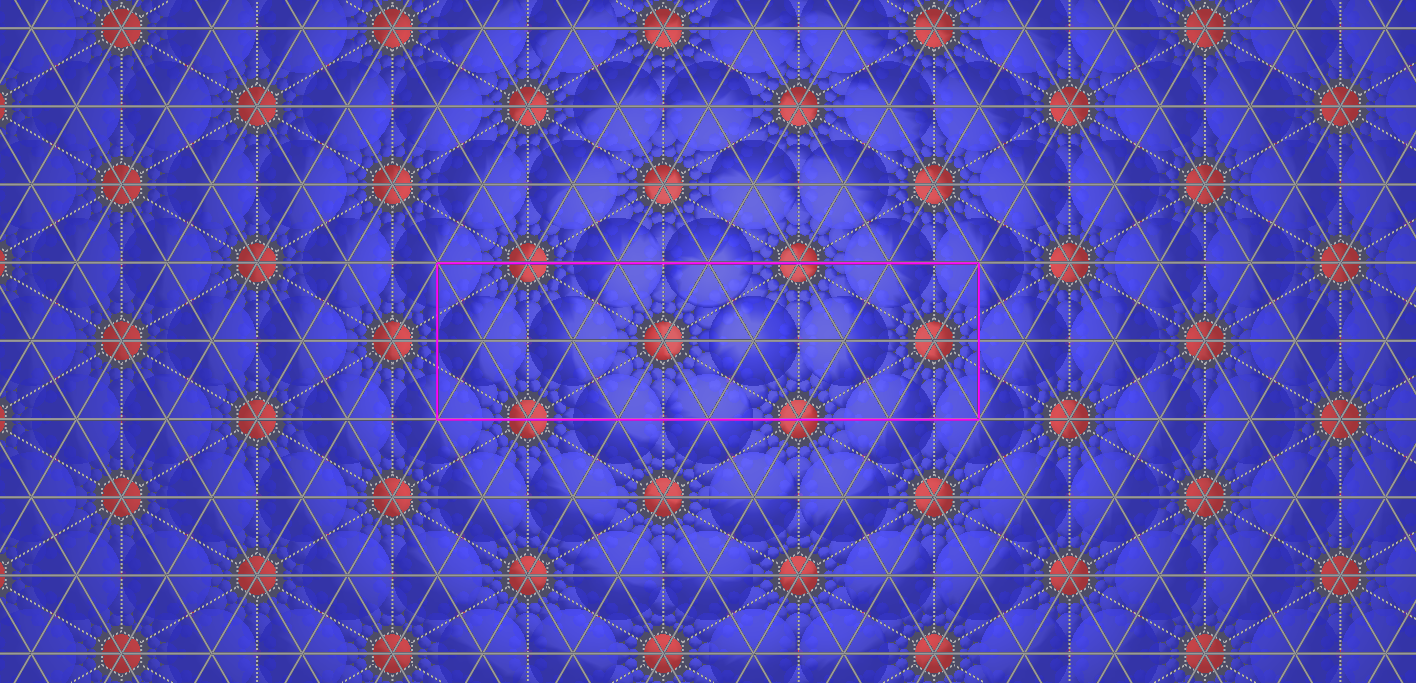} 
\caption{Horoball packings of $\mathbb{H}^3$ for \texttt{otet08\_00002} (\texttt{K\_hlgy\_lk}[3]) ; \\ Left: $0$-cusp maximal horoball packing, Right: $1$-cusp maximal horoball packing  (pictures obtained from SnapPy \cite{snappy}).}
\label{bad6sym_3}
\end{figure}

\begin{figure}
\centering 
\captionsetup{justification=centering}
\includegraphics[scale=.1]{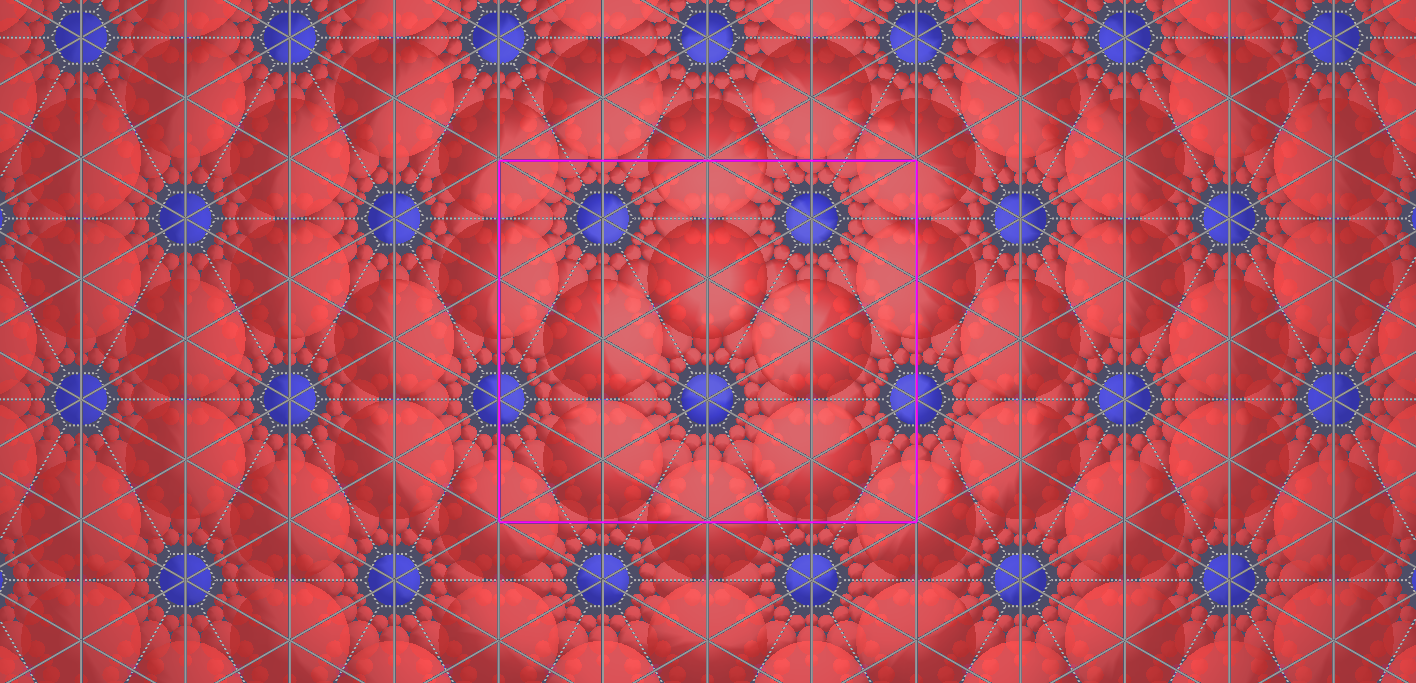}
\caption{ $0$-cusp maximal horoball packing of $\mathbb{H}^3$ for \texttt{otet08\_00003} (\texttt{K\_hlgy\_lk}[4])  (picture obtained from SnapPy \cite{snappy}).}
\label{bad6sym_4}
\end{figure}

\begin{figure}
\centering 
\captionsetup{justification=centering}
\includegraphics[scale=.1]{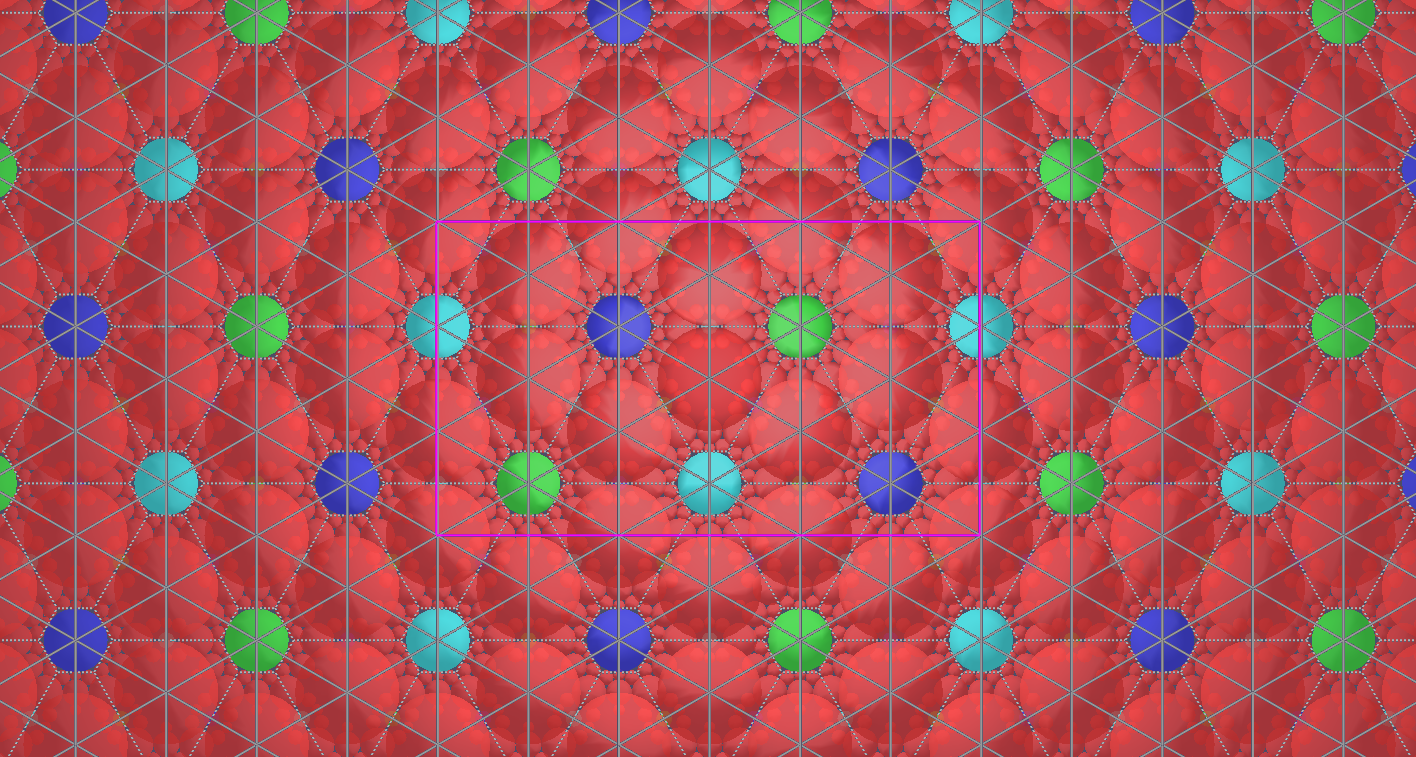}
\caption{ $0$-cusp maximal horoball packing of $\mathbb{H}^3$ for \texttt{otet12\_00009} (\texttt{K\_hlgy\_lk}[22]) (picture obtained from SnapPy \cite{snappy}).}
\label{bad6sym_22}
\end{figure}

\begin{figure}
\centering 
\captionsetup{justification=centering}
\includegraphics[scale=.1]{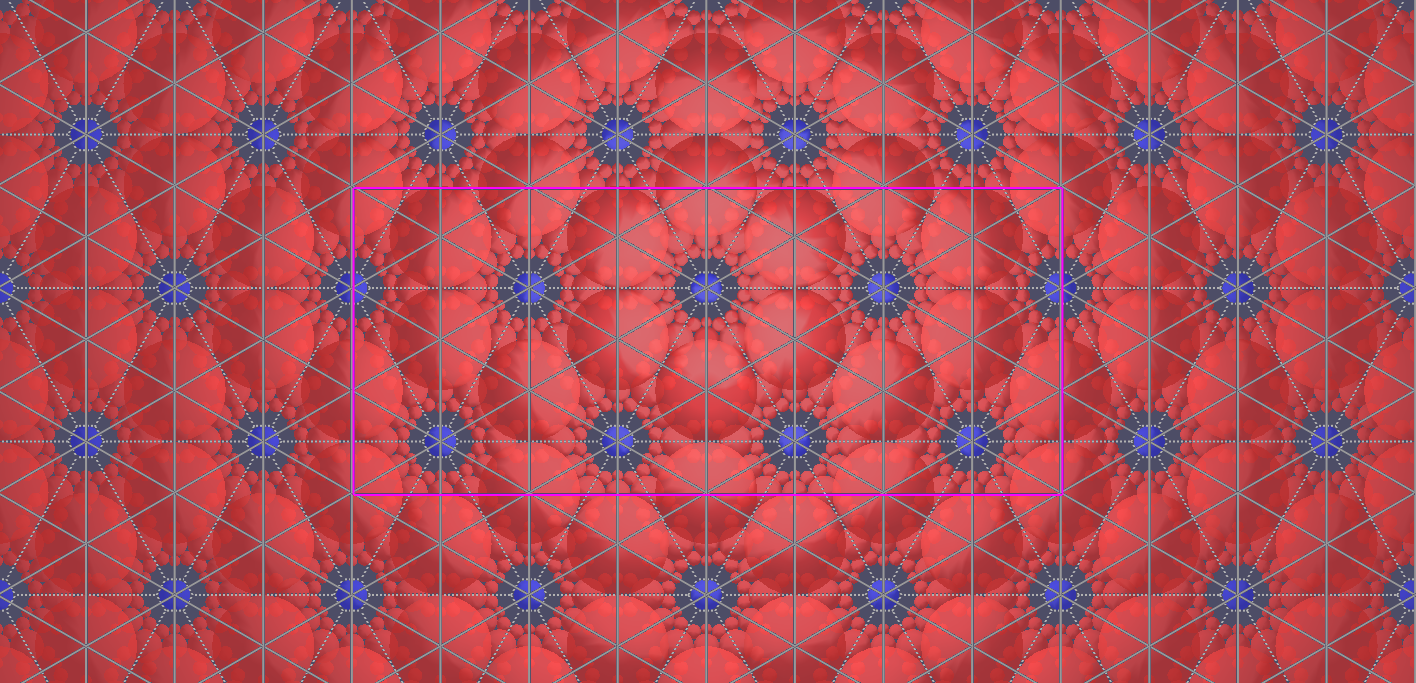} \includegraphics[scale=.1]{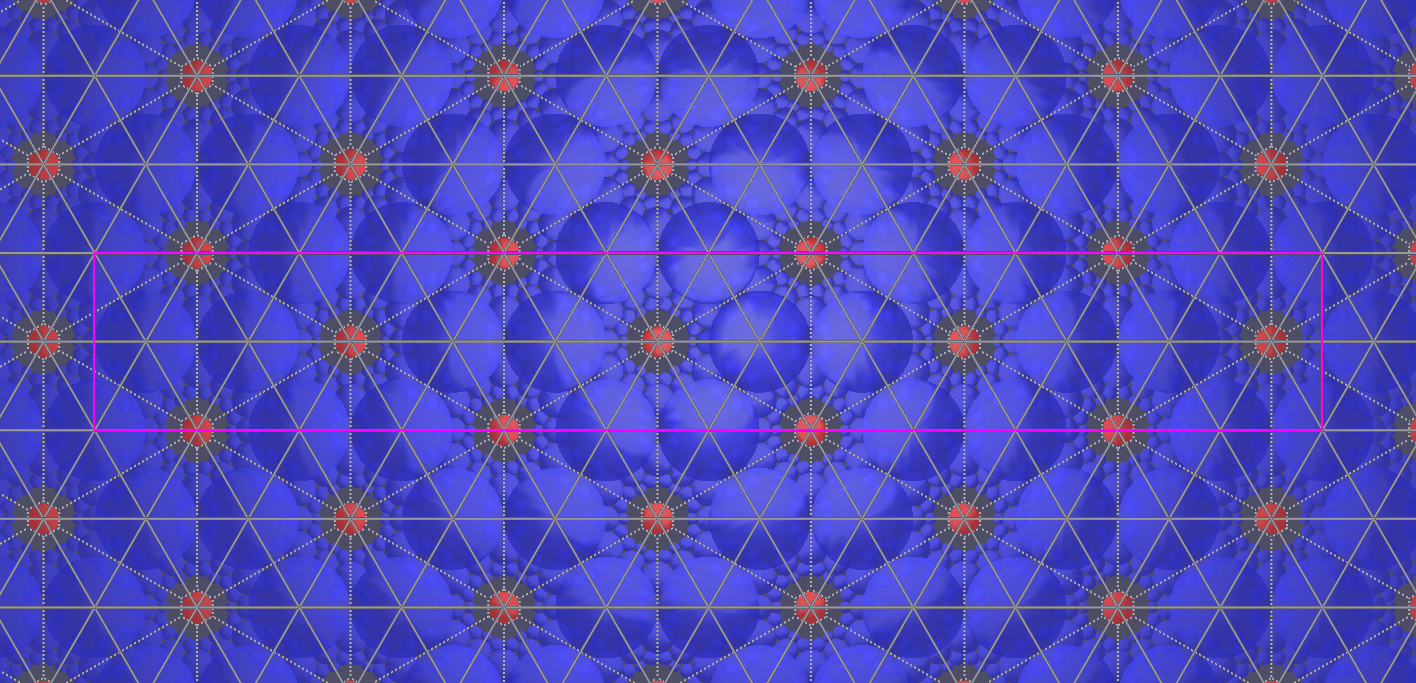} 
\caption{Horoball packings of $\mathbb{H}^3$ for \texttt{otet16\_00013} (\texttt{K\_hlgy\_lk}[58]); \\ Left: $0$-cusp maximal horoball packing, Right: $1$-cusp maximal horoball packing  (pictures obtained from SnapPy \cite{snappy}).}
\label{bad6sym_58}
\end{figure}

\begin{figure}
\centering 
\captionsetup{justification=centering}
\includegraphics[scale=.1]{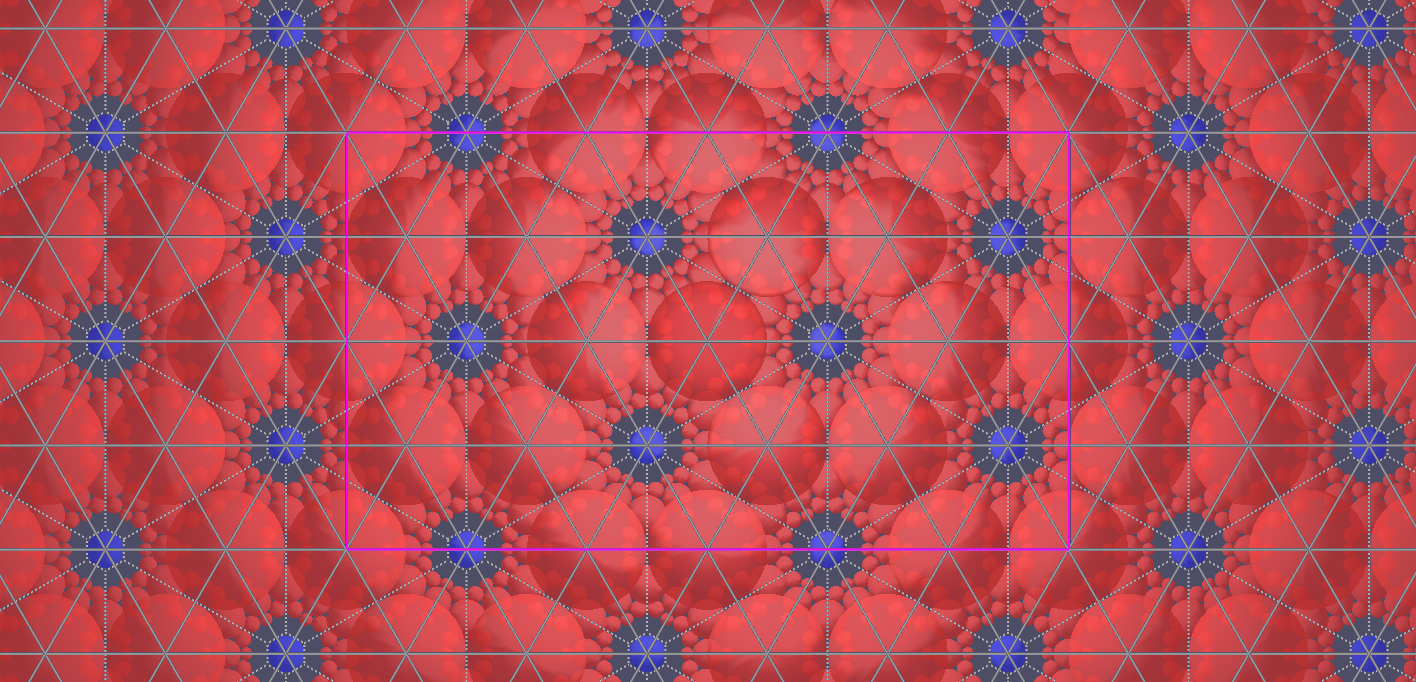} \includegraphics[scale=.1]{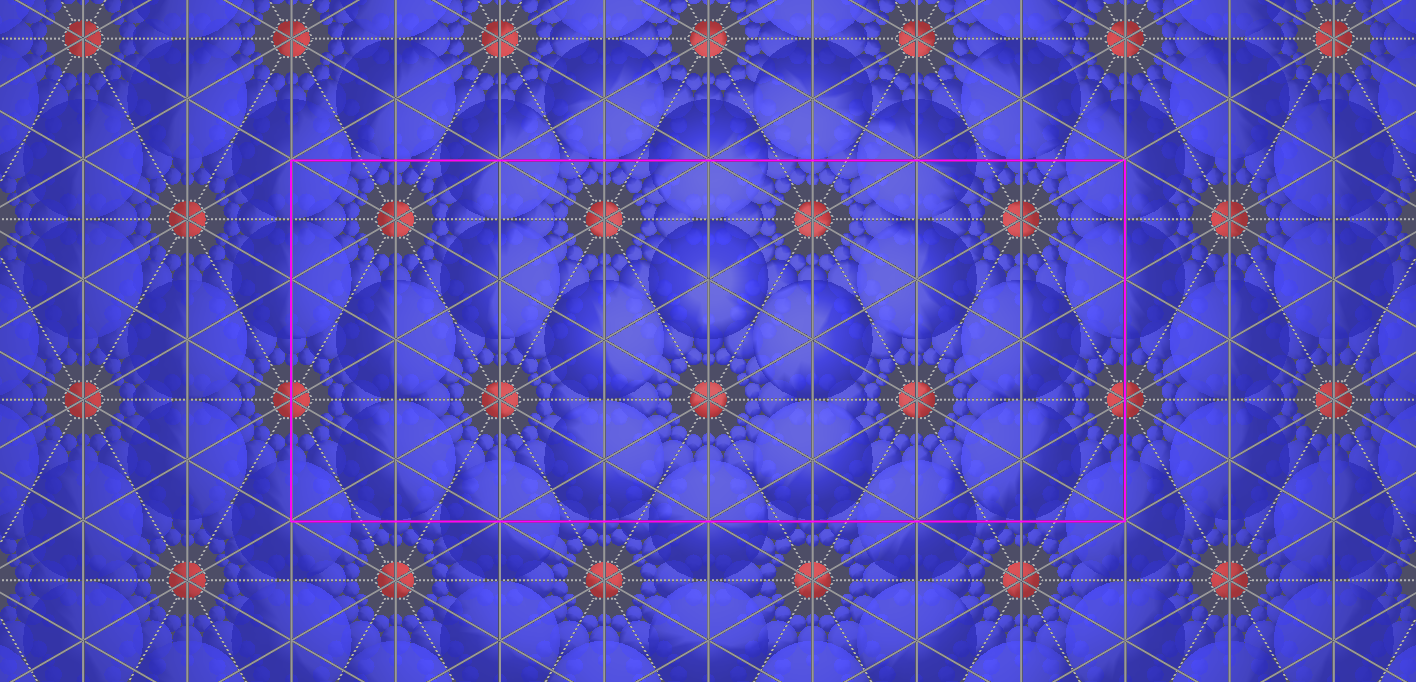} 
\caption{Horoball packings of $\mathbb{H}^3$ for \texttt{otet16\_00058} (\texttt{K\_hlgy\_lk}[74]); \\ Left: $0$-cusp maximal horoball packing, Right: $1$-cusp maximal horoball packing  (pictures obtained from SnapPy \cite{snappy}).}
\label{bad6sym_74}
\end{figure}

\begin{figure}
\centering 
\captionsetup{justification=centering}

\includegraphics[scale=.1]{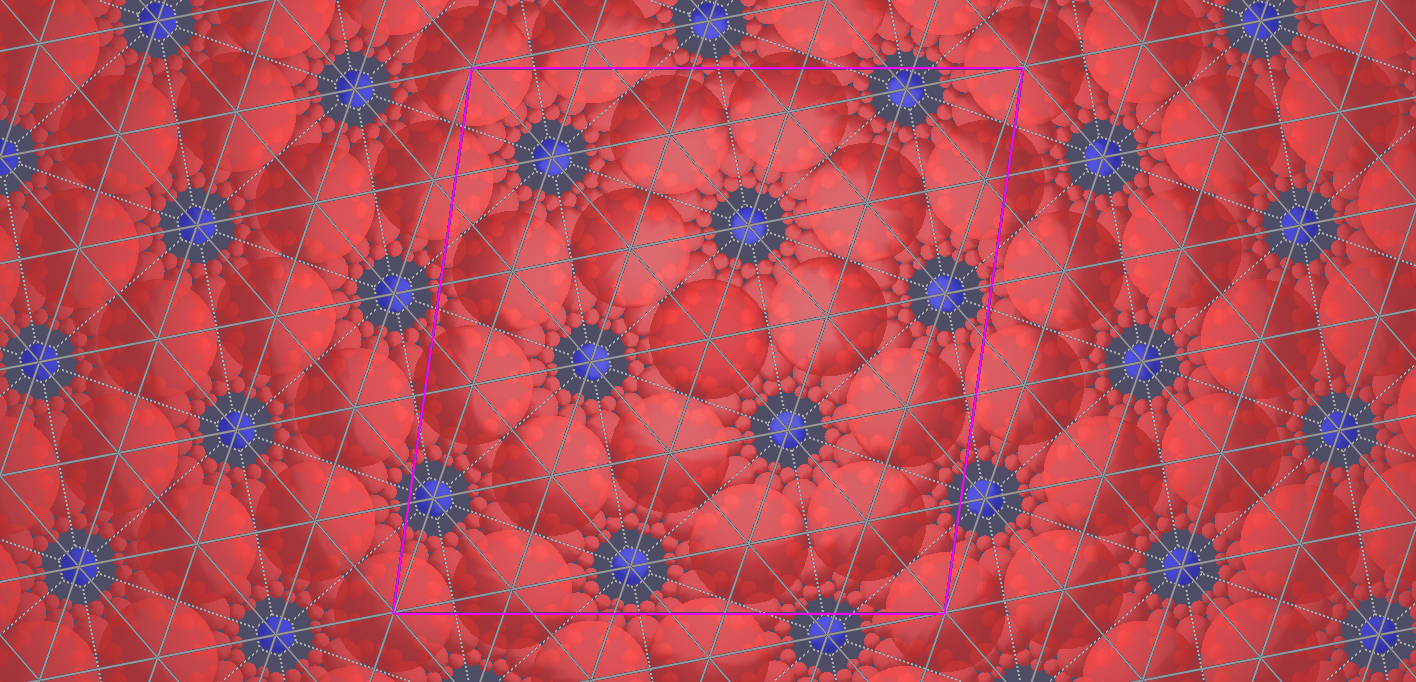}
\caption{ $0$-cusp maximal horoball packing of $\mathbb{H}^3$ for \texttt{otet16\_00090} (\texttt{K\_hlgy\_lk}[79]) (picture obtained from SnapPy \cite{snappy}).}
\label{bad6sym_79}
\end{figure}

\begin{figure}
\centering 
\captionsetup{justification=centering}
\includegraphics[scale=.1]{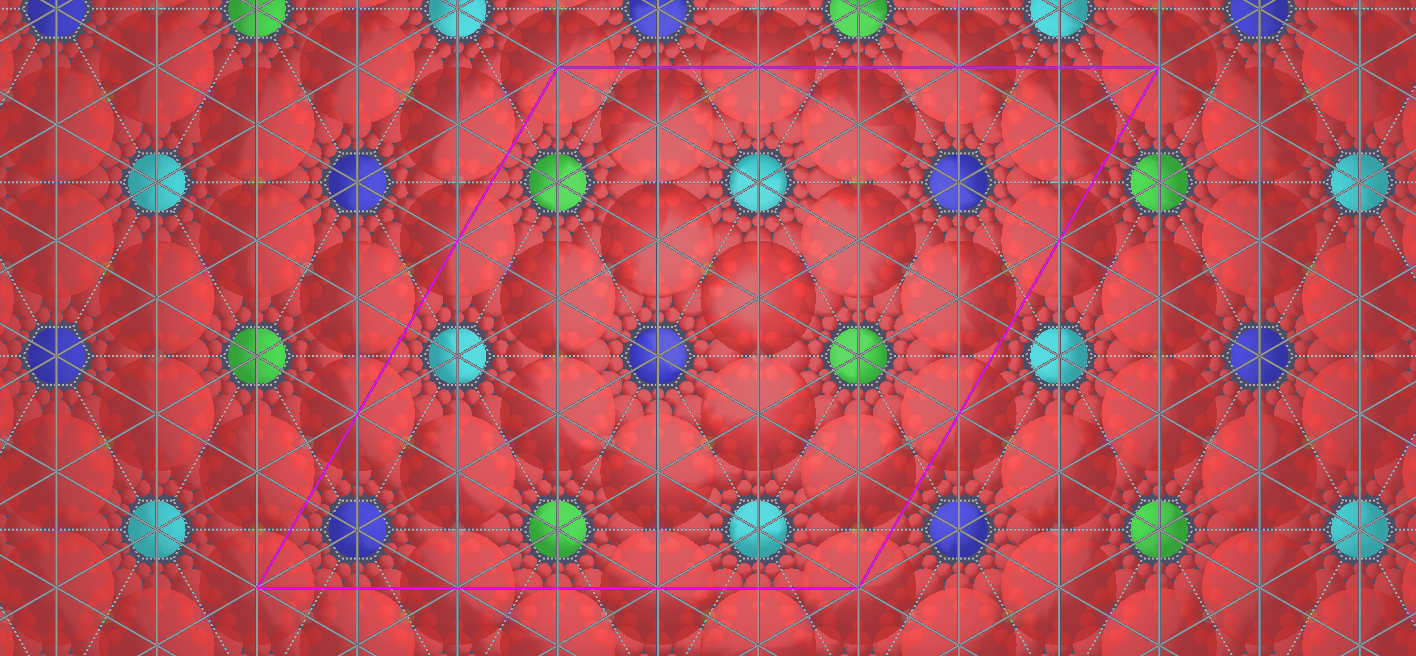} \includegraphics[scale=.1]{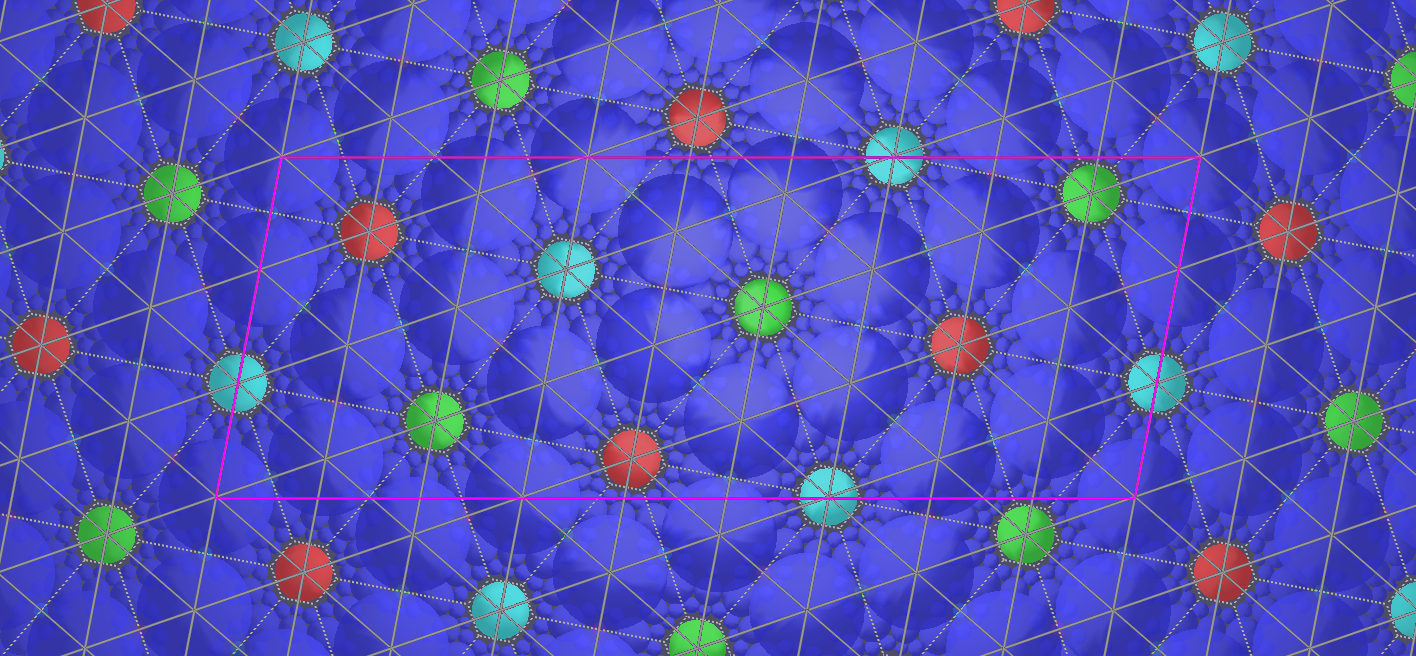} 
\caption{Horoball packings of $\mathbb{H}^3$ for \texttt{otet18\_00028} (\texttt{K\_hlgy\_lk}[98]); \\ Left: $0$-cusp maximal horoball packing, Right: $1$-cusp maximal horoball packing  (pictures obtained from SnapPy \cite{snappy}).}
\label{bad6sym_98}
\end{figure}

\begin{figure}
\centering 
\captionsetup{justification=centering}
\includegraphics[scale=.1]{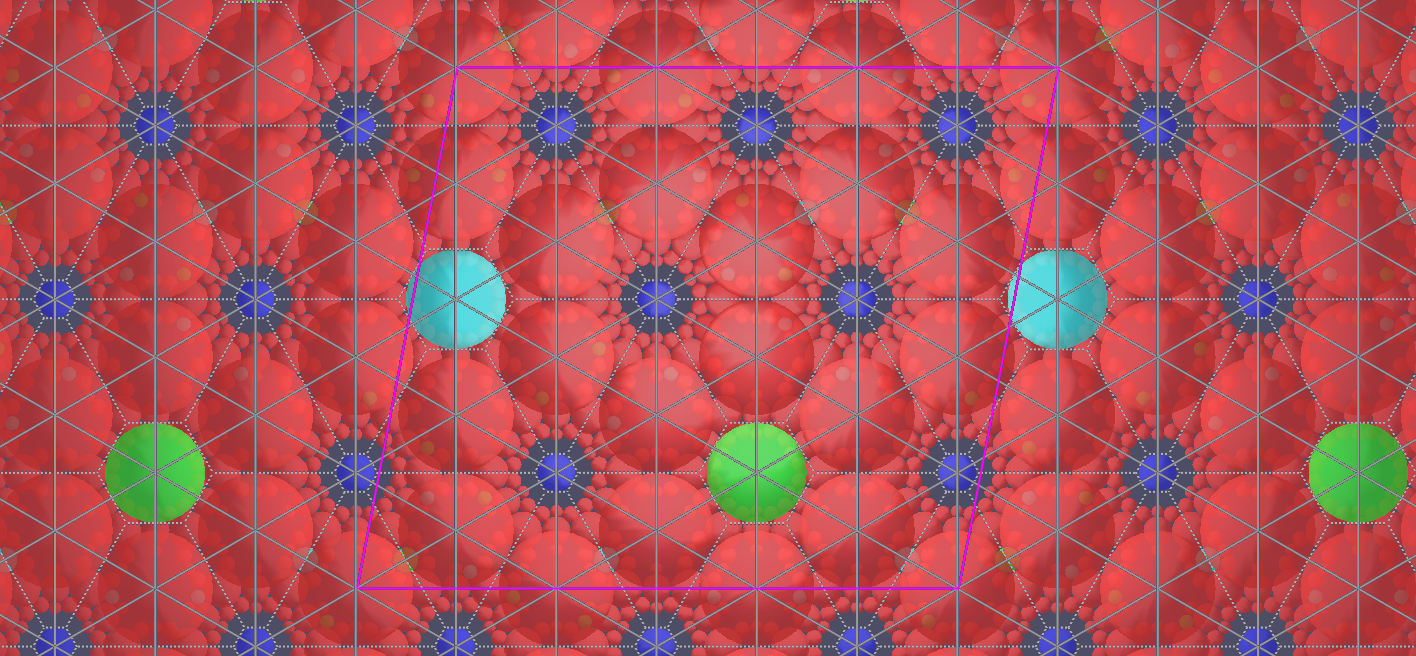}
\caption{ $0$-cusp maximal horoball packing of $\mathbb{H}^3$ for \texttt{otet18\_00104} (\texttt{K\_hlgy\_lk}[127]) (picture obtained from SnapPy \cite{snappy}).}
\label{bad6sym_127}
\end{figure}

\begin{figure}
\centering 
\captionsetup{justification=centering}
\includegraphics[scale=.1]{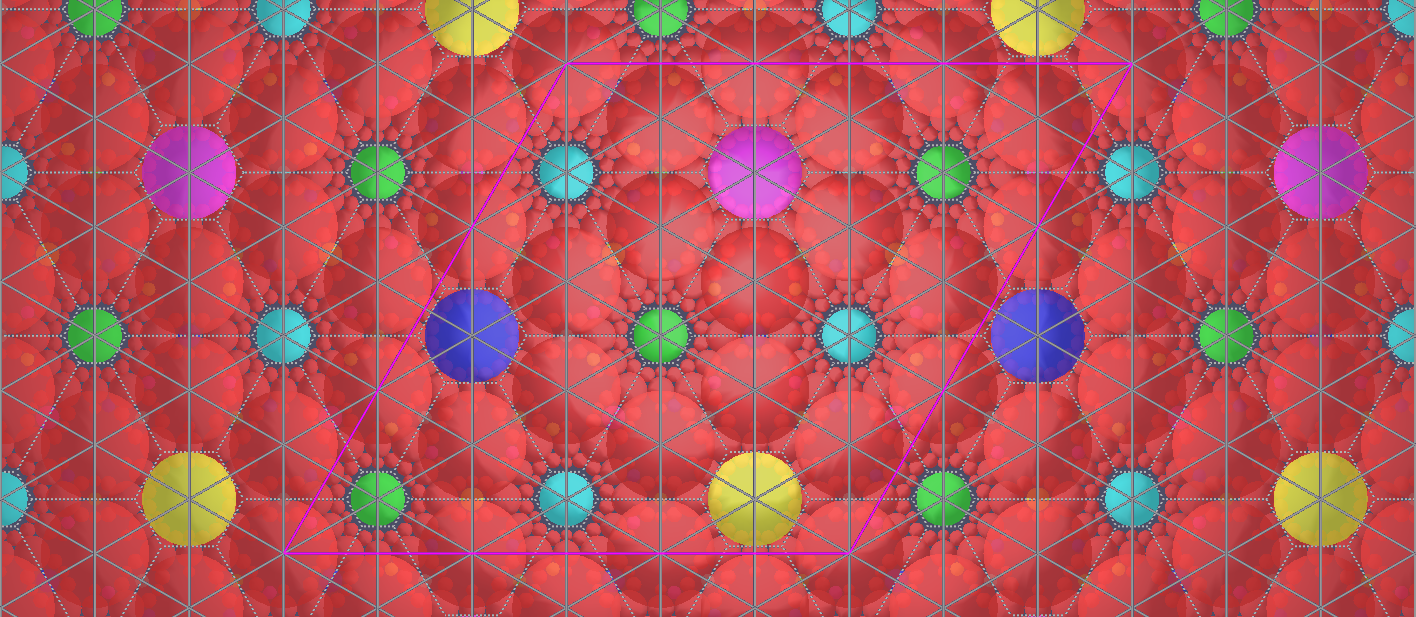}
\caption{ $0$-cusp maximal horoball packing of $\mathbb{H}^3$ for \texttt{otet18\_00171} (\texttt{K\_hlgy\_lk}[145]) (picture obtained from SnapPy \cite{snappy}).}
\label{bad6sym_145}
\end{figure}

\begin{figure}
\centering 
\captionsetup{justification=centering}
\includegraphics[scale=.1]{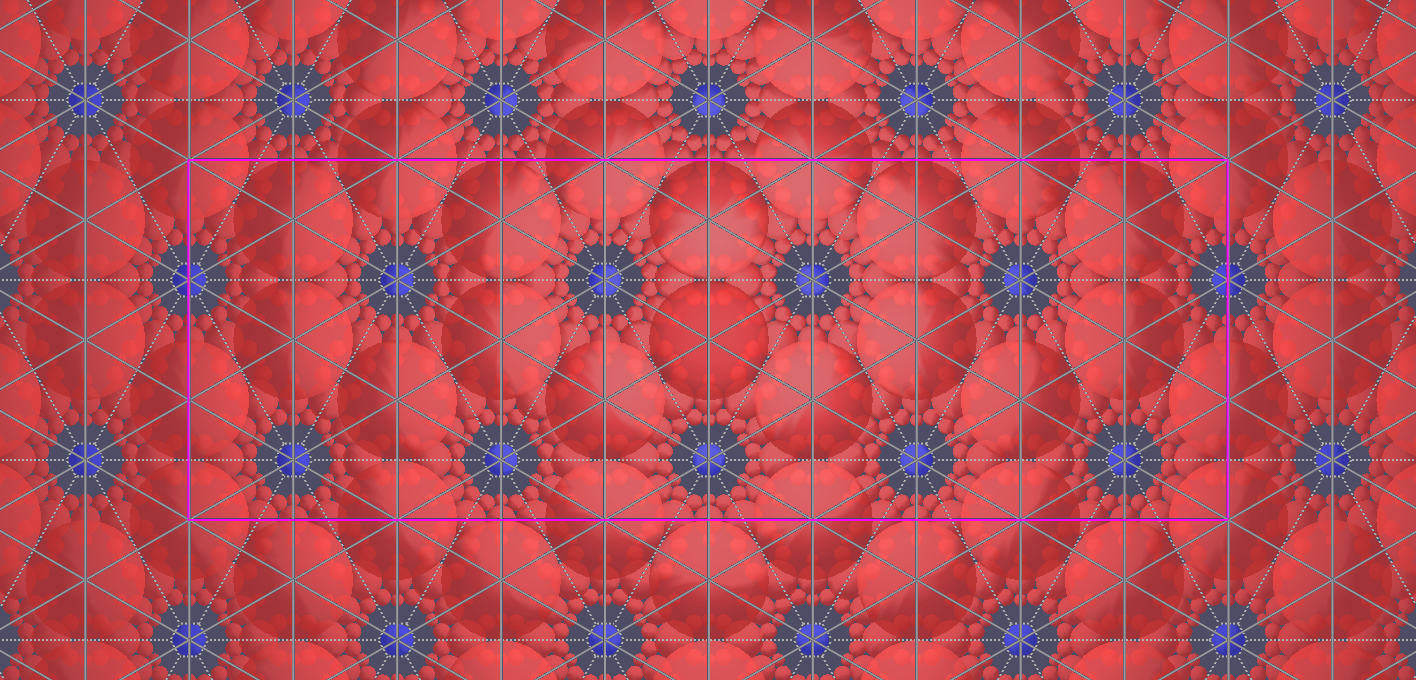} \includegraphics[scale=.1]{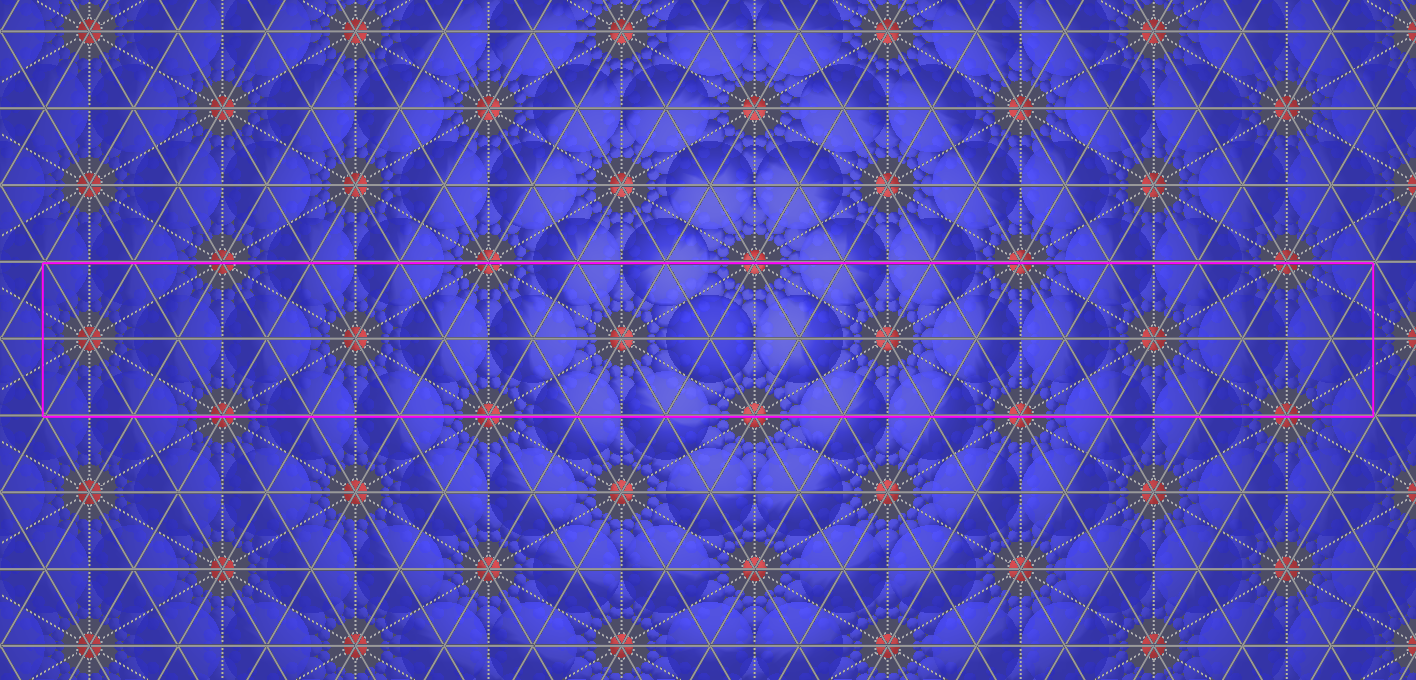} 
\caption{Horoball packings of $\mathbb{H}^3$ for \texttt{otet20\_00443} (\texttt{K\_hlgy\_lk}[219]) ; \\ Left: $0$-cusp maximal horoball packing, Right: $1$-cusp maximal horoball packing  (pictures obtained from SnapPy \cite{snappy}).}
\label{bad6sym_219}
\end{figure}

\begin{figure}
\centering 
\captionsetup{justification=centering}
\includegraphics[scale=.1]{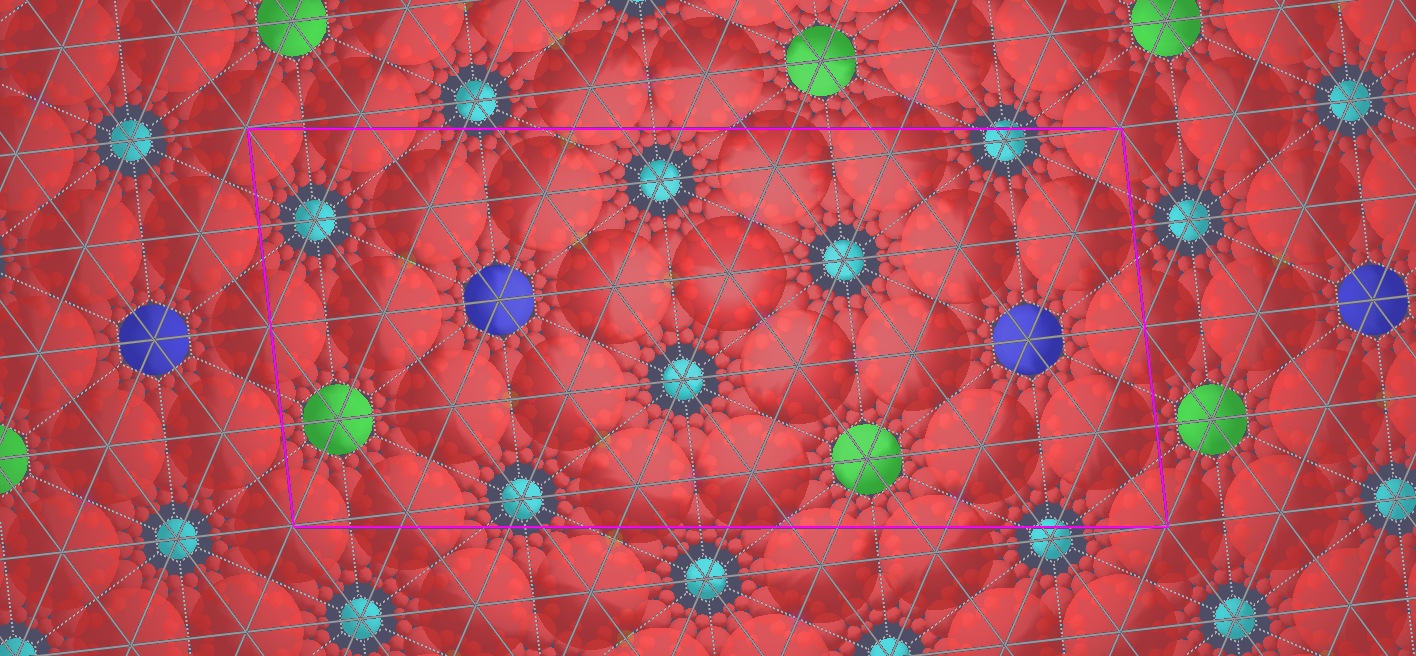}
\caption{ $0$-cusp maximal horoball packing of $\mathbb{H}^3$ for \texttt{otet20\_00543} (\texttt{K\_hlgy\_lk}[245]) (picture obtained from SnapPy \cite{snappy}).}
\label{bad6sym_245}
\end{figure}

\begin{figure}
\centering 
\captionsetup{justification=centering}
\includegraphics[scale=.1]{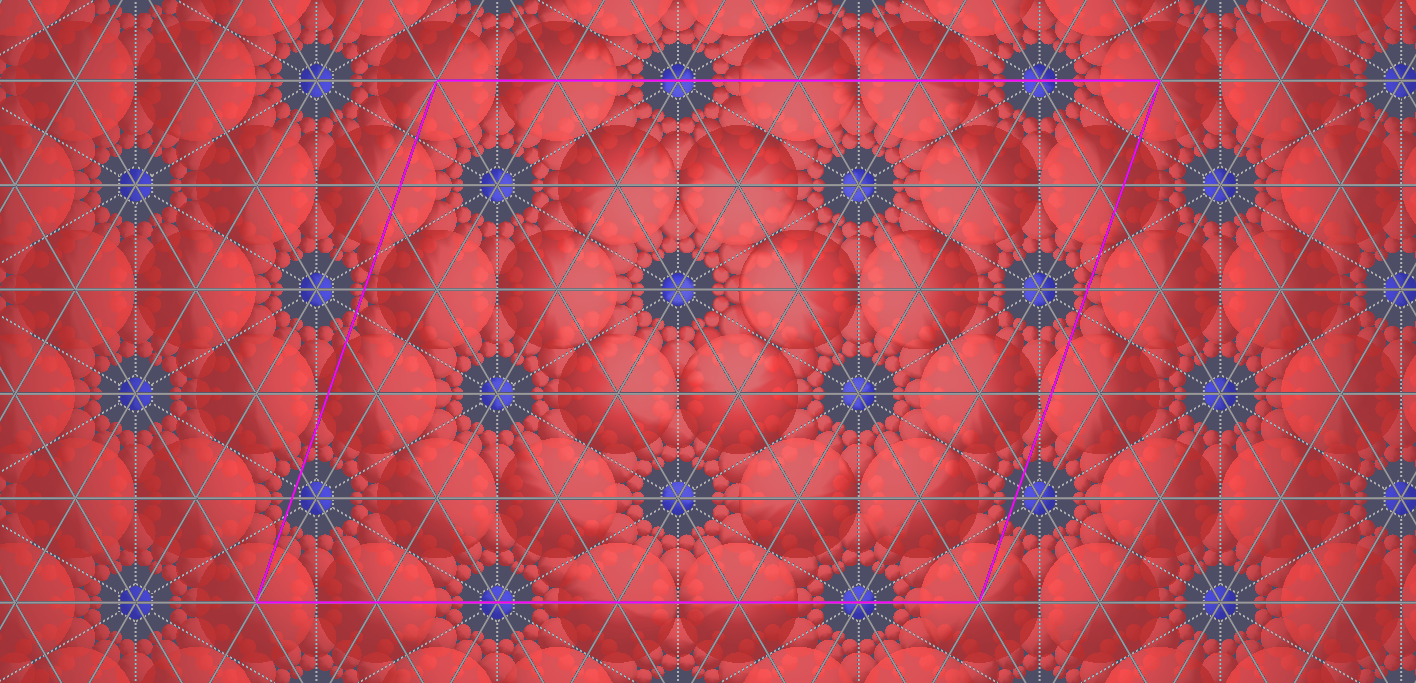}
\caption{ $0$-cusp maximal horoball packing of $\mathbb{H}^3$ for \texttt{otet20\_00762} (\texttt{K\_hlgy\_lk}[299]) (picture obtained from SnapPy \cite{snappy}).}
\label{bad6sym_299}
\end{figure}

\begin{figure}
\centering 
\captionsetup{justification=centering}
\includegraphics[scale=.1]{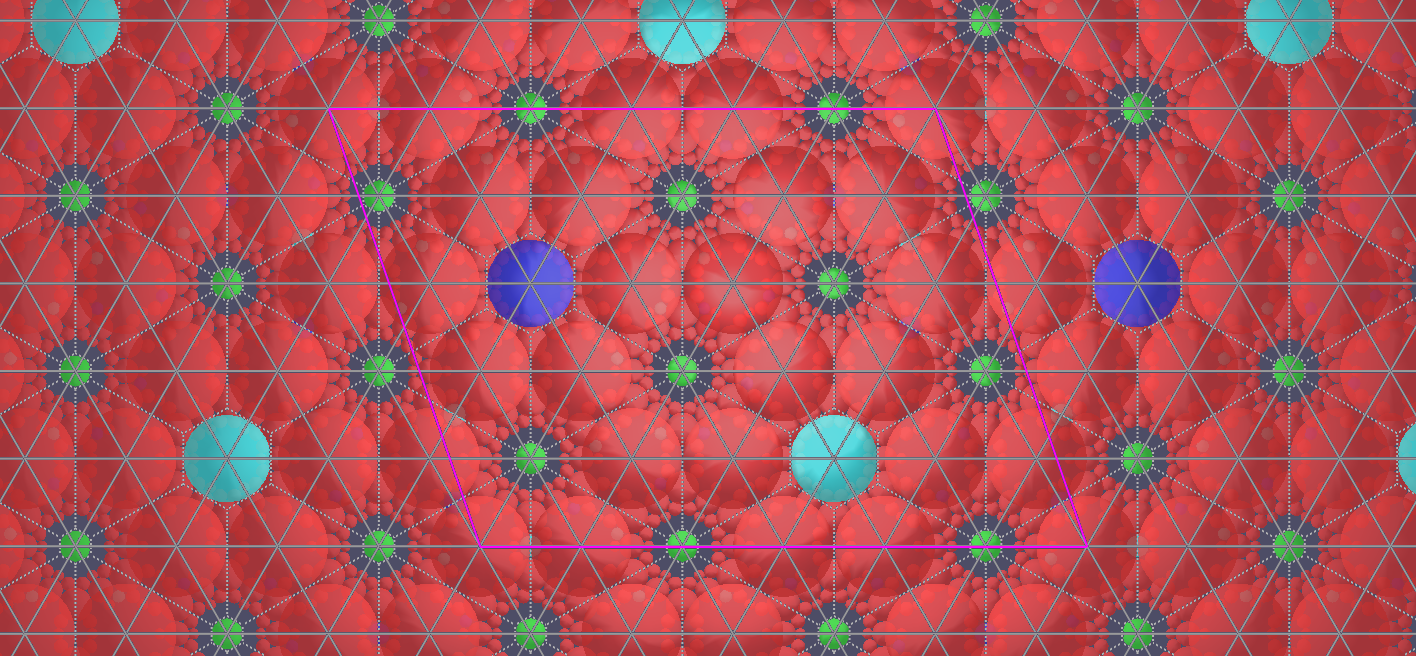}
\caption{ $0$-cusp maximal horoball packing of $\mathbb{H}^3$ for \texttt{otet20\_00793} (\texttt{K\_hlgy\_lk}[307]) (picture obtained from SnapPy \cite{snappy}).}
\label{bad6sym_307}
\end{figure}

\begin{figure}
\centering 
\captionsetup{justification=centering}
\includegraphics[scale=.1]{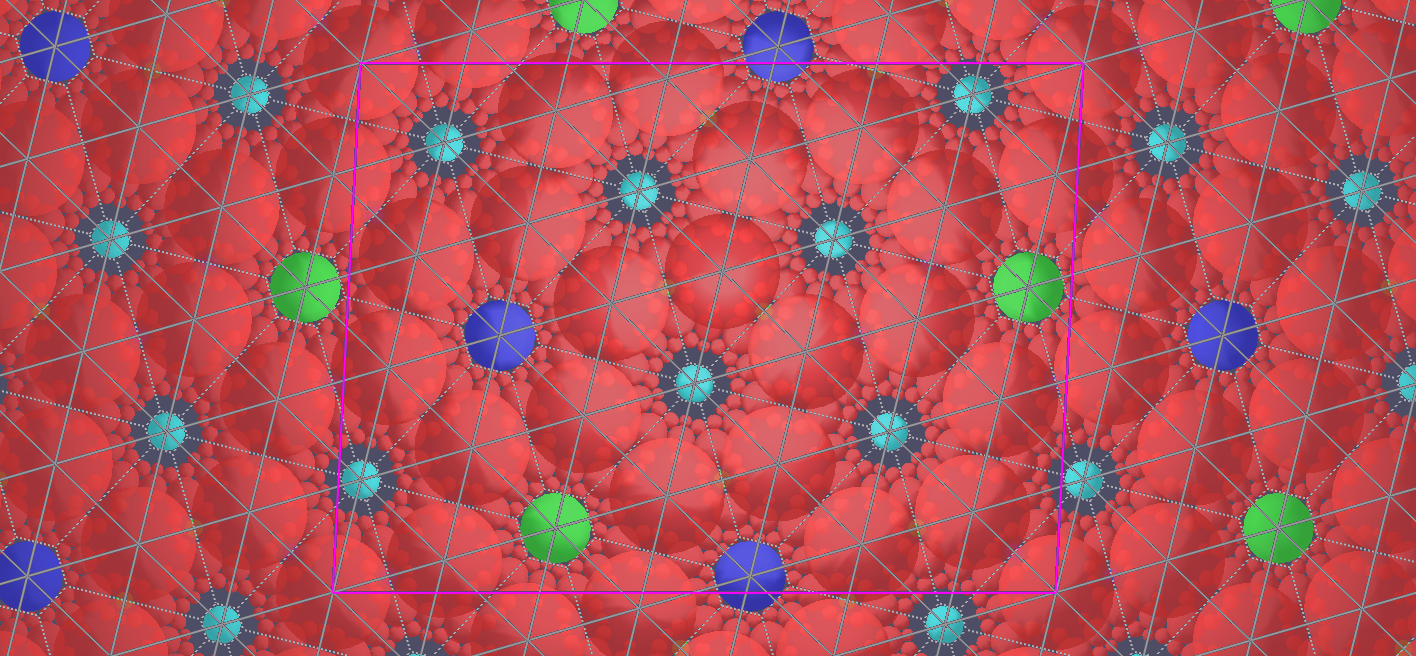}
\caption{ $0$-cusp maximal horoball packing of $\mathbb{H}^3$ for \texttt{otet22\_00130} (\texttt{K\_hlgy\_lk}[425]) (picture obtained from SnapPy \cite{snappy}).}
\label{bad6sym_425}
\end{figure}

\begin{figure}
\centering 
\captionsetup{justification=centering}
\includegraphics[scale=.1]{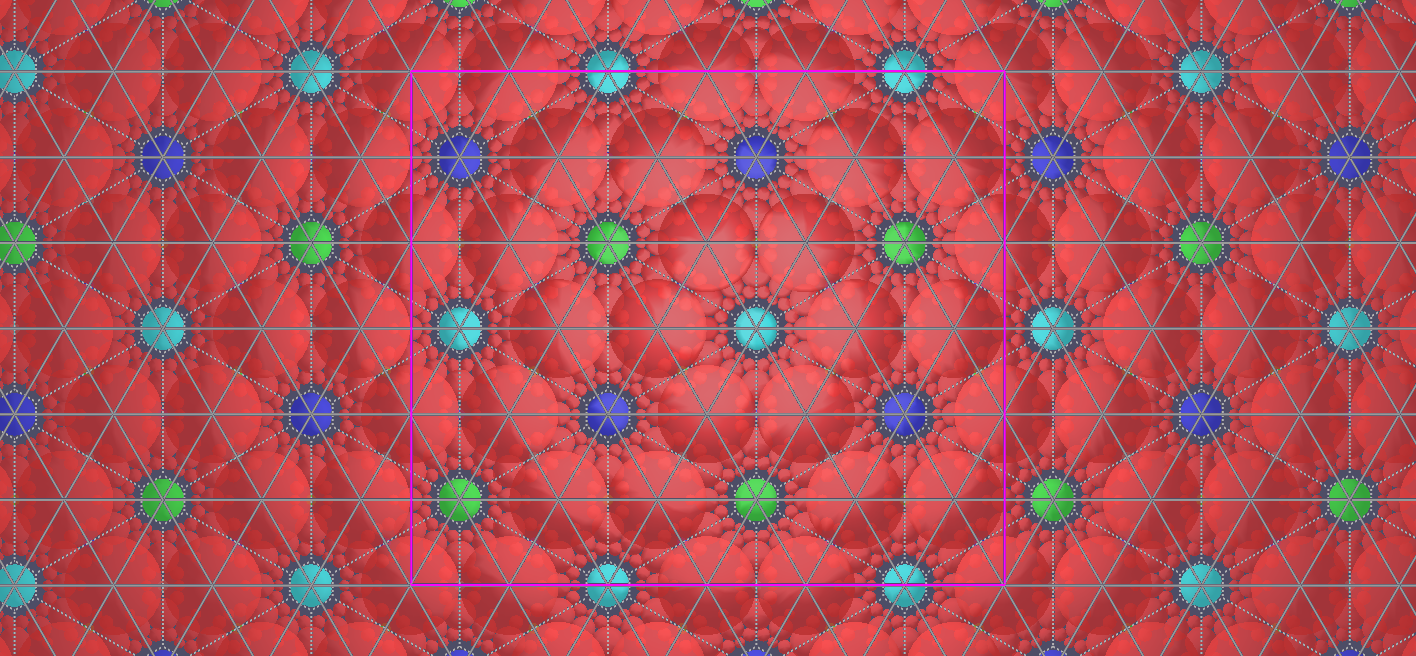} \includegraphics[scale=.1]{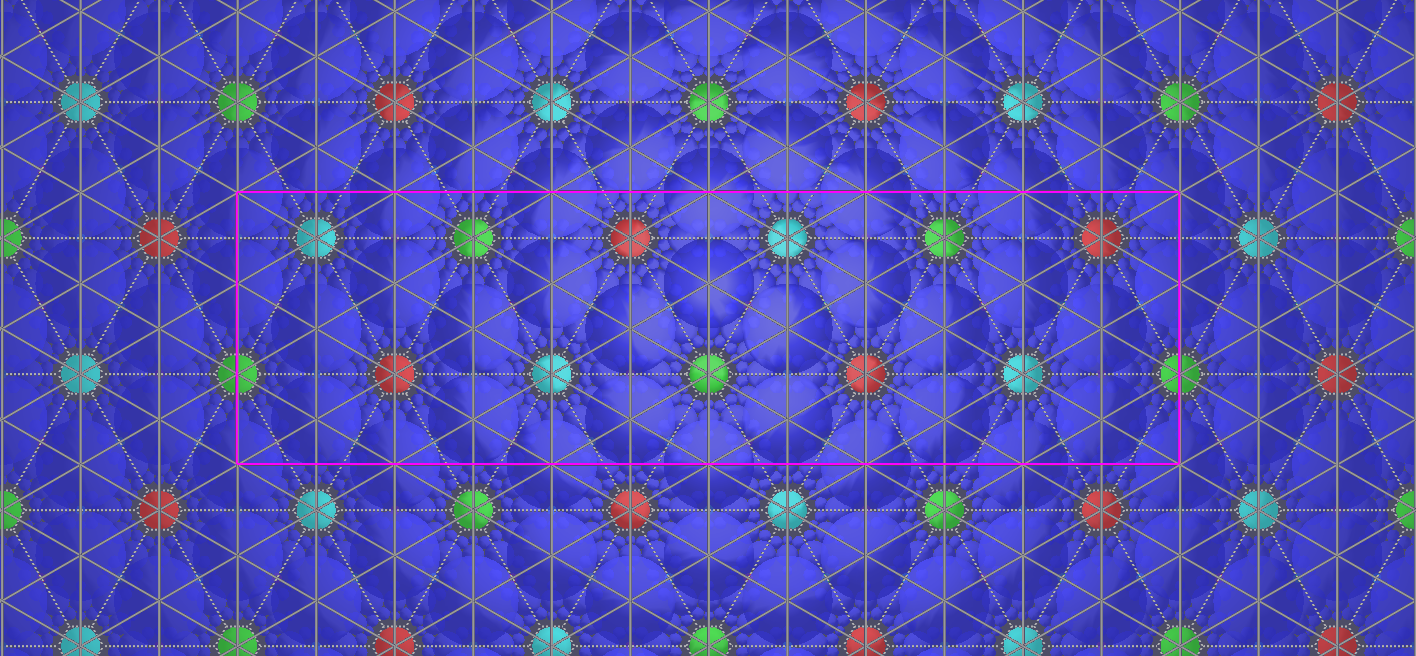} 
\caption{Horoball packings of $\mathbb{H}^3$ for \texttt{otet24\_00259} (\texttt{K\_hlgy\_lk}[633]); \\ Left: $0$-cusp maximal horoball packing, Right: $1$-cusp maximal horoball packing  (pictures obtained from SnapPy \cite{snappy}).}
\label{bad6sym_633}
\end{figure}

\begin{figure}
\centering 
\captionsetup{justification=centering}
\includegraphics[scale=.1]{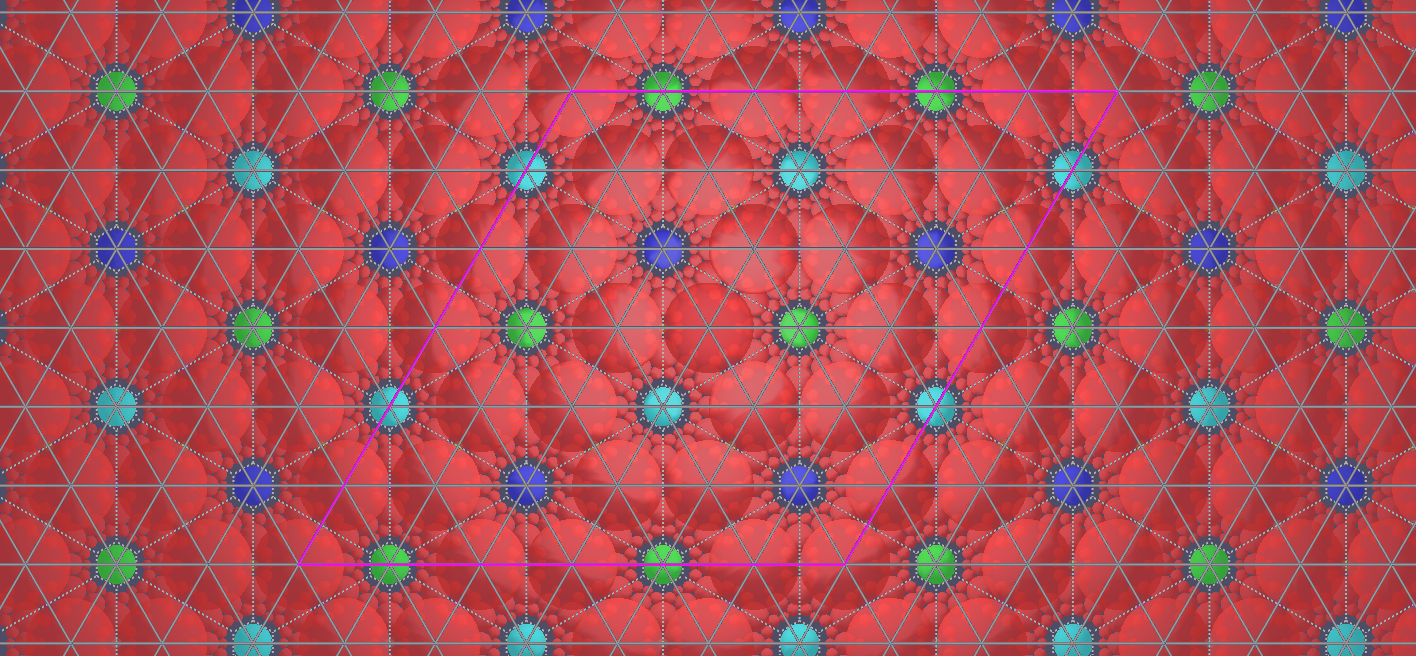} \includegraphics[scale=.1]{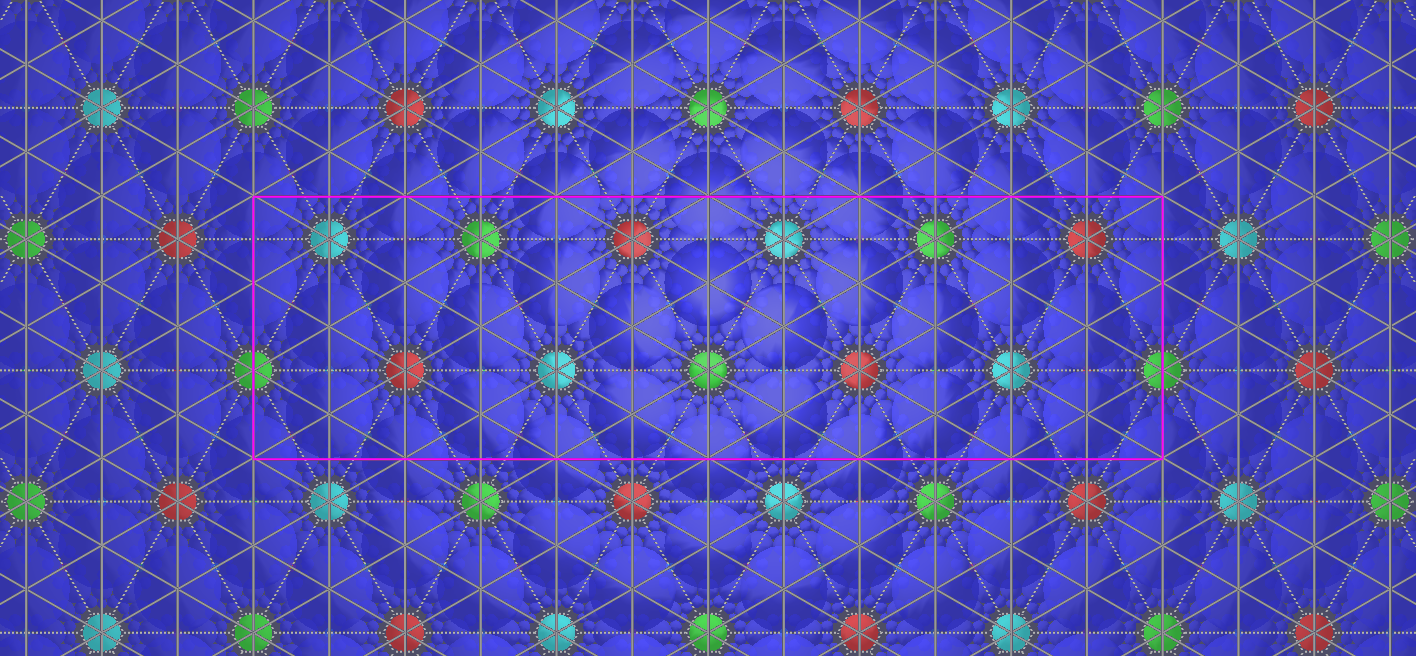} 
\caption{Horoball packings of $\mathbb{H}^3$ for \texttt{otet24\_00260} (\texttt{K\_hlgy\_lk}[634]); \\ Left: $0$-cusp maximal horoball packing, Right: $1$-cusp maximal horoball packing  (pictures obtained from SnapPy \cite{snappy}).}
\label{bad6sym_634}
\end{figure}

\begin{figure}
\centering 
\captionsetup{justification=centering}
\includegraphics[scale=.1]{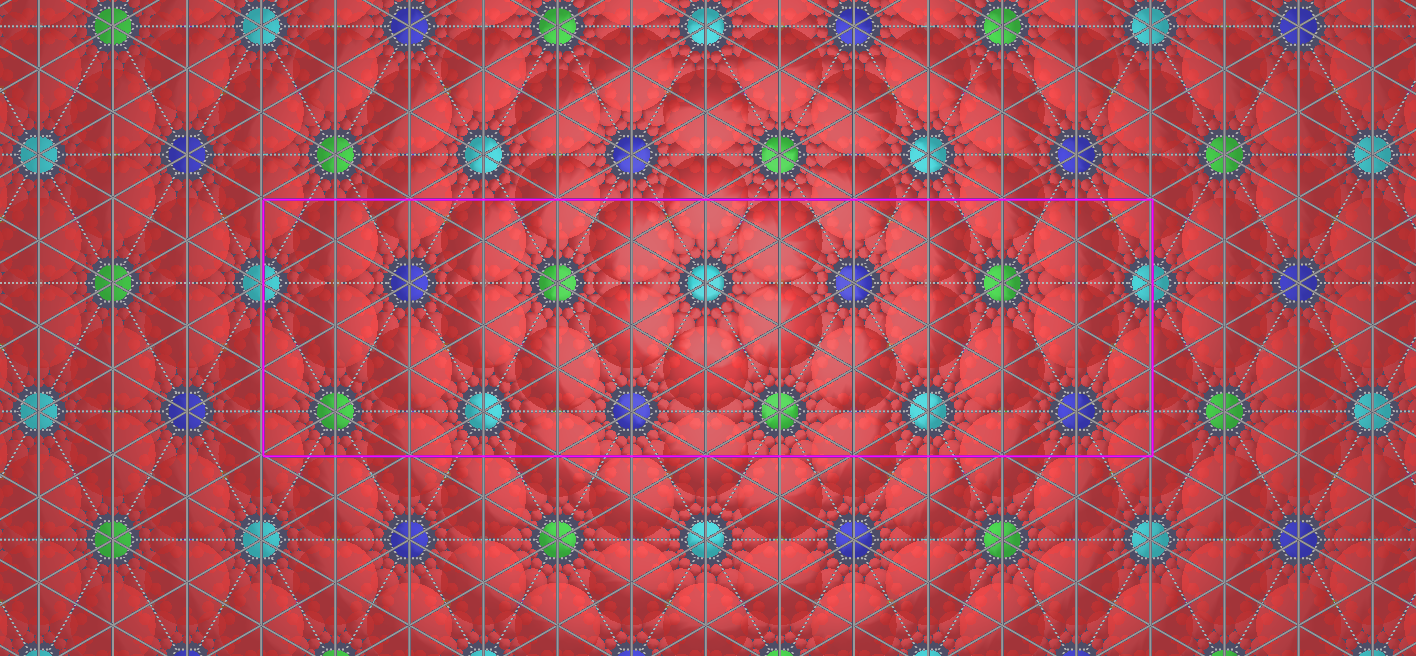}
\caption{ $0$-cusp maximal horoball packing of $\mathbb{H}^3$ for \texttt{otet24\_00263} (\texttt{K\_hlgy\_lk}[636]) (picture obtained from SnapPy \cite{snappy}).}
\label{bad6sym_636}
\end{figure}

\begin{figure}
\centering 
\captionsetup{justification=centering}
\includegraphics[scale=.1]{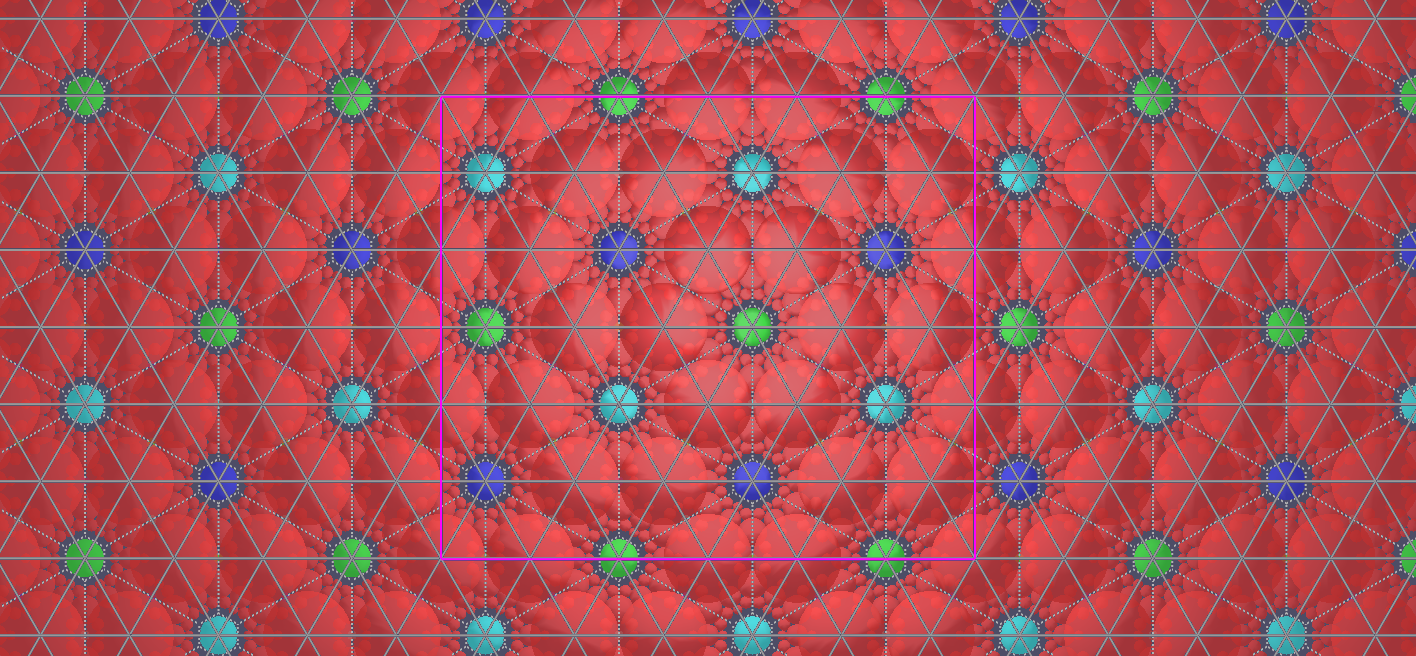} \includegraphics[scale=.1]{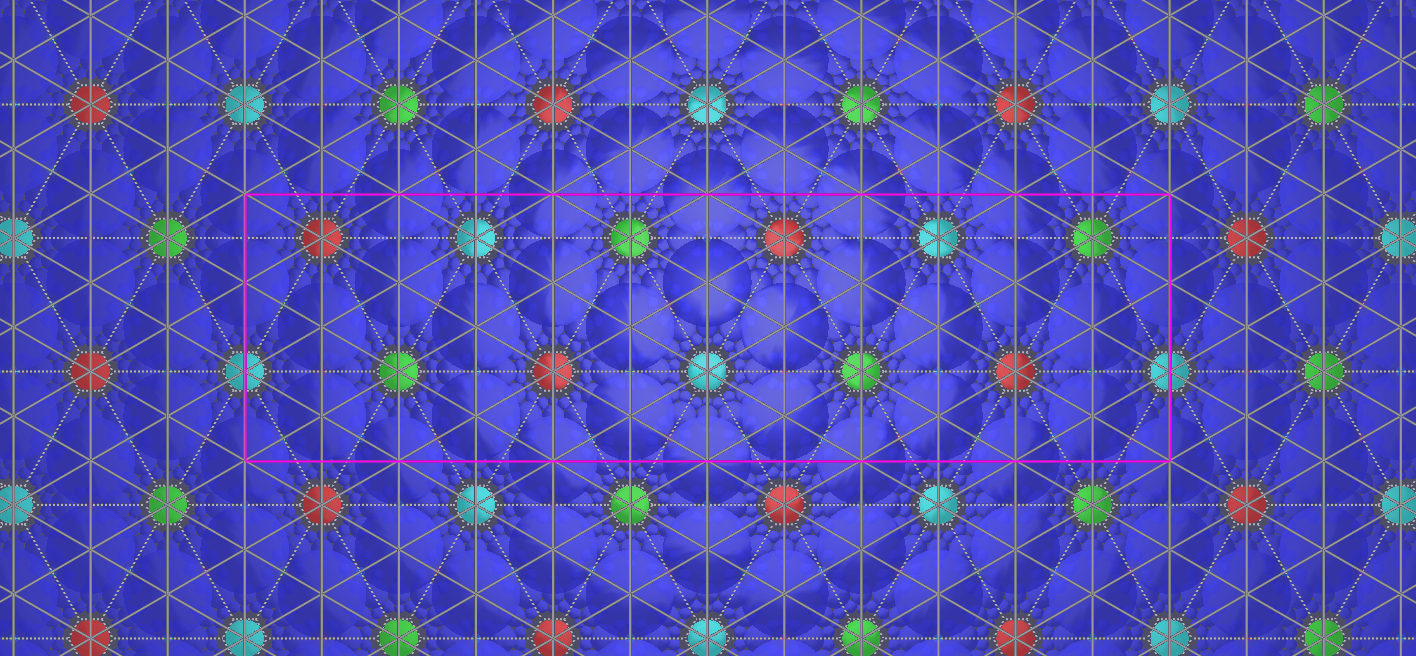} 
\caption{Horoball packings of $\mathbb{H}^3$ for \texttt{otet24\_00290} (\texttt{K\_hlgy\_lk}[645]) ; \\ Left: $0$-cusp maximal horoball packing, Right: $1$-cusp maximal horoball packing  (pictures obtained from SnapPy \cite{snappy}).}
\label{bad6sym_645}
\end{figure}

\begin{figure}
\centering 
\captionsetup{justification=centering}
\includegraphics[scale=.1]{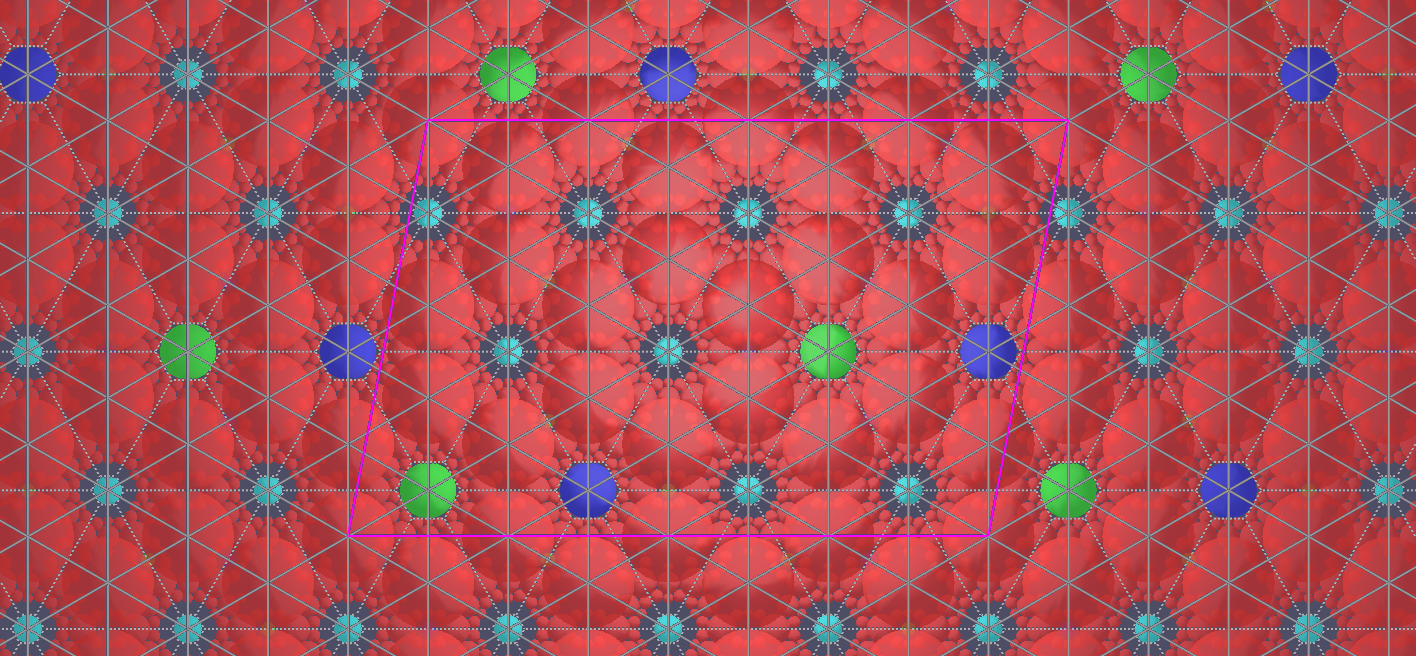}
\caption{ $0$-cusp maximal horoball packing of $\mathbb{H}^3$ for \texttt{otet24\_00396} (\texttt{K\_hlgy\_lk}[681])  (picture obtained from SnapPy \cite{snappy}).}
\label{bad6sym_681}
\end{figure}

\begin{figure}
\centering 
\captionsetup{justification=centering}
\includegraphics[scale=.1]{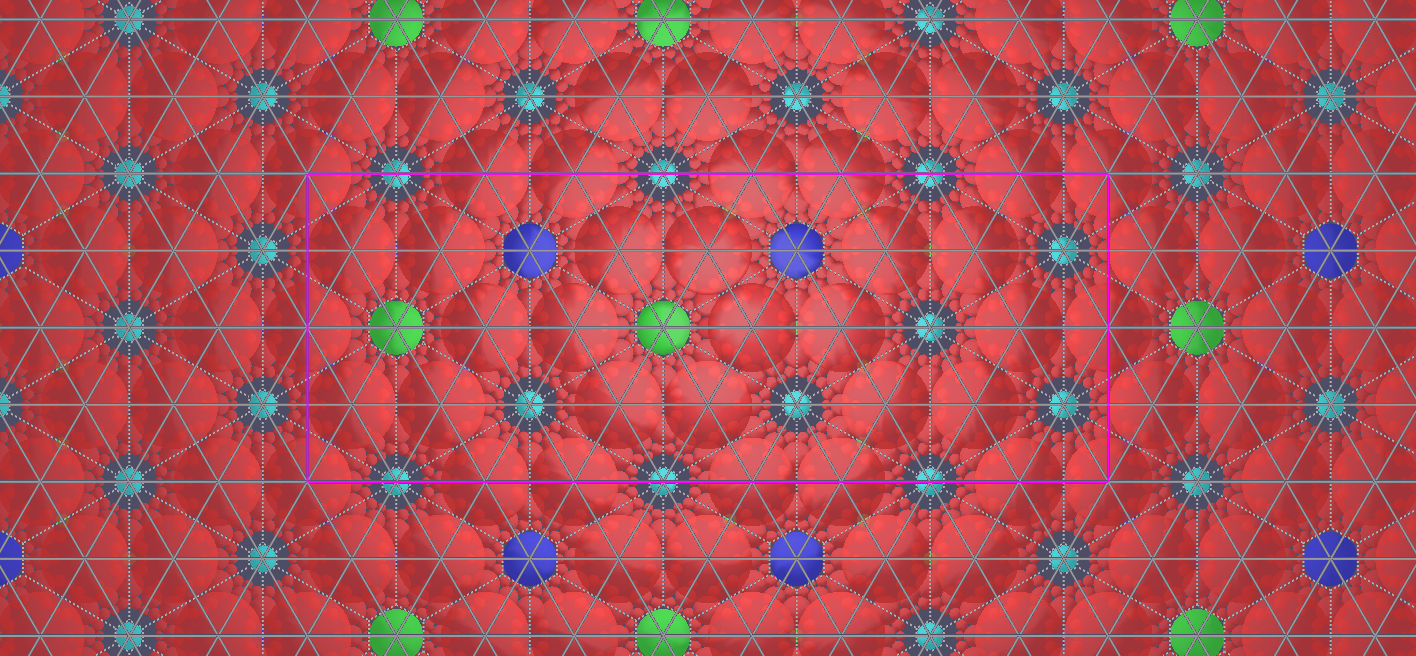}
\caption{ $0$-cusp maximal horoball packing of $\mathbb{H}^3$ for \texttt{otet24\_00398} (\texttt{K\_hlgy\_lk}[682])  (picture obtained from SnapPy \cite{snappy}).}
\label{bad6sym_682}
\end{figure}

\begin{figure}
\centering 
\captionsetup{justification=centering}
\includegraphics[scale=.1]{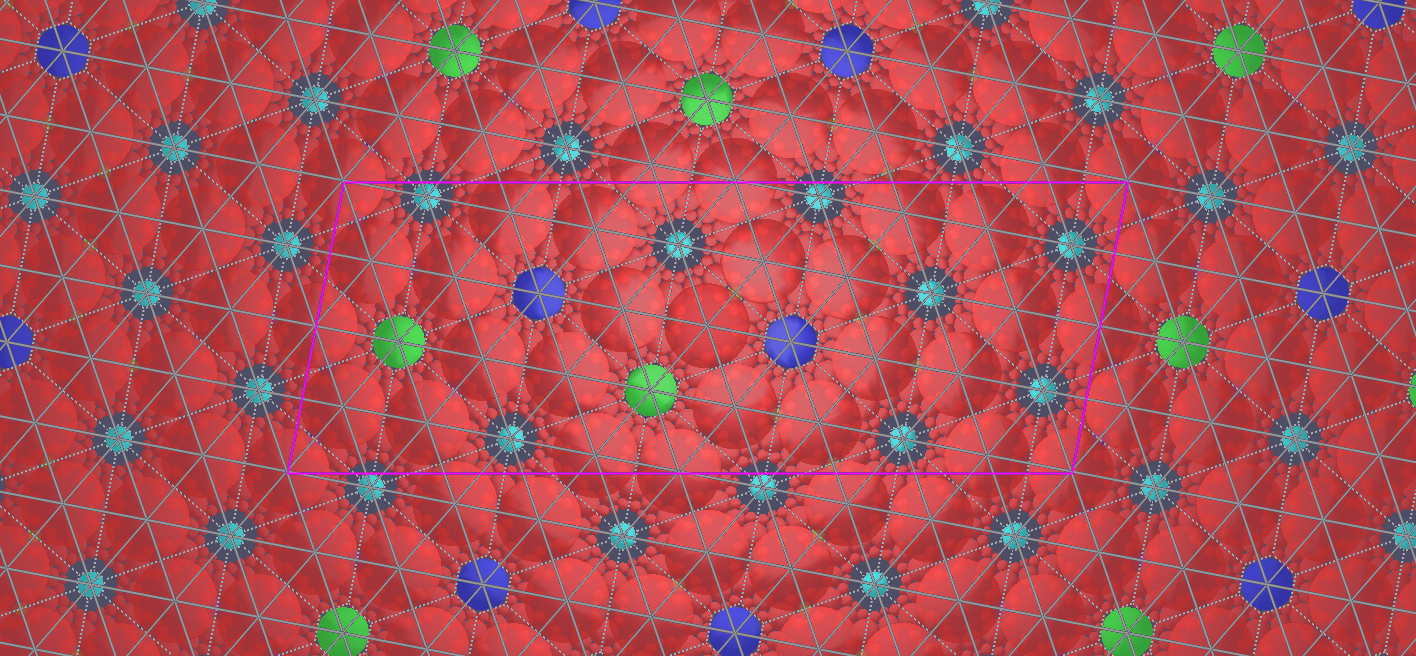}
\caption{ $0$-cusp maximal horoball packing of $\mathbb{H}^3$ for \texttt{otet24\_00399} (\texttt{K\_hlgy\_lk}[683])  (picture obtained from SnapPy \cite{snappy}).}
\label{bad6sym_683}
\end{figure}

\begin{figure}
\centering 
\captionsetup{justification=centering}
\includegraphics[scale=.1]{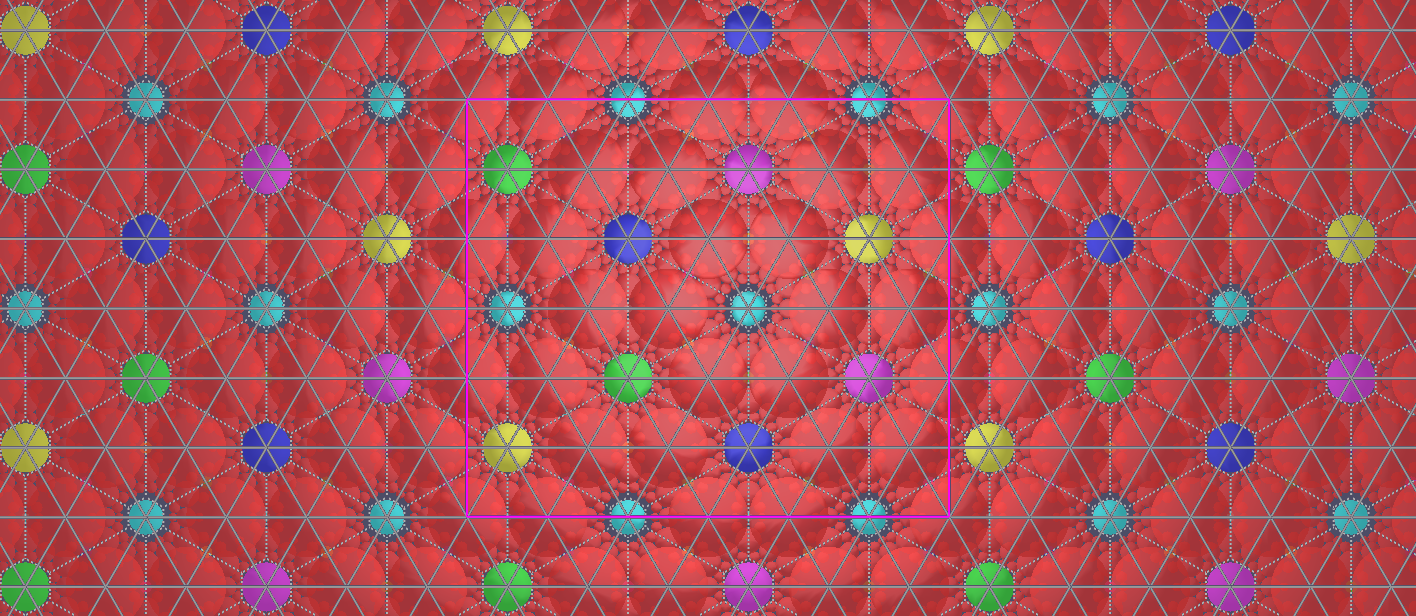}
\caption{ $0$-cusp maximal horoball packing of $\mathbb{H}^3$ for \texttt{otet24\_00401} (\texttt{K\_hlgy\_lk}[684])  (picture obtained from SnapPy \cite{snappy}).}
\label{bad6sym_684}
\end{figure}

\begin{figure}
\centering 
\captionsetup{justification=centering}
\includegraphics[scale=.1]{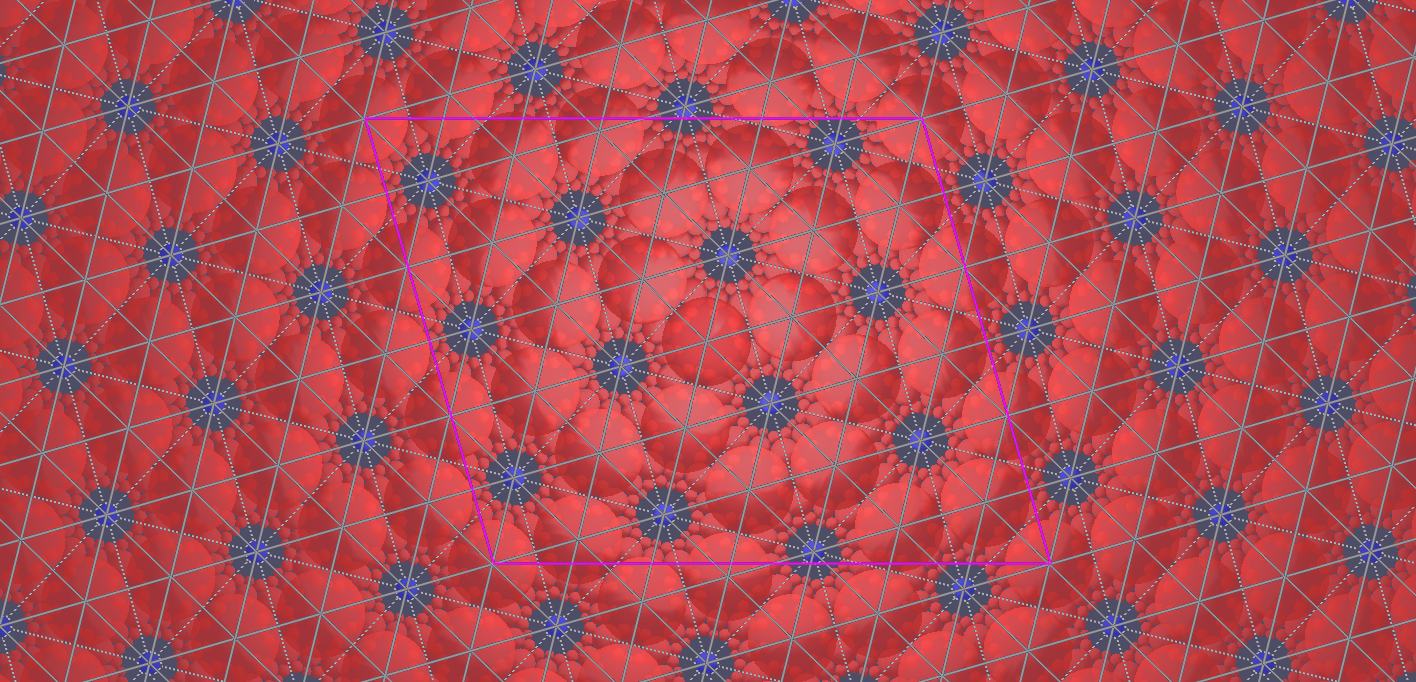}
\caption{ $0$-cusp maximal horoball packing of $\mathbb{H}^3$ for \texttt{otet24\_00979} (\texttt{K\_hlgy\_lk}[821])  (picture obtained from SnapPy \cite{snappy}).}
\label{bad6sym_821}
\end{figure}

\begin{figure}
\centering 
\captionsetup{justification=centering}
\includegraphics[scale=.1]{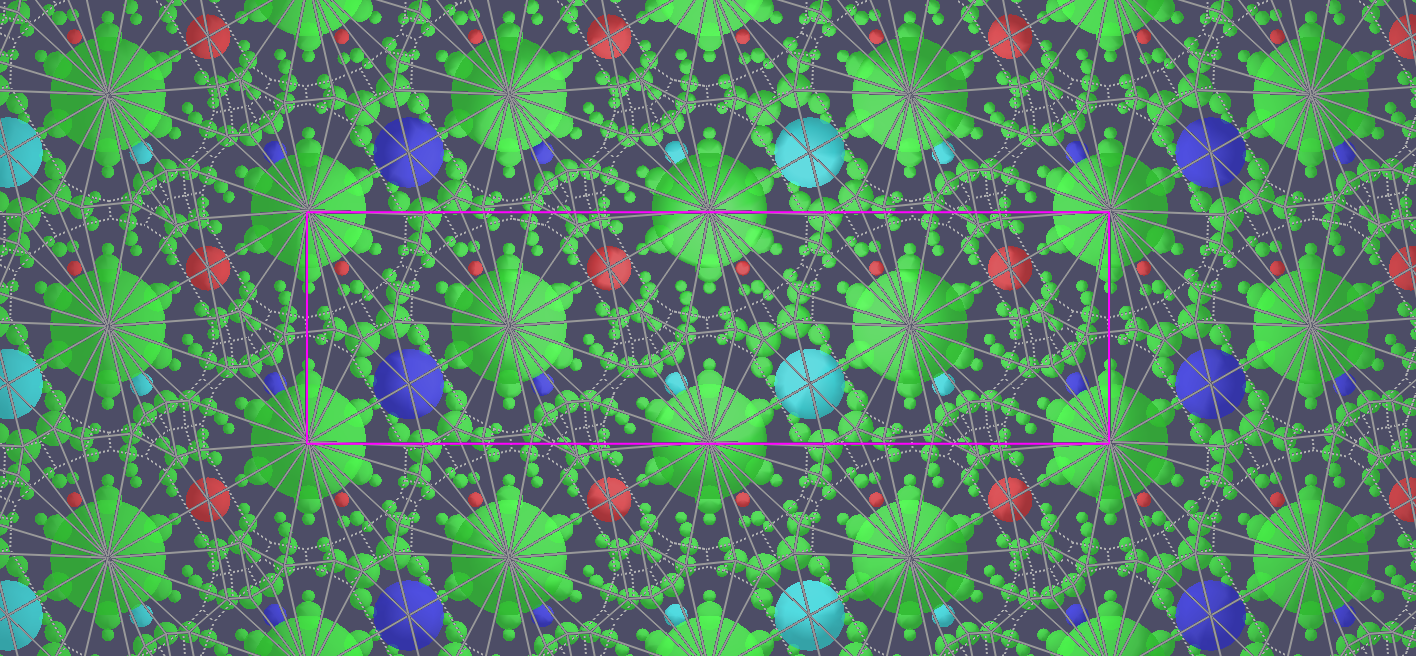}
\caption{ $2$-cusp maximal horoball packing of $\mathbb{H}^3$ for \texttt{otet20\_00062} (\texttt{K\_hlgy\_lk}[160])  (picture obtained from SnapPy \cite{snappy}).}
\label{fullhoronosym_160}
\end{figure}

\begin{figure}
\centering 
\captionsetup{justification=centering}
\includegraphics[scale=.1]{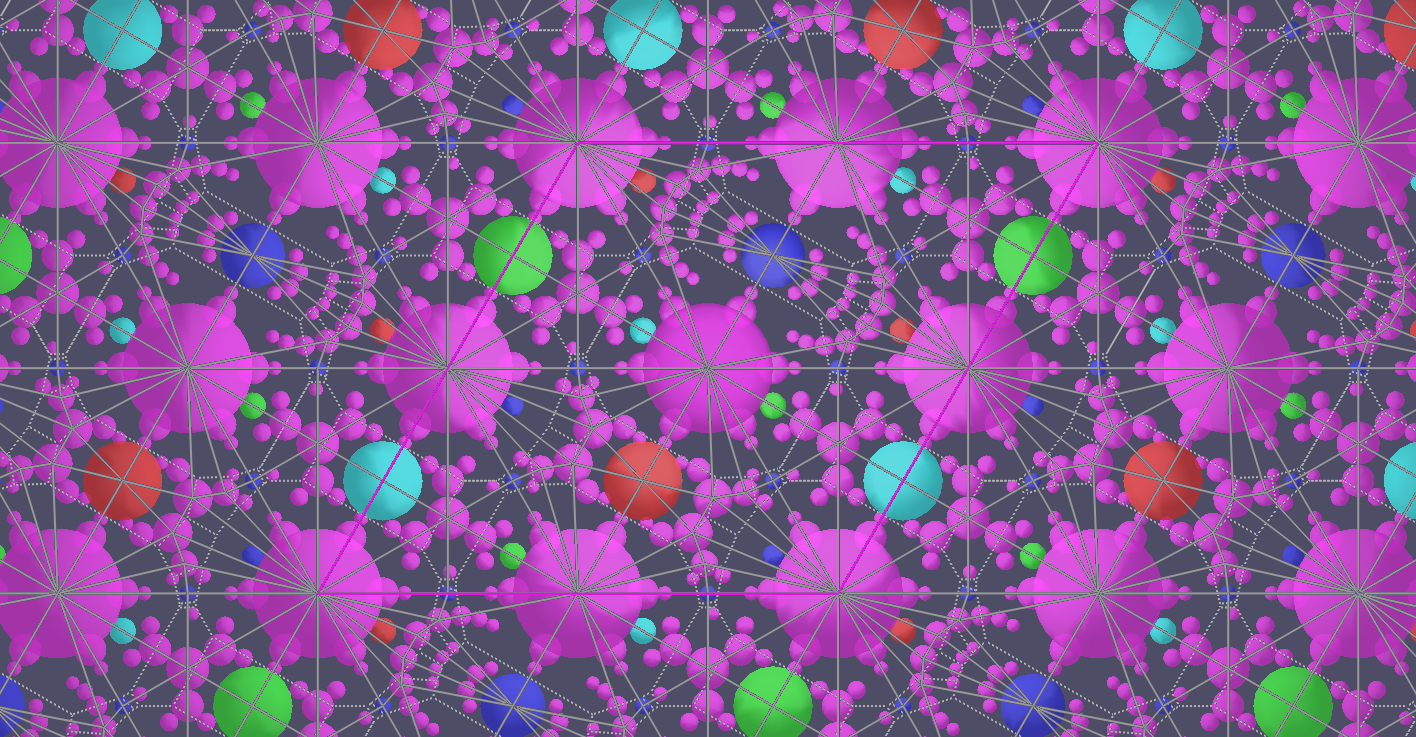}
\caption{ $4$-cusp maximal horoball packing of $\mathbb{H}^3$ for \texttt{otet20\_00097} (\texttt{K\_hlgy\_lk}[167])  (picture obtained from SnapPy \cite{snappy}).}
\label{fullhoronosym_167}
\end{figure}

\begin{figure}
\centering 
\captionsetup{justification=centering}
\includegraphics[scale=.1]{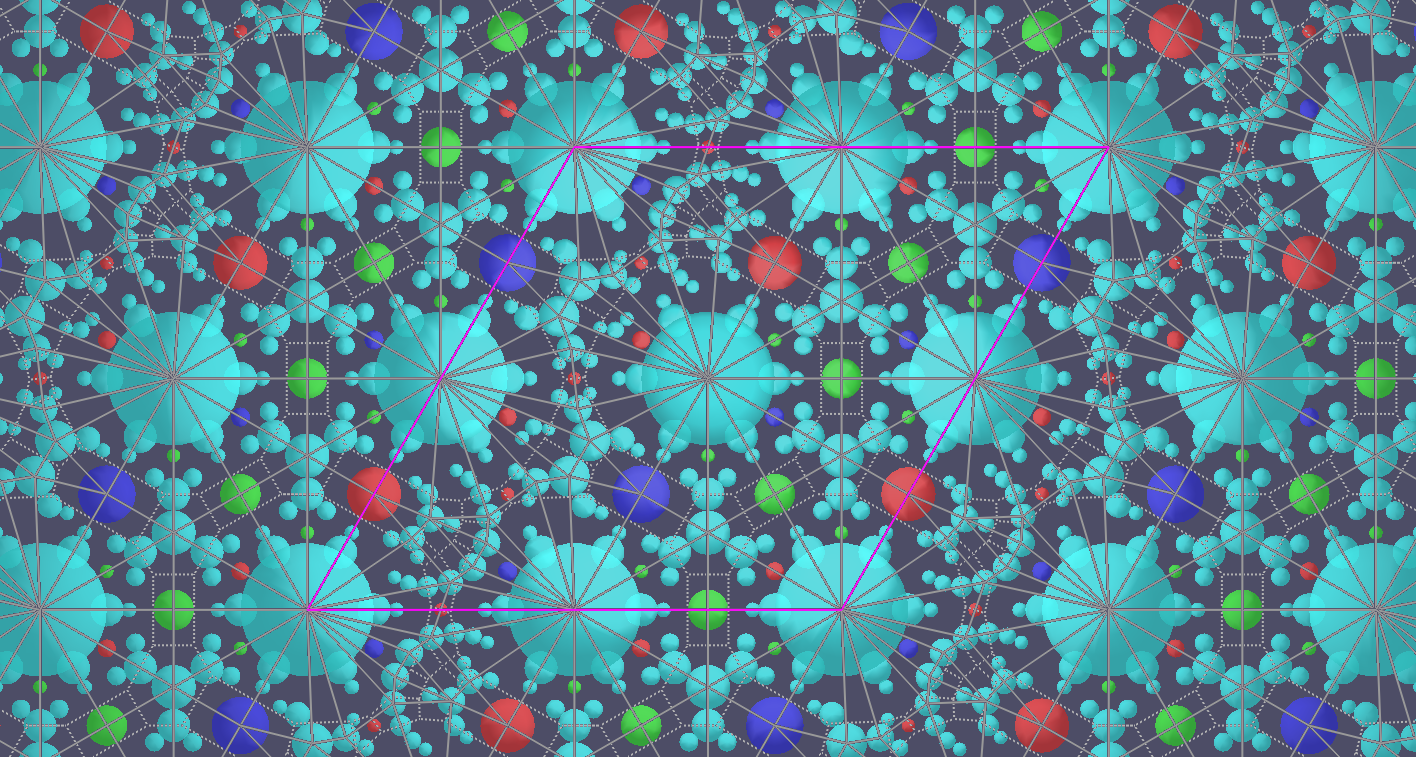}
\caption{ $3$-cusp maximal horoball packing of $\mathbb{H}^3$ for \texttt{otet20\_00102} (\texttt{K\_hlgy\_lk}[172])  (picture obtained from SnapPy \cite{snappy}).}
\label{fullhoronosym_172}
\end{figure}

\begin{figure}
\centering 
\captionsetup{justification=centering}
\includegraphics[scale=.1]{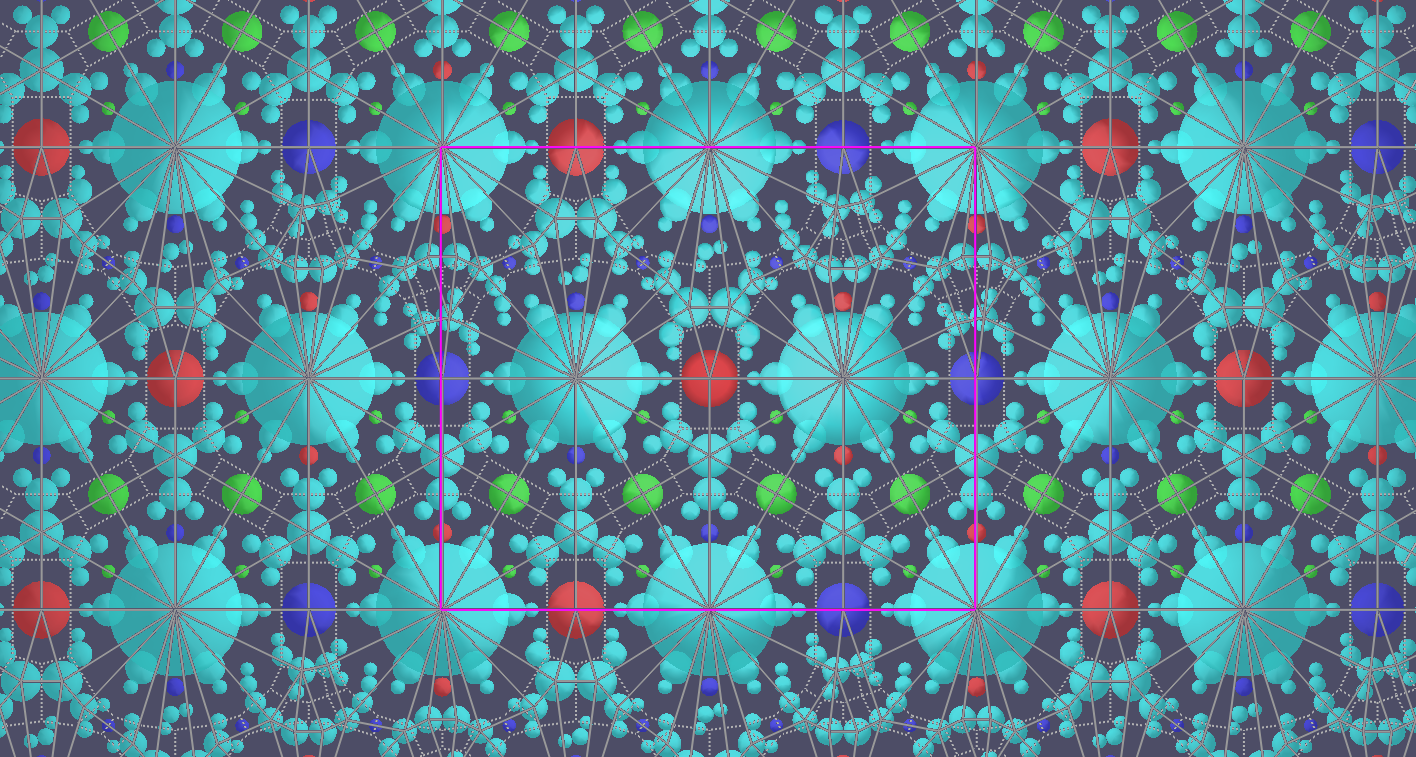}
\caption{ $3$-cusp maximal horoball packing of $\mathbb{H}^3$ for \texttt{otet20\_00131} (\texttt{K\_hlgy\_lk}[175])  (picture obtained from SnapPy \cite{snappy}).}
\label{fullhoronosym_175}
\end{figure}

\begin{figure}
\centering 
\captionsetup{justification=centering}
\includegraphics[scale=.1]{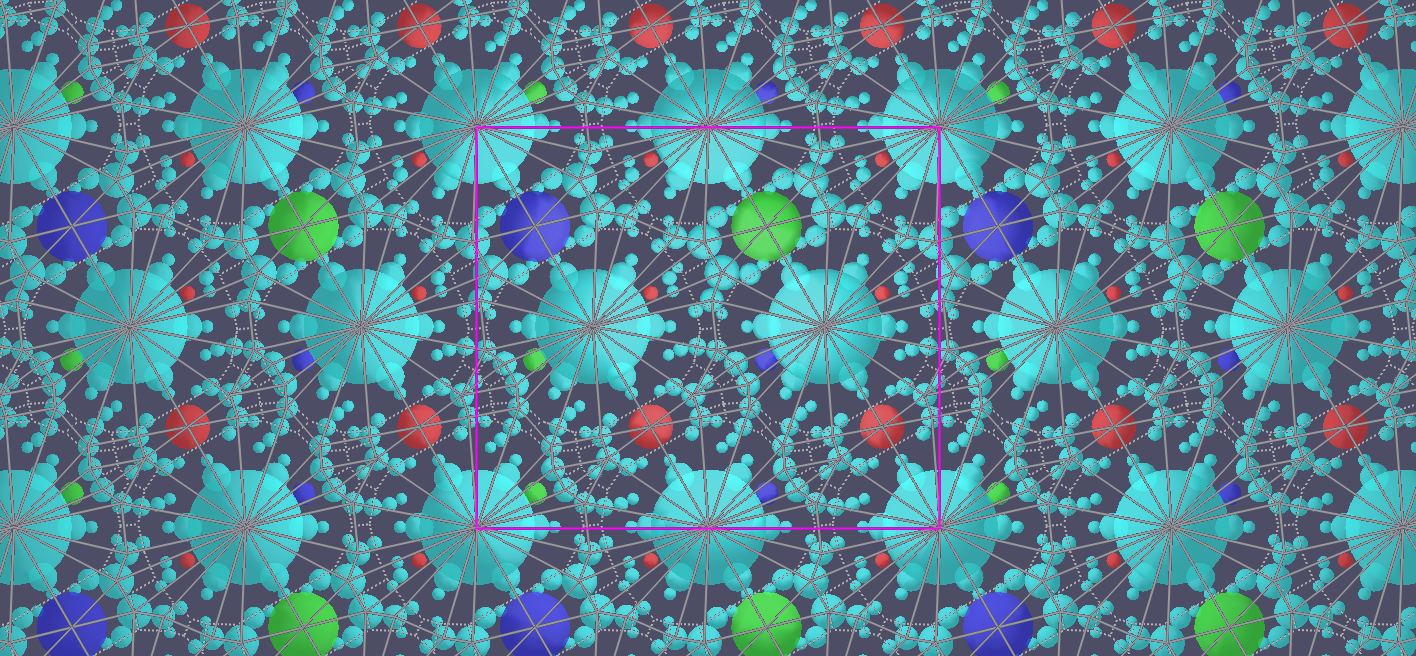}
\caption{ $3$-cusp maximal horoball packing of $\mathbb{H}^3$ for \texttt{otet20\_00770} (\texttt{K\_hlgy\_lk}[301])  (picture obtained from SnapPy \cite{snappy}).}
\label{fullhoronosym_301}
\end{figure}

\begin{figure}
\centering 
\captionsetup{justification=centering}
\includegraphics[scale=.1]{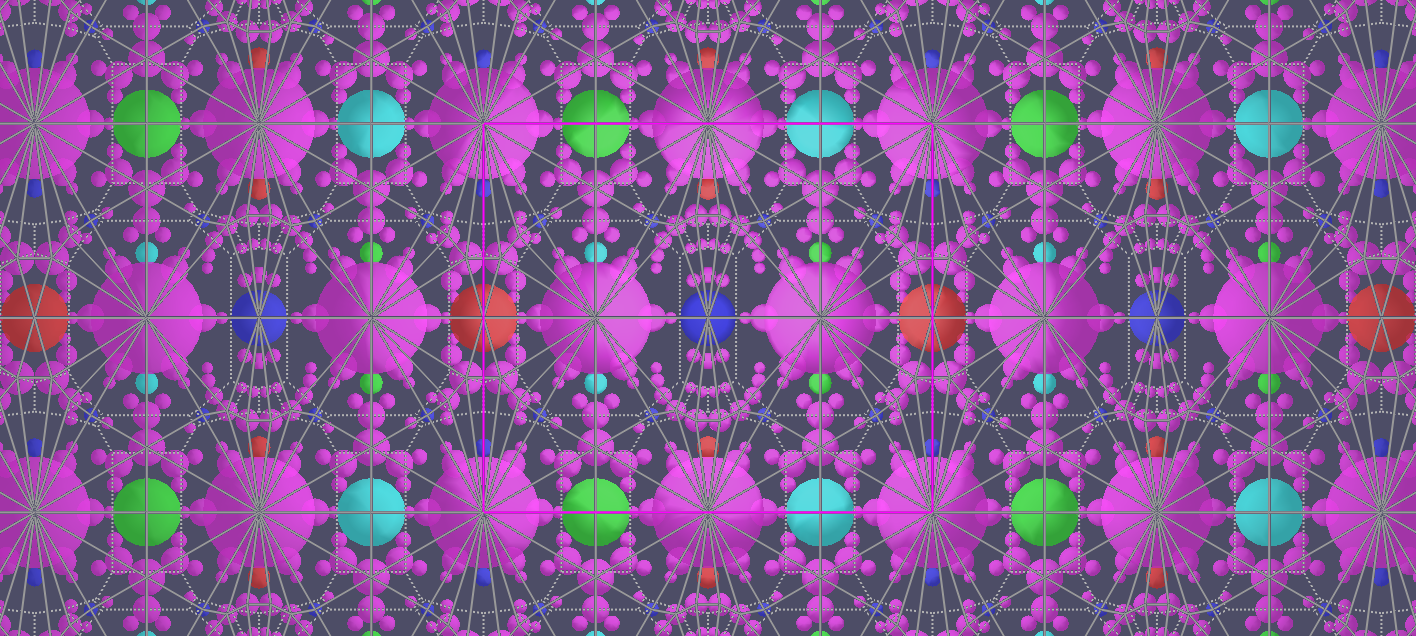}
\caption{ $4$-cusp maximal horoball packing of $\mathbb{H}^3$ for \texttt{otet20\_01405} (\texttt{K\_hlgy\_lk}[331])  (picture obtained from SnapPy \cite{snappy}).}
\label{fullhoronosym_331}
\end{figure}

\begin{figure}
\centering 
\captionsetup{justification=centering}
\includegraphics[scale=.1]{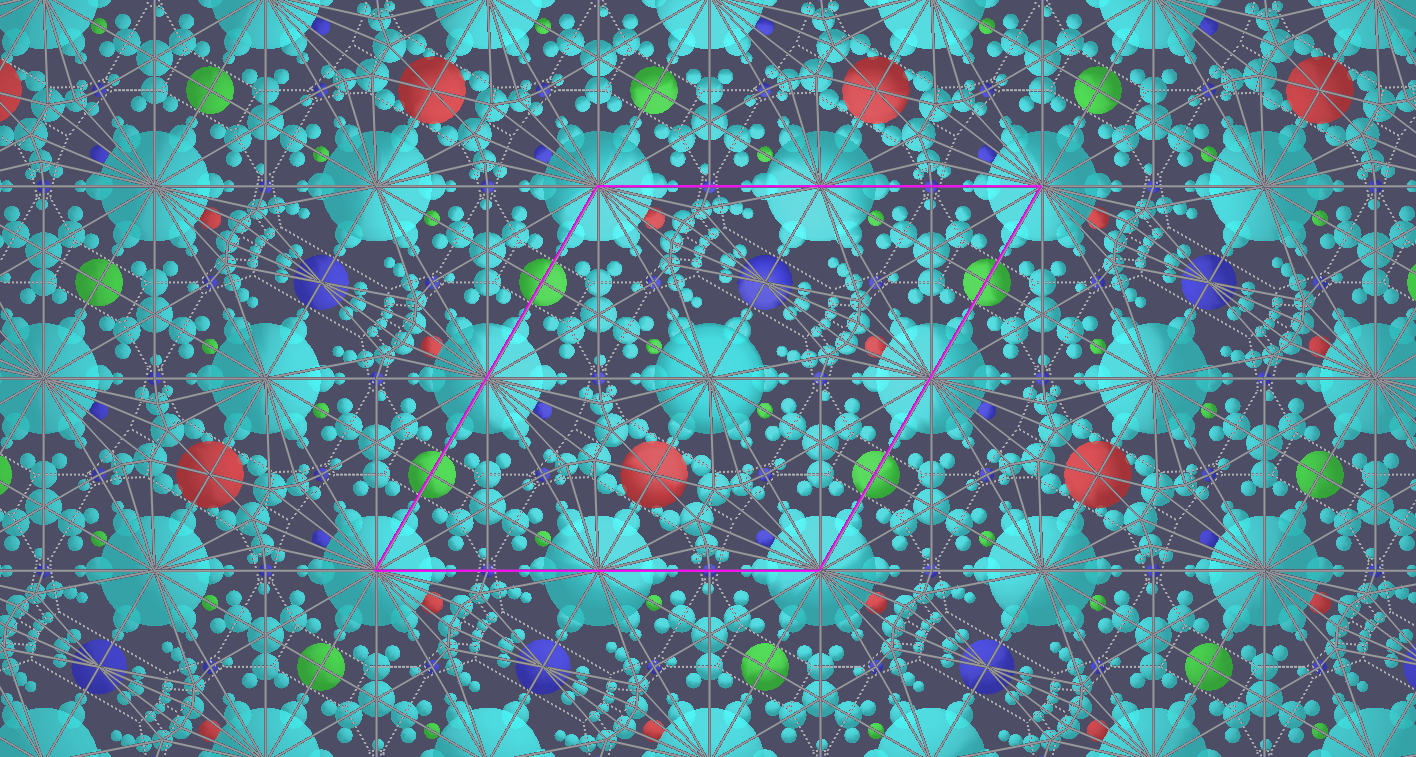}
\caption{ $3$-cusp maximal horoball packing of $\mathbb{H}^3$ for \texttt{otet20\_01414} (\texttt{K\_hlgy\_lk}[333])  (picture obtained from SnapPy \cite{snappy}).}
\label{fullhoronosym_333}
\end{figure}

\begin{figure}
\centering 
\captionsetup{justification=centering}
\includegraphics[scale=.1]{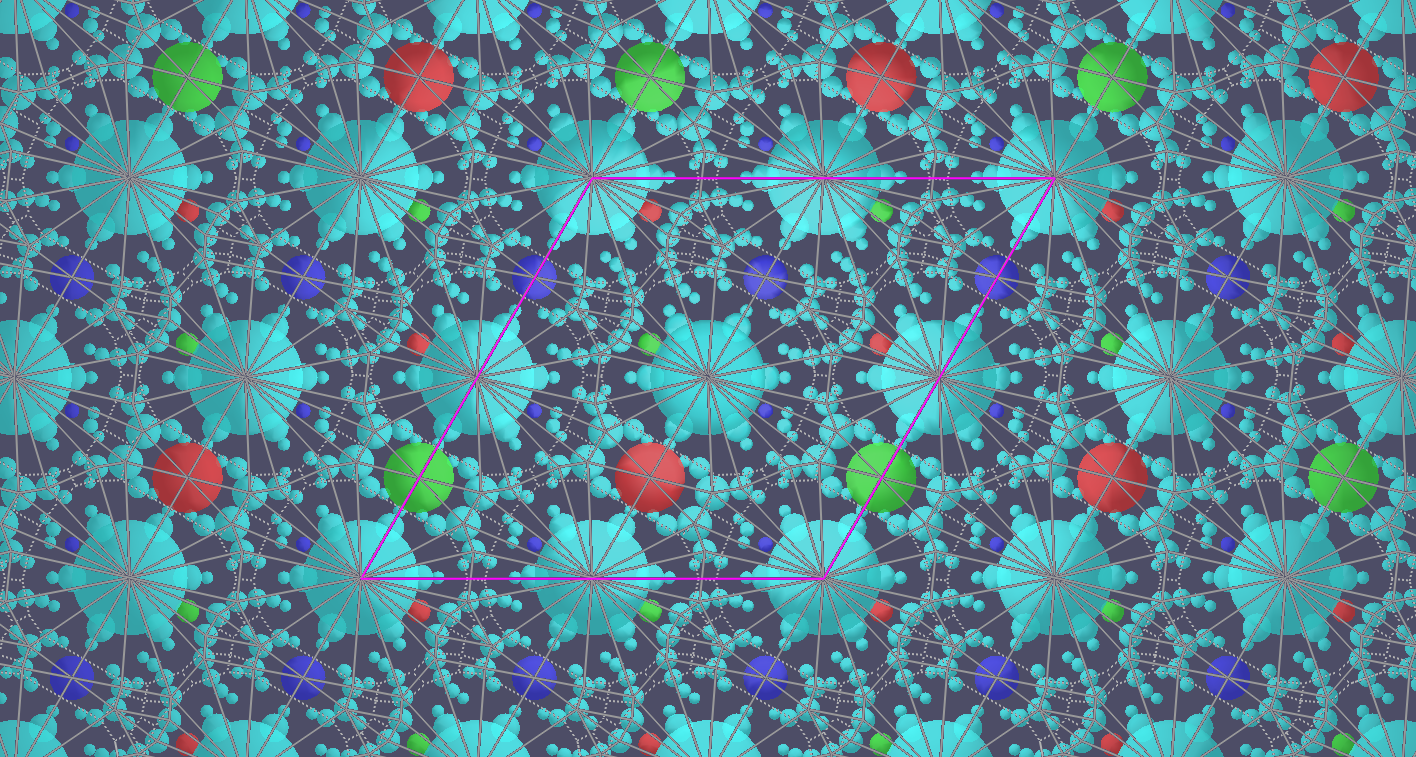}
\caption{ $3$-cusp maximal horoball packing of $\mathbb{H}^3$ for \texttt{otet20\_01431} (\texttt{K\_hlgy\_lk}[334])  (picture obtained from SnapPy \cite{snappy}).}
\label{fullhoronosym_334}
\end{figure}

\begin{figure}
\centering 
\captionsetup{justification=centering}
\includegraphics[scale=.1]{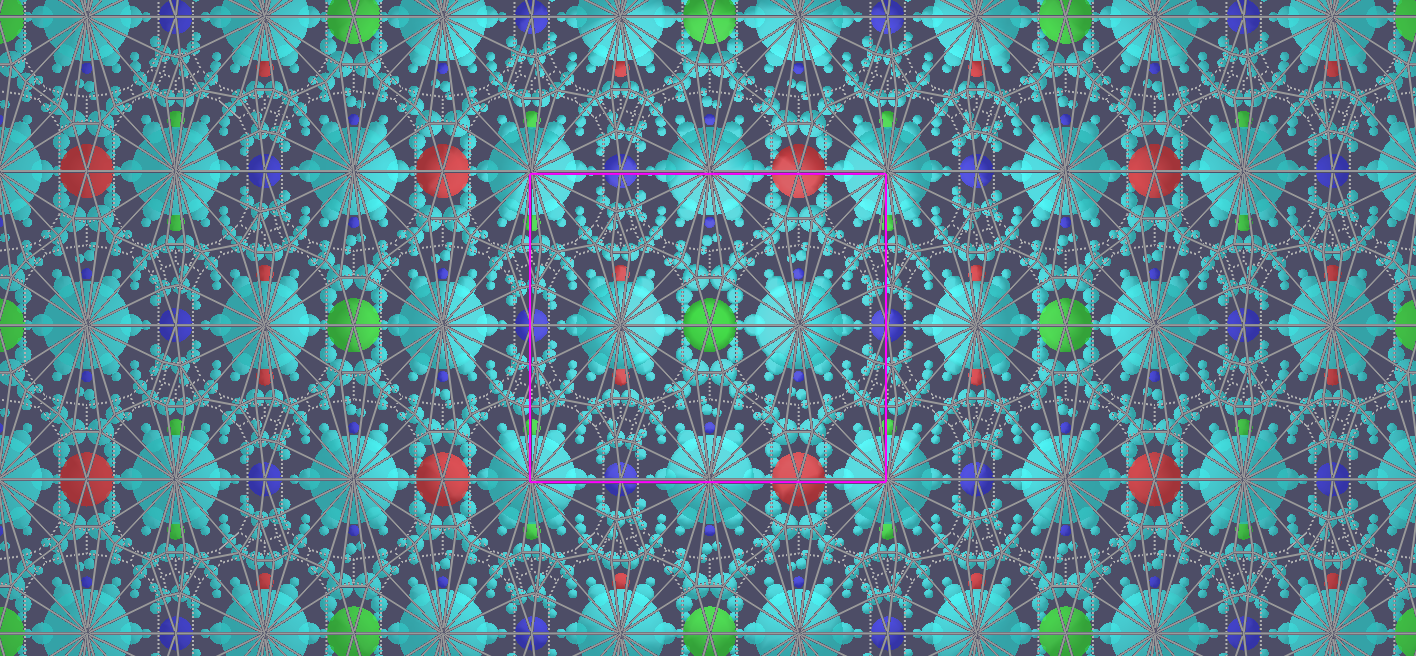}
\caption{ $3$-cusp maximal horoball packing of $\mathbb{H}^3$ for \texttt{otet20\_01433} (\texttt{K\_hlgy\_lk}[335])  (picture obtained from SnapPy \cite{snappy}).}
\label{fullhoronosym_335}
\end{figure}

\begin{figure}
\centering 
\captionsetup{justification=centering}
\includegraphics[scale=.1]{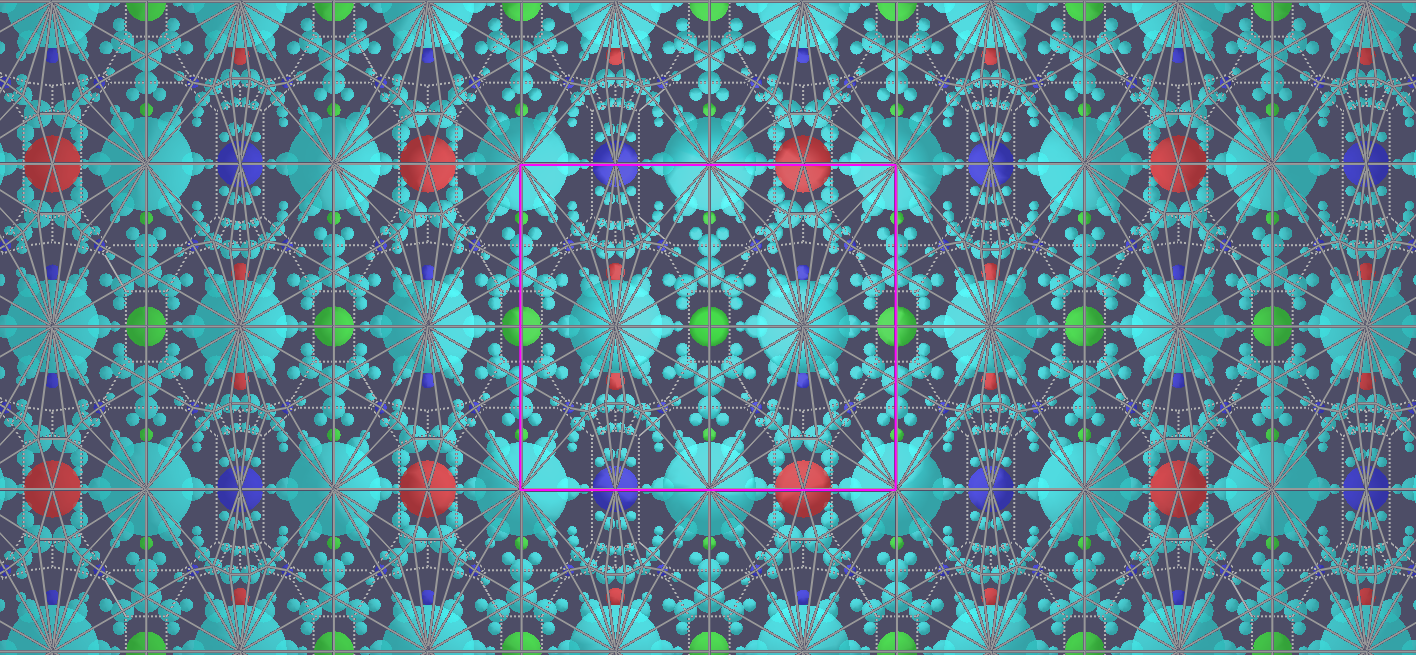}
\caption{ $3$-cusp maximal horoball packing of $\mathbb{H}^3$ for \texttt{otet20\_01438} (\texttt{K\_hlgy\_lk}[336])  (picture obtained from SnapPy \cite{snappy}).}
\label{fullhoronosym_336}
\end{figure}

\subsection{Utilities \texttt{SigToSeq.py} and \texttt{TestForCovers.py} from \cite{orbcenpract}}\label{censusutilities}

In this subsection, we will review the work in \cite{orbcentheory} and \cite{orbcenpract} and describe how the utilities \texttt{SigToSeq.py} and \texttt{TestForCovers.py} from \cite{orbcenpract} work. These utilities are available at \cite{orbtricode}. \cite{orbcentheory} and \cite{orbcenpract} investigate the commensurability class of the figure eight knot complement and this class is denoted there as $\mathcal{C}_3$. Since the figure-eight knot complement covers $\mathbb{H}^3/\operatorname{PGL_2}(O_3)$, $\mathcal{C}_3$ contains $\mathbb{H}^3/\operatorname{PGL_2}(O_3)$ as well. To understand  $\mathcal{C}_3$, \cite{orbcentheory} and \cite{orbcenpract}  look into a sub-collection $\mathcal{C}_{main}$ of $\mathcal{C}_3$ which is defined to be the collection of the finite covers of $\mathbb{H}^3/\operatorname{PGL_2}(O_3)$. Left of Figure \ref{orbtet} shows the fundamental domain $\Delta$ of the action of $\operatorname{PGL_2}(O_3)$ on $\mathbb{H}^3$ which is a tetrahedron with one ideal vertex $v$ at $\infty$ and three finite vertices $f_0$, $f_1$ and $e$ lying above respectively $\frac{1}{2}-\frac{i}{2\sqrt{3}}$, $\frac{1}{2}+\frac{i}{2\sqrt{3}}$ and $0$ on the unit sphere centered at the origin. $\Delta$ can be obtained by gluing two simplices of the barycentric subdivision of the regular ideal tetrahedron (see the picture in the right of Figure \ref{orbtet}) along their faces which are contained in the faces of the regular ideal tetrahedra. Note that the volume of $\Delta$ is $\frac{v_0}{12}$ where $v_0$ is the volume of the regular ideal tetrahedron. Denote the faces of $\Delta$ by their opposite vertices. Let $\rho_v$ (respectively, $\rho_e$ and $\rho_f$) denote the rotation of angle $\pi$ (respectively, $\pi$ and $\frac{\pi}{3}$) around the axis which joins $e$ to $m$ (respectively, $v$ to $m$, and $v$ to $e$). Now, gluing face $v$ to itself via $\rho_v$, face $f_0$ and $f_1$ to each other via $\rho_f$ and face $e$ to itself via $\rho_e$ gives $\mathbb{H}^3/\operatorname{PGL_2}(O_3)$ an \textit{orbifold triangulation}. Since members of  $\mathcal{C}_{main}$ are finite covers of $\mathbb{H}^3/\operatorname{PGL_2}(O_3)$, one can get  an orbifold triangulation of any member of $\mathcal{C}_{main}$ into tetrahedra each isometric to $\Delta$ by lifting the (orbifold) triangulation of $\mathbb{H}^3/\operatorname{PGL_2}(O_3)$. Suppose $O \in \mathcal{C}_{main}$ and there are $n$ tetrahedra $\Delta^O_0, \ldots \Delta^O_{n-1}$ in such triangulation of $O$. Then for each $i \in \{0,1, \ldots, n-1\}$, the $v$ face of $\Delta^O_i$ is glued to the $v$ face of $\Delta^O_{v(i)}$ for some $v(i) \in  \{0,1, \ldots, n-1\}$, the $f_0$ (respectively, $f_1$) face of $\Delta^O_i$ is glued to the $f_1$ (respectively $f_0$) face of $\Delta^O_{f_0(i)}$ (respectively $\Delta^O_{f_1(i)}$) for some $f_0(i)$ (respectively, $f_1(i)$) $\in \{0,1, \ldots, n-1\}$ and the $e$ face of $\Delta^O_i$ is glued to the $e$ face of $\Delta^O_{e(i)}$ for some $e(i) \in  \{0,1, \ldots, n-1\}$. The \textit{destination sequence} $\mathcal{S}^O$ of this triangulation is the finite sequence of $4n$ numbers from $\{0,1, \ldots, n-1\}$ defined as 
\begin{align*}
\mathcal{S}^O(4i)=v(i)\\
 \mathcal{S}^O(4i+1)=f_0(i)\\
 \mathcal{S}^O(4i+2)=f_1(i)\\
 \mathcal{S}^O(4i+3)=e(i)
\end{align*}
 for each $i \in \{0,1, \ldots, n-1\}$. 
The gluing maps of this triangulation induce a partition on the collection of all ideal vertices in this triangulation and the equivalence classes in this partition have a one-to-one correspondence to the cusps of $O$. Since each $\Delta^O_i$ has only one ideal vertex, each cusp of $O$ can be written uniquely as the collection of the indices of the tetrahedra $\Delta^O_i$'s containing the ideal vertices in that class.

\begin{figure}
\centering 
\captionsetup{justification=centering}
\begin{tikzpicture}[scale=2.5]
\draw [-](.5,0)--(.3,.25)--(.5, .803);
\draw [dotted](.5, .803)--(.73, .3)--(.5,0);

\draw [gray=!60] (-.3,0)-- (1.3,0);
\draw [] (.5, .792) -- (.5,-.792);
\draw [gray=!60] (.5,.792)  -- (1.3,0);
\draw [gray!60] (-.3,0) -- (.5, -.792);
\draw [gray!60] (1.3,0)  -- (.5,-.792);
\draw [gray!60] (.5,.792)  -- (.5,-.792);
\draw [gray!60] (.5,.792)  -- (-.3,0);

\draw [-](.5, .792)--(.5,0);
\draw [dashed](.3, .25)--(.73,.3);
\draw[fill] (.3, .25) circle (1.7pt);
\draw[fill] (.5, 0) circle (1.7pt);
\draw[fill=black!50] (.73, .3) circle (1.7pt);

\draw[fill=white] (-.3,0) circle (1.7pt);
\draw[fill=white] (1.3,0) circle (1.7pt);
\draw[fill=white] (.5,.792) circle (1.7pt);
\draw[fill=white] (.5,-.792) circle (1.7pt);

\begin{scope}[xshift=-2.5cm, yshift=-.6cm, scale=1.25]

\draw[fill] (.3, .25) circle (1.5pt);
\draw[fill] (.5, 0) circle (1.5pt);
\draw[fill] (1.24, .51) circle (1.5pt);

\draw [-](.5, .803)--(.5,0);
\draw [dashed](.3, .25)--(.77,.38)--(1.24, .51);
\draw [dotted] (.5, .803)--(.77,.38);
\draw [dotted] (.5,0)--(.77, .38);
\draw (.5,.803)--(.3,.25);
\draw (.3, .25)--(.5,0);
\draw (.5, .803)--(1.24, .51);
\draw (.5,0)--(1.24, .51);
\draw[fill=black!50] (.77, .38) circle (1.5pt);
\draw [fill=white](.5, .803) circle (1.5pt);

\node at (.5, .92){$v$};
\node at (.5, .92){$v$};
\node at (.5, -.13){$e$};
\node at (.79, .48){$m$};

\node at (.17, .25){$f_1$};
\node at (1.37, .51){$f_0$};

\end{scope}
\end{tikzpicture}
\caption{Left: Fundamental domain $\Delta$ of $\operatorname{PGL}_2(O_3)$: Tetrahedron with ideal vertex $v$ and finite vertices $f_0$, $f_1$ and $e$; Right: $\Delta$ is the double of a simplex of the barycentric subdivision of a regular ideal tetrahedron.}
\label{orbtet}
\end{figure}
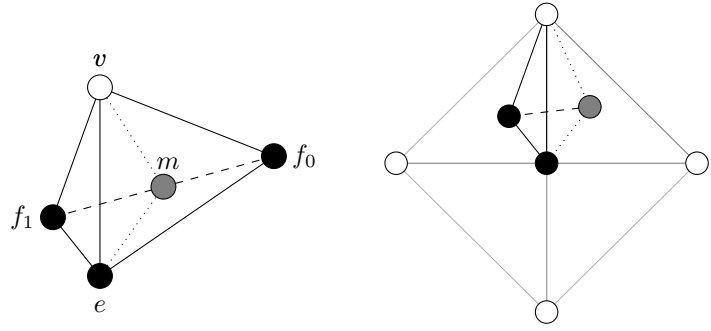

One can use the utility \texttt{TestForCovers.py} of \cite{orbcenpract} (code available at \cite{orbtricode}) to determine the cusps of an orbifold triangulation of an element of $C_{main}$ and how a triangulation and orientation preserving cover between two such orbifold triangulations behave. The \texttt{CuspSeqs} function of \texttt{TestForCovers.py} represents the cusps of an orbifold triangulation of an element of $\mathcal{C}_{main}$ in adherence to the observation above. If we turn the destination sequence $\mathcal{S}^O$ into a Python list \texttt{dseq}, then  \texttt{CuspSeqs(dseq)} will return a (Python) list each of whose members is itself a (Python) list of tetrahedra indices representing a cusp of $O$.

The \texttt{Covers} function of the utility \texttt{TestForCovers.py} works in the following principle. Let $\tilde{O}, O \in \mathcal{C}_{main}$. 
Let $\Delta_0^{\tilde{O}}, \ldots, \Delta_{n-1}^{\tilde{O}}$ be the tetrahedra in the orbifold triangulation of $\tilde{O}$ and $\Delta^O_0, \ldots \Delta^O_{m-1}$ the tetrahedra in the orbifold triangulation of $O$ where $n$ and $m$ are non-negative integers. If $\Phi: \tilde{O}\to O$ is an orientation covering map of degree $d$ which preserves these triangulations then each $\Delta^O_j$ has $d$ many $\Delta_i^{\tilde{O}}$'s in its pre-image. Given any $i \in \{0, \ldots, n-1\}$, we will use $\Phi(i)$ to denote the index of the tetrahedra $\Phi(\Delta_i^{\tilde{O}})$. Note that $\Phi(i) \in \{0, \ldots, m-1\}$ and 
\begin{align*}
\Phi( \mathcal{S}^{\tilde{O}}(4i+k))=\mathcal{S}^O\left(4\Phi(i)+k)\right)
\end{align*}
where $i \in \{0, \dots, n-1\}$ and $k \in \{0,1,2,3\}$. 

If we convert $\mathcal{S}^{\tilde{O}}$ and $\mathcal{S}^O$ into Python lists \texttt{dsequp} and  \texttt{dseqdown} respectively, then the function \texttt{Covers(dsequp,dseqdown)} would return a (Python) list each of whose members is a Python list of $n$ entries of tetrahedra indices from $\{0, \ldots, m-1\}$ representing a covering map from $\tilde{O}$ onto $O$, say $\Phi$ whose $i$-th entry is $\Phi(i)$. Suppose we choose a Python list, say \texttt{L\_cover}, representing a cover of $O$ by $\tilde{O}$. Let $c_0, \ldots, c_{p-1}$ be the equivalence classes of the partition of $\{0, \ldots, n-1\}$ each representing a cusp of $\tilde{O}$. For $t\in \{0, \ldots, p-1\}$, we will use $\Phi(c_t)$ to denote the cusp of $O$ that is $\{\Phi(i): i \in c_t\}$. Let \texttt{L\_t} be the Python list representing $c_t$ that we get from the function \texttt{CuspSeqs(dsequp)} for $t\in \{0, \ldots, p-1\}$. Then, the function \texttt{CuspCovers(\texttt{L\_cover},CuspSeqs(dsequp),CuspSeqs(dseqdown))} returns the Python list (of lists) \texttt{[[L\_0, L\_image\_0], \ldots, [L\_{p-1}, L\_image\_{p-1}]]} where \texttt{L\_image\_{t}} is the Python list representing $\Phi(c_t)$ that we get from the function \texttt{CuspSeqs(dseqdown)} for $t\in \{0, \ldots, p-1\}$. 

The utility \texttt{SigToSeq.py} developed in \cite{orbcenpract} (code available at \cite{orbtricode}) when run inside Regina \cite{regina}, can compute the destination sequence (i.e. an orbifold triangulation) of a tetrahedral manifold given its tetrahedral triangulation information. We first describe how the destination sequence is computed in \texttt{SigToSeq.py} given a tetrahedral triangulation. Suppose $M$ is a tetrahedral manifold with a tetrahedral triangulation $\mathcal{T}^M$ into (regular ideal) tetrahedra $T^M_0, \ldots, T^M_{r-1}$. Let $i \in \{0, \ldots, r-1\}$. The barycentric subdivision of $T^M_i$ gives rise to $24$ barycentric pieces, each a tetrahedron. Each such piece has one ideal vertex (of $T^M_i$) and three finite vertices - the barycenter of $T^M_i$, an \textit{edge vertex} that is the barycenter of an edge of $T^M_i$ and a \textit{face vertex} that is the barycenter of a face of $T^M_i$. We can then denote such a piece by the $4$-tuple $(i,j,k, l)$ where $j\in \{0,1,2,3\}$ is the index of the face of $T^M_i$ containing the finite face vertex of the piece, $k\in \{0,1,2,3\}$ is the index of the ideal vertex of $T^M_i$ that is the only ideal vertex of the piece and $l\in \{0,1,2,3\}$ is the index of the ideal vertex of $T^M_i$ such that the barycenter of the edge of $T^M_i$ joining $k$ and $l$ is the finite edge vertex of the piece.

Now for a given $i \in \{0, \ldots, r-1\}$, the $j$-face of  $T^M_i$ is glued to a unique $j'$-face of a unique tetrahedron  $T^M_{i'}$ of $\mathcal{T}^M$ where $i' \in \{0, \ldots, r-1\}$ and $j' \in \{0,1,2,3\}$. This gluing also sends ideal vertices $k$ and $l$ of  $T^M_i$ to unique ideal vertices $k'$ and $l'$ of  $T^M_{i'}$ since the $j$-face contains the ideal vertices $k$ and $l$. The gluing of the pieces $(i,j,k, l)$ and $(i',j',k', l')$ along faces $j$ and $j'$ makes up a tetrahedron in the orbifold triangulation $\mathcal{O}^M$ of $M$ into $12r$ many tetrahedron of the form shown in the left of Figure \ref{orbtet}. So, each of such $12r$ tetrahedra can be written as $\left\{(i,j,k, l),(i',j',k', l')\right\}$. Note that the tetrahedron (i.e. the set of two $4$ tuples) whose $v$ face (respectively, $f_1$ face, $f_0$ face and $e$ face) is glued to the $v$ face (respectively, $f_0$ face, $f_1$ face and $e$ face) of $\left\{(i,j,k, l),(i',j',k', l')\right\}$ is automatically determined by the barycentric subdivisions of $T^M_1, \ldots, T^M_r$. Now choosing one tetrahedron $\left\{(i,j,k, l),(i',j',k', l')\right\}$ and labelling it as the $0$-th tetrahedron, we will look at the tetrahedron whose $v$ face is glued to the $v$ face of this $0$-th tetrahedron $\left\{(i,j,k, l),(i',j',k', l')\right\}$ and if it is a new tetrahedron, we label it as $1$. We continue the same process with the tetrahedra that are glued to the $f_0$ face, the $f_1$ face and the $e$ face of $0$-th tetrahedron  $\left\{(i,j,k, l),(i',j',k', l')\right\}$, i.e., if they are new tetrahedra, we label them the next non-negative integer(s). We repeat the same process with tetrahedron with label $1$, $2$ and so on. If at any stage, after completing the process for the tetrahedron with label $s$, we don't have a tetrahedron with label $s+1$ from our process of labelling so far, we choose one unlabelled tetrahedron and label it $s+1$ and continue the process. This process itself produces the destination sequence of $M$ corresponding to the orbifold triangulation $\mathcal{O}^M$ of $M$.  

Given a tetrahedral manifold $M$, if we know its tetrahedral Regina triangulation, say \texttt{t}, then the function \texttt{Des\_seq(t)} of \texttt{SigToSeq.py} returns a Python list \texttt{dseq} representing a destination sequence of $M$ corresponding to the obtained orbifold triangulation of $M$ described above. 

We note that each ideal vertex of index $k\in \{0,1,2,3\}$ of a $T^M_i$ corresponds uniquely to a cusp of $M$ and so we can partition the set 
$$\left\{(i,k): k \in \{0,1,2,3\}, i \in \{0, \ldots, r-1\}\right\}$$ into equivalence classes such that each class uniquely represents a cusp of $\mathcal{T}^M$. Now, if \texttt{L} is an entry of \texttt{CuspSeqs(dseq)}, i.e., a Python list representing a cusp of \texttt{dseq}, then \texttt{L[0]} is the index of some tetrahedron in $\mathcal{O}^M$. Suppose this index is $s$ and the $s$-th tetrahedron in $\mathcal{O}^M$ is $\left\{(i,j,k, l),(i',j',k', l')\right\}$. Then the equivalence class of both $(i,k)$ and $(i',k')$ represents a cusp of $\mathcal{T}^M$. \texttt{SigToSeq.py} also includes a function \texttt{Cusp\_info} which, with \texttt{t} as the input, returns a list whose $s$-th element is $(s, (i,k),(i',k'))$ (the order of $(i,k)$ anxd $(i',k')$ might be switched).

\subsection{Members of $\mathcal{E}_1$ and $\mathcal{E}_4$}\label{E1E4}

In this subsection, we are going to use Proposition \ref{coversmallestmulticusp} to conclude that when $(M,i) \in \mathcal{E}_1\cup \mathcal{E}_4$, if $\mathcal{F}$ is a family of hyperbolic knot complements obtained from filling all cusps of $M$ but cusp $i$ that geometrically converges to $M$, then $\mathcal{F}$ can have at-most finitely elements with hidden symmetries. Note that existence of $\mathcal{F}$ implies that $M$ is a link complement. 

We first recall the two cusped orbifold $O_{(2,3,6),(2,2,2,2)}$ with one $(2,3,6)$ cusp and one $(2,2,2,2)$ cusp from Section \ref{coversection}. We will use the utilities \texttt{TestForCovers.py} and \texttt{SigToSeq.py} from \cite{orbcenpract} (see codes at \cite{orbtricode}) to show that for each $(M,i) \in \mathcal{E}_1\cup \mathcal{E}_4$, there is a covering map  $\phi_{M,i}: M \to O_{(2,3,6), (2,2,2,2)}$ such that there is only one cusp $c$ of $M$ that maps to the $(2,3,6)$-cusp of $O_{(2,3,6), (2,2,2,2)}$ via  $\phi_{M,i}$ and that $c$ is symmetric to the $i$-th cusp of $M$.

From \cite{orbcenpract}, we see that the orbifold triangulation of $O_{(2,3,6), (2,2,2,2)}$ has $10$ tetrahedra labelled $0, \ldots, 9$ and the corresponding destination sequence as a Python list is 
$$\texttt{[0,0,0,1,2,3,4,0,1,5,6,2,6,4,1,4,5,1,3,3,4,6,2,7,3,2,5,8,8,8,9,5,7,9,7,6,9,7,8,9]}$$
and that \texttt{[0,1,3,4]} represents the $(2,3,6)$-cusp of $O_{(2,3,6), (2,2,2,2)}$. We will use \texttt{dseqdown} to denote the above destination sequence of $O_{(2,3,6), (2,2,2,2)}$. 

Our goal here is to obtain orbifold triangulations for the manifolds appearing in $\mathcal{E}_1 \cup \mathcal{E}_4$ from their regular ideal triangulations using \texttt{SigToSeq.py} of \cite{orbcenpract}. Now, one can see the isomorphism signatures of all the regular ideal tetrahedral decompositions of all the orientable tetrahedral manifolds of the Fominykh-Garoufalidis-Goerner-Tarkaev-Vesnin census \cite{FGGTV} from the ancillary file \cite{isosigcensus} of their paper \cite{FGGTV}. We should also note from this ancillary file \cite{isosigcensus} of \cite{FGGTV} that some tetrahedral manifolds have more than one (regular ideal) tetrahedral triangulations and hence, more than one corresponding tetrahedral isomorphism signatures.

We first write a code \texttt{Iso\_sig.py}, which we run in \texttt{regina-python}\footnote{When \texttt{.py} files are run via \texttt{regina-python}, \texttt{regina} is imported as a Python module. See \url{https://regina-normal.github.io/docs/man-regina-python.html}.}, to collect all the isomorphism signatures of all the regular ideal tetrahedral decompositions of the manifolds appearing in the members of $\mathcal{E}$ in the \texttt{Iso\_sigs.json} file. Both \texttt{Iso\_sig.py} and \texttt{Iso\_sigs.json} can be accessed from \cite{tetra_code}. Inside \texttt{Iso\_sig.py}, we imported \texttt{regina} and \texttt{json} modules, the \texttt{children\_iterator} function from (the ancillary file) \texttt{example.py} \cite{examplecensus} of \cite{FGGTV}, and the regina packet $$\texttt{tetrahedralOrientableCuspedCensus.rga}$$ \cite{censusreginapacket} of \cite{FGGTV} containing (information on) all regular ideal tetrahedral decompositions in the orientable census of \cite{FGGTV}. We also imported the \texttt{Excep\_tuple.json} file (can be found in \cite{tetra_code}) in \texttt{Iso\_sig.py}. In \texttt{Iso\_sig.py}, we check if the default isomorphism signatures from (the last entries of the $4$-tuples in) \texttt{Excep\_tuple.json} appear as the tetrahedral isomorphism signatures of manifolds in $$\texttt{tetrahedralOrientableCuspedCensus.rga}$$ and when it does, we collect all the tetrahedral isomorphism signatures corresponding to all the regular ideal triangulations from $$\texttt{tetrahedralOrientableCuspedCensus.rga}$$ of those manifolds. Each element of the list exported as the \texttt{Iso\_sigs.json} file is a list itself containing all the tetrahedral isomorphism signatures corresponding to all the regular ideal triangulations of a member $(M,i)$ of $\mathcal{E}$. 

We now discuss how we split $\mathcal{E}$ into $\mathcal{E}_1$,  $\mathcal{E}_2$,  $\mathcal{E}_3$ and  $\mathcal{E}_4$. We will define $\mathcal{E}_1$ and  $\mathcal{E}_4$. Then, $\mathcal{E}_2$ and  $\mathcal{E}_3$ are distinguished by the special properties of their corresponding horoball packings as already seen in the last two paragraphs of Subsection \ref{snappyrun}. Let $\mathcal{E}_{\text{single}}$ be defined as the set of all pairs $(M,i)$ in $\mathcal{E}$ such that $ \lvert \mathcal{C}_{\mathcal{E}}(M) \rvert =1$. Let $\mathcal{E}_{\text{good cover}}$ be the set of all pairs $(M,i)$ in $\mathcal{E}$ such that there exists a triangulation and orientation preserving (orbifold) cover from $M$ to $O_{(2,3,6),(2,2,2,2)}$ mapping only a single cusp of $M$ to the $(2,3,6)$ cusp of $O_{(2,3,6),(2,2,2,2)}$, where the orbifold triangulation of $M$ is obtained from one of its regular ideal tetrahedral triangulation (corresponding to an isomorphism signature from \texttt{Iso\_sigs.json}) via \texttt{SigToSeq.py} of \cite{orbcenpract}.  
Let 
$$\mathcal{E}'_1\coloneq \mathcal{E}_{\text{good cover}} \cap \mathcal{E}_{\text{single}} \text{ and } \mathcal{E}'_4\coloneq \mathcal{E}_{\text{good cover}}-\mathcal{E}'_1.$$ 
We finally define,
\begin{align*}
 \mathcal{E}_1&=\mathcal{E}'_1 \cup \left \{(\texttt{otet20\_01414}, 2), (\texttt{otet20\_01438}, 2)\right\}, \text{ and, }\\
  \mathcal{E}_4&=\mathcal{E}'_4 -\left \{(\texttt{otet20\_01414}, 2), (\texttt{otet20\_01414}, 3), (\texttt{otet20\_01438}, 2), (\texttt{otet20\_01438}, 3)\right \}.
 \end{align*}
 (Note that both (\texttt{otet20\_01414}, 3) and (\texttt{otet20\_01438}, 3) belong to $\mathcal{E}_3$.) 

We write \texttt{E\_single.py} and \texttt{E\_good\_cover.py} to obtain the elements of respectively $\mathcal{E}_{\text{single}}$ and $\mathcal{E}_{\text{good cover}}$ and export them as respectively \texttt{E\_single.json} and \texttt{E\_good\_cover.json} files. All of \texttt{E\_single.py}, \texttt{E\_good\_cover.py}, \texttt{E\_single.json} and \texttt{E\_good\_cover.json} are available at \cite{tetra_code}. The pseudocode for \texttt{E\_single.py} is discussed in Algorithm \ref{Esinglealg} and the pseudocode for \texttt{E\_good\_cover.py} in Algorithm \ref{Egoodalg}.

 \begin{algorithm}
  \caption{Pseudocode for obtaining the Python list representing $\mathcal{E}_{\text{single}}$}

 \begin{algorithmic}

 \State Get the \texttt{json} file \texttt{Excep\_tuple}
 \State 
 \Function {All\_exep\_cusp}{Integer $j$}
 \State Exceptional\_cusp$\gets$ empty list
 
\For {(index, manifold name, cusp number, isomorphism signature) $\in$  \texttt{Excep\_tuple}}
\If{$j$ is index}
\State  add cusp number to Exceptional\_cusp
\EndIf
\EndFor
\State \Return Exceptional\_cusp
 \EndFunction
 
\State

\State E\_indices $\gets \{ \}$
\State
\For {(index, manifold name, cusp number, isomorphism signature) $\in$  \texttt{Excep\_tuple}}
\State add index to E\_indices
\EndFor
\State 
\State E\_single $\gets$ empty list
\State 
\For {$j \in$ E\_indices}
\If {All\_excep\_cusp($j$) has only one element}
\State add $(j, \text{the only member of All\_excep\_cusp}(j))$ to E\_single
 
\EndIf
\EndFor
\State 
\State Export E\_single as the \texttt{E\_single.json} file

 \end{algorithmic}
 \label{Esinglealg}
 \end{algorithm}

 \begin{algorithm}
 \caption{Pseudocode for obtaining the Python list representing $\mathcal{E}_{\text{good cover}}$}

\begin{algorithmic}
 \State Get the \texttt{json} files \texttt{Excep\_tuple}, \texttt{Iso\_sigs} and the list \texttt{dseqdown}
 \State

\State  E\_good\_cover $\gets $ empty list
\State 
\For {$j \in \{0, \dots, 85\}$}
\For {sig in the $j$-th member of \texttt{Iso\_sigs}} 
 \State \texttt{dsequp} $=$ the destination sequence list for the Regina triangulation corresponding to sig 
 \State Cover\_list$=$list of triangulation and orientation preserving covers from \texttt{dsequp} to \texttt{dseqdown}
 \If {Cover\_list is non-empty}
\For {$c$ in Cover\_list}
\State Rigid\_preimage$=$the list of all cusps of \texttt{dsequp} mapping to \texttt{[0,1,3,4]} via $c$
\If {Rigid\_preimage has only one element}
\State add $j$-th member of \texttt{Excep\_tuple} to E\_good\_cover
\EndIf

\EndFor

\EndIf

\EndFor

\EndFor
\State
\State Export  E\_good\_cover as the \texttt{E\_good\_cover.json} file

 \end{algorithmic}
 \label{Egoodalg}
 \end{algorithm}
 \begin{remark} We make a note of following. 
 \begin{itemize}
 \item We imported the \texttt{json} module in \texttt{E\_single.py}, and the \texttt{json} module, the \texttt{Des\_seq} function from \texttt{SigToSeq.py} of \cite{orbcenpract} and the functions \texttt{CuspSeqs}, \texttt{Covers}  and \texttt{CuspCovers} from \texttt{TestForCovers} of  \cite{orbcenpract} in \texttt{E\_good\_cover.py}. 
 \item  \texttt{E\_good\_cover.py} needs to be run in \texttt{regina-python}\footnote{See \url{https://regina-normal.github.io/docs/man-regina-python.html}.}.
 \item We use \texttt{Des\_seq} function from \texttt{SigToSeq.py} of  \cite{orbcenpract} (and \texttt{Triangulation3.fromIsoSig} command from Regina \cite{regina}) to obtain \texttt{dsequp} in Algorithm \ref{Egoodalg}. In particular, the destination sequence mentioned in the definition of \texttt{dsequp} in Algorithm \ref{Egoodalg} is the orbifold destination sequence from \cite{orbcenpract} (and \cite{orbcentheory}). 
 \item We use the \texttt{Covers} function from \texttt{TestForCovers.py} of \cite{orbcenpract} to get Cover\_list in Algorithm \ref{Egoodalg} from \texttt{dsequp} and \texttt{dseqdown}. 
 \item We use  the \texttt{CuspCovers} function from  \texttt{TestForCovers.py} of \cite{orbcenpract} to obtain Rigid\_preimage in Algorithm \ref{Egoodalg} . 
 
 \end{itemize}
 \end{remark}

Before we move on to the $\mathcal{E}_1$ case, we point out that for each $(M,i)$ in $\mathcal{E}_1$, $\mathcal{E}_4$ and $\mathcal{E}_2\cup \mathcal{E}_3$, \texttt{E\_split.py} file in \cite{tetra_code} prints out $(j, \text{namestring}, i)$ where $j$ is the index of $M$ in \texttt{K\_hlgy\_lk} and \text{namestring} is the name of $M$ (in the notation of \cite{FGGTV}) as a Python string. Inside \texttt{E\_split.py}, we imported the \texttt{json} module, and the files \texttt{Excep\_tuple.json}, \texttt{E\_single.json} and \texttt{E\_good\_cover.json}.

Let $(M,i)\in \mathcal{E}_1$. By the definition of $\mathcal{E}_1$, there is a covering $\phi_{M,i}: M \to O_{(2,3,6), (2,2,2,2)}$ so that there is only one cusp of $M$, say $c_M$, that maps to the $(2,3,6)$ cusp of $O_{(2,3,6), (2,2,2,2)}$ via $\phi_{M,i}$. Note from Appendix \ref{allEi} that the volume of $M$ is $20v_0$. Now, let $\mathcal{F}$ be a family of hyperbolic knot complements obtained from Dehn filling all cusps of $M$ but cusp $i$ and geometrically converging to $M$. Then, $M$ is a link complement as well. So, the hypothesis of Proposition \ref{codeprop20v0} holds. If $(M,i)\in \mathcal{E}'_1$, then $\mathcal{C}_{\mathcal{E}}(M)=\{i\}$, i.e., the only cusp in $\mathcal{C}(M)$ for which the \texttt{Free\_rot\_strng} function of \texttt{TwoCentroids.py} returns \texttt{True} is $i$. Hence, Proposition \ref{codeprop20v0} implies that $c_M$ is symmetric to $i$. Now, assume $(M,i)\in \mathcal{E}_1-\mathcal{E}'_1$. Then $i=2$ and $\mathcal{C}_{\mathcal{E}}(M)=\{2,3\}$. But $(M,3) \in \mathcal{E}_3$. So, by Remark \ref{codeprop20v0remark}, $c_M$ cannot be symmetric to $3$. Since Proposition \ref{codeprop20v0} implies that $c_M$ is symmetric to an element of $\mathcal{C}_{\mathcal{E}}(M)$, we conclude that $c_M$ is symmetric to $i=2$ in this situation as well. We now apply Proposition \ref{coversmallestmulticusp} to see that $\mathcal{F}$ can have at-most finitely many elements with hidden symmetries.

The last case that remains to be considered is when $(M,i) \in \mathcal{E}_4$. This case would require careful analysis of each manifold $M$ for which $(M,i) \in \mathcal{E}_4$ for some $i$, since for each such manifold $M$ there are multiple $i$'s for which $(M,i) \in \mathcal{E}_4$.  So, we will have to make sure that for each $(M,i) \in \mathcal{E}_4$, there is a covering map $\phi_{(M,i)}: M \to O_{(2,3,6), (2,2,2,2)}$ so that the only cusp of $M$ that maps to the $(2,3,6)$-cusp of $O_{(2,3,6), (2,2,2,2)}$ is the $i$-cusp. Let $\mathcal{M}_{\mathcal{E}_4}$ denote the collection of all $M$ so that $(M,i) \in \mathcal{E}_4$ for some $i$. For $M \in \mathcal{M}_{\mathcal{E}_4}$, we define, 
$$\mathcal{C}_{\mathcal{E}_4}(M)=\{ i \in \mathcal{C}(M): (M,i) \in \mathcal{E}_4\}.$$

From Appendix \ref{E4}, we see that, 
\begin{align*}
&\mathcal{M}_{\mathcal{E}_4}=\left \{ \texttt{otet20\_00049},\texttt{otet20\_00063}, \texttt{otet20\_00462},\texttt{otet20\_00474},\texttt{otet20\_00577}   \right\},\nonumber \\
&\mathcal{C}_{\mathcal{E}_4}(\texttt{otet20\_00049})=\{0,1\}, \nonumber \\
&\mathcal{C}_{\mathcal{E}_4}(\texttt{otet20\_00063})=\{0,2\},  \nonumber \\
&\mathcal{C}_{\mathcal{E}_4}(\texttt{otet20\_00462})=\{1,2\}, \nonumber \\
&\mathcal{C}_{\mathcal{E}_4}(\texttt{otet20\_00474})=\{0,1,2\},  \nonumber \\
&\mathcal{C}_{\mathcal{E}_4}(\texttt{otet20\_00577})=\{0,3\}. \nonumber
\end{align*}
Before delving into each individual case from $\mathcal{E}_4$, we describe the general procedure that we follow to investigate the members of $\mathcal{E}_4$. Let $M \in \mathcal{M}_{\mathcal{E}_4}$. We will use the isomorphism signature(s) of $M$ corresponding to its tetrahedral triangulations from the ancillary file \cite{isosigcensus} of \cite{FGGTV} to get the triangulation data from SnapPy \cite{snappy}. If \texttt{sig} is the string representing such an isomorphism signature, then the command \texttt{Manifold(sig).\_to\_string()} in SnapPy \cite{snappy} returns a string, say \texttt{s}, which consists of the corresponding tetrahedral triangulation information. (In \texttt{E4\_all\_triangulations.py} from \cite{tetra_code}, we collect these strings of triangulation data of all the tetrahedral isomorphism signatures of all $5$ manifolds of $\mathcal{M}_{\mathcal{E}_4}$ from the ancillary file \cite{isosigcensus} of \cite{FGGTV} in a Python dictionary and export this dictionary as the \texttt{E4\_snappy\_triangulations.json} file, also available in \cite{tetra_code}. We had to import both \texttt{snappy} and \texttt{json} modules in \texttt{E4\_all\_triangulations.py}.

Now, the command \texttt{Triangulation3(s)} in Regina \cite{regina} would produce a Regina tetrahedral triangulation, say \texttt{t}. Now, as in the $\mathcal{E}_1$ case, we will use the function \texttt{Des\_seq(t)} from \texttt{SigToSeq.py} of  \cite{orbcenpract} (available from \cite{orbtricode}) to get the destination sequence \texttt{dsequp} of the orbifold triangulation of $M$ and the functions \texttt{Covers} and \texttt{CuspCovers} from \texttt{TestForCovers.py} of  \cite{orbcenpract} (available from \cite{orbtricode}) to investigate the triangulation and orientation preserving covers of \texttt{dseqdown} (i.e. the destination sequence of $O_{(2,3,6),(2,2,2,2)}$) by \texttt{dsequp} and how they act on the cusps. From the output of \texttt{CuspCovers}, we will be able to see which cusps of \texttt{dsequp} map to \texttt{[0,1,3,4]} (i.e. the $(2,3,6)$ cusp of  $O_{(2,3,6),(2,2,2,2)}$). The cusps in the pre-image of \texttt{[0,1,3,4]} are Python lists of indices of tetrahedra in the orbifold triangulation of $M$ corresponding to the destination sequence \texttt{dsequp}. If the pre-image of  \texttt{[0,1,3,4]} has only one such Python list, say \texttt{L}, we look at which tetrahedra indices (of the orbifold triangulation of $M$ corresponding to \texttt{dsequp}) \texttt{L} contains. Given any such tetrahedron index in \texttt{L}, we then can use the \texttt{Cusp\_info} function from \texttt{SigToSeq.py} of  \cite{orbcenpract}  to get one corresponding pair $(i, k)$ where $k$ is the index of an ideal vertex of the $i$-th tetrahedron of the tetrahedral triangulation \texttt{t}. Now, the information given by the Regina command \texttt{ t.boundaryComponents()} would tell us the label of the cusp corresponding to $(i, k)$ in \texttt{t} and so in \texttt{Manifold(sig)} in SnapPy \cite{snappy} as the Regina triangulation \texttt{t} is lifted from the SnapPy triangulation of \texttt{Manifold(sig)} by feeding the string \texttt{s} containing the SnapPy triangulation data of \texttt{Manifold(sig)} to Regina. It is crucial that we take all these extra steps since we want to make sure that for each $(M,i) \in \mathcal{E}_4$, we have a covering  $\phi_{(M,i)}: M \to O_{(2,3,6), (2,2,2,2)}$ such that the only cusp of $M$ that maps to the $(2,3,6)$-cusp of $O_{(2,3,6), (2,2,2,2)}$ is the $i$-cusp and the fact that the label of the cusps change if we feed a different tetrahedral triangulation of $M$ (via a different isomorphism signature) in SnapPy \cite{snappy}. 

It is important to note that for each of \texttt{Manifold(`otet20\_00049')}, \texttt{Manifold(`otet20\_00063')}, \texttt{Manifold(`otet20\_00462')}, \texttt{Manifold(`otet20\_00474')} and \texttt{Manifold(`otet20\_00577')} SnapPy \cite{snappy} produces a default tetrahedral triangulation. To get the other tetrahedral triangulations (if any), we can use the command \texttt{Manifold(sig)} where \texttt{sig} is the string representing the isomorphism signature of that other tetrahedral triangulation. We are now ready to discuss the individual cases corresponding to $\mathcal{E}_4$. The computations below can be seen by running \texttt{E4\_computations.py} from \cite{tetra_code} in \texttt{regina-python}\footnote{refer to \url{https://regina-normal.github.io/docs/man-regina-python.html}.}. (We remark that in \texttt{E4\_computations.py}, we imported the \texttt{regina} and \texttt{json} modules, the functions \texttt{Des\_seq}, \texttt{Cusp\_info}, \texttt{Mfd\_cusp\_index} from \texttt{SigToSeq.py} of \cite{orbcenpract}, and the functions \texttt{CuspSeqs}, \texttt{Covers}, \texttt{CuspCovers} from \texttt{TestForCovers.py} of \cite{orbcenpract}. We also imported the file \texttt{E4\_snappy\_triangulations.json} from \cite{tetra_code} inside the Python file \texttt{E4\_computations.py} to get hold of the SnapPy triangulation data for all the tetrahedral isomorphism signatures of all manifolds from $\mathcal{M}_{\mathcal{E}_4}$.)

For \texttt{otet20\_00049}, the orbifold triangulation (or the destination sequence) obtained from the default tetrahedral triangulation corresponding to the isomorphism signature 
$$\texttt{uLLLLvzPPzLQQQccefemlkokqsqrtqsrtrstiitdipattiuapqlqpqphi}$$
has two triangulation and orientation preserving covers of $O_{(2,3,6), (2,2,2,2)}$. In the first one, only cusp $1$ of \texttt{otet20\_00049} covers the $(2,3,6)$ cusp of  $O_{(2,3,6), (2,2,2,2)}$ and in the second, only cusp $0$ of \texttt{otet20\_00049} covers the $(2,3,6)$ cusp of  $O_{(2,3,6), (2,2,2,2)}$. 

For \texttt{otet20\_00063}, the orbifold triangulation (or the destination sequence) obtained from the default tetrahedral triangulation, say $\mathcal{T}_{161}^0$ (we use $161$ in the suffix as the index of \texttt{otet20\_00063} in \texttt{K\_hlgy\_lk} is $161$) corresponding to the isomorphism signature 
$$\texttt{uLLLLvzQMzPPQPccefemllkkkppqprsqstttiitdimpamtiaplttdhxeh} $$
has two triangulation and orientation preserving covers of $O_{(2,3,6), (2,2,2,2)}$. In one of such covers only cusp $0$ of \texttt{otet20\_00063} covers the $(2,3,6)$ cusp of  $O_{(2,3,6), (2,2,2,2)}$.  We also note from the ancillary file \cite{isosigcensus} of \cite{FGGTV}
that \texttt{otet20\_00063} also has a second tetrahedral decomposition, say $\mathcal{T}_{161}^1$, corresponding to the isomorphism signature 
$$\text{\texttt{uLLPLvAALQMvPQcceefejijimnomnpsttstsiiatdpaxteqahiaehdqid}}.$$
We further see that the orbifold triangulation (or the destination sequence) obtained from this tetrahedral triangulation $\mathcal{T}_{161}^1$ has two triangulation and orientation preserving covers of $O_{(2,3,6), (2,2,2,2)}$. In each such case, only one cusp of the tetrahedral decomposition $\mathcal{T}_{161}^1$ covers the $(2,3,6)$ cusp of  $O_{(2,3,6), (2,2,2,2)}$. These are cusps $1$ and $0$ of $\mathcal{T}_{161}^1$ (in SnapPy \cite{snappy}). Here we need to note that the $1$ and $0$ cusps of $\mathcal{T}_{161}^1$ might not be the $1$ and $0$ cusps of $\mathcal{T}_{161}^0$ as cusp indexing may change if we take a different triangulation in SnapPy. We will now consider the symmetry equivalence on the set of the cusps, i.e., two cusps belong to the same equivalence class if and only if they are symmetric to each other. One can check from SnapPy that the partition on the set of cusps corresponding to this symmetry equivalence for $\mathcal{T}_{161}^0$ and $\mathcal{T}_{161}^1$ are respectively $\left \{ \{0\}, \{1,3\}, \{2,4\}\right \}$ and $\left \{ \{0,1\}, \{2,4\}, \{3\}\right \}$. 
So, cusp $0$ of $\mathcal{T}_{161}^0$ is cusp $3$ of $\mathcal{T}_{161}^1$. Now one can also check from SnapPy that the maximal volume of cusp $1$ and $2$ of $\mathcal{T}_{161}^0$ are $3.4641$ and $6.9282$ respectively where as the maximal volume of cusp $0$ and $2$ of $\mathcal{T}_{161}^1$ are $6.9282$ and $3.4641$ respectively. Since the maximal volume of a cusp is invariant under the action of an isometry, we can conclude that cusps $0$ and $1$ of $\mathcal{T}_{161}^1$ are cusps $2$ and $4$ of $\mathcal{T}_{161}^0$ (may be not in the same order). So, there is a covering of \texttt{otet20\_00063} onto $O_{(2,3,6), (2,2,2,2)}$ such that only cusp $2$ of \texttt{otet20\_00063} covers the $(2,3,6)$ cusp of  $O_{(2,3,6), (2,2,2,2)}$. 

 \texttt{otet20\_00462} also has two tetrahedral triangulations. We will consider the one that corresponds to the isomorphism signature $$\texttt{uvvLLMvvQAQQQPcfghfnrponsoprsqosqtttaaulaqxxluaulqahluluq}$$ and not the default triangulation of \texttt{otet20\_00462}. Let us denote this triangulation by  $\mathcal{T}_{221}^1$. We can see that the orbifold triangulation (or the destination sequence) obtained from  $\mathcal{T}_{221}^1$ has two triangulation and orientation preserving covers of $O_{(2,3,6), (2,2,2,2)}$. In the first cover, only cusp $1$ of $\mathcal{T}_{221}^1$ covers the $(2,3,6)$ cusp of  $O_{(2,3,6), (2,2,2,2)}$ and in the second, only cusp $0$ of $\mathcal{T}_{221}^1$ covers the $(2,3,6)$ cusp of  $O_{(2,3,6), (2,2,2,2)}$. Now, one can check from SnapPy that no cusp of \texttt{otet20\_00462} is symmetric to another cusp of \texttt{otet20\_00462}. Furthermore, the maximal volumes of cusp $0$, $1$ and $2$ of the default triangulation $\mathcal{T}_{221}^0$ are respectively $13.8564$, $6.9282$ and $6.9282$ where as the maximal volumes of cusp $0$, $1$ and $2$ of $\mathcal{T}_{221}^1$ are respectively $6.9282$, $6.9282$ and $13.8564$. So, cusps $1$ and $0$ of $\mathcal{T}_{221}^1$ are cusps $1$ and $2$ of (the default triangulation) \texttt{otet20\_00462} (may not be in the same order). So, for each $i \in \{1,2\}$, there is a covering map from \texttt{otet20\_00462} onto $O_{(2,3,6), (2,2,2,2)}$ such that only cusp $i$ covers the $(2,3,6)$ cusp of  $O_{(2,3,6), (2,2,2,2)}$.

 For \texttt{otet20\_00577}, the orbifold triangulation (or the destination sequence) obtained from the default tetrahedral triangulation, denoted as $\mathcal{T}_{256}^0$, corresponding to the isomorphism signature 
 $$\texttt{uLLzvvvPQQwPQQcceenqllnpomotssrsrtrtiiamqamxaqxippaihhiia}$$
 has two triangulation and orientation preserving covers of $O_{(2,3,6), (2,2,2,2)}$. In the first one, only cusp $1$ of \texttt{otet20\_00577} covers the $(2,3,6)$ cusp of $O_{(2,3,6), (2,2,2,2)}$ and in the second, only cusp $0$ of \texttt{otet20\_00577} covers the $(2,3,6)$ cusp of  $O_{(2,3,6), (2,2,2,2)}$. But it can be checked from SnapPy that cusps $0$ and $1$ of \texttt{otet20\_00577} are symmetric. So, we consider the second tetrahedral triangulation $\mathcal{T}_{256}^1$ corresponding to the isomorphism signature 
 $$\texttt{uvvLLAwLMMQAPQchfgkhomnmqpooqrststtseeauauappptahqauaxqex}.$$
 The orbifold triangulation (or the destination sequence) obtained from $\mathcal{T}_{256}^1$  has two triangulation and orientation preserving covers of $O_{(2,3,6), (2,2,2,2)}$ and in one of such covers, the only cusp of $\mathcal{T}_{256}^1$ that covers the $(2,3,6)$ cusp of $O_{(2,3,6), (2,2,2,2)}$ is cusp $2$ of $\mathcal{T}_{256}^1$. From SnapPy we can check that the partition on the set of cusps corresponding to the symmetry equivalence for $\mathcal{T}_{256}^0$ and $\mathcal{T}_{256}^1$ are respectively $\left \{\{0,1\}, \{2,4\}, \{3\} \right \}$ and $\left \{ \{0,3\},  \{1,4\}, \{2\}\right \}$. So, cusp $2$ of $\mathcal{T}_{256}^1$ is cusp $3$ of  $\mathcal{T}_{256}^0$. Hence, there is a covering from  \texttt{otet20\_00577} onto $O_{(2,3,6), (2,2,2,2)}$ such that only cusp $3$ maps to the $(2,3,6)$ cusp of $O_{(2,3,6), (2,2,2,2)}$.

\texttt{otet20\_00474} also has two tetrahedral triangulations. The default triangulation that SnapPy uses for \texttt{otet20\_00474}, which we will denote by $\mathcal{T}_{222}^0$, corresponds to the isomorphism signature 
$$\texttt{uLLvvvLAPQQAPQccennmlllnrorqpqttssstiilxhhtxapaaqaipaqiaa}.$$
There are two triangulation and orientation preserving covers of the orbifold triangulation (or the destination sequence) obtained from $\mathcal{T}_{222}^0$ onto $O_{(2,3,6), (2,2,2,2)}$. In the first such cover, the only cusp of $\mathcal{T}_{222}^0$ that covers the  $(2,3,6)$ cusp of  $O_{(2,3,6), (2,2,2,2)}$ is cusp $1$ of $\mathcal{T}_{222}^0$ and in the second cover, the only cusp of $\mathcal{T}_{222}^0$ that covers the $(2,3,6)$ cusp of  $O_{(2,3,6), (2,2,2,2)}$ is cusp $0$.  The second tetrahedral triangulation of \texttt{otet20\_00474}, which we will denote by $\mathcal{T}_{222}^1$, corresponds to the isomorphism signature
$$\texttt{uvvLLPwLPMQAPQcgjilgmnmnpopqrptststsmataptdammaaatpupqqux}.$$
The orbifold triangulation (or the destination sequence) obtained from $\mathcal{T}_{222}^1$ has two triangulation and orientation preserving covers of $O_{(2,3,6), (2,2,2,2)}$. In one such cover, the only cusp of $\mathcal{T}_{222}^1$ that covers the $(2,3,6)$ cusp of $O_{(2,3,6), (2,2,2,2)}$ is cusp $1$ of $\mathcal{T}_{222}^1$. We will now argue that cusp $1$ of $\mathcal{T}_{222}^1$ is cusp $2$ of $\mathcal{T}_{222}^0$. First we note from SnapPy that the partition on the cusps corresponding to the symmetry equivalence for $\mathcal{T}_{222}^0$ and $\mathcal{T}_{222}^1$ are 
respectively $\left \{\{0\}, \{1\}, \{2\}, \{3,4\} \right \}$ and $\left \{\{0\}, \{1\}, \{2,3\}, \{4\} \right \}$. So, cusps $0$, $1$ and $2$ of $\mathcal{T}_{222}^0$ are cusps $0$, $1$ and $4$ of $\mathcal{T}_{222}^1$ (may not be in the same order). From SnapPy we can see that the symmetry group of the $c$-maximal horoball packing of $\mathbb{H}^3$ where $c$ is either of cusps $0$, $1$ and $4$ of $\mathcal{T}_{222}^1$ contains $W_{(2,3,6)}$ as a subgroup. We can further check from SnapPy that the maximal cusp volume for each of $0$, $1$ and $4$ cusps of $\mathcal{T}_{222}^1$ (and so for cusps $0$, $1$ and $2$ of  $\mathcal{T}_{222}^0$ as well) is 6.9282. Let $\mathcal{H}^0_{i,j,k}$ (respectively, $\mathcal{H}^1_{i,j,k}$) denote the horoball packing of $\mathbb{H}^3$ that one obtain by first maximizing the $i$-horoballs of $\mathcal{T}_{222}^0$ (respectively, $\mathcal{T}_{222}^1$) equivariantly until one of them touches another such $i$-horoball, then maximizing the $j$-horoballs of $\mathcal{T}_{222}^0$ (respectively, $\mathcal{T}_{222}^1$) equivariantly until one of them touches another such $i$- or $j$-horoball and finally maximizing the $k$-horoballs of $\mathcal{T}_{222}^0$ (respectively, $\mathcal{T}_{222}^1$)  equivariantly until one of them touches another such $i$- or $j$- or $k$-horoball. Let $V^0_{i,j,k}$ (respectively, $V^1_{i,j,k}$) denote the set (with three elements) consisting of the volumes of cusp $i$, cusp $j$ and cusp $k$ in $\mathcal{H}^0_{i,j,k}$ (respectively, $\mathcal{H}^1_{i,j,k}$). From SnapPy \cite{snappy} we see that 
\begin{align*}
&V^0_{0,1,2}=V^0_{0,2,1}=V^0_{1,0,2}=V^0_{1,2,0}=\left \{6.9282, 6.9282, 1.7321\right \}\\
&V^0_{2,0,1}=V^0_{2,1,0}=\{6.9282, 1.7321, 1.7321\}\\
&V^1_{0,4,1}=V^1_{0,1,4}=V^1_{4,0,1}=V^1_{4,1,0}=\{6.9282, 6.9282, 1.7321\}\\
&V^1_{1,0,4}=V^1_{1,4,0}=\{6.9282, 1.7321, 1.7321\}. 
\end{align*}
If $f$ is an isometry from $\mathcal{T}_{222}^0$ onto $\mathcal{T}_{222}^1$, then, $V^0_{i,j,k}=V^1_{f(i),f(j),f(k)}$. So, cusp $1$ of $\mathcal{T}_{222}^1$ is cusp $2$ of $\mathcal{T}_{222}^0$. So, for each $i \in \{0,1,2\}$, there is a covering from \texttt{otet20\_00474} onto $O_{(2,3,6), (2,2,2,2)}$ such that only cusp $i$ of \texttt{otet20\_00474} maps to the $(2,3,6)$ cusp of $O_{(2,3,6), (2,2,2,2)}$. 

So, Proposition \ref{coversmallestmulticusp} and the discussion above shows that for each $(M,i) \in \mathcal{E}_4$, a family of hyperbolic knot complements obtained from Dehn filling all cusps of $M$ but cusp $i$ and geometrically converging to $M$ can have at-most finitely many elements with hidden symmetries.

\subsection{Conclusion of Section \ref{tlcom}} Together the discussion in Subsection \ref{snappyrun} and \ref{E1E4} prove the following theorem. 
\begin{theorem}\label{tetracompthm}
\tetcompthm 
\end{theorem}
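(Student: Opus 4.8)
The plan is to reduce the statement to a finite computation supplemented by a handful of geometric and orbifold-theoretic arguments. By Fact~\ref{symmfact} it suffices to bound the hidden-symmetry members for each cusp only up to the symmetric-cusp equivalence, so I need only treat cusps $i\in\mathcal C(M)$. By Proposition~\ref{hlinkprop}, the tetrahedral homology link complements with two or more cusps in the orientable census of \cite{FGGTV} are precisely the $882$ manifolds collected in \texttt{K\_hlgy\_lk}, identified by computing their first homology in SnapPy. Thus the theorem reduces to checking, for each of the $3026$ pairs $(M,i)$ with $M\in\texttt{K\_hlgy\_lk}$ and $i\in\mathcal C(M)$, that Dehn filling all cusps of $M$ but $i$ cannot produce an infinite family of hidden-symmetry knot complements converging to $M$.

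First I would run \texttt{Free\_rot\_strng}$(M,i)$ of Section~\ref{hsalgorithm} on every such pair. Whenever it returns \texttt{False}, the contrapositive of Proposition~\ref{codeprop} — which encodes the necessary conditions of Theorem~\ref{hexlatt} and Corollaries~\ref{nearrot} and~\ref{neardistinctrot} on the $i$-circle packing — immediately yields that only finitely many members of any such family can have hidden symmetries. This settles all pairs outside the $86$-element exceptional set $\mathcal E$.

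Next I would partition $\mathcal E=\mathcal E_1\sqcup\mathcal E_2\sqcup\mathcal E_3\sqcup\mathcal E_4$ and dispose of the ``geometric'' cases first. For $(M,i)\in\mathcal E_3$, although the $i$-circle packing has an order-$3$ symmetry, the $i$-maximal horoball packing has none fixing $\infty$, so Fact~\ref{hexsymmhoro} forbids the $W_{(3,3,3)}$ symmetry of the packing that Theorem~\ref{hexlatt} requires. For $(M,i)\in\mathcal E_2$, the $i$-maximal packing carries an order-$6$ rotation whose axis joins $\infty$ to the horocenter of a horoball of a different cusp; since $\operatorname{vol}(M)\le 24v_0$, Corollary~\ref{badorder6sym} applies directly.

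The remaining, and hardest, cases are $\mathcal E_1$ and $\mathcal E_4$, where the packing genuinely possesses the detected symmetry; here I would use Proposition~\ref{coversmallestmulticusp}. Via the orbifold-triangulation utilities \texttt{SigToSeq.py} and \texttt{TestForCovers.py} of \cite{orbcenpract} (with theory in \cite{orbcentheory}), I would exhibit, for each such $(M,i)$, a triangulation- and orientation-preserving cover $M\to O_{(2,3,6),(2,2,2,2)}$ whose only cusp $c_M$ over the $(2,3,6)$-cusp is symmetric to $i$, and then invoke Proposition~\ref{coversmallestmulticusp}(2). For $\mathcal E_1$ this identification is nearly automatic: $M$ has volume $20v_0$, so Proposition~\ref{codeprop20v0} applies, and since the algorithm flags only $i$ (up to the accompanying $\mathcal E_3$-cusp), $c_M$ must be symmetric to $i$. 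The real obstacle is $\mathcal E_4$: cusp labels are \emph{not} preserved across the distinct regular-ideal triangulations of a given $M$, so matching the rigid-cusp preimage with $i$ up to symmetry demands careful bookkeeping — comparing the symmetry-equivalence partitions of the cusps and their maximal cusp volumes across triangulations, and, for \texttt{otet20\_00474}, separating equal-volume cusps by the finer invariant furnished by the triple-maximized cusp-volume sets $V^0_{i,j,k}$ and $V^1_{i,j,k}$. Once $c_M$ is shown symmetric to $i$ in every case, Proposition~\ref{coversmallestmulticusp} gives the desired finiteness, and, together with the arguments for $\mathcal E_1,\mathcal E_2,\mathcal E_3$ and the non-exceptional pairs, this completes the proof.
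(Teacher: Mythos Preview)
Your proposal is correct and follows essentially the same approach as the paper's own proof: the reduction to the $3026$ pairs via Fact~\ref{symmfact} and Proposition~\ref{hlinkprop}, the SnapPy run of \texttt{Free\_rot\_strng} with Proposition~\ref{codeprop} handling $\mathcal N$, the four-way split of $\mathcal E$, the use of Fact~\ref{hexsymmhoro} for $\mathcal E_3$, Corollary~\ref{badorder6sym} for $\mathcal E_2$, and Proposition~\ref{coversmallestmulticusp} together with Proposition~\ref{codeprop20v0} and the \texttt{SigToSeq.py}/\texttt{TestForCovers.py} cover computations (including the cross-triangulation cusp bookkeeping and the $V^0_{i,j,k}$ invariant for \texttt{otet20\_00474}) for $\mathcal E_1\cup\mathcal E_4$. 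You have accurately anticipated both the overall structure and the subtle points, including the $\mathcal E_3$-cusp exclusion used to pin down $c_M$ in the $\mathcal E_1\setminus\mathcal E_1'$ cases.
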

As an immediate corollary we can see the following. 
\begin{corollary}\label{cenpaperthm}
\censuspaperlinks 
\end{corollary}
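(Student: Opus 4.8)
The plan is to deduce this statement as an immediate specialization of Theorem \ref{tetracompthm}, after checking that each of the $25$ listed links meets that theorem's hypotheses. First I would verify that every one of the $25$ tetrahedral links $L$ with more than one component explicitly depicted in \cite{FGGTV} qualifies as a tetrahedral homology link complement with two or more cusps in the orientable tetrahedral census, in the precise sense required by Theorem \ref{tetracompthm}. This amounts to three observations: (i) $\mathbb{S}^3-L$ is a tetrahedral manifold by the definition of a tetrahedral link, and it lies in the orientable census of \cite{FGGTV} because all $25$ of these links decompose into $20$ or fewer regular ideal tetrahedra (one uses $20$, the rest $12$ or fewer), well within the $25$-tetrahedron bound of that census; (ii) each $L$ has two or more components by hypothesis, so $\mathbb{S}^3-L$ has two or more cusps; and (iii) each is a homology link complement, since tetrahedral links are links in integral homology spheres, whence $\mathbb{S}^3-L$ is isometric to a link complement in a homology sphere. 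In particular each such $\mathbb{S}^3-L$ is one of the $882$ multi-cusped tetrahedral homology link complements collected in \texttt{K\_hlgy\_lk}.

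With these checks in hand, Theorem \ref{tetracompthm} applies verbatim to each such $L$ and to any chosen cusp $c$: every family of hyperbolic knot complements obtained by Dehn filling all cusps of $L$ but $c$ and geometrically converging to $\mathbb{S}^3-L$ contains at most finitely many members with hidden symmetries. Since ``at most finitely many'' rules out an infinite subfamily of members with hidden symmetries, this is exactly the conclusion asserted by the corollary, and the proof is complete.

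The only step demanding any care — and it is purely bookkeeping rather than a genuine obstacle — is confirming the census membership and the tetrahedron counts for all $25$ links, so that they are correctly recognized as multi-cusped tetrahedral homology link complements to which Theorem \ref{tetracompthm} applies. I expect no substantive difficulty, as the corollary is a direct restriction of the main theorem to the explicitly enumerated sublist of \cite{FGGTV}, and no new geometric or computational input beyond Theorem \ref{tetracompthm} itself is needed.
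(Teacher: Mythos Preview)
Your proposal is correct and matches the paper's approach: the corollary is stated immediately after Theorem \ref{tetracompthm} with no proof beyond the phrase ``As an immediate corollary,'' since the $25$ explicitly listed links in \cite{FGGTV} are links in $\mathbb{S}^3$ (hence homology link complements) with at most $20$ tetrahedra and two or more cusps, so Theorem \ref{tetracompthm} applies directly.
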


\begin{remark}\label{622bergebefore}
We point out that both $6^2_2$ from \cite[Appendix C]{Rolfsen} and the link corresponding to the Berge manifold belong to this explicit list of $25$ links from \cite{FGGTV} (see the second and the third picture in \cite[Figure 3]{FGGTV}). We already know the $6^2_2$ case from \cite[Corollary 3.4]{CDM} (or, \cite[Theorem 1.6]{CDHMMWIMRN}) and the Berge manifold case from \cite[Proof of Theorem 6.1]{Hoff_smallknot}(or, \cite[Proof of Theorem 1.1]{Hoff_Comm}).
\end{remark}

\section{Two infinite families of tetrahedral links}\label{cycliccovers}
In this section, we analyze two infinite families of tetrahedral links. In the first family, the complement of each link is a cyclic cover of the Berge manifold. In the second, the complement of each link is a cyclic cover of $\mathbb{S}^3-6^2_2$. Both the Berge manifold and $\mathbb{S}^3-6^2_2$ belong to the (orientable) tetrahedral census of \cite{FGGTV}. 
From SnapPy \cite{snappy} (or, the second and the third picture in \cite[Figure 3]{FGGTV}), one can check that the Berge manifold is identified as \texttt{otet04\_00000} and $\mathbb{S}^3-6^2_2$ as \texttt{otet04\_00001} in the notation of \cite{FGGTV}. In particular, they both have four regular ideal tetrahedra in their tetrahedral decompositions. As noted in Remark \ref{622bergebefore}, both the Berge manifold and $\mathbb{S}^3-6^2_2$ have been studied before (see \cite{Hoff_smallknot}, \cite{Hoff_Comm}, \cite{CDM}, \cite{CDHMMWIMRN}). Putting together Theorem \ref{Bergecyclic} and Theorem \ref{622cyclicresult} of this section one can see that for each link $L$ in these two families, a family of hyperbolic knot complements obtained by Dehn filling all but one component of $L$ and geometrically converging to $\mathbb{S}^3-L$ can have at-most finitely many elements with hidden symmetries. 

\subsection{Cyclic covers of the Berge manifold}

Let $B^{0,n}$ denote the link shown in Figure \ref{bergecover}. Observe that the complement of $B^{0,1}$ is the Berge manifold (see \cite[Figure 1]{Hoff_Comm}) and that $\mathbb{S}^3-B^{0,n}$ is a degree $n$ cyclic cover of the Berge manifold. We have the following theorem.

\begin{figure}
\centering 
\captionsetup{justification=centering}
\includegraphics[scale=.22]{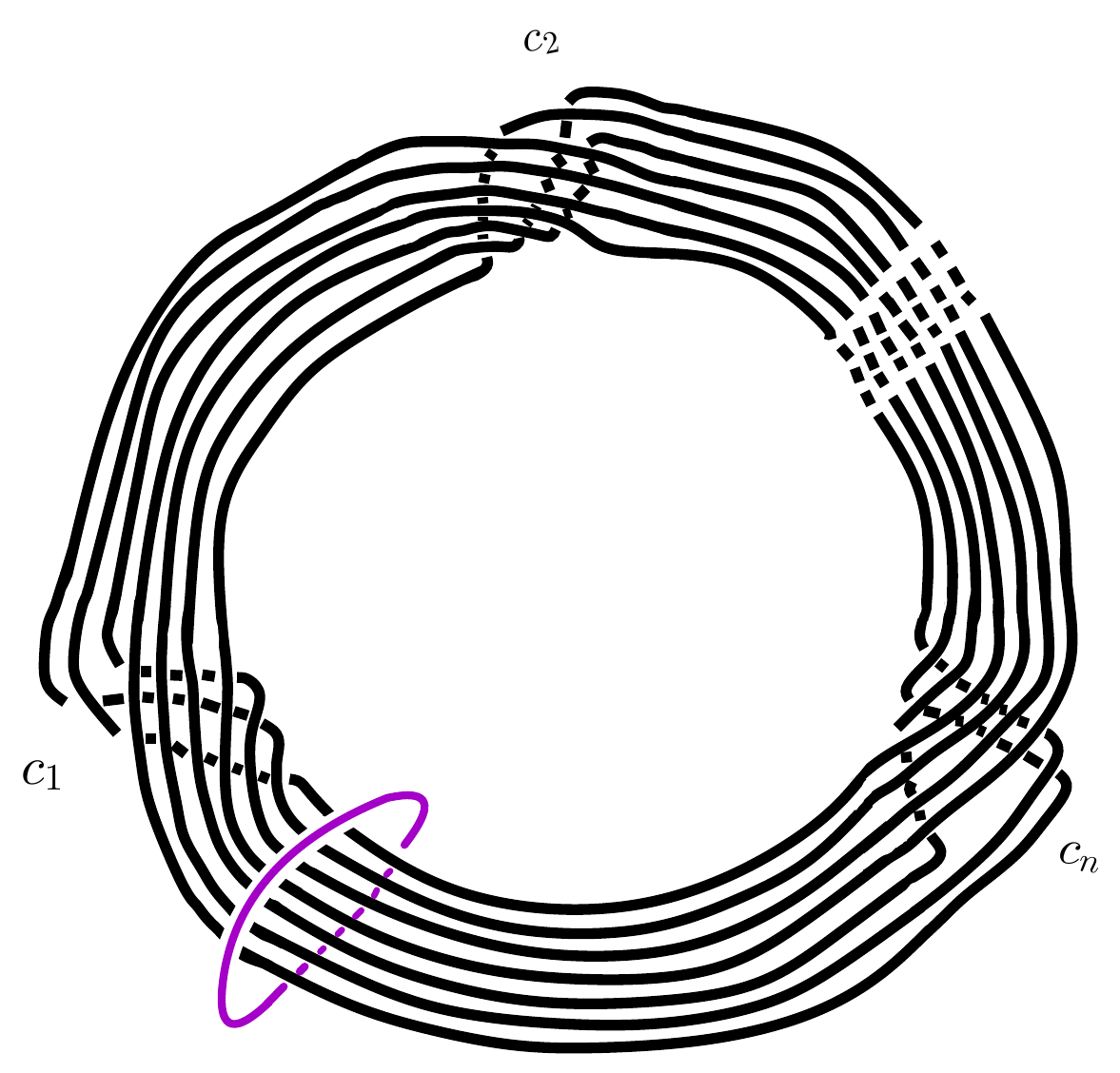}
\caption{The link $B^{0,n}$ (cf. \cite[Figure 1]{Hoff_Comm}).}
\label{bergecover}
\end{figure}

\begin{figure}
\centering 
\captionsetup{justification=centering}
\includegraphics[scale=.3]{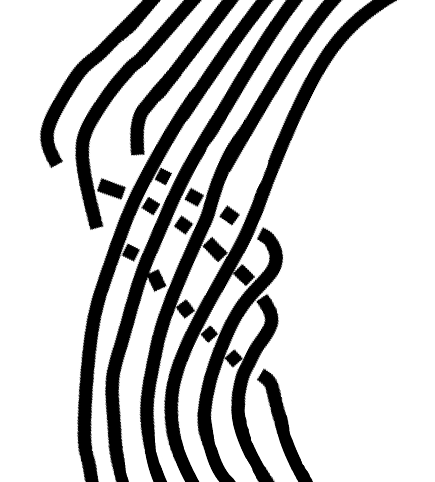}
\caption{Braid $b_7$ corresponding to the cycle $(1472536)$.}
\label{b7braid}
\end{figure}

\begin{figure}
\centering 
\captionsetup{justification=centering}
 \includegraphics[scale=.22]{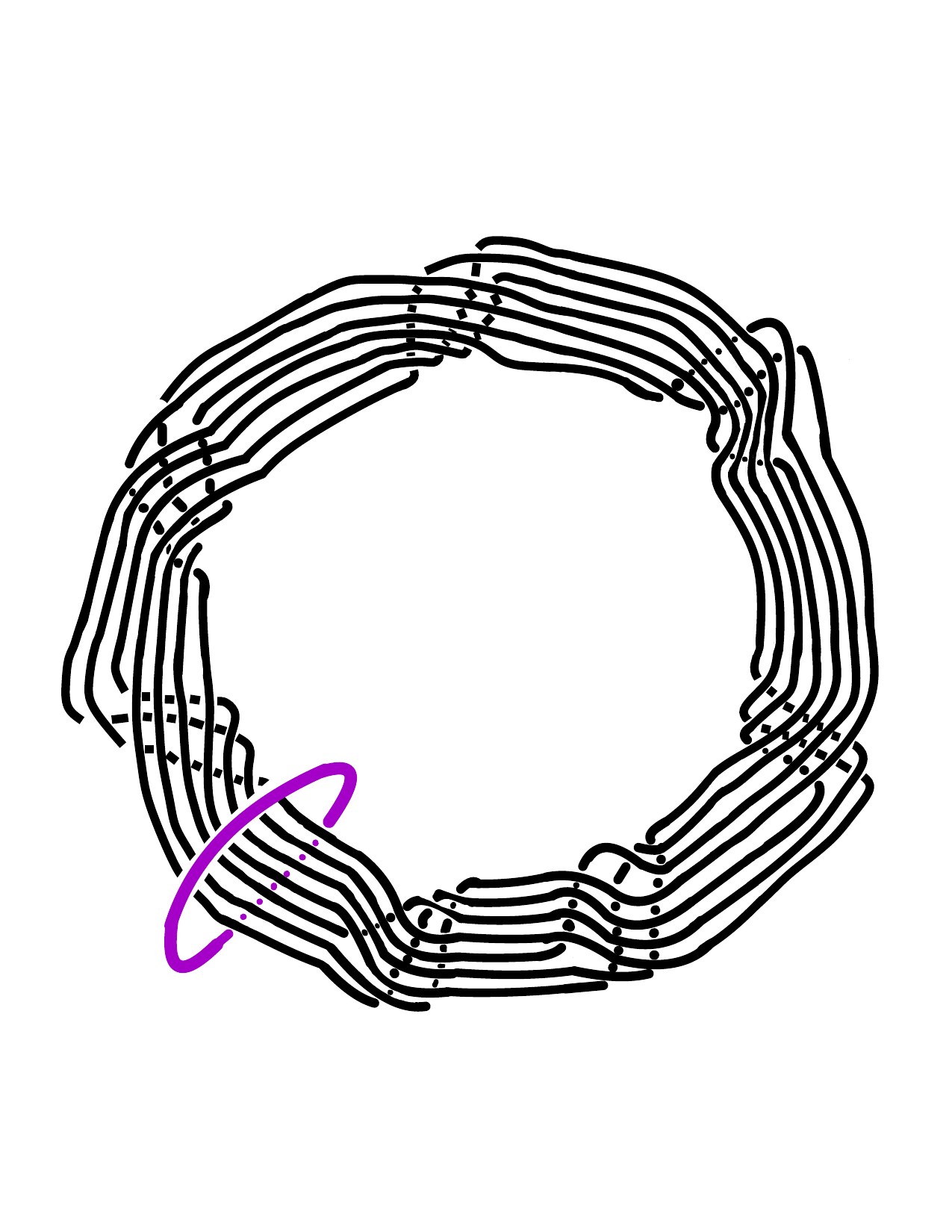}
 \caption{The $8$-component link $B^{0,7}$.}
 \label{berge7fold}
\end{figure}

\begin{theorem} \label{Bergecyclic}
\bergecyclic
\end{theorem}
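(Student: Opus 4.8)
The plan is to argue by contradiction using Theorem \ref{hexlatt}, exploiting that $\mathbb{S}^3-B^{0,n}$ is a cyclic cover of a fixed small link complement so that the conditions Theorem \ref{hexlatt} imposes on the unfilled cusp descend to a finite base check. \textbf{Setup.} When $7\nmid n$, the link $B^{0,n}$ has two components and $\mathbb{S}^3-B^{0,n}$ is a degree-$n$ cyclic cover of the Berge manifold $B=B^{0,1}$; when $7\mid n$ it has eight components and covers $\mathbb{S}^3-B^{0,7}$ with degree $n/7$. Write $B_\ast$ for the appropriate base. The cyclic deck group permutes the components, so all meridian components of $B^{0,n}$ are mutually symmetric, and by Fact \ref{symmfact} it suffices to analyze one representative of each symmetry class of cusp.

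\textbf{Projection principle.} For the fixed (unfilled) cusp $c$ of $B^{0,n}$, which covers a cusp $\bar c$ of $B_\ast$, the point of choosing $B_\ast$ as above is that the covering restricted to $c$ is a \emph{connected} cusp cover: no cusp of $B^{0,n}$ splits over its image in $B_\ast$. Hence the $c$-maximal horoball packing $\mathcal H$ of $\mathbb H^3$ coincides, after the normalization placing a $c$-horoball at height $1$, with the $\bar c$-maximal horoball packing of $B_\ast$ — the preimage of the maximal $\bar c$-neighborhood is embedded and self-tangent, and any further expansion of $c$ would project to an expansion of $\bar c$ beyond maximal whose self-intersection lifts back upstairs. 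Consequently the covering projection carries every symmetry of the $c$-circle packing to a symmetry of the $\bar c$-circle packing of $B_\ast$, and a \emph{good} order $3$ or $6$ rotation (one fixing no horocenter of a filled-component horoball) projects to a good rotation downstairs, since a fixed horocenter of the other base cusp would lift to fixed horocenters of filled components.

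\textbf{Finite base check and invariance.} It then remains to verify, for each cusp $\bar c$ of $B$ and of $B^{0,7}$, that the $\bar c$-circle packing has no good order $3$ or $6$ rotational symmetry; this is a finite verification read off from the SnapPy horoball pictures of the concerned maximal cusp neighborhoods. Should some base cusp nonetheless admit such a symmetry, I would instead invoke Part \ref{hexarea} of Theorem \ref{hexlatt}: under the connected degree-$d$ cusp cover both the co-area of the translational subgroup and the maximal cusp area of $c$ scale by the same factor $d$, so the ratio in Part \ref{hexarea} is a covering invariant and its failure for $B_\ast$ transfers verbatim to $B^{0,n}$. In either case the projection principle shows the $c$-circle packing of $B^{0,n}$ fails the conditions of Theorem \ref{hexlatt} for every $n$ and every fixed cusp $c$, so no infinite family of knot complements with hidden symmetries converging to $\mathbb{S}^3-B^{0,n}$ can exist.

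\textbf{Main obstacle.} The delicate step is the connected-cusp-cover claim and the attendant identification of the maximal horoball packings upstairs and downstairs — in particular confirming that the choice of base $B_\ast$ really makes the unfilled cusp cover its image connectedly in every case, most notably that the seven meridian components of $B^{0,n}$ with $7\mid n$ each cover a component of $B^{0,7}$ connectedly rather than being permuted. Pinning this down, together with carrying out the finite horoball-packing checks for both $B$ and $B^{0,7}$, is where the ``careful observation of the horoball packings'' is spent; once it is in place the reduction to a base computation makes the uniformity over all $n$ essentially automatic.
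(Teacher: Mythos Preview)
Your strategy matches the paper's: reduce via the cyclic covering to a finite base check on circle packings, then contradict Theorem \ref{hexlatt}. The paper completes exactly the two verifications you flag as the main obstacle. The connected-cusp-cover claim follows from a component count driven by the braid permutation $\rho_7(b_7)=(1472536)$: when $7\nmid n$ both $B^{0,n}$ and the Berge manifold have two components, and when $7\mid n$ both $B^{0,n}$ and $B^{0,7}$ have eight, so each cusp upstairs is the \emph{unique} preimage of its image cusp and the $c$-maximal horoball packing literally coincides with the $\bar c$-maximal one.

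The paper then sharpens your reduction: instead of checking both $B$ and $B^{0,7}$, it observes that the violet cusp of $B^{0,7}$ is itself the unique preimage of a cusp of the Berge manifold, and that the cusps of each base are mutually symmetric. Hence in every case the $c$-circle packing is the red-cusp circle packing of \texttt{m202}, and a single picture (Figure \ref{Bergeredhp}) suffices. The base check is carried out by an argument rather than visual inspection: any order $3$ symmetry must preserve the hexagonal triangulation $\mathcal T$ determined by the full-sized red horoballs, hence fixes either a vertex or a $2$-simplex centroid of $\mathcal T$; among those points inside the cusp parallelogram the only ones that are genuine rotation centers of the packing are horocenters of blue horoballs. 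Your Part \ref{hexarea} fallback is therefore unnecessary.
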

\begin{proof}
We first write $B^{0,n}$ as $L \sqcup K$ where $K$ is the violet component and $L$ the union of the rest of the components of $B^{0,n}$ (see Figure \ref{bergecover}). We observe that $L$ is obtained by joining the end points of the braid $b_7^n$ that is produced by concatenating the braid $b_7$ on $7$ strands shown in Figure \ref{b7braid} $n$-times. Since $b_7$ has $7$ end points, $b_7$ belongs to the Braid group $\mathcal{B}_7$ on $7$ generators (see \cite[Subsection 1.2.4]{KasTur} for details on $\mathcal{B}_m$, where $m$ is a natural number). 

For a natural number $m$, we will denote the symmetric group on $m$ elements by $S_m$. By $\rho_m$, we will denote the group homomorphism $\rho_m: \mathcal{B}_m \rightarrow S_m$ which sends each element of $\mathcal{B}_m$ to its associated permutation on the end points of its strands counted from left to right counterclockwise (see Subsection 1.1.2 and the paragraph before \cite[Corollary 1.14]{KasTur} for more details about this homomorphism). 

Consider $\rho_7: \mathcal{B}_7 \rightarrow S_7$. One can then see that $\rho_7(b_7)$ is the $7$-cycle $(1472536)$. So, $\rho_7(b_7^n)$ is identity when $n$ is a multiple of $7$ and a $7$-cycle otherwise. So, $L$ has seven components when $n$ is a multiple of $7$ and only one component otherwise. Also note that $\mathbb{S}^3-B^{0,7k}$ is a cyclic cover of degree $k$ of $\mathbb{S}^3-B^{0,7}$ for each $k \in \mathbb{N}$. This implies  
\begin{enumerate}
\item $B^{0,n}$ is a two-component link when $n$ is not divisible by $7$, and, 
\item for all $k \in \mathbb{N}$, $B^{0,7k}$ is an eight-component link and its complement is a cyclic cover of degree $k$ of the complement of the eight-component link $B^{0,7}$ shown in Figure \ref{berge7fold}.
\end{enumerate}

\begin{figure}
\centering 
\captionsetup{justification=centering}
\includegraphics[scale=.12]{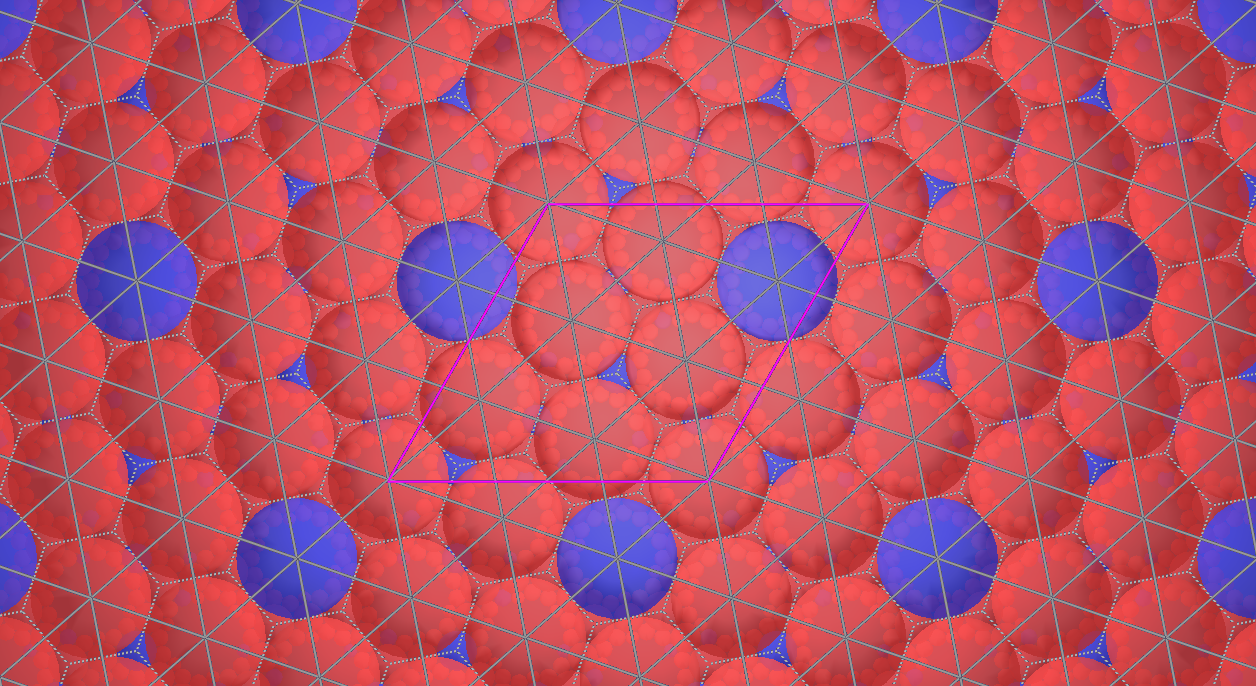} 
\caption{$(0, \texttt{m202})$-maximal horoball packing of $\mathbb{H}^3$ (picture obtained from SnapPy \cite{snappy}).}
\label{Bergeredhp}

\end{figure}

Let $n \in \mathbb{N}$. Suppose $c$ is a cusp of $B^{0,n}$. If $7$ does not divide $n$, then $c$ maps to a cusp $c_{B^{0,1}}$ of the Berge manifold such that the $c$-circle packing of $\mathbb{C}$ is the $c_{B^{0,1}}$-circle packing of $\mathbb{C}$ as the only cusp of $B^{0,n}$ that maps to $c_{B^{0,1}}$ is $c$. But since the cusps of the Berge manifold are symmetric (which one can check from SnapPy\cite{snappy}), the $c$-circle packing (which is also the $c_{B^{0,1}}$-circle packing) is the red cusp circle packing of $\mathbb{C}$ for the census manifold \texttt{m202} (i.e. the Berge manifold) in SnapPy \cite{snappy}.

On the other hand, when $n=7k$ for some $k \in \mathbb{N}$, $c$ maps to a cusp $c_{B^{0,7}}$ of the link $B^{0,7}$. So, the $c$-circle packing of $\mathbb{C}$ is the $c_{B^{0,7}}$-circle packing of $\mathbb{C}$ since the only cusp of $B^{0,n}$ that maps to $c_{B^{0,7}}$ is $c$. Now, one can check from SnapPy \cite{snappy} that any two cusps of $B^{0,7}$ are symmetric to each other. So, the $c$-circle packing (which is the $c_{B^{0,7}}$-circle packing) of $\mathbb{C}$ is the violet cusp circle packing of $\mathbb{C}$ for the violet cusp of $B^{0,7}$ shown in Figure \ref{berge7fold}. Now, the only cusp of $B^{0,7}$ which maps to the cusp $c_U$ corresponding to the unknotted component of the link $B^{0,1}$ ($\mathbb{S}^3- B^{0,1}$ is the Berge manifold) is the violet cusp in Figure  \ref{berge7fold}. So, the violet cusp circle packing of $\mathbb{C}$ (for $B^{0,7}$) is the $c_U$-circle packing of $\mathbb{C}$. Since the cusps of the Berge manifold are symmetric, this means that the $c$-circle packing of $\mathbb{C}$ is the red cusp circle packing of $\mathbb{C}$ for the census manifold \texttt{m202} in SnapPy \cite{snappy}.

So, Theorem \ref{hexlatt} implies that to conclude the theorem, it is enough to show that the red cusp circle packing of $\mathbb{C}$ for  SnapPy \cite{snappy} manifold \texttt{m202}, which can be seen from Figure \ref{Bergeredhp}, has no order $3$ rotational symmetry that does not fix the center of a blue horoball. We denote this circle packing by $\mathcal{H}$. It should be pointed out that in Figure \ref{Bergeredhp} the eye at $\infty$ contains a red horoball. Let us denote the cusp parallelogram drawn in pink in Figure \ref{Bergeredhp} as $P$. For the sake of contradiction, if we assume that $\mathcal{H}$ has an order $3$ rotational symmetry that does not fix the center of a blue horoball, then, by Fact \ref{cuspparasym}, $\mathcal{H}$ has an order $3$ rotational symmetry $r_{3,\mathbf{p}}$ such that $\mathbf{p} \in P$ and $\mathbf{p}$ is not the horocenter of a blue horoball. 

Consider the hexagonal triangulation $\mathcal{T}$ of the complex plane $\mathbb{C}$ shown in Figure \ref{Bergeredhp}. Let $L$ be a line from $\mathcal{T}$ such that $\mathbf{p} \notin L$. Since $r^{\pm 1}_{3,\mathbf{p}}$ is a symmetry of $\mathcal{H}$, $r^{\pm1}_{3,\mathbf{p}}$ is also a symmetry of $\mathcal{T}$. So, $r_{3,\mathbf{p}}(L)$ and $r^2_{3,\mathbf{p}}(L)$ are also lines from $\mathcal{T}$. Let $\mathbf{p}_i$ denote the point of intersection of $r^{i-1}_{3,\mathbf{p}}(L)$ and $r^i_{3,\mathbf{p}}(L)$ for $i \in \{1,2,3\}$. Note that $\mathbf{p}_i$ is a vertex of $\mathcal{T}$ for each $i=1,2,3$. Then $r_{3,\mathbf{p}}(\mathbf{p}_i)=\mathbf{p}_{i+1 (mod\; 3)}$. Since $\mathbf{p} \notin L$, by Fact \ref{equilatfact}, we see that $\mathbf{p}_1$, $\mathbf{p}_2$ and $\mathbf{p}_3$ are the three distinct vertices of the equilateral triangle $\Delta$ bounded by $L$, $r_{3,\mathbf{p}}(L)$ and $r^2_{3,\mathbf{p}}(L)$. So, $\mathbf{p}$ is the centroid of $\Delta$. Since the side length of a $2$-simplex of $\mathcal{T}$ is $1$, we see that the side length of $\Delta$ is a natural number $n$. So, the centroid $\mathbf{p}$ of the equilateral triangle $\Delta$ is the centeroid of a $2$-simplex of $\mathcal{T}$ if $n$ is not a multiple of $3$ and its a vertex of $\mathcal{T}$ if $n$ is a multiple of $3$. 

Now, amongst all the vertices and the centroids of the $2$-simplices of $\mathcal{T}$ lying in $P$, the only points which could be the fixed points of order $3$ symmetries of $\mathcal{H}$ are actually the horocenters of the blue horoballs. This contradicts that $\mathbf{p}$ can not be the horocenter of a blue horoball. So, we are done.
\end{proof}

\subsection{Cyclic covers of the $6^2_2$ link complement}
Let $(6^2_2)^{0,n}$ denote the link in Figure \ref{622cycliccoverpic}. Note that $(6^2_2)^{0,1}$ is the link $6^2_2$ from \cite[Appendix C]{Rolfsen} (see the right picture in \cite[Figure 11]{CDHMMWIMRN}). Observe that $\mathbb{S}^3-(6^2_2)^{0,n}$ is a degree $n$ cyclic cover of $\mathbb{S}^3-6^2_2$. We also note that SnapPy \cite{snappy} identifies $(6^2_2)^{0,3}$ as \texttt{L12n2208} of the Hoste-Thistlewaite census. We can show the following.

\begin{theorem}\label{622cyclicresult}
\sixtwotwocyclic
\end{theorem}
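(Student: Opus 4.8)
The plan is to first reduce the problem to a fixed, finite-volume base manifold and then run the covering-group argument of Proposition~\ref{two236cusps}, replacing the volume hypothesis $\mathrm{vol}\le 24v_0$ (which fails here, since $\mathrm{vol}(\mathbb{S}^3-(6^2_2)^{0,n})=4nv_0$) by a volume bound coming from the base. Recall that $\mathbb{S}^3-(6^2_2)^{0,n}$ is a degree-$n$ cyclic cover of $\mathbb{S}^3-6^2_2$, and when $3\mid n$ it is moreover a cyclic cover of $\mathbb{S}^3-(6^2_2)^{0,3}=\texttt{L12n2208}$. By Fact~\ref{symmfact} it suffices to treat one representative of each symmetry class of cusps. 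So fix a cusp $c$ of $(6^2_2)^{0,n}$ and let $B$ denote $\mathbb{S}^3-6^2_2$ if $3\nmid n$ and $\texttt{L12n2208}$ if $3\mid n$. Exactly as in the proof of Theorem~\ref{Bergecyclic}, $c$ maps to a single cusp $\bar c$ of $B$ having $c$ as its only preimage, so the $c$-maximal horoball packing $\mathcal H$ of $\mathbb H^3$ coincides with the $\bar c$-maximal horoball packing of $B$; since the cusps of $B$ are symmetric (checkable on SnapPy), there is essentially one packing to analyze. As noted in the introduction, $\mathcal H$ does carry the $W_{(3,3,3)}$ symmetry prescribed by Theorem~\ref{hexlatt}, so that theorem alone does not settle the claim, and the extra input must come from $\mathcal H$.

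The key geometric observation is that $\mathcal H$ admits an order $6$ symmetry joining $\bar c$ to a second cusp, and, using the orbifold-triangulation utilities \texttt{SigToSeq.py} and \texttt{TestForCovers.py} of \cite{orbcenpract}, that $B$ covers one of the small Adams orbifolds with a $(2,3,6)$ cusp of Lemma~\ref{coveradams} --- say $O_{\frac{v_0}{3}}$ (the argument for $O_{\frac{v_0}{2}}$ being identical) --- in such a way that $\bar c$ is the unique cusp of $B$ mapping to a $(2,3,6)$-cusp $c''$ of $O_{\frac{v_0}{3}}$. Because $\mathbb S^3-(6^2_2)^{0,n}$ covers $B$ with $c$ the only preimage of $\bar c$, composing covers shows $\mathbb S^3-(6^2_2)^{0,n}$ covers $O_{\frac{v_0}{3}}$ with $c$ the unique cusp over $c''$, and the volume of $O_{\frac{v_0}{3}}$ is $\frac{v_0}{3}$ independently of $n$.

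Now suppose for contradiction that a family $\mathcal F$ obtained by filling all cusps of $(6^2_2)^{0,n}$ but $c$ and geometrically converging to $\mathbb S^3-(6^2_2)^{0,n}$ has infinitely many members with hidden symmetries. Then Proposition~\ref{hscharac}, Theorem~\ref{hoff_cusptype} and Theorem~\ref{CDHMMWorb} produce an orbifold $O$ with exactly one rigid cusp $c_{rigid}$ and at least one smooth cusp such that $c$ is the only cusp of $\mathbb S^3-(6^2_2)^{0,n}$ over $c_{rigid}$, together with orbifolds $O_i$ obtained by filling the smooth cusps of $O$ and covered by the knot complements of $\mathcal F$. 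Writing $\Gamma_O$ and $\Gamma'$ for the fundamental groups of $O$ and $O_{\frac{v_0}{3}}$, both are subgroups of the symmetry group of $\mathcal H$, so by \cite[Lemma 2.1 and the proof of Lemma 2.2]{GoHeHo} the group $\Gamma_1=\langle \Gamma_O,\Gamma'\rangle$ is Kleinian. Set $O_1=\mathbb H^3/\Gamma_1$. Exactly as in the proofs of Proposition~\ref{coversmallestmulticusp} and Proposition~\ref{two236cusps}, the parabolic fixed points of $c_{rigid}$ and of $c''$ both equal the set of parabolic points over $\bar c$, so $c_{rigid}$ and $c''$ descend to a single $(2,3,6)$ cusp of $O_1$; and since $O_{\frac{v_0}{3}}$ covers $O_1$ we get $\mathrm{vol}(O_1)\le \frac{v_0}{3}$. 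Adams' classification \cite[Lemma 2.2]{Adams_multi} then forces $O_1=O_{\frac{v_0}{3}}$, whence $\Gamma_O\subseteq \Gamma'$, i.e. $O$ covers $O_{\frac{v_0}{3}}$.

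Finally, $O$ is an orbifold with exactly one rigid cusp and a smooth cusp that covers $O_{\frac{v_0}{3}}$, so Lemma~\ref{coveradams} (itself resting on the cusp killing homomorphism of \cite{Hoff_smallknot}) shows that no one-cusped orbifold obtained by Dehn filling all the smooth cusps of $O$ can be covered by a hyperbolic knot complement. This contradicts the fact that each $O_i$ is exactly such an orbifold and is covered by a knot complement in $\mathcal F$, so $\mathcal F$ has at most finitely many members with hidden symmetries. I expect the main obstacle to be the second step together with the cusp bookkeeping in the third: one must verify, separately for the $3\nmid n$ base $\mathbb S^3-6^2_2$ and the $3\mid n$ base \texttt{L12n2208}, both the precise order $6$ symmetry of $\mathcal H$ and the ``unique cusp over the $(2,3,6)$-cusp'' property of the cover onto $O_{\frac{v_0}{3}}$, and one must ensure the hypotheses of \cite{GoHeHo} (in particular that the parabolic points of $c$ span $\mathbb H^3$) so that $\Gamma_1$ is genuinely discrete.
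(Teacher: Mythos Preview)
Your proposal is correct and in fact streamlines the paper's argument. Both proofs share the same skeleton: reduce to the base $B$ ($\mathbb{S}^3-6^2_2$ or $\mathbb{S}^3-\texttt{L12n2208}$), invoke Theorem~\ref{CDHMMWorb} to produce the orbifold $O$, use the cover $B\to O_{\frac{v_0}{3}}$ established computationally via \texttt{SigToSeq.py} and \texttt{TestForCovers.py} (this is the paper's Lemma~\ref{index36cover}), and combine groups inside the symmetry group of the common maximal horoball packing using \cite{GoHeHo}. The difference is in how the contradiction is reached. The paper first builds an intermediate orbifold $O_1$ of volume $\frac{2v_0}{3}$ from $\langle \pi_1^{Orb}(O),\gamma_6,\pi_1(M_0)\rangle$ (this is where the order~$6$ symmetry is actually used), then adjoins $\Gamma_2\cong\pi_1^{Orb}(O_{\frac{v_0}{3}})$, and finally uses Lemma~\ref{isovertex} to force the resulting quotient strictly below $\frac{v_0}{3}$, contradicting Adams. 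You instead form $\Gamma_1=\langle\Gamma_O,\Gamma'\rangle$ with $\Gamma'=\pi_1^{Orb}(O_{\frac{v_0}{3}})$ directly; since $\Gamma'\le\Gamma_1$ gives $\mathrm{vol}(O_1)\le\frac{v_0}{3}$ and $O_1$ is multi-cusped with a $(2,3,6)$ cusp, Adams forces $\Gamma_1=\Gamma'$, hence $O$ covers $O_{\frac{v_0}{3}}$, and Lemma~\ref{coveradams} delivers the contradiction. Your route is shorter: it bypasses the $\frac{2v_0}{3}$ step entirely, does not need to pin down that $c_{rigid}$ is $(3,3,3)$, and renders the order~$6$ symmetry you mention superfluous (you never use it). The paper's route, on the other hand, makes the geometry of the intermediate quotient more explicit. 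Two small points to tidy up in your write-up: drop the order~$6$ observation or explain that it is not needed, and make explicit that $O_1$ has at least two cusps (because $S_{uf}$ and $S_{filled}$ are each invariant under both $\Gamma_O$ and $\Gamma'$), since Adams' bound requires the multi-cusped hypothesis.
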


\begin{proof}
We note from Figure \ref{622cycliccoverpic} that $(6^2_2)^{0,n}=L\sqcup K$ where $K$ is the blue component and $L$ is the link obtained by joining the ends of a braid $b_3^n$ on $3$ strands so that $\rho_3(b_3)$ is the $3$-cycle $(132)$. So, arguing similarly as in the proof of Theorem \ref{Bergecyclic}, we can show that 

\begin{enumerate}
\item $(6^2_2)^{0,n}$ is a two-component link when $n$ is not divisible by $3$, and, 
\item for all $k \in \mathbb{N}$, $(6^2_2)^{0,3k}$ is a four-component link and its complement is a cyclic cover of degree $k$ of the complement of the four component link $(6^2_2)^{0,3}=\texttt{L12n2208}$. 
\end{enumerate}

\begin{figure}
\centering 
\captionsetup{justification=centering}
\includegraphics[scale=.2]{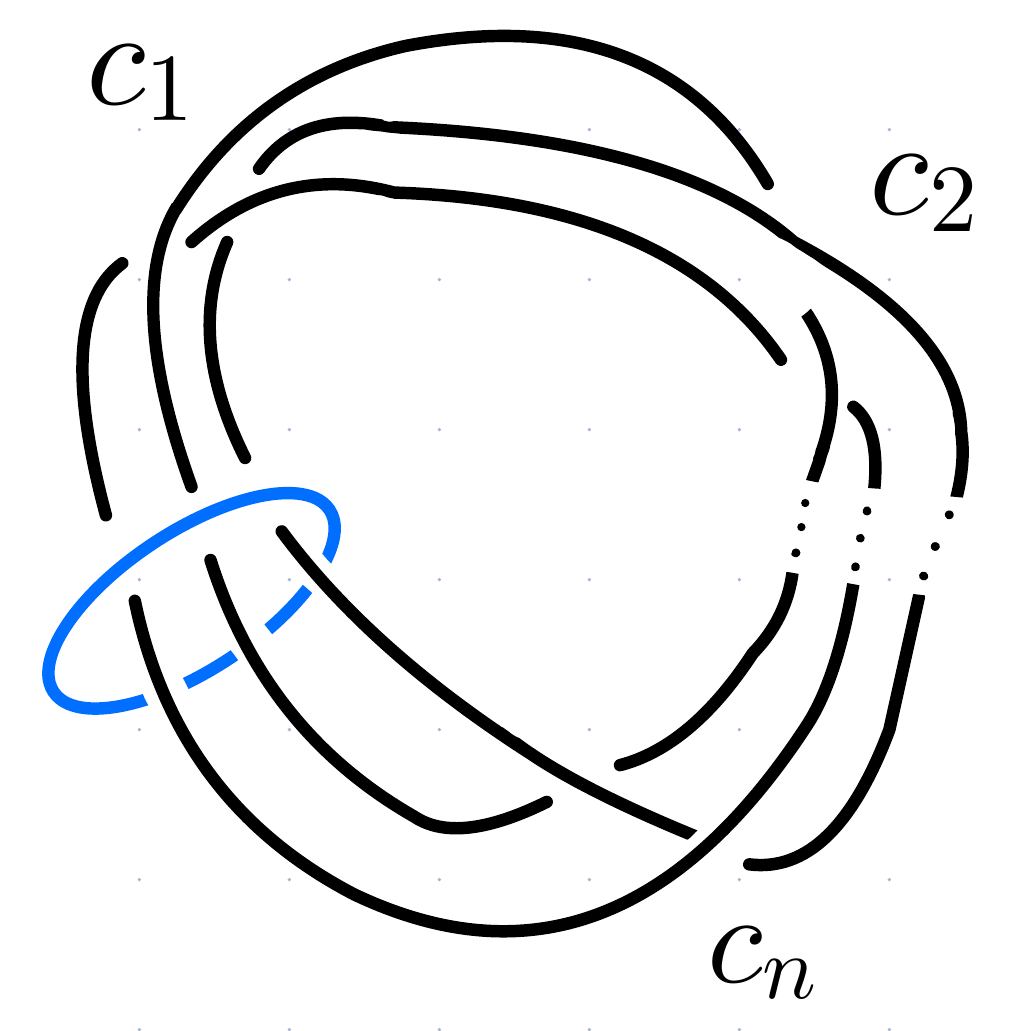}
\caption{The link $(6^2_2)^{0,n}$.}
\label{622cycliccoverpic}
\end{figure}

 Assume for contradiction that there exists a natural number $n$ such that there is an infinite family $\mathcal{F}$ of hyperbolic knot complements with hidden symmetries obtained by Dehn filling all but a fixed cusp of $(6^2_2)^{0,n}$ and geometrically converging to $\mathbb{S}^3-(6^2_2)^{0,n}$. Let us denote the un-filled cusp of $(6^2_2)^{0,n}$ as $c_{uf}$. Then Proposition \ref{hscharac}, Theorem \ref{hoff_cusptype} and Theorem \ref{CDHMMWorb} together imply that $\mathbb{S}^3-(6^2_2)^{0,n}$ covers an orbifold $O$ with at-least one smooth cusp and exactly one rigid cusp $c_{rigid}$ that is covered only by cusp $c_{uf}$ of $\mathbb{S}^3-(6^2_2)^{0,n}$ so that the conditions laid out in Theorem \ref{CDHMMWorb} are satisfied. Let $\mathcal{H}_{rigid}$ denote the $c_{rigid}$-maximal horoball packing of $\mathbb{H}^3$.

 If $3$ divides $n$, then we have seen that $(6^2_2)^{0,n}$ is a four component link whose complement covers the four cusped manifold $\mathbb{S}^3-\texttt{L12n2208}$. So, $c_{uf}$ is the only cusp of  $\mathbb{S}^3-(6^2_2)^{0,n}$ that maps to its image cusp in $\mathbb{S}^3-\texttt{L12n2208}$. This means that $c_{uf}$-maximal horoball packing of $\mathbb{H}^3$ is same as the maximal horoball packing of $\mathbb{H}^3$ that projects down to the cusp of $\mathbb{S}^3-\texttt{L12n2208}$ where $c_{uf}$ maps to. We can check from SnapPy \cite{snappy} that the cusps of \texttt{L12n2208} are symmetric. So, we can see that $c_{uf}$-maximal horoball packing, which is same as $\mathcal{H}_{rigid}$, is isometric to the red cusp maximal horoball packing $\mathcal{H}_{red}$ of $\mathbb{H}^3$ for \texttt{L12n2208}. $\mathcal{H}_{red}$ can be seen in Figure \ref{L12n2208red} obtained from SnapPy \cite{snappy}. The eye at $\infty$ in Figure \ref{L12n2208red} contains a red horoball. 
 
 Now, for $n=3$, the cusp corresponding to the blue component in Figure \ref{622cycliccoverpic} is the only cusp of $\mathbb{S}^3-(6^2_2)^{0,n}$ that maps to its image cusp in $\mathbb{S}^3-6^2_2$. So, since the cusps of $\mathbb{S}^3-6^2_2$ are symmetric, which can be checked from SnapPy \cite{snappy}, and so are the cusps of $\mathbb{S}^3-\texttt{L12n2208}$, we conclude that $\mathcal{H}_{red}$ is also isometric to the red cusp maximal horoball packing of $\mathbb{H}^3$ for $6^2_2$ from SnapPy \cite{snappy}. Let us denote the red cusp maximal horoball packing of $\mathbb{H}^3$ for $6^2_2$ as $\mathcal{H}$. One can see $\mathcal{H}$ from Figure \ref{622red}, where the horoball at eye at $\infty$ is a red horoball.

On the other hand, if $n$ is not divisible by $3$, then $(6^2_2)^{0,n}$ has two components and therefore, $O$ has two cusps. So, since cusps of $6^2_2$ are symmetric (as checked from SnapPy \cite{snappy}), $c_{uf}$-maximal horoball packing is isometric to  $\mathcal{H}$, which we know to be isometric to $\mathcal{H}_{red}$,  and consequently, $\mathcal{H}_{rigid}$ is isometric to $\mathcal{H}_{red}$ even in this case. 
\begin{cl}
$O$ has exactly two cusps - one $(3,3,3)$ cusp and one smooth cusp. 
\end{cl}
\begin{proof}
Now, using Theorem \ref{hoff_cusptype} and the properties of $O$ laid out in Theorem \ref{CDHMMWorb}, proof of Theorem \ref{2comp_sym} shows that $c_{rigid}$ is $(3,3,3)$ or $(2,3,6)$. 

We first consider the case when $3$ divides $n$. Observe from Figure \ref{L12n2208red} that all order $6$ symmetries of the $\mathcal{H}_{red}$-circle packing fix the horocenter of a non-red horoball. We note that the isometry of $\mathbb{H}^3$ mentioned above that sends $\mathcal{H}_{red}$ isometrically to the $c_{uf}$-maximal horoball packing of $\mathbb{H}^3$ would send the horocenters of these non-red horoballs to the horocenters of the non $c_{uf}$-horoballs. So, all order $6$ rotational symmetries of the $\mathcal{H}_{rigid}$-circle packing of $\mathbb{C}$ fix the center of a non $c_{uf}$-horoball. Since all cusps of $O$ other than $c_{rigid}$ are smooth, there is no order $6$ symmetry of the $\mathcal{H}_{rigid}$-circle packing that belongs to the peripheral subgroup of $\pi_1^{Orb}(O)$ corresponding to $c_{rigid}$. So, $c_{rigid}$ is $(3,3,3)$. 

Now, since $3 \mid n$, both \texttt{L12n2208} and $(6^2_2)^{0,n}$ have the same number of components and so, each cusp of $\mathbb{S}^3-\texttt{L12n2208}$ is covered by exactly one cusp of $\mathbb{S}^3-(6^2_2)^{0,n}$. Consequently, any $(\mathbb{S}^3-\texttt{L12n2208})$-horoball packing of $\mathbb{H}^3$ is a $(\mathbb{S}^3-(6^2_2)^{0,n})$-horoball packing of $\mathbb{H}^3$. Now, Figure \ref{L12n2208red} shows that any order $3$ symmetry of $\mathcal{H}_{red}$-circle packing from (the isomorphic copy of) $\pi_1^{Orb}(c_{rigid})<\pi_1^{Orb}(O)$ identifies the other three cusps of \texttt{L12n2208}. So, all the filled cusps of $(6^2_2)^{0,n}$ map to a single smooth cusp of $O$. So, when $3$ divides $n$, $O$ has two cusps - one $(3,3,3)$-cusp and one smooth cusp. 

Now, consider the case when $3$ does not divide $n$. We can see from Figure \ref{622red} that all order $6$ symmetries of $\mathcal{H}$ fix the horocenters of blue horoballs. The isometry of $\mathbb{H}^3$ mentioned above that sends $\mathcal{H}$ isometrically to the $c_{uf}$-maximal horoball packing of $\mathbb{H}^3$ sends these blue horocenters to the horocenters of the non $c_{uf}$-horoballs. So, arguing similarly as in first case, we can conclude that $c_{rigid}$ is $(3,3,3)$. We already know that $O$ has two cusps in this case. So, the claim holds when $3$ does not divide $n$ as well.

\end{proof}

 \begin{figure}
 \centering 
\captionsetup{justification=centering}
\includegraphics[scale=.13]{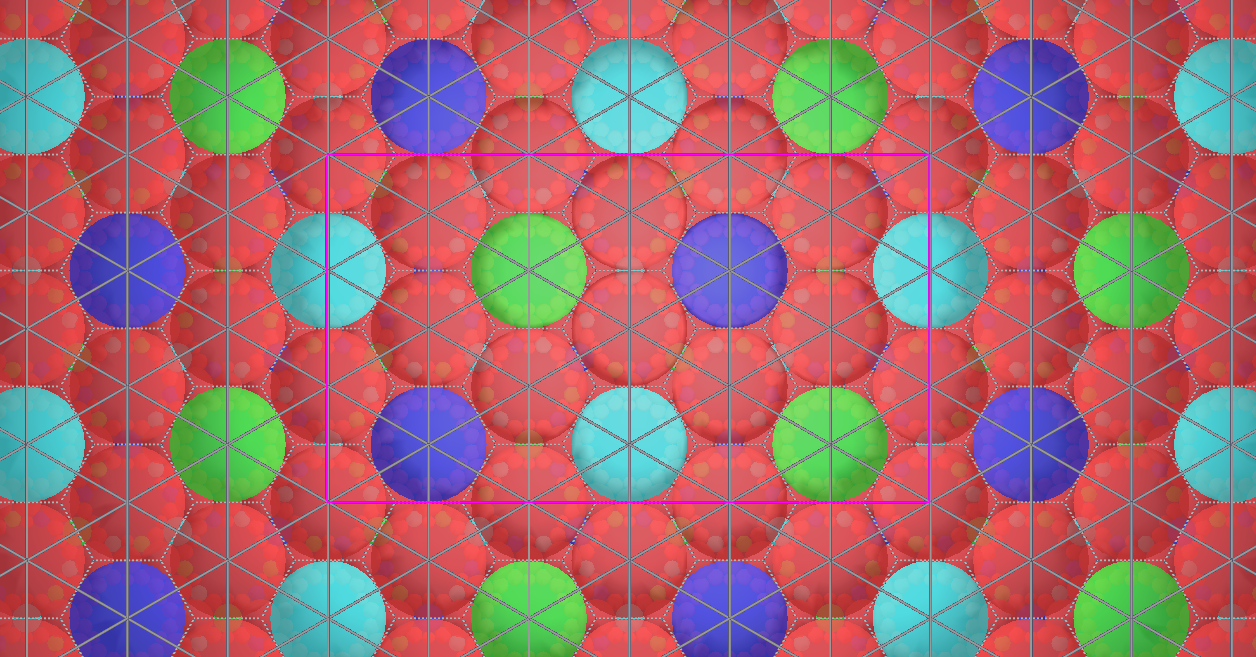}
\caption{$(0, \mathbb{S}^3-\texttt{L12n2208})$-maximal horoball packing of $\mathbb{H}^3$ (picture obtained from SnapPy \cite{snappy}).}
\label{L12n2208red}
\end{figure}

 \begin{figure}
 \centering 
\captionsetup{justification=centering}
\includegraphics[scale=.12]{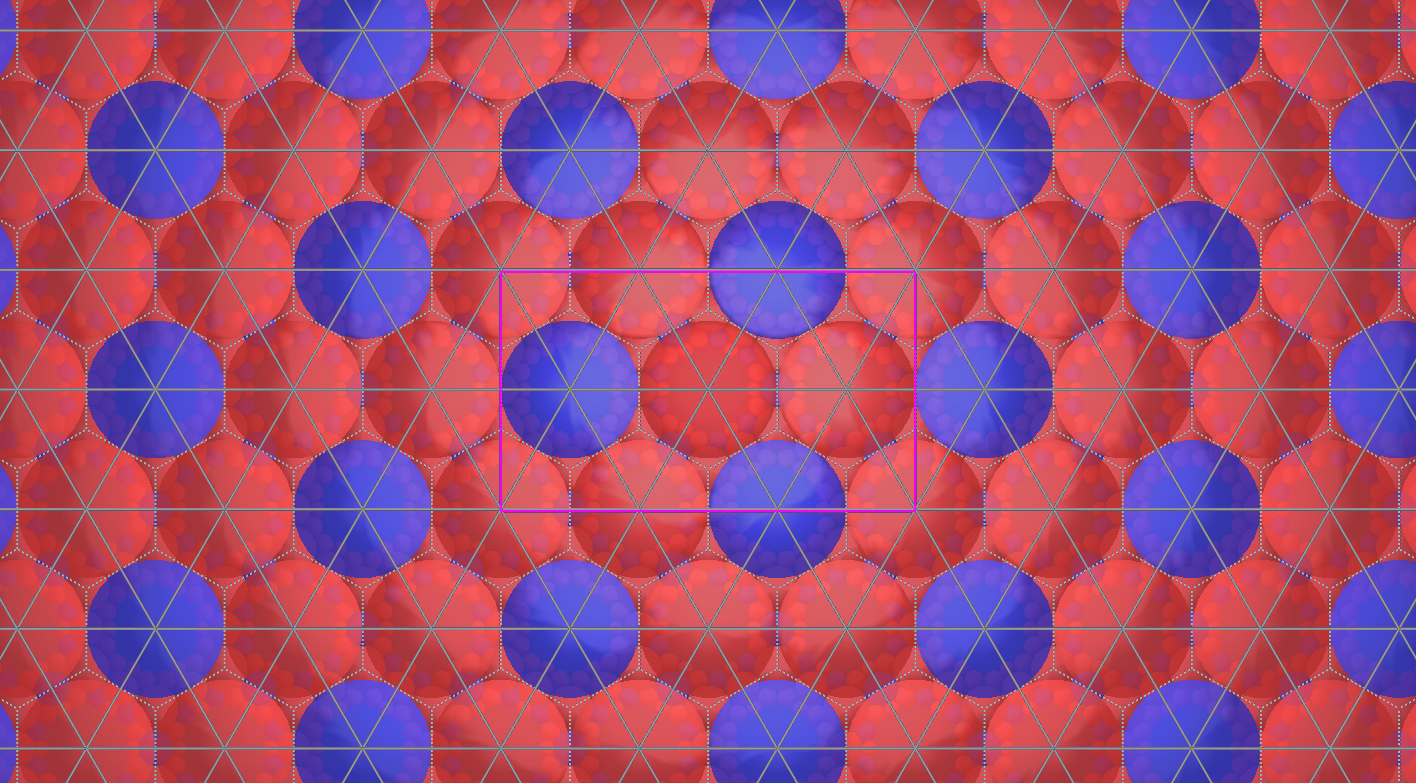}
\caption{$(0, \mathbb{S}^3-6^2_2)$-maximal horoball packing of $\mathbb{H}^3$ (picture obtained from SnapPy \cite{snappy}).}
\label{622red}
\end{figure}

Let $S_{uf}$ be the set of all parabolic fixed points of $\pi_1\left(\mathbb{S}^3-{(6^2_2)}^{0,n}\right)$ corresponding to cusp $c_{uf}$ and $S_{filled}$ the set of the rest of the parabolic fixed points of $\pi_1\left(\mathbb{S}^3-{(6^2_2)}^{0,n} \right)$ (i.e. the parabolic fixed points corresponding  to the cusps of $\mathbb{S}^3-(6^2_2)^{0,n}$ other than $c_{uf}$). 

Let $M_0$ denote $\mathbb{S}^3-6^2_2$ when $3 \nmid n$ and $\mathbb{S}^3-\texttt{L12n2208}$ when $ 3 \mid n$.  From Figure \ref{L12n2208red} (respectively, Figure \ref{622red}) we can see that for any non-red cusp $c_{\text{non-red}}$ of $\mathbb{S}^3-\texttt{L12n2208}$ (respectively, $\mathbb{S}^3-6^2_2$), $\mathcal{H}_{red}$ (respectively,   $\mathcal{H}$) has an order $6$ rotational symmetry $\gamma_{6,red}$ such that the axis of $\gamma_{6,red}$ joins the red cusp at $\infty$ and $c_{\text{non-red}}$ at the horocenter of a $c_{\text{non-red}}$-horoball. Since any two cusps of $M_0$ are exchanged by a self-isometry of $M_0$ fixing the other two cusps of $M_0$ (if any), which can be checked from SnapPy \cite{snappy}, we can conclude that $\mathcal{H}_{rigid}$ has an order $6$ rotational symmetry $\gamma_{6}$ such that $\gamma_{6}$ has one boundary fixed point in $S_{uf}$ and the other in $S_{filled}$. 
 
Let $c'_{uf}$ be the cusp of $M_0$ to where cusp $c_{uf}$ of $\mathbb{S}^3-(6^2_2)^{0,n}$ is mapped.

\begin{cl}
There exists an orbifold $O_1$ of volume $\frac{2v_0}{3}$ which has two cusps - both of the type $(2,3,6)$ such that $O_1$ is covered by both $O$ and $M_0$. 
\end{cl}
\begin{proof}
We note that $\mathcal{H}_{rigid}$ is same as the $c'_{uf}$-maximal and the $c_{uf}$-maximal horoball packings of $\mathbb{H}^3$. So, all of $\pi_1^{Orb}(O)$, $\pi_1(M_0)$ and $\gamma_6$ act as symmetries of $\mathcal{H}_{rigid}$. As in the proof of Proposition \ref{coversmallestmulticusp}, \cite[Lemma 2.1 and Proof of Lemma 2.2]{GoHeHo} then implies that $$\Gamma_1=\left \langle \pi_1^{Orb}(O), \gamma_6, \pi_1(M_0)\right \rangle $$ 
is discrete. Let $O_1=\mathbb{H}^3/\Gamma_1$. Now, $S_{uf}$ is invariant under the action of $\gamma_6$ as well as the elements of $\pi_1^{Orb}(O)$ and  $\pi_1(M_0)$. Consequently, it is invariant under the action of $\Gamma_1$.  It should be noted that  $\mathbb{S}^3-(6^2_2)^{0,n}$ covers $O_1$ and therefore, the set of all parabolic fixed points of $\pi_1\left(\mathbb{S}^3-(6^2_2)^{0,n}\right)$ is same as that of $\Gamma_1$.  So, $S_{filled}$ is invariant under the action of $\Gamma_1$ as well. This implies that $O_1$ has at-least two cusps. On the other hand, since $O_1$ is covered by the two cusped orbifold $O$, it has at-most two cusps. Hence, $O_1$ has exactly two cusps such that cusp $c_{uf}$ of $\mathbb{S}^3-(6^2_2)^{0,n}$, cusp $c'_{uf}$ of $M_0$ and cusp $c_{rigid}$ of $O$ all map to the same cusp of $O_1$. Call this cusp of $O_1$ as $c_1$ and the other cusp as $c_2$. It should be pointed out that the parabolic fixed points of $\Gamma_1$ corresponding to cusp $c_1$ is $S_{uf}$ and so all the filled cusps of $\mathbb{S}^3-(6^2_2)^{0,n}$ (i.e. the cusps corresponding to $S_{filled}$) map to $c_2$. We now conclude that both $c_1$ and $c_2$ are $(2,3,6)$ cusps as the order $6$ elliptic element $\gamma_6$ of $\Gamma_1$ fixes parabolic fixed points of $\Gamma_1$ corresponding to both $c_1$ and $c_2$.

When $ 3 \mid n$, we note that since the three torus cusps of $M_0$ other than $c'_{uf}$ map to the $(2,3,6)$ cusp $c_2$, $[\Gamma_1:\pi_1(M_0)]$ is a multiple of $6$ and is at-least $18$. So, the volume of $O_1$ is $\frac{12v_o}{6m}=\frac{2v_o}{m}$ where $m \ge 3$.  

On the other hand, when  $ 3 \nmid n$, the torus cusp of $M_0$ which is not $c'_{uf}$ maps to the $(2,3,6)$ cusp $c_2$ and so $[\Gamma_1:\pi_1(M_0)]=6m$ for some positive integer $m$. So, volume of $O_1$ is $\frac{4v_o}{6m}=\frac{2v_o}{3m}$ where $m \ge 1$. 

So, the volume of $O_1$ is less than or equal to $ \frac{2v_o}{3}$ in both cases. If the volume of $O_1$ is strictly less than $\frac{2v_o}{3}$, then, the two volume formulae above imply that the volume of $O_1$ is less than or equal to $\frac{v_0}{2}$. But, since $O_1$ has two $(2,3,6)$ cusps and its volume is of the form $\frac{2v_0}{k}$ where $k$ is a positive integer, we can apply \cite[Lemma 2.2]{Adams_multi} to conclude that $O_1$ is one of the unique orbifolds with two $(2,3,6)$ cusps of volume $\frac{v_0}{3}$ and $\frac{v_0}{2}$ given in \cite{Adams_multi}.  But, $O_1$ is covered by $O$, which is a contradiction to Lemma \ref{coveradams}. So, the volume of $O_1$ is exactly $\frac{2v_o}{3}$.

\end{proof}

Recall from the proof of Lemma \ref{coveradams} that $ O_{\frac{v_0}{3}}$ denotes the unique orbifold of volume $\frac{v_0}{3}$ with two $(2,3,6)$ cusps given in Adams \cite{Adams_multi}. 

\begin{lemma}\label{index36cover}
For each cusp $c$ of $M_0$, there exists a covering $\phi_{c}: M_0 \rightarrow O_{\frac{v_0}{3}}$ such that $c$ is the only cusp of $M_0$ that maps to its image cusp in $ O_{\frac{v_0}{3}}$ via $\phi_{c}$.

\end{lemma}
\begin{proof}
We will break down the proof into two cases - Case 1: when $3 \nmid n$ and Case 2: when $3 \mid n$.

\textit{Case 1}: $M_0$ in this case is $\mathbb{S}^3-6^2_2$. SnapPy \cite{snappy} tells us that $\mathbb{S}^3-6^2_2$ can be identified as \texttt{otet04\_00001} in the notation of \cite{FGGTV}. We note from the ancillary file \cite{isosigcensus} of \cite{FGGTV} that the isomorphism signature of the default tetrahedral triangulation of \texttt{otet04\_00001} is \texttt{eLMkbcddddedde}.
$O_{\frac{v_0}{3}}$ is identified in \cite{orbcenpract} as $O^4$ and the orbifold destination sequence of $O^4$ in \cite{orbcenpract} is given to be 
$$\texttt{[0,1,1,0, 1,0,0,2, 3,2,2,1, 2,3,3,3]}.$$ We will denote this Python list as \texttt{des\_seq\_O4}. Following what we have done in Subsection \ref{censusutilities}, we now use the command \texttt{Triangulation3.fromIsoSig} from Regina \cite{regina} and the utility \texttt{SigToSeq.py} of \cite{orbcenpract} (available from \cite{orbtricode}) to get an orbifold destination sequence \texttt{des\_seq\_622}  of \texttt{otet04\_00001} and check via \texttt{TestForCovers.py} of \cite{orbcenpract} (available from \cite{orbtricode}) that \texttt{des\_seq\_622} covers \texttt{des\_seq\_O4}. Now one can check from SnapPy \cite{snappy} that there is an orientation preserving self-isometry of $\mathbb{S}^3-6^2_2$ exchanging its cusps. So, since both $\mathbb{S}^3-6^2_2$ and $O_{\frac{v_0}{3}}$ have two cusps, the lemma follows when $3 \nmid n$.

\textit{Case 2}: $M_0$ in this case is $\mathbb{S}^3-\texttt{L12n2208}$. We note from SnapPy \cite{snappy} that the link complement $\mathbb{S}^3-\texttt{L12n2208}$ is identified as \texttt{otet12\_00009} of the tetrahedral census \cite{FGGTV}. We can now see from the ancillary file \cite{isosigcensus} of \cite{FGGTV} that the canonical triangulation of $\texttt{otet12\_00009}$ (different from its default triangulation) is a tetrahedral triangulation corresponding to the isomorphism signature 
$$\texttt{mvvLPQwQQfhgffijlklklkaaaaaaaaaaaaa}. $$

As in Case 1, we use the command \texttt{Triangulation3.fromIsoSig} from Regina \cite{regina} and \texttt{SigToSeq.py} of \cite{orbcenpract} from \cite{orbtricode} to get an orbifold destination sequence \texttt{des\_seq\_L12n2208} corresponding to the above isomorphism signature of  $\mathbb{S}^3-\texttt{L12n2208}$. Using the functions \texttt{Covers}, \texttt{CuspSeqs()} and \texttt{CuspCovers()} of \cite{orbcenpract} 's \texttt{TestForCovers.py} from \cite{orbtricode}, we can see that there are $4$ triangulation preserving and orientation preserving covers of \texttt{des\_seq\_O4} by \texttt{des\_seq\_L12n2208} and that for each cusp $c$ of $\mathbb{S}^3-\texttt{L12n2208}$ there is precisely one such cover which sends only cusp $c$ to its image cusp in $O_{\frac{v_0}{3}}$ and the other three cusps of $\mathbb{S}^3-\texttt{L12n2208}$ to the other cusp of $O_{\frac{v_0}{3}}$. 

(We remark that the computations for both cases can be seen from running the Python file \texttt{6220n\_computation.py} from \cite{tetra_code} in \texttt{regina-python}. In \texttt{6220n\_computation.py}, we imported the \texttt{regina} module, the function \texttt{Des\_seq} from \texttt{SigToSeq.py} of \cite{orbcenpract} and the functions \texttt{CuspSeqs}, \texttt{Covers} and \texttt{CuspCovers} from \texttt{TestForCovers.py} of \cite{orbcenpract}). 

\end{proof}

Since the $c'_{uf}$-maximal horoball packing of $\mathbb{H}^3$ is $\mathcal{H}_{rigid}$, Lemma \ref{index36cover} implies that $\pi_1(M_0)$ is contained a Kleinian group $\Gamma_2$ isomorphic to $\pi_1^{Orb}(O_{\frac{v_0}{3}})$ such that the elements of $\Gamma_2$ are symmetries of $\mathcal{H}_{rigid}$ and there are two orbits of parabolic fixed points of $\Gamma_2$: $S_{uf}$ and $S_{filled}$. So, we can again use \cite[Lemma 2.1 and Proof of Lemma 2.2]{GoHeHo} to see that the group $$\Gamma_3=\left \langle \Gamma_1, \Gamma_2 \right \rangle $$ is a discrete subgroup of $\operatorname{PSL}(2, \mathbb{C})$. So, $O_3=\mathbb{H}^3/\Gamma_3$ is a hyperbolic orbifold with two $(2,3,6)$ cusps. Now, by Lemma \ref{isovertex}, the isotropy graph of $O$ has a finite vertex of type $(2,3,3)$ or $(2,3,4)$ or $(2,3,5)$. On the other hand, the isotropy graph of $O_{\frac{v_0}{3}}$ does not (see Figure \ref{O4andO6_1isopic}). Since $O_1$ is covered by $O$ and neither of $A_4$, $S_4$ and $A_5$ is a subgroup of a finite dihedral group, the isotropy graph of any orbifold that $O_1$ covers also has a finite vertex of type $(2,3,3)$ or $(2,3,4)$ or $(2,3,5)$. This implies that $O_{\frac{v_0}{3}}$ is not covered by $O_1$. So, $O_3$ is covered by $O_{\frac{v_0}{3}}$ by an index greater than or equal to $2$. So, $O_3$ is an orbifold with two $(2,3,6)$ cusps with volume less than $\frac{v_0}{3}$ which contradicts \cite[Lemma 2.2] {Adams_multi}.

So, our assumption was wrong and no family of hyperbolic knot complements obtained by Dehn filling all but a fixed cusp of $\mathbb{S}^3-(6^2_2)^{0,n}$ and geometrically converging to  $\mathbb{S}^3-(6^2_2)^{0,n}$ can have infinitely many elements with hidden symmetries.

\end{proof}

\appendix

\section{Members of $\mathcal{E}_1$, $\mathcal{E}_2$, $\mathcal{E}_3$ and $\mathcal{E}_4$}\label{allEi}
For each of $\mathcal{E}_1$, $\mathcal{E}_2$, $\mathcal{E}_3$ and $\mathcal{E}_4$, we describe its elements $(M,i)$ in the following format: $(j,\text{namestring}, i)$ where $j$ is the index of $M$ in \texttt{K\_hlgy\_lk} and \text{namestring} is the name of $M$ as a Python string in the notation of \cite{FGGTV}. 

\subsection{Members of $\mathcal{E}_1$}
\begin{enumerate}
\item (157, \texttt{`otet20\_00059'}, 0)
\item (158, \texttt{`otet20\_00060'}, 0)
\item (159, \texttt{`otet20\_00061'}, 0)
\item (166, \texttt{`otet20\_00090'}, 1)
\item (168, \texttt{`otet20\_00098'}, 3)
\item (183, \texttt{`otet20\_00158'}, 1)
\item (195, \texttt{`otet20\_00363'}, 0)
\item (214, \texttt{`otet20\_00429'}, 0)
\item (220, \texttt{`otet20\_00461'}, 1)
\item (223, \texttt{`otet20\_00477'}, 1)
\item (242, \texttt{`otet20\_00540'}, 0)
\item (243, \texttt{`otet20\_00541'}, 0)
\item (244, \texttt{`otet20\_00542'}, 0)
\item (248, \texttt{`otet20\_00552'}, 0)
\item (250, \texttt{`otet20\_00565'}, 4)
\item (252, \texttt{`otet20\_00567'}, 2)
\item (257, \texttt{`otet20\_00581'}, 0)
\item (308, \texttt{`otet20\_01296'}, 0)
\item (309, \texttt{`otet20\_01301'}, 2)
\item (310, \texttt{`otet20\_01302'}, 2)
\item (311, \texttt{`otet20\_01330'}, 2)
\item (312, \texttt{`otet20\_01335'}, 2)
\item (322, \texttt{`otet20\_01385'}, 2)
\item (324, \texttt{`otet20\_01388'}, 2)
\item (325, \texttt{`otet20\_01389'}, 2)
\item (326, \texttt{`otet20\_01390'}, 1)
\item (327, \texttt{`otet20\_01391'}, 2)
\item (329, \texttt{`otet20\_01396'}, 0)
\item (330, \texttt{`otet20\_01402'}, 0)
\item (333, \texttt{`otet20\_01414'}, 2)
\item (336, \texttt{`otet20\_01438'}, 2)
\item (338, \texttt{`otet20\_01488'}, 0)
\item (343, \texttt{`otet20\_01500'}, 0)
\item (355, \texttt{`otet20\_01662'}, 3)
\end{enumerate}

\subsection{Members of $\mathcal{E}_2$}
\begin{enumerate}
\item (3, \texttt{`otet08\_00002'}, 0)

\item (3, \texttt{`otet08\_00002'}, 1)
\item (4, \texttt{`otet08\_00003'}, 0)
\item (22, \texttt{`otet12\_00009'}, 0)
\item (58, \texttt{`otet16\_00013'}, 0)

\item (58, \texttt{`otet16\_00013'}, 1)
\item (74, \texttt{`otet16\_00058'}, 0)
\item (74, \texttt{`otet16\_00058'}, 1)

\item (79, \texttt{`otet16\_00090'}, 0)
\item (98, \texttt{`otet18\_00028'}, 0)
\item (98, \texttt{`otet18\_00028'}, 1)

\item (127, \texttt{`otet18\_00104'}, 0)
\item (145, \texttt{`otet18\_00171'}, 0)
\item (219, \texttt{`otet20\_00443'}, 0)
\item (219, \texttt{`otet20\_00443'}, 1)

\item (245, \texttt{`otet20\_00543'}, 0)
\item (299, \texttt{`otet20\_00762'}, 0)
\item (307, \texttt{`otet20\_00793'}, 0)
\item (425, \texttt{`otet22\_00130'}, 0)
\item (633, \texttt{`otet24\_00259'}, 0)

\item (633, \texttt{`otet24\_00259'}, 1)
\item (634, \texttt{`otet24\_00260'}, 0)
\item (634, \texttt{`otet24\_00260'}, 1)

\item (636, \texttt{`otet24\_00263'}, 0)
\item (645, \texttt{`otet24\_00290'}, 0)
\item (645, \texttt{`otet24\_00290'}, 1)

\item (681, \texttt{`otet24\_00396'}, 0)
\item (682, \texttt{`otet24\_00398'}, 0)
\item (683, \texttt{`otet24\_00399'}, 0)
\item (684, \texttt{`otet24\_00401'}, 0)
\item (821, \texttt{`otet24\_00979'}, 0)
\end{enumerate}

\subsection{Members of $\mathcal{E}_3$}
\begin{enumerate}
\item (160, \texttt{`otet20\_00062'}, 2)
\item (167, \texttt{`otet20\_00097'}, 4)
\item (172, \texttt{`otet20\_00102'}, 3)
\item (175, \texttt{`otet20\_00131'}, 3)
\item (301, \texttt{`otet20\_00770'}, 3)
\item (331, \texttt{`otet20\_01405'}, 4)
\item (333, \texttt{`otet20\_01414'}, 3)
\item (334, \texttt{`otet20\_01431'}, 3)
\item (335, \texttt{`otet20\_01433'}, 3)
\item (336, \texttt{`otet20\_01438'}, 3)
\end{enumerate}

\subsection{Members of $\mathcal{E}_4$}\label{E4}
\begin{enumerate}
\item (156, \texttt{`otet20\_00049'}, 0)
\item (156, \texttt{`otet20\_00049'}, 1)

\item (161, \texttt{`otet20\_00063'}, 0)
\item (161, \texttt{`otet20\_00063'}, 2)
\item (221, \texttt{`otet20\_00462'}, 1)

\item (221, \texttt{`otet20\_00462'}, 2)
\item (222, \texttt{`otet20\_00474'}, 0)
\item (222, \texttt{`otet20\_00474'}, 1)
\item (222, \texttt{`otet20\_00474'}, 2)
\item (256, \texttt{`otet20\_00577'}, 0)

\item (256, \texttt{`otet20\_00577'}, 3)
\end{enumerate}

\bibliography{tetrahedral.bib}
\bibliographystyle{plain}

\end{document}